\newcommand{\N}{\mathbb{N}}
\newcommand{\R}{\mathbb{R}}
\renewcommand{\P}{\mathbb{P}}
\newcommand{\E}{\mathcal{E}}
\newcommand\procn[1]{\left(#1,\, j\in \llbracket 0,n \rrbracket\right)}
\newcommand\suiten[1]{\left(#1,\, j\in \llbracket 0,n \rrbracket\right)}
\newcommand\pr[1]{{\mathbb P}\left[#1\right]}
\newcommand\prm[1]{{\mathbb P}_\mu\left[#1\right]}
\newcommand\proc[1]{\left(#1,\, t\ge 0\right)}
\newcommand\procun[1]{\left(#1,\, t\in [0,1]\right)}
\newcommand\suite[1]{\left(#1,n\in\N_+\right)}
\newcommand\suitei[1]{\left\{#1\right\}_{i \in \N}}
\def\esp#1{{\mathbb E}\left[#1\right]}
\def\eps{\epsilon}
\newcommand{\M}{\maM}
\def\maN{{\mathcal N}}
\def\bphi{\mathbf{\Phi}}
\newcommand\cro[1]{\left\langle #1 \right\rangle}
\newcommand\llangle{\left\langle\!\left\langle}
\newcommand\rrangle{\right\rangle\!\right\rangle}
\def\d{\operatorname{d}\!}
\def\esp#1{{\mathbb E}\left[#1\right]}
\newtheorem{theorem}{Theorem}[section]
\newtheorem{definition}{Definition}[section]
\newtheorem{corollary}{Corollary}[section]
\newtheorem{lemma}{Lemma}[section]
\newtheorem{Ex}{Example}[section]
\newtheorem{proposition}{Proposition}[section]
\newtheorem{hypo}{Assumption}[section]
\newtheorem{remark}{Remark}[section]
\newcommand{\be}{ \begin{equation}}
\newcommand{\ee}{\end{equation}}
\newcommand\mr[1]{\mathring{\mathcal{#1}}}
\newcommand\wh[1]{\widehat{#1}}
\newcommand{\proofpart}[2]{%
  \par
  \addvspace{\medskipamount}%
  \noindent\emph{Part #1: #2}\par\nobreak
  \addvspace{\smallskipamount}%
  \@afterheading
}
\def\E{{\mathbb E}}
\def\P{{\mathbb P}}
\def\R{{\mathbb R}}
\def\N{{\mathbb N}}
\def\maC{{\mathscr{C}}}
\def\T{{\mathcal{T}}}
\def\bdmu{\bar\mu^\dag}
\def\maT{{\mathscr T}}
\def\maK{{\mathcal K}}
\def\maI{{\mathcal I}}
\def\maU{{\mathcal U}}
\def\maA{{\mathcal{A}}}
\def\maV{{\mathcal{V}}}
\def\maE{{\mathcal{E}}}
\def\maG{{\mathcal{G}}}
\def\maB{{\mathcal{B}}}
\def\maM{{\mathcal{M}}}
\def\x{\mathbb x}
\def\T2a{{\tau_{2\alpha^{+}^{+}}}}
\def\t2a{{t_{2\alpha^{+}^{+}}}}
\def\M{{\mathscr M}}
\def\({{\Bigl(}}
\def\){{\Bigr)}}
\def\ind{{\mathchoice {\rm 1\mskip-4mu l} {\rm 1\mskip-4mu l}
{\rm 1\mskip-4.5mu l} {\rm 1\mskip-5mu l}}}
\def\square{\ifmmode\sqr\else{$\sqr$}\fi}
\def\sqr{\vcenter{
         \hrule height.1mm
         \hbox{\vrule width.1mm height2.2mm\kern2.18mm\vrule width.1mm}
         \hrule height.1mm}}                  
\begin{document}
\title[Local matching on the Configuration model]{Large graph limits of local matching algorithms on Configuration model graphs}
\author[M. H. A. Diallo Aoudi, P. Moyal and V. Robin]{Mohamed Habib Aliou Diallo Aoudi, Pascal Moyal, and Vincent Robin}
\begin{abstract}
{In this work, we propose a large-graph limit estimate of the matching coverage for several matching algorithms, on general random graphs generated by the configuration model. For a wide class of {\em local} matching algorithms, namely, algorithms that only use information on the immediate neighborhood of the explored nodes, we propose a joint construction of the graph by the configuration model, and of the resulting matching on the latter graph. This leads to a generalization in infinite dimension of the differential equation method of Wormald: We keep track of the matching algorithm over time by a measure-valued CTMC, for which we prove the convergence, to the large-graph limit, to a deterministic hydrodynamic limit, identified as the unique solution of a system of ODE's in the space of integer measures.  
Then, the asymptotic proportion of nodes covered by the matching appears as a simple function of that solution. We then make this solution explicit for three particular local algorithms: the classical {\sc greedy} algorithm, and then the {\sc uni-min} and {\sc uni-max} algorithms, two variants of the greedy algorithm that select, as neighbor of any explored node, its neighbor having the least (respectively largest) residual degree.}
%
\end{abstract}

\maketitle

\section{Introduction}
\label{sec:intro}
{A {\em matching} on a graph is a subgraph in which all nodes are of degree one, that is, have exactly one neighbor. A matching is {\em maximal}, if no more node can be added to it without breaking this property. A maximal matching is {\em perfect}, if it covers all the nodes of the graph. 
Providing conditions for the existence of a perfect matching on a graph is a classical problem in graph theory. The problems of implementing algorithms that are able to achieve this perfect matching {if it exists}, and if not, to maximize the so-called {\em matching coverage}, namely, the proportion of nodes appearing in 
the resulting matching, have received an increasing attention in various  research communities, from discrete mathematics to theoretical computer science, and discrete probability. Such problems have natural applications 
in a wide variety of fields, from on-line advertisement to peer-to-peer interfaces, from job and housing allocations to organ transplants and blood banks, online dating, and so on.}

{Necessary and sufficient conditions for the existence of a perfect matching were given in the classical reference \cite{Hall} for bipartite graphs, and then 
\cite{Tutte} for general (i.e., non-necessarily bipartite) graphs. Provided that these conditions hold, a flurry of algorithms were proposed to {achieve} this 
perfect matchings, starting from the now classical {\sc blossom} algorithm, see \cite{edmonds_1965}. The naive matching algorithm consists of the repetition of the following iteration:  1) An edge is added to the matching, and 2) Its neighboring edges are blocked. Maximum matching algorithms would then proceed to progessively {augment} the resulting maximal matching. In practice it is profitable, but algorithmically expensive, to allow the possibility of {\em backtracking} during the construction, that is, to delete edges from the matching, if this allows to add more edges to it later on, by following another path. This is the case 
in particular in {\sc blossom}-type algorithms. However, it may be imposed by practical constraints, and in particular by complexity costs, as the size of the graph gets large, to {\em forbid} backtracking along the procedure, specifically: as soon as an edge is included in the matching, it will remain so until the end of the procedure.}

{This is naturally the case in the so-called {\em online-matching} problem: Consider a bipartite graph $\maG(\maU\cup\maV,\maE)$, and suppose that the nodes of the ``$\maV$-side'' are known, but nodes of the ``$\maU$-side'' and their neighbors on the ``$\maV$-side'', are unveiled sequentially. Then, the matching algorithm just consists in performing sequentially, upon the unveiling of each new node $u\in\maU$ and its neighbors, the match of $u$ with one of its unmatched neighbors, if any, following a criterion that is fixed beforehand. The algorithm is said to be {\sc greedy}, if the match is chosen uniformly at random among the unmatched neighbors of $u$. See e.g. 
\cite{feldman2009online,Manshadi12,TCS-057,MJ13}. This {\em online} matching procedure, consisting of matching incoming item on the fly, is particularly adapted to the contexts of online advertisement (in which $\maV$ is the set of adds and $\maU$ is the set of users, that are interested by some adds but not all, and arrive sequentially) or organ transplants in real time, for instance. For a given resulting bipartite graph, the {\em competitive ratio} of a given algorithm, is then the proportion of nodes covered 
by the online matching, over the size of the largest maximal matching that could have been achieved on that graph ({\em offline} matching). In \cite{Karp}, Karp et al. used the so-called \emph{adversarial} order of arrivals as a worst case scenario for the matching completion, leading to the well-known $1-1/e$ bound for the competitive ratio.}

{In this work, in the same spirit we address a class of matching algorithms that prohibit any backtracking, and investigate the typical 
proportion of nodes covered by the matching, as the size of the graph grows large. We are interested in simple matching algorithms, that only take information on the immediate neighborhood of the explored nodes. They can be roughly defined as follows: at each step, 1) We choose a given unmatched node, following a criterion that uses, at most, the sole knowledge of the {\em degrees} of the unmatched nodes; 2) We choose the match of that node among its neighbors, if any, again on the basis of the sole knowledge of the degrees of these nodes, and 3) We erase these two matched nodes, and reiterate the procedure on the graph induced by the remaining nodes. We say that such matching algorithms are {\em local}, in the sense that the choices (of the explored node and then, of its match) only use information on the neighborhood of nodes at distance one. It is immediate that the aforementioned {\sc greedy} algorithm is local, in that sense.
The motivation behind this assumption is mostly practical: in today's huge networks in Telecom, computer architectures, biological networks and social media, 
a precise view of the geometry of the graph at hand is clearly out-of-reach, only the local characteristics of the graph are known. 
In such contexts, it is a classical approach to suppose that the graph at hand is {\em random}, characterized by its local characteristics (e.g., the degrees of its nodes), and generated by a random procedure. 
In this work, we consider the classical so-called {\em Configuration Model} (CM), see e.g. \cite{BOLLOBAS1980311,10.5555/3086985}. This random graph model 
is known to be particularly simple and versatile, and relevant to represent a wide variety of networks, from social networks to epidemiological and biological networks, 
for instance. By its very construction, based on the procedure of so-called {\em uniform pairing} of half-edges (see below), it is the suitable model to produce a {\em typical} realization of a random graph having a prescribed degree distribution. Specifically, it can be shown that, conditionality of producing a simple graph, its distribution 
is uniform among all simple graphs having this degree distribution (see Proposition 7.4 of \cite{10.5555/3086985}).}

{The matching problem on random graphs has an intense recent history, and has been investigate along various aspects: among others, the number of matchings of a given size in Erdös-Rényi and regular random graphs was characterized in \cite{ZM06} by the so-called cavity method, \cite{gamarnik2010randomized} provided performance bounds for the `randomized' greedy algorithm on regular random graphs. \cite{BLS13} characterizes the asymptotic optimal matching size in the CM. Recently, \cite{SNLMP23} analyzed the performance of online and offline matchings on geometric random graphs. \cite{SJM23} considers a stochastic block model (SBM), and provides a condition for achieving a perfect matching infinitely often, as the size of the graph gets large, by using a comparison to the stability problem of a related queueing system, the so-called {\em Stochastic matching model} 
introduced in \cite{MaiMoy16}.}

{The closest contribution to the present work is \cite{NSP21}, in which the performance of the {\sc greedy} matching algorithm is analyzed, on a 
bipartite configuration model graph. The asymptotic expected competitive ratio is identified as the solution of a functional equation involving the generating function of the degree distribution. 
In the present work, we complete and extend this result, by deriving the asymptotic matching coverage, to the large graph limit, for a general 
(instead of bipartite) graph, generated by the CM, and the simultaneous construction we propose for the graph together with a matching on it, 
slightly differs from that of \cite{NSP21}. Second, we propose a more exhaustive description of the construction, through the measure-valued process keeping track of the number of 
open half-edges of the nodes along the construction. Third, we extend our result to a wider class of {local} matching algorithms, in the sense introduced above, including the so-called {\sc uni-min} matching algorithm, similar in favor to the {\em Degree-greedy} algorithm introduced in \cite{bermolen2019degree} for 
the construction of Maximal Independent Sets.}

{A remarkable feature of the CM is to allow for a {\em joint} (synchronized) construction of the graph by the uniform pairing procedure, {\em and} of the maximal matching on the latter graph. Then the construction leads to a simple point measure-valued process, as described above. 
We can then come back to the original problem, by showing that the asymptotic matching coverage coincides, in some sense, to the one obtained if we implement the same algorithm on a graph that was previously constructed by the CM, see Theorem \ref{thm:coupling} below. The asymptotic matching coverage can then 
be obtained as a simple function of the limiting deterministic measure-valued process, obtained as the solution of a system of ODE. We thereby apply an infinite-dimensional version of the so-called {\em differential equation method} of Wormald, see \cite{10.2307/2245111} and \cite{W19}.  
The properties of measure-valued processes have been well understood since the pioneering monograph \cite{10.1007/BFb0084190}. Further, tools have been developed, especially in the context of queueing systems and networks, to obtain their fluid and diffusion limits, which are essential results to understand their asymptotic and/or mean behavior, in particular as the size of the queue gets large and/or in time-space scaling of the queueing system at hand, see e.g., along various kinds of queueing models, \cite{doytchinov2001real,grom,DecMoyEDF,decreusefond2008functional,kaspi2011law,ABK15}.}

{In the present work, we show the convergence of the sequence of processes to a deterministic and continuous measure-valued function, and thereby establish formally, and in the context of {\em general} (instead of bipartite) graphs, the fluid approximation heuristics proposed in \cite{aoudi:hal-03294781} for the {\sc greedy} and the {\sc uni-min} matching algorithms on the bipartite Configuration Model. In the latter reference, the performance of {\sc greedy} and {\sc uni-min} were then compared by solving numerically the limiting ODE in both cases. 
In the context of large-graph limits of Markovian processes on large random graphs, a similar approach has been used to study the large-graph behavior of a SIR epidemics (see \cite{decreusefond2012}), Greedy or degree-greedy construction of Maximal Independent sets on the CM, see \cite{Jam,bermolen2019degree}, and applications to CSMA ({\em Carrier Sense Multiple Access})-type algorithms on radio-mobile networks in \cite{bermolen2016estimating}.}

{This paper is organized as follows. After some preliminary in Section \ref{sec:prelim}, we introduce our main results regarding the large graph approximation of the 
matching coverage for the {\sc greedy} and the {\sc uni-min} algorithms in Section \ref{sec:mainres}, including examples of degree distributions in Section \ref{subsec:mainexamples}. Then, we describe precisely the more general class of {\em local} matching algorithms, on a previously constructed graph in Section \ref{subsec:localfinite}, and the corresponding joint construction of the graph by the CM and of the matching on that graph, in Section \ref{subsec:localCM}. In Section \ref{sec:Markov}, we introduce the measure-valued Markov chain of our joint construction. In Section \ref{sec:constructionhat} we introduce an auxiliary construction whose large-graph limits will turn out to be easier to establish. Our main general results, including the hydrodynamic limits of the measure-valued Continuous Time Markov Chain (CTMC), and the large graph limit of the matching coverage for local matching algorithms, is presented in Section \ref{sec:Main}. The proof of our main convergence result, using the latter auxiliary construction, 
is developed in Section \ref{sec:proof}. The specialization of these results to the special cases of the {\sc greedy}, the {\sc uni-min} and the {\sc uni-max} algorithms, are presented in Sections \ref{sec:greedy}, \ref{sec:unimin} and \ref{sec:unimax}, respectively.}

\section{Preliminary}
\label{sec:prelim}

\subsection{General notation}
Let $\R$, $\R_+$, $\N$ and $\N_+$ denote the sets of real numbers, non-negative numbers, non-negative and positive integers, respectively. 
For any $a\le b\in\N$, we denote by $\llbracket a,b \rrbracket$, the integer interval 
$$\llbracket a,b \rrbracket=\{a,a+1,\cdots,b-1,b\}.$$ 
In what follows, any finite set $\maA$ of cardinality $q\in\N_+$ is naturally identified with the integer interval $\llbracket 1,q \rrbracket$. 
For any real number $x\in\R$, we let $\lfloor x\rfloor$ denote the integer part of $x$. 

We denote by $\maC_b:=\maC_b(\N)$ (respectively, $\maC_K:=\maC_K(\N)$), the set of Borel bounded (resp., compact supported) functions 
$\N$ to $\R$. Let us denote by $\mathbb 1:x \longmapsto 1$, the mapping on $\R$ constantly equal to $1$. Let $\chi :x \longmapsto x$ be the {identity function} on $\R$, and for any $p\ge 1$, $\chi^p:x \longmapsto x^p$ be the $p$-power function on $\R$. 
For any $f:\N\to \R$, we define the {\em discrete gradient} of $f$, as 
 $$\nabla f:\, x \longmapsto f(x) - f(x-1),\, x\in\N.$$
  For two mappings $f$ and $g:\,\N\to \R$ and $n\in\N$, we write $f(n) = o(g(n))$ if $$\lim_{n\rightarrow\infty} \frac{f(n)}{g(n)} = 0.$$
   Likewise, we write $f(n) = O(g(n))$ if $$\lim_{n\rightarrow\infty} \left|\frac{f(n)}{g(n)}\right| < C,$$ for some positive constant $C$.
   
The null measure is denoted by $\mathbf 0$. For any measurable set $E$, we let $\M_F(E)$ denote the 
space of finite positive measures on $E$. We let $\M_F:=\M_F(\N)$ be the space of finite positive measures on $\N$, and $\M_p\subset \M_F$  
be the subset of counting measures on $\N$. For any $\mu\in\M_F$ and $i\in\N$, with some abuse we 
set $\mu(i)=\mu(\{i\}).$ 
For any $n\in\N$, we define $\M^n$ as the subset of $\M_p$ whose elements $\mu$ 
have a total mass less or equal to $n$, that is, $$\mu(\N)=\sum_{i=0}^\infty \mu(i) = m\le n.$$ 
Note, that any such measure $\mu\in\M^n$ can thus be written as 
\[\mu=\sum_{i=1}^m \delta_{a_i},\]
where $a_i \in \N$ for all $i\in\llbracket 1,m \rrbracket$, and $m\le n$. 
For any $n\in\N_+$, we then define the set 
$$\bar{\M}^n:= \frac{1}{n}\M^n = \left\{\frac{1}{n}\mu \, :\, \mu \in \M^n\right\}.$$ 
Then, $\bar{\M}^n$ is a subset of $\bar\M$, the space of finite measures on $\N$ having total mass bounded by $1$. 
For a function $f:\N\to \R$ and a measure $\mu\in\M_F$, we write 
\[\cro{\mu,f}=\int_\R f d\mu:=\sum_{i\in\N}f(i)\mu(i).\]
For a measure $\bar\mu\in\bar\M$, we let respectively $F_{\bar\mu}$ and $\bar F_{\bar\mu}$ denote the c.d.f. and the tail of distribution 
of the size-biased probability measure associated to 
$\bar\mu$, that is, for all $y\in\N$, 
\begin{align}
F_{\bar\mu}(y)&=\sum_{j=0}^y {j\bar\mu(j)\over \cro{\bar\mu,\chi}};\label{eq:defF}\\
\bar F_{\bar\mu}(y)&=1-F_{\bar\mu}(y)=\sum_{j=y+1}^\infty {j\bar\mu(j)\over \cro{\bar\mu,\chi}}\cdot\label{eq:defbarF} 
\end{align}
In particular, for any $\mu\in\M_F$ and $p\ge 1$, $\cro{\mu,\mathbb 1}$ and $\cro{\mu,\chi^p}$ respectively represent the total mass and the $p$-th moment of the measure $\mu$. 
The above measure spaces are endowed with their weak (resp. vague) topology. In particular, the weak convergence of measures is denoted by 
\[\mu^n \xRightarrow{n\to\infty} \mu \iff \cro{\mu^n,f} \xrightarrow{n\to \infty} \cro{\mu,f}, \mbox{ for all }f\in\maC_b.\]
Both $\maC_b$ and $\maC_K$ are endowed with the topology induced by the sup norm 
$$\lVert f \rVert = \sup_{n\in\N} |f(n)|,\quad f\in\maC_b \,(\mbox{resp.},\,\maC_K).$$ 

\medskip

For a non-oriented graph $\mathcal G(\mathcal V,\mathcal E)$, $\mathcal V$ denotes the set of nodes and $\mathcal E$, the set of edges, that is, a set of subsets of $\mathcal V$ of cardinality $2$. For any node $v$, we let $\maE(v)$ be the set of neighbors of $v$, i.e., of nodes of $\maV$ that share an edge with $v$. 
The degree of $v$ is then the cardinality of $\maE(v)$. 

\medskip

Throughout, unless the contrary is explicitly mentioned, all random variables (r.v.'s, for short) are defined on a common probability space $(\Omega, \mathscr F, \mathbb P)$. For a Polish space $E$ and $T>0$ we denote by $\mathbb{D}([0,T],E)$, the space of RCLL (right-continuous with left-hand limits) $E$-valued processes, endowed with the Skorokhod topology  (see e.g. \cite{Mlard1993SurLC,10.2307/1427238}). We let $\mathbb{C}([0,T],E)$ denote the space of continuous $E$-valued processes. We use indifferently the notation ``$\Longrightarrow$'' for weak convergence of $E$-valued r.v.'s, and for convergence of measures in the space $(\M_F(E),w)$ endowed with the weak topology. 

\section{Main results}
\label{sec:mainres}
In this section, we introduce and summarize our main results. We start by introducing two of the main matching algorithms that will be addressed in this work. 
A wider class of algorithms will be defined, and further formalized, in Section \ref{sec:localalgo}. 

\subsection{Two matching algorithms on a finite graph}
\label{subsec:mainresalgo}
We start by considering a given finite graph $\mr G(\mr V,\mr E)$ of size $|\mr V|=n$. Hereafter, a circle will be appended to all 
characteristics of algorithms on the fixed graph $\mr G$. We consider two matching algorithms on $\mr G$: 
First, in Algorithm \ref{algo:greedy} we consider the classical {\em greedy matching}, which consists in choosing, at each step, a node uniformly at random 
among the unmatched nodes having a non-empty neighborhood, if any, and then match it with another unmatched node, chosen uniformly at random among its neighbors. In Algorithm \ref{algo:unimin}, we formalize what we call the \textsc{uni-min} matching algorithm. It is defined exactly as the \textsc{greedy} matching algorithm, except that, at each step, the match of the first chosen node is drawn uniformly at random among its neighbors having {\em minimal degree} in the graph induced by the unmatched nodes. This good-sense modification tends to favor, among the neighbors of the first node, those who will be {harder} to match afterwards, due to a small degree. 

\begin{algorithm}
\caption{\textsc{greedy} matching algorithm on $\mr{G}(\mr V,\mr E)$}
\label{algo:greedy}
\begin{algorithmic}
\REQUIRE Non-empty graph $\mr G(\mr V,\mr E)$
\STATE $\mr{\mbox{Matching}} \leftarrow \emptyset$;
\FOR{$1\le i \le  n$}
   \IF{$\mr V$ is empty, or all nodes in $\mr V$ have an empty neighborhood}
        \STATE Do not do anything 
   \ELSE
	\STATE Pick $v$ uniformly at random among the nodes of $\mr V$ having a non-empty neighborhood
	\STATE Pick $v'$uniformly at random in its neighborhood $\mr E(v)$;
	\STATE $\mr V \leftarrow \mr V \setminus \{v,v'\}$;
	\STATE $\mr{\mbox{Matching}} \leftarrow \mr{\mbox{Matching}} \cup \{\{v,v'\}\}$;
   \ENDIF
   \STATE $i \leftarrow i+1$;
\ENDFOR
\end{algorithmic}
\end{algorithm}
\normalsize

\begin{algorithm}
\caption{\textsc{uni-min} matching algorithm on $\mr G(\mr V,\mr E)$}
\label{algo:unimin}
\begin{algorithmic}
\REQUIRE Non-empty graph $\mr G(\mr V,\mr E)$
\STATE $\mr{\mbox{Matching}} \leftarrow \emptyset$;
\FOR{$1\le i \le  n$}
   \IF{$\mr V$ is empty, or all nodes in $\mr V$ have an empty neighborhood}
        \STATE Do not do anything 
   \ELSE
	\STATE Pick $v$ uniformly at random among the nodes of $\mr V$ having a non-empty neighborhood
	\STATE Pick $v'$uniformly at random in in the set of nodes of its neighborhood $\mr E(v)$ having minimal degree in $\mr V$;
	\STATE $\mr V \leftarrow \mr V \setminus \{v,v'\}$;
	\STATE $\mr{\mbox{Matching}} \leftarrow \mr{\mbox{Matching}} \cup \{\{v,v'\}\}$;
   \ENDIF
   \STATE $i \leftarrow i+1$;
\ENDFOR
\end{algorithmic}
\end{algorithm}

Both algorithms \ref{algo:greedy} and \ref{algo:unimin} produce a maximal matching on $\mr G$. 
For $\bphi\in\{\textsc{greedy},\textsc{uni-min}\}$, the {\em matching coverage}
$\mathring{\mathbf M}^n_{\bphi}(\mr{G})$ is then the proportion of nodes of $\mr V$ that ended up in the matching at the termination time $n$, 
namely, 
\begin{equation}
\label{eq:defmatchingcov}
\mathring{\mathbf M}^n_{\bphi}(\mr{G}) = {2\left|\,\mr{\mbox{Matching}}\,\right| \over n }\cdot 
\end{equation} 
For both algorithms, the matching can be perfect, that is, $\mathring{\mathbf M}^n_{\bphi}(\mr{G})=1$, only if $n$ is even and $\maG$ satisfies Tutte's condition, see \cite{Tutte}. On the other hand, it is well known that both algorithms produce a matching whose cardinality is at least half of the largest possible matching on 
$\mr G$, see e.g. Section 10.3 of \cite{Lovasz}. Both algorithms can be seen as {\em online} algorithms, in the sense that the edges that are added to the matching cannot be erased, that is, there is no backtracking in the construction of the matching. 

\subsection{The two corresponding algorithms on the configuration model}
\label{subsec:mainresalgoCM}
Let us now consider the two corresponding matching algorithms on a {\em random} (multi-)graph, constructed by the configuration model, as introduced in \cite{BOLLOBAS1980311}. As is well known, this random graph model consists in the following three main steps: 
\begin{itemize}
\item[(i)] Fix a probability distribution $\xi$ on $\N$, and draw the degree vector $\mathbb d=(d(1),\ldots,d(n))$ of the $n$ nodes, as a $n$-sample of $\xi$. 
\item[(ii)] Append to each node $i$, as many half-edges as its degree $d(i)$; 
\item[(iii)] Draw uniformly at random, a pairing of all the $\sum_{i=1}^n d(i)$ half-edges thereby obtained, to obtain edges. 
\end{itemize}
This results in a multi-graph having the prescribed degree distribution. However, it is well known that the number of self-loops and mutliple edges become small as $n$ goes large. Moreover, conditional on the resulting multi-graph being a graph (an event whose probability tends to a positive value), the latter is drawn uniformly at random among the graphs having degree distribution $\mathbb d$, and thereby provides a `{\em typical}' realization of a graph having this degree distribution. For details regarding this classical model, see e.g. Chapter 7 of \cite{RDH}. 

In line with \cite{decreusefond2012,Jam,bermolen2016estimating,bermolen2019degree,NSP21,DFSCM}, we adopt the so-called 
`{\em Constructing while exploring}' approach, to provide a declination of Algorithms \ref{algo:greedy} and \ref{algo:unimin} in which the matching is constructed together with the multi-graph. This results in Algorithms \ref{algo:greedyCM} and \ref{algo:uniminCM}. 

\begin{algorithm}
\caption{\textsc{greedy} matching algorithm on CM($\mathbb d$)}
\label{algo:greedyCM}
\begin{algorithmic}
\REQUIRE Degree vector $\mathbb d=(d(1),\ldots,d(n))$; $n$ nodes with $d(1)$, $d(2)$, ... ,$d(n)$ incident half-edges. 
\STATE $\mbox{Matching} \leftarrow \emptyset$;
\FOR{$1\le i \le  n$}
   \IF{No vertices have a positive number of incident half-edges}
        \STATE Do not do anything 
   \ELSE
	\STATE Pick $v$ uniformly at random in the set of nodes having incident half-edges;
	\STATE Pair all half-edges of $v$ uniformly at random with available half-edges, one by one. 
	\STATE Update the number of available half-edges of all nodes. 
	\STATE $\maV \leftarrow \maV \setminus \left\{v\right\}$;
	\IF{$v$ shares an edge with no other node than itself}
		\STATE  Do not do anything
	\ELSE
		\STATE Pick $v'$uniformly at random in the set of nodes that share an edge with $v$;
		\STATE $\maV \leftarrow \maV \setminus \left\{v'\right\}$;
		\STATE $\mbox{Matching} \leftarrow \mbox{Matching} \cup \{\{v,v'\}\}$;
	\ENDIF
   \ENDIF
   \STATE $i \leftarrow i+1$;
\ENDFOR
\end{algorithmic}
\end{algorithm}

\begin{algorithm}
\caption{\textsc{uni-min} matching algorithm on CM($\mathbb d$)}
\label{algo:uniminCM}
\begin{algorithmic}
\REQUIRE Degree vector $\mathbb d=(d(1),\ldots,d(n))$; $n$ nodes with $d(1)$, $d(2)$, ... ,$d(n)$ incident half-edges. 
\STATE $\mbox{Matching} \leftarrow \emptyset$;
\FOR{$1\le i \le  n$}
   \IF{No vertices have a positive number of incident half-edges}
        \STATE Do not do anything 
   \ELSE
	\STATE Pick $v$ uniformly at random in the set of nodes having incident half-edges;
	\STATE Pair all half-edges of $v$ uniformly at random with available half-edges, one by one. 
	\STATE Update the number of available half-edges of all nodes. 
	\STATE $\maV \leftarrow \maV \setminus \left\{v\right\}$;
	\IF{$v$ shares an edge with no other node than itself}
		\STATE  Do not do anything
	\ELSE
		\STATE Pick $v'$uniformly at random in the set of nodes having the smallest number of incident half-edges, among those which 
		share an edge with $v$;
		\STATE $\maV \leftarrow \maV \setminus \left\{v'\right\}$;
		\STATE $\mbox{Matching} \leftarrow \mbox{Matching} \cup \{\{v,v'\}\}$;
	\ENDIF
   \ENDIF
   \STATE $i \leftarrow i+1$;
\ENDFOR
\end{algorithmic}
\end{algorithm}

By the definition of the uniform pairing procedure, both Algorithm \ref{algo:greedyCM} and Algorithm \ref{algo:uniminCM} produce a multi-graph 
$\maG$ of degree vector $\mathbb d$, together with a maximal matching on $\maG$.  
Similarly to \eqref{eq:defmatchingcov}, for $\bphi\in\{\textsc{greedy},\textsc{uni-min}\}$, the {matching coverage} 
is then defined as the following r.v., 
\begin{equation}
\label{eq:defmatchingcovCM}
{\mathbf M}^n_{\bphi}(\xi) = {2\left|\,{\mbox{Matching}}\,\right| \over n }\cdot
\end{equation}

Clearly, Algorithm \ref{algo:greedyCM} (respectively, Algorithm \ref{algo:uniminCM}) transposes the matching procedure of Algorithm \ref{algo:greedy} (resp., of Algorithm \ref{algo:unimin}) in a context where, at each step, the neighborhood of node $v$ is constructed on the fly, instead of being previously known. 
This fact will be formalized later on, by establishing the equivalence in distribution of two processes representing the two corresponding constructions in both cases; See Theorem \ref{thm:coupling} below. 

\subsection{Main results}
\label{subsec:mainres}
We have the following convergence results for the matching coverages of Algorithm \ref{algo:greedyCM} and \ref{algo:uniminCM}. 

\begin{theorem}
\label{thm:matchcovgreedy}
If $\xi$ has a support that is not restricted to $\{0\}$, and a finite $3,5+\varepsilon$-th moment for any $\varepsilon>0$, 
then we have the convergence in probability 
\[{\mathbf {M}}^n_{\textsc{greedy}}(\xi) \xrightarrow{(n,\P)} 1 - \bar{\mu}^{\textsc{greedy}}_1 (\{0\}),\]
where the measure-valued function $\procun{\bar{\mu}_t^{\textsc{greedy}}}$ is the unique solution in $\mathbb{D}([0,1],\bar\M)$ 
of the system of ODE's 
\begin{equation}\label{eq:ODEgreedy}
\begin{cases}
\eta_0 &=\xi;\\
{\d\cro{\eta_t,f} \over \d t}&=-\Biggl\{{\cro{\eta_t,f\mathbb 1_{\N_+}}\over \cro{\eta_t,\mathbb 1_{\N_+}}} +{\cro{\eta_t,\chi f}\over\cro{\eta_t,\chi}}
+ {\cro{\eta_t,\chi\nabla f} \over \cro{\eta_t,\chi}}\left\{{\cro{\eta_t,(\chi- \mathbb 1_{\N_+})}\over \cro{\eta_t,\mathbb 1_{\N_+}}}
+ {\cro{\eta_t,\chi^2 - \chi} \over \cro{\eta_t,\chi}}\right\} \Biggl\}\mathbb 1_{\cro{\eta_t,\mathbb 1_{\N_+}}>0},\\
&\hspace{20pt}\quad t\in[0,1],\,f \in \maC_b(\R).
\end{cases}\end{equation}
\end{theorem}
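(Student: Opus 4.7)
The plan is to apply an infinite-dimensional version of the Wormald differential equation method to the measure-valued Markov chain $(\mu^n_k)_{k \in \llbracket 0, n \rrbracket}$ that records, after step $k$ of the joint CM-and-\textsc{greedy} construction of Algorithm \ref{algo:greedyCM}, the empirical counting measure of residual (unpaired) half-edge degrees over the not-yet-removed nodes. By Theorem \ref{thm:coupling}, this joint construction is equivalent in law to running \textsc{greedy} on a pre-constructed CM graph, so it suffices to analyze the former. Define the rescaled process $\barmun_t = \frac{1}{n}\mu^n_{\lfloor nt \rfloor}$ for $t \in [0,1]$. At termination every remaining node has residual degree zero, so the matching coverage \eqref{eq:defmatchingcovCM} satisfies $\mathbf{M}^n_{\textsc{greedy}}(\xi) = 1 - \barmun_1(\{0\})$; since $\{0\}$ is open in $\N$, the functional $\nu \mapsto \nu(\{0\})$ is continuous in the weak topology, so it will be enough to establish the convergence $\barmun \Rightarrow \bar\mu^{\textsc{greedy}}$ in $\mathbb{D}([0,1], \bar\M)$.

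For every $f \in \maC_b$ I would derive the semi-martingale decomposition
\[
\cro{\barmun_t, f} = \cro{\barmun_0, f} + \int_0^t \Phi_n(\barmun_s, f)\, ds + M^n_t(f),
\]
where $\Phi_n$ is the rescaled one-step conditional drift and $M^n_t(f)$ is a martingale. A careful bookkeeping of one step of Algorithm \ref{algo:greedyCM} splits the drift into three contributions matching the three bracketed terms of \eqref{eq:ODEgreedy}: the removal of the uniformly-selected node $v$, contributing $-\cro{\eta, f\mathbb 1_{\N_+}}/\cro{\eta, \mathbb 1_{\N_+}}$; the uniform pairing of each of the $d(v)$ half-edges of $v$ with an available half-edge, which decrements by one the residual degree of a node picked according to the size-biased distribution \eqref{eq:defF} and thus yields the gradient term $\cro{\eta, \chi \nabla f}/\cro{\eta, \chi}$ weighted by the expected number of such pairings per unit of rescaled time; and the removal of the randomly-chosen match $v'$, contributing $-\cro{\eta, \chi f}/\cro{\eta, \chi}$. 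The indicator $\mathbb 1_{\cro{\eta, \mathbb 1_{\N_+}}>0}$ encodes algorithm termination.

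Tightness of $(\barmun)_n$ in $\mathbb{D}([0,1], \bar\M)$ follows from the moment-based strategy of \cite{decreusefond2012, bermolen2019degree}: it reduces to tightness of the real-valued processes $\cro{\barmun, f}$ for $f$ in a countable convergence-determining subset of $\maC_b$, together with the uniform moment control $\esp{\sup_{t \le 1} \cro{\barmun_t, \chi^{3.5+\ve}}} < \infty$ that prevents mass from escaping to infinity. The assumption on the $3.5+\ve$-th moment of $\xi$ enters precisely here, and also serves to bound the predictable bracket of $M^n_\cdot(f)$ by $O(1/n)$, so that $\sup_{t \le 1}|M^n_t(f)| \to 0$ in probability by Doob's inequality; Aldous' criterion then yields tightness.

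The hard part is the identification and uniqueness of the limit, because of the rational denominators $\cro{\eta, \mathbb 1_{\N_+}}$ and $\cro{\eta, \chi}$ in $\Phi_n$ that may degenerate near the (deterministic) extinction time $t^* := \inf\{t : \cro{\bar\mu_t, \mathbb 1_{\N_+}} = 0\}$. I would argue that any limit point is continuous and satisfies the integral form of \eqref{eq:ODEgreedy} on $[0, t^* - \delta]$ for every $\delta > 0$, where the right-hand side is Lipschitz in $\eta$ for a norm incorporating the $\chi$-moment; the Picard--Lindel\"of theorem then yields a unique solution there, while by the built-in indicator the flow is stationary beyond $t^*$, giving uniqueness on all of $[0,1]$. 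Uniqueness identifies the limit and extends the weak convergence to the entire sequence, and the continuity of $\nu \mapsto \nu(\{0\})$ at a deterministic limit delivers the announced convergence in probability of the matching coverage.
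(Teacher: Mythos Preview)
Your overall architecture matches the paper's: the same measure-valued Markov chain, the same semi-martingale decomposition and drift identification, tightness via Aldous--Roelly, vanishing martingale via Doob, and a stopping argument near extinction. Two points, however, need correction.

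First, a minor one: the identity $\mathbf{M}^n_{\textsc{greedy}}(\xi) = 1 - \barmun_1(\{0\})$ is not exact. In the joint construction of Algorithm~\ref{algo:greedyCM} the first node $v$ may see only self-loops after uniform pairing; such nodes are \emph{blocked} rather than matched or counted among residual-degree-zero nodes, so the correct formula is $\mathbf{M}^n_{\textsc{greedy}}(\xi) = 1 - \barmun_1(\{0\}) - |\maB_n|/n$ (this is \eqref{eq:ratioCMmubar} in the paper). One must then argue, via the standard Poisson limit for self-loops in the CM, that $|\maB_n|/n \to 0$ in probability; this is how Corollary~\ref{cor:maincov} proceeds.

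Second, and more seriously, your uniqueness step is where the real difficulty lies, and it is not handled. You assert that the right-hand side of \eqref{eq:ODEgreedy} is ``Lipschitz in $\eta$ for a norm incorporating the $\chi$-moment'' and then invoke Picard--Lindel\"of. But the drift contains $\cro{\eta,\chi^2-\chi}/\cro{\eta,\chi}$, so any Banach norm in which the map is Lipschitz must control the second moment; yet the drift itself does not obviously map the corresponding ball back into itself with a uniform Lipschitz constant, and no such norm is exhibited. The paper does \emph{not} use an abstract Picard argument here: it proves uniqueness (Corollary~\ref{cor:MainGRE}) by a coordinate-wise Gr\"onwall estimate on the weighted discrepancy $\Gamma_t=\sum_{i\ge 1} i^{\alpha}(\xi_t(i)-\zeta_t(i))^2$ with $\alpha=5+\varepsilon$, after an Abel summation to tame the $i^{\alpha+1}$ cross terms. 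The choice of $\alpha$ is dictated by terms of the form $i^{\alpha+1}\kappa(i)\xi(i{+}1)$, whose summability forces $\xi$ to have a finite moment of order $(\alpha+2)/2 = 3.5+\varepsilon/2$. In other words, the $3.5+\varepsilon$ moment hypothesis is consumed by the \emph{uniqueness} proof, not (as you write) by tightness or the martingale bound; for those, a finite third moment already suffices (cf.\ the sets $\M^n_{\beta,M}$ in \eqref{eq:defMbetaM}). Your proposal therefore misplaces the role of the moment assumption and leaves the crucial uniqueness step unjustified.
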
 

\begin{theorem}
\label{thm:matchcovunimin}
If $\xi$ has a support that is not restricted to $\{0\}$, and included in $\llbracket 0,N \rrbracket$ for some integer $N$, then 
we have the convergence in probability 
\[{\mathbf {M}}^n_{\textsc{uni-min}}(\xi) \xrightarrow{(n,\P)} 1 - \bar{\mu}^{\textsc{uni-min}}_1 (\{0\}),\]
where, recalling \eqref{eq:defbarF}, the measure-valued function $\procun{\bar{\mu}_t^{\textsc{uni-min}}}$ is the unique solution in $\mathbb{D}([0,1],\bar\M)$ 
of the system of ODE's 
\begin{equation}\label{eq:ODEunimin}
\begin{cases}
\eta_0 &=\xi;\\
{\d\cro{\eta_t,f} \over \d t}&=-\Biggl\{{\cro{\eta_t,f\mathbb 1_{\N_+}}\over \cro{\eta_t,\mathbb 1_{\N_+}}} + {\sum_{k\in\N_+}\eta_t(k)\sum_{k'\in\N_+}f(k')
\left\{\bar F_{\eta_t}(k'-1)^k-\bar F_{\eta_t}(k')^k\right\}\over\cro{\eta_t,\mathbb 1_{\N_+}}}\\
&\hspace{12pt}\quad \left.+ {\cro{\eta_t,\chi\nabla f} \over \cro{\eta_t,\chi}}\left\{{\cro{\eta_t,(\chi- \mathbb 1_{\N_+})}\over \cro{\eta_t,\mathbb 1_{\N_+}}}
+{\sum_{k\in\N_+}\eta_t(k)\sum_{k'\in\N_+}\left(k'-1\right)\left\{\bar F_{\eta_t}(k'-1)^k-\bar F_{\eta_t}(k')^k\right\}\over\cro{\eta_t,\mathbb 1_{\N_+}}}\right\} \right\}\mathbb 1_{\cro{\eta_t,\mathbb 1_{\N_+}}>0},\\
&\hspace{20pt}t\in[0,1],\,f \in \maC_b(\R).
\end{cases}\end{equation}
\end{theorem}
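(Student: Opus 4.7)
The plan is to cast Theorem \ref{thm:matchcovunimin} as a specialization of the general hydrodynamic limit result announced in Section \ref{sec:Main} for local matching algorithms on the CM, applied to \textsc{uni-min}. In that framework, one tracks the joint CM/matching construction (Algorithm \ref{algo:uniminCM}) by a measure-valued CTMC $\proc{\mu^n_t}$ on $\M^n$, where $\mu^n_t$ records the residual-degree profile of the nodes that have not yet been removed from $\maV$ after $\lfloor nt\rfloor$ iterations. The rescaled process $\bar\mu^n_t := \mu^n_{\lfloor nt \rfloor}/n$ lives in $\bar\M^n \subset \bar\M$, and once the general theorem yields convergence to the deterministic solution of a generic ODE, the task reduces to (i) computing the \textsc{uni-min}-specific drift, and (ii) justifying the formula $1-\bar\mu^{\textsc{uni-min}}_1(\{0\})$ for the limit matching coverage.

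The key \textsc{uni-min}-specific computation concerns the law of the neighbor of minimum residual degree. At a step where the current profile is $\bar\mu^n_t$, the selected node $v$ has residual degree $K$ with $\P(K=k) = \bar\mu^n_t(k)\mathbb 1_{\N_+}(k)/\cro{\bar\mu^n_t, \mathbb 1_{\N_+}}$. Its $K$ half-edges are then paired uniformly with available half-edges; to leading order in $n$, the residual degrees of the $K$ neighbors thereby revealed are i.i.d.\ with the size-biased law whose c.d.f.\ and tail are $F_{\bar\mu^n_t}$ and $\bar F_{\bar\mu^n_t}$ of \eqref{eq:defF}--\eqref{eq:defbarF}. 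Under \textsc{uni-min}, $v'$ is the neighbor of minimum residual degree; conditionally on $K = k$, the minimum of $k$ such i.i.d.\ samples equals $k'$ with probability $\bar F_{\bar\mu^n_t}(k'-1)^k - \bar F_{\bar\mu^n_t}(k')^k$. Plugging this kernel into the generic drift yields the second term of \eqref{eq:ODEunimin}, and weighting the size-biased degree decrement of $v'$'s $k'-1$ ``other'' half-edges by this same distribution yields the second summand inside the curly braces of the third term.

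The remaining drift terms are common to every local algorithm and arise mechanically: the first term $\cro{\eta_t, f\mathbb 1_{\N_+}}/\cro{\eta_t, \mathbb 1_{\N_+}}$ encodes the removal of $v$; the prefactor $\cro{\eta_t, \chi\nabla f}/\cro{\eta_t,\chi}$ of the last bracket is the mean effect on $\cro{\eta_t,f}$ of decrementing a single size-biasedly chosen residual degree; and the first summand in that bracket, $\cro{\eta_t, \chi-\mathbb 1_{\N_+}}/\cro{\eta_t, \mathbb 1_{\N_+}}$, is the mean number of ``other'' half-edges of $v$, that is $\E[K-1 \mid K \geq 1]$. The indicator $\mathbb 1_{\cro{\eta_t,\mathbb 1_{\N_+}}>0}$ accounts for the idle iterations corresponding to the ELSE branch of Algorithm \ref{algo:uniminCM}. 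Convergence of $\bar\mu^n$ to the solution of \eqref{eq:ODEunimin} then follows, as in Section \ref{sec:proof}, from a semimartingale decomposition of $\cro{\bar\mu^n_t,f}$ for $f\in\maC_b$, tightness of the sequence in $\mathbb{D}([0,1],\bar\M)$ via Aldous' criterion, and identification of any subsequential limit through its characterization by \eqref{eq:ODEunimin}.

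The main obstacle is twofold, and is precisely where the bounded-support assumption $\mathrm{supp}(\xi)\subset\llbracket 0, N \rrbracket$ intervenes. First, the selection of the minimum-degree neighbor is a \emph{nonlinear} functional of the $K$ neighbors' residual degrees, which are only asymptotically i.i.d.\ size-biased (they are drawn without replacement from a finite pool), and the minimum amplifies fluctuations; a size-biased coupling, together with the uniform bound on degrees, yields the convergence in probability of the empirical min-degree law to the $\bar F_{\eta_t}$-based expression, with rates that are uniform in $t\in[0,1]$. Second, the drift of \eqref{eq:ODEunimin} involves the quantities $\bar F_{\eta_t}(k')^k$, which are polynomial but nonlinear in $\eta_t$, so that uniqueness is not a direct Cauchy--Lipschitz statement on an infinite-dimensional state space. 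The bounded-support hypothesis confines $\eta_t$ to the finite-dimensional simplex of sub-probability measures on $\llbracket 0,N \rrbracket$, on which the drift coefficients are smooth functions of $(\eta_t(0),\ldots,\eta_t(N))$ away from the absorbing hyperplane $\cro{\eta_t,\mathbb 1_{\N_+}}=0$ (past which the flow is constant), giving existence and uniqueness by Cauchy--Lipschitz. Finally, since unmatched nodes at the end of the procedure are exactly those whose residual degree has reached $0$ without them having been selected as $v$ or $v'$, the asymptotic matching coverage is $1 - \bar\mu^{\textsc{uni-min}}_1(\{0\})$; the stronger hypothesis here, compared to $\E[\xi^{3.5+\varepsilon}]<\infty$ in Theorem \ref{thm:matchcovgreedy}, is the price paid for the nonlinearity of the min selection.
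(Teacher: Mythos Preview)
Your proposal is correct and follows essentially the same route as the paper: reduce to the general hydrodynamic-limit theorem for local criteria (Theorem \ref{thm:main} and Corollary \ref{cor:maincov}), compute the \textsc{uni-min}-specific kernels via the minimum of $k$ i.i.d.\ size-biased samples to obtain the drift \eqref{eq:ODEunimin}, and use the bounded-support assumption both to control the with/without-replacement discrepancy (what the paper calls \emph{well-behavedness}) and to reduce uniqueness to a finite-dimensional Cauchy--Lipschitz argument on $[0,1]^{N+1}$. The only cosmetic difference is that the paper verifies tightness via Roelly's criterion rather than Aldous', and packages the with/without-replacement comparison as the abstract ``well-behaved'' property rather than phrasing it as a coupling; the substance is the same.
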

\noindent Theorems \ref{thm:matchcovgreedy}  and Theorem \ref{thm:matchcovunimin} are respectively proven in 
Section \ref{sec:greedy} and Section \ref{sec:unimin}, as corollaries of a more general result, Theorem \ref{thm:main}. 

These two results 
thus establish large-graph approximations for the matching coverage of two simple, online matching algorithms on the Configuration model, given respectively 
by Algorithm \ref{algo:greedyCM} and Algorithm \ref{algo:uniminCM}. In view of the above remarks, they can be seen as proxies for the large-graph approximations of the respective matching coverages of Algorithm \ref{algo:greedy} and Algorithm \ref{algo:unimin}, for a graph that is uniformly drawn among the graphs having degree distribution $\xi$. 

\subsection{Examples}
\label{subsec:mainexamples}
We now provide a few examples, to illustrate how these results specialize for given degree distributions. 

\subsubsection{$1$-regular graphs}
We first consider the trivial case where $\xi=\delta_1$, namely, the graph is $1$-regular and so it is itself a perfect matching. Then, 
we readily obtain that the unique solution to \eqref{eq:ODEgreedy} and the unique solution to \eqref{eq:ODEunimin}
coincide, and 
read 
\begin{equation*}
\bar{\mu}_t^{\textsc{greedy}}=\bar{\mu}_t^{\textsc{uni-min}}=(1-2t)\delta_1\mathbb 1_{\left[0,{1\over 2}\right]}(t),\quad t\in[0,1]. 
\end{equation*}
In particular, the limiting matching coverages are given by 
\[1 - \bar{\mu}^{\textsc{greedy}}_1 (0) = 1 - \bar{\mu}^{\textsc{uni-min}}_1 (0)=1.\]
In other words, 2 nodes are matched per unit of time, and all nodes have been added to the matching by time ${1\over 2}$, and so the matching is perfect, as it must.

\subsubsection{Finite support distributions}
Let us now address the general case where the limiting degree distribution has support in $\llbracket 0,n \rrbracket$, for $n\in\N_+$. 

First consider the matching criterion {\sc greedy}. Then, the unique solution to the system \eqref{eq:ODEgreedy} is fully characterized by 
the $(n+1)$-dimensional process $\procun{\left(\eta_t(0),\eta_t(1),\ldots,\eta_t(n)\right)}$, and simple computations shows that the latter process 
solves, on the interval $[0,1]$, the following $(n+1)$-dimensional ODE of unknown $\procun{\left(x_0(t),x_1(t), \ldots, x_n(t)\right)}$: 
\begin{equation}\label{eq:ODEgreedyd=n}
\begin{cases}
\left(x_0(0),x_1(0),\ldots,x_n(0)\right) =\xi\,;\\
\vspace{8pt}
{\d x_0(t) \over \d t}
={x_1(t)\over \sum_{i=1}^n ix_i(t)}A\left(x_1(t),\ldots,x_n(t)\right)
\mathbb 1_{\sum_{i=1}^n x_i(t)>0}\,;\\
\vspace{5pt}
{\d x_j(t) \over \d t}
=-\Biggl\{{x_j(t)\over \sum_{i=1}^n x_i(t)}
+{jx_j(t)\over\sum_{i=1}^nix_i(t)}+\left({jx_j(t)-(j+1)x_{j+1}(t)\over\sum_{i=1}^nix_i(t)}\right)A\left(x_1(t),\ldots,x_n(t)\right)\Biggl\}
\mathbb 1_{\sum_{i=1}^n x_i(t)>0},\\
\,\hspace{354pt} j\in\llbracket 1,n-1 \rrbracket\,;\\
\vspace{5pt}
{\d x_n(t) \over \d t}
=-\Biggl\{{x_n(t)\over \sum_{i=1}^n x_i(t)}
+{nx_n(t)\over\sum_{i=1}^nix_i(t)}+\left({nx_n(t)\over\sum_{i=1}^nix_i(t)}\right)A\left(x_1(t),\ldots,x_n(t)\right)\Biggl\}
\mathbb 1_{\sum_{i=1}^n x_i(t)>0}, 
\end{cases}\end{equation}
for $$A(x_1,\ldots,x_n)={\sum_{i=2}^n(i-1)x_i\over \sum_{i=1}^n x_i} + {\sum_{i=2}^ni(i-1)x_i \over \sum_{i=1}^nix_i}\cdot$$

Similarly, in the case of {\sc uni-min}, the unique solution to \eqref{eq:ODEunimin} is fully characterized by 
the process $\procun{\left(\eta_t(0),\eta_t(1),\ldots,\eta_t(n)\right)}$, which is the only solution of the $(n+1)$-dimensional ODE 
\begin{equation}\label{eq:ODEunimind=n}
\begin{cases}
\left(x_0(0),x_1(0),\ldots,x_n(0)\right) =\xi\,;\\
\vspace{8pt}
{\d x_0(t) \over \d t}
={x_1(t)\over \sum_{i=1}^n i x_i(t)}B\left(x_1(t),\ldots,x_n(t)\right)
\mathbb 1_{\sum_{i=1}^n x_i(t)>0}\,;\\
\vspace{5pt}
{\d x_j(t) \over \d t}
=-\Biggl\{{x_j(t)\over \sum_{i=1}^n x_i(t)}
+{\sum_{k=1}^n x_k(t){\left(\sum_{\ell=j}^n \ell x_\ell(t)\right)^k- \left(\sum_{\ell=j+1}^n \ell x_\ell(t)\right)^k\over \left(\sum_{\ell=1}^n \ell x_\ell(t)\right)^k}\over\sum_{i=1}^n x_i(t)}\\
\hspace{97pt} +\left({jx_j(t)-(j+1)x_{j+1}(t)\over\sum_{i=1}^nix_i(t)}\right)B\left(x_1(t),\ldots,x_n(t)\right)\Biggl\}
\mathbb 1_{\sum_{i=1}^n x_i(t)>0},\quad j\in\llbracket 1,n-1 \rrbracket\,;\\
\vspace{5pt}
{\d x_n(t) \over \d t}
=-\Biggl\{{x_n(t)\over \sum_{i=1}^n x_i(t)}
+{\sum_{k=1}^n x_k(t){\left(nx_n(t)\right)^k\over \left(\sum_{\ell=1}^n \ell x_\ell(t)\right)^k}\over\sum_{i=1}^nx_i(t)}+\left({nx_n(t)\over\sum_{i=1}^nix_i(t)}\right)B\left(x_1(t),\ldots,x_n(t)\right)\Biggl\}
\mathbb 1_{\sum_{i=1}^n x_i(t)>0},
\end{cases}\end{equation}
for 
$$B(x_1,\ldots,x_n)={\sum_{i=1}^n(i-1)x_i\over \sum_{i=1}^n x_i} + {\sum_{k=1}^n x_k \sum_{k'=2}^n(k'-1){\left(\sum_{\ell=k'}^n \ell x_\ell\right)^k- \left(\sum_{\ell=k'+1}^n \ell x_\ell\right)^k\over \left(\sum_{\ell=1}^n \ell x_\ell\right)^k} \over \sum_{i=1}^nx_i}\cdot$$

For any fixed dimension $n$ we can retrieve a numerical approximation of the matching coverage in both cases, by solving the differential systems \eqref{eq:ODEgreedyd=n} and \eqref{eq:ODEunimind=n} numerically, and then retreiveing the matching coverage, as $1-\eta_1(0)$. 
On Figure 1, we present our numerical estimates (Runge-Kutta K4 numerical resolution of mesh $10^{-5}$) for 
the respective matching coverages of {\sc greedy} and {\sc uni-min}, for various regular graphs (degrees from $2$ to $15$ - left sub-figure) and 
uniform degree distributions on $\llbracket 1,n \rrbracket$ (maximal degree from $2$ to $20$ - right sub-figure), which show that 
 {\sc uni-min} outperforms {\sc greedy} by an important margin in all cases.

\begin{figure}[h!]			
			\centering
				\begin{subfigure}{0.4\textwidth}
				\includegraphics[width=.99\textwidth]{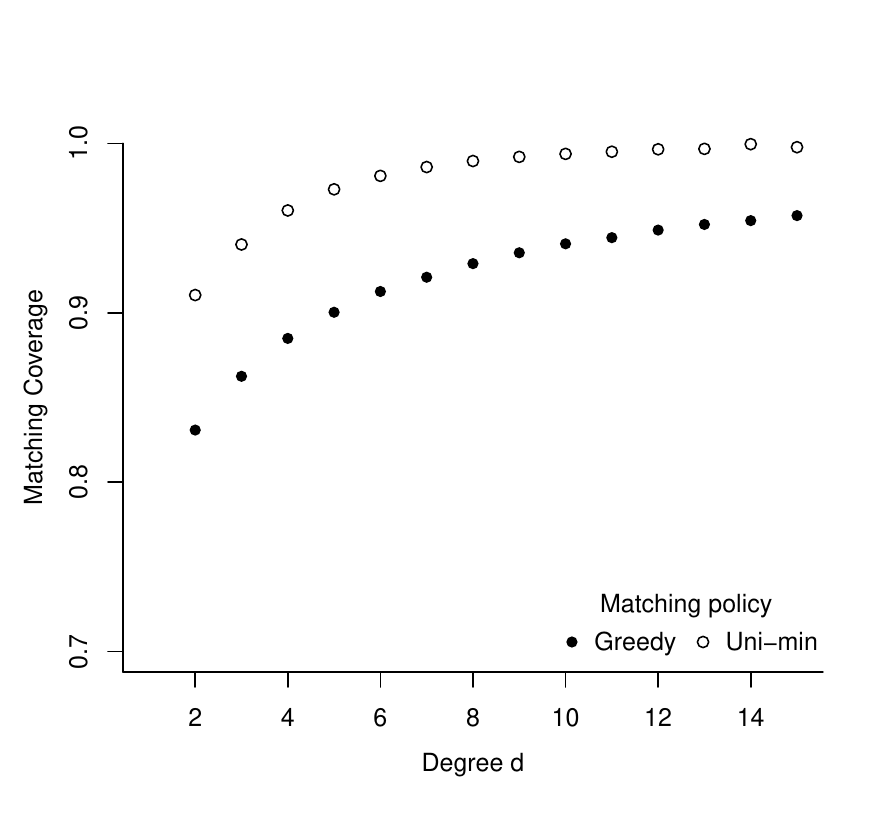}
				\end{subfigure}
			\begin{subfigure}{0.4\textwidth}
				\includegraphics[width=.99\textwidth]{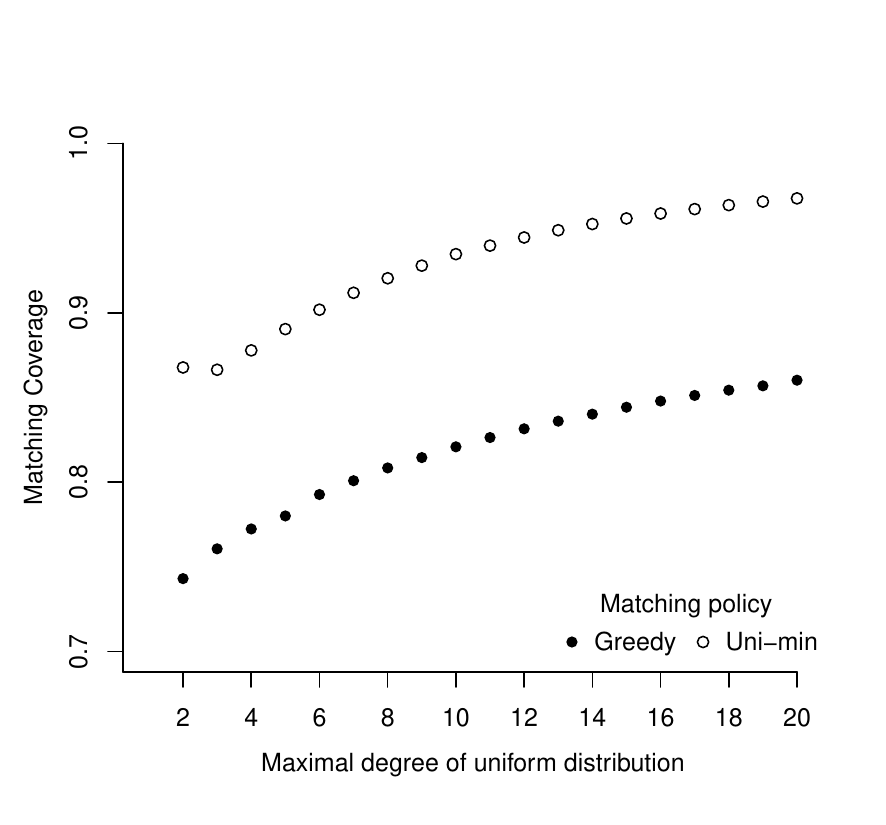}
			\end{subfigure}
	\label{fig:GreedyVSUnimin}
	\caption{Matching coverages for {\sc greedy} (bullets) and {\sc uni-min} (circles), for various $n$-regular graphs (left sub-figure) and uniform degree distributions over $\llbracket 1,n \rrbracket$ (right sub-figure).}
\end{figure}

\subsubsection{Greedy for Poisson degree distributions}
\label{subsubsec:greedyPoisson}
Let us now consider that the initial measure $\xi$ has support on $\N$, and set the matching criterion as {\sc greedy}. 
Then, the system \eqref{eq:ODEgreedyd=n} can be extended to the whole support $\N$, as 

\begin{equation}\label{eq:ODEgreedyN}
\begin{cases}
\left(x_0(0),x_1(0),\ldots,x_n(0)\right) =\xi\,;\\
\vspace{8pt}
{\d x_0(t) \over \d t}
={x_1(t)\over \sum_{i=1}^{+\infty} ix_i(t)}A\left(x_1(t),x_2(t),\ldots\right)
\mathbb 1_{\sum_{i=1}^{+\infty} x_i(t)>0}\,;\\
\vspace{5pt}
{\d x_j(t) \over \d t}
=-\Biggl\{{x_j(t)\over \sum_{i=1}^{+\infty} x_i(t)}
+{jx_j(t)\over\sum_{i=1}^{+\infty}ix_i(t)}+\left({jx_j(t)-(j+1)x_{j+1}(t)\over\sum_{i=1}^{+\infty}ix_i(t)}\right)A\left(x_1(t),x_2(t),\ldots\right)\Biggl\}
\mathbb 1_{\sum_{i=1}^n x_i(t)>0},\\
\,\hspace{354pt} j\ge 1,
\end{cases}\end{equation}
for $$A(x_1,x_2,\ldots)={\sum_{i=2}^{+\infty}(i-1)x_i\over \sum_{i=1}^{+\infty} x_i} + {\sum_{i=2}^{+\infty} i(i-1)x_i \over \sum_{i=1}^{+\infty}ix_i}\cdot$$
In the Poisson case, we have the following result. 

\begin{proposition}
Suppose that $\xi$ is a Poisson measure $\mathcal P(\rho)$, with parameter $\rho>0$. Then the unique solution to \eqref{eq:ODEgreedyN} is given by the 
family of weighted Poisson measures 
\begin{equation}
\label{eq:Poisssyst1}
\begin{cases}
x_j(t) &=\displaystyle v(t) {(\rho v(t))^j \over j! }e^{-\rho v(t)},\hspace{111pt}\quad j\in\N_+,\quad t\in [0,1];\\
x_0(t)&=\displaystyle e^{-\rho}+\int_0^t e^{-\rho v(s)}\left({\rho v(s) \over 1-e^{-\rho v(s)}}-1+\rho v(s)\right)\mathbb 1_{v(t)>0}\d t, \quad t\in [0,1],
\end{cases}\end{equation}
where $\procun{v(t)}$ is the unique solution of the ODE 
\begin{equation}
\label{eq:PoissODE}
\begin{cases}
v(0) &=1;\\
v^\prime(t)&=-\left(1+{1\over 1-e^{-\rho v(t)}}\right)\mathbb 1_{v(t)>0},\quad t \in [0,1]. 
\end{cases}\end{equation}
Consequently, the asymptotic matching coverage reads as follows, 
\begin{equation}
\label{eq:truc}
1 - \bar{\mu}^{\textsc{greedy}}_1 (\{0\})=1-e^{-\rho}-\int_0^1 e^{-\rho v(s)}\left({\rho v(s) \over 1-e^{-\rho v(s)}}-1+\rho v(s)\right)\mathbb 1_{v(t)>0} \d s.
\end{equation}
\end{proposition}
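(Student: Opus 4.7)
The approach is ansatz-and-verify: I exhibit the explicit formulas \eqref{eq:Poisssyst1}--\eqref{eq:PoissODE} as a candidate and check they satisfy \eqref{eq:ODEgreedyN}. Uniqueness for \eqref{eq:ODEgreedyN} is already guaranteed by Theorem \ref{thm:main}, so this suffices to identify it as \emph{the} solution, and the matching coverage formula \eqref{eq:truc} then follows from Theorem \ref{thm:matchcovgreedy} by evaluating $\bar\mu_1^{\textsc{greedy}}(\{0\}) = x_0(1)$. The heuristic motivation for the ansatz is that the degree distribution of unmatched nodes should remain in the Poisson family, with decaying parameter, since the restriction of a Poisson degree distribution after removal of uniformly chosen half-edges stays Poisson (Bernoulli thinning).

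I would first establish a unique solution $v$ of \eqref{eq:PoissODE} on $[0,1]$. The map $w\mapsto-(1+(1-e^{-\rho w})^{-1})$ is smooth and Lipschitz on every compact subset of $(0,\infty)$, so the Cauchy-Lipschitz theorem yields a unique maximal solution starting from $v(0)=1$, and the indicator $\mathbb 1_{v(t)>0}$ freezes $v$ if and when it reaches $0$, which gives a global solution on $[0,1]$. Note that since $\sum_{i\ge 1}x_i(t) = v(t)(1-e^{-\rho v(t)})$ under the ansatz, the indicators $\mathbb 1_{v(t)>0}$ and $\mathbb 1_{\sum_{i\ge 1}x_i(t)>0}$ coincide.

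The core of the verification is algebraic. Setting $u(t):=\rho v(t)$, standard Poisson identities give the closed-form expressions
\begin{equation*}
\sum_{i\ge 1}x_i(t) = v(1-e^{-u}),\quad \sum_{i\ge 1}ix_i(t)=uv,\quad \sum_{i\ge 2}i(i-1)x_i(t)=u^2v,\quad \sum_{i\ge 2}(i-1)x_i(t)=v(u-1+e^{-u}),
\end{equation*}
from which $A(x_1(t),x_2(t),\ldots) = \frac{u}{1-e^{-u}}-1+u$ and $jx_j(t)-(j+1)x_{j+1}(t) = ve^{-u}\frac{u^j}{j!}(j-u)$. Differentiating the ansatz $x_j(t)=v\frac{u^j}{j!}e^{-u}$ with respect to $t$ (using $u'=\rho v'$) yields $\frac{dx_j}{dt} = v'e^{-u}\frac{u^j}{j!}(1+j-u)$. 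Plugging these into the right-hand side of \eqref{eq:ODEgreedyN} and identifying the coefficient of $j$ and the constant term both reduce to the single scalar equation $v'=-(1+A)/u=-(1+(1-e^{-u})^{-1})$, which is exactly \eqref{eq:PoissODE}. The $x_0$ equation then reads $\frac{dx_0}{dt} = \frac{x_1}{\sum_i ix_i}A\mathbb 1_{v>0}= e^{-u}\bigl(\frac{u}{1-e^{-u}}-1+u\bigr)\mathbb 1_{v>0}$, and integrating from $x_0(0)=\xi(\{0\})=e^{-\rho}$ gives the second line of \eqref{eq:Poisssyst1}, while \eqref{eq:truc} is just $1-x_0(1)$.

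The only genuinely non-trivial point is the observation that the \emph{two} matching conditions (coefficient of $j$ and constant) both turn out to be equivalent to the \emph{one} scalar ODE \eqref{eq:PoissODE}; that is, the ansatz closes the infinite-dimensional system uniformly in $j\ge 1$. This is not a coincidence: it is exactly the manifestation, at the ODE level, of the invariance of the Poisson family under the pairing dynamics, and it is what makes the Poisson case explicitly solvable. The remainder of the argument is bookkeeping with Poisson moments.
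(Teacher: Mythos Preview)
Your proposal is correct and follows essentially the same approach as the paper: both are ansatz-and-verify arguments that compute the same Poisson moment identities, differentiate the candidate $x_j(t)=v(t)\frac{(\rho v(t))^j}{j!}e^{-\rho v(t)}$, and match against the right-hand side of \eqref{eq:ODEgreedyN} to reduce everything to the single scalar ODE \eqref{eq:PoissODE}. Your organization of the matching step (collecting the coefficient of $j$ and the constant term) is a mild cosmetic variant of the paper's (which matches the two basis functions $\frac{u^j}{j!}$ and $\frac{u^{j-1}}{(j-1)!}$), but the substance is identical.
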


\begin{proof}
We write the unique solution to \eqref{eq:ODEgreedyN} under the form 
\begin{equation}
\label{eq:Poisssyst2}
\begin{cases}
x_j(t) &=\displaystyle v(t) {(\rho v(t))^j \over j! }e^{-\rho v(t)},\hspace{132pt}\quad j\in\N_+,\quad t\in [0,1];\\
x_0(t) &=x_0(0)+\displaystyle\int_0^t {x_1(t)\over \sum_{i=1}^{+\infty} ix_i(t)}A\left(x_1(t),x_2(t),\ldots\right)
\mathbb 1_{\sum_{i=1}^{+\infty} x_i(t)>0}\d t,\quad t\in[0,1]. 
\end{cases}\end{equation}
Let us first observe the following identities: for all $t\in [0,1]$, 
\begin{align}
\label{eq:Poissrhs1}
\sum_{i=1}^\infty x_i(t) &= v(t)\left(1-e^{-\rho v(t)}\right);\\
\label{eq:Poissrhs2}
\sum_{i=1}^\infty ix_i(t) &= \rho v(t)^2;\\
\label{eq:Poissrhs3}
\sum_{i=1}^\infty (i-1)x_i(t)&=\sum_{i=1}^\infty ix_i(t)-\sum_{i=1}^\infty x_i(t) = v(t)\left(\rho v(t) -1+e^{-\rho v(t)}\right);\\
\label{eq:Poissrhs4}
\sum_{i=1}^\infty i(i-1)x_i(t) &= v(t)e^{-\rho v(t)}(\rho v(t))^2\sum_{i=2}^\infty {(\rho v(t))^{j-2} \over (j-2)!}=\rho^2 v(t)^3,\\
\label{eq:Poissrhs5}
A\left(x_1(t),x_2(t),\ldots\right)&={\sum_{i=1}^{+\infty}(i-1)x_i(t)\over \sum_{i=1}^{+\infty} x_i(t)} + {\sum_{i=1}^{+\infty} i(i-1)x_i(t) \over \sum_{i=1}^{+\infty}ix_i(t)}
= {\rho v(t) \over 1-e^{-\rho v(t)}}-1+\rho v(t). 
\end{align}
Then, in \eqref{eq:ODEgreedyN}, for any $j\ge 1$, the left-hand side of the ODE satisfied by $\procun{x_j(t)}$ reads for all $t$ as
\begin{align}
{\d x_j(t) \over \d t} &= v^\prime(t)\left({(\rho v(t))^j \over j! }e^{-\rho v(t)}\right)+v(t)\left(\rho v^\prime(t){(\rho v(t))^{j-1} \over (j-1)!}e^{-\rho v(t)}-\rho v^\prime(t){(\rho v(t))^{j} \over j!}e^{-\rho v(t)} \right)\notag\\
&=e^{-\rho v(t)}\left\{{(\rho v(t))^j \over j! }(1-\rho v(t))v^\prime(t)+{(\rho v(t))^{j-1} \over (j-1)!}\rho v(t)v^\prime(t)\right\}.\label{eq:Poisslhs}
\end{align}
On the other hand, in view of (\ref{eq:Poissrhs1}-\ref{eq:Poissrhs5}), the right-hand side of that ODE can be written for all $t$ as 
\begin{multline}
-\Biggl\{{(\rho v(t))^j \over j! }{e^{-\rho v(t)}\over 1-e^{-\rho v(t)}}
+{(\rho v(t))^j \over (j-1)! }{v(t)e^{-\rho v(t)}\over\rho v(t)^2}\\
\shoveright{+\left({{(\rho v(t))^j \over (j-1)! }v(t)e^{-\rho v(t)} - {(\rho v(t))^{j+1} \over j! }v(t)e^{-\rho v(t)}\over\rho v(t)^2}\right)
\left( {\rho v(t) \over 1-e^{-\rho v(t)}}-1+\rho v(t)\right)\Biggl\}
\mathbb 1_{v(t)>0}}\\
\shoveleft{=e^{-\rho v(t)}\Biggl\{-{(\rho v(t))^j \over j! }{1\over 1-e^{-\rho v(t)}}
-{(\rho v(t))^{j-1} \over (j-1)! }v(t)}\\
\shoveright{+\left({(\rho v(t))^{j} \over j! }-{(\rho v(t))^{j-1} \over (j-1)! }\right)
\left( {\rho v(t) \over 1-e^{-\rho v(t)}}-1+\rho v(t)\right)\Biggl\}
\mathbb 1_{v(t)>0}}\\
\shoveleft{=e^{-\rho v(t)}\Biggl\{{(\rho v(t))^j \over j! }\left(1-\rho v(t)\right)\left(-1-{1\over 1-e^{-\rho v(t)}}\right)}\\
+{(\rho v(t))^{j-1} \over (j-1)! }\rho v(t)\left(-1-{1\over 1-e^{-\rho v(t)}}\right)\Biggl\}
\mathbb 1_{v(t)>0}.\label{eq:Poissrhs}
\end{multline}
As for all $t$, \eqref{eq:Poisslhs} and \eqref{eq:Poissrhs} coincide for all $j\ge 1$, by identification we obtain that 
\[v^\prime(t)=-\left(1+{1\over 1-e^{-\rho v(t)}}\right)\mathbb 1_{v(t)>0},\quad t \in [0,1],\]
as desired. 
It then follows from the initial conditions, that $v(0)=1$, and thus the second relation of \eqref{eq:Poisssyst1} follows from that of \eqref{eq:Poisssyst2}, together with \eqref{eq:Poissrhs2} and \eqref{eq:Poissrhs5}. The uniqueness of the solution to \eqref{eq:PoissODE} is a straightfoward consequence of Cauchy-Lipschitz Theorem. 
Finally, \eqref{eq:truc} follows by the very definition of the matching coverage. 
\end{proof}

The matching coverage for the {\sc greedy} criterion can then be numerically estimated, by solving numerically the 1-dimensional ODE \eqref{eq:PoissODE} (using again a Runge-Kutta K4 numerical scheme with mesh $10^{-5}$), and then integrating numerically the corresponding solution, 
to compute \eqref{eq:truc}. As is illustrated on Figure 2, we retrieve exactly, for any mean degree $\rho$, the matching coverage 
\begin{equation}\label{eq:matchcovJaillet}1-{\log(2-e^{-\rho})\over \rho},\end{equation}
although we do not have an analytical proof of this equality at this point. Interestingly enough, \eqref{eq:matchcovJaillet} is precisely the matching coverage of an online matching algorithm on a bipartite Erdös-R\'enyi graph of asymptotic degree Poisson $\mathcal P(\rho)$ (see \cite{MJ13}), and also coincides with the asymptotic matching coverage of an online matching algorithm on a bipartite Configuration model graph of degree distribution $\mathcal P(\rho)$, see Section 2.3.2 of \cite{NSP21}. 
Observe however, that such Erdös-R\'enyi graphs do {\em not} have the law of a Configuration Model (even if it is has the same limiting degree distribution) 
and second, that the online matching algorithms on bipartite graphs and on general graphs (as in the present study) are not equivalent.  

\begin{figure}[ht]	
			\centering
			\includegraphics[width=.65\textwidth]{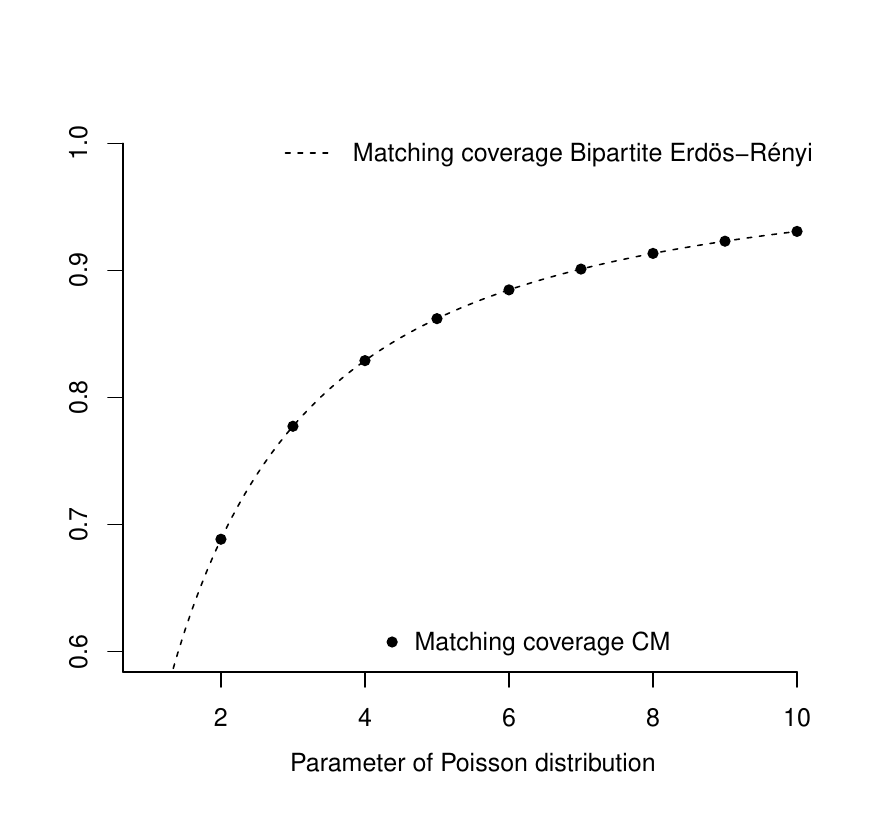}
					\label{fig:GreedyVSUnimin}
	\caption{Bullets: Estimate of the matching coverages for {\sc greedy} for Poisson degree distributions $\mathcal P(\rho)$. Dashed lines: the mapping 
	$\rho\mapsto 1-{\log(2-e^{-\rho})\over \rho}\cdot$}
\end{figure}

Last, we illustrate the influence of the degree distribution on the performance of the matching algorithm, by comparing the matching coverages for the matching criterion {\sc greedy}, for various degree distributions of same mean, namely: 
\begin{itemize}
\item The {\sc uni-min} matching algorithm on $d$-Regular graphs and Uniform degrees on $\llbracket 1,2d-1 \rrbracket$ (left figure of Figure 3);
\item The {\sc greedy} matching algorithm on Regular graphs of degree $d$, Uniform degrees on $\llbracket 1,2d-1 \rrbracket$, and Poisson degrees of parameter $d$ (right figure of Figure 3).
\end{itemize}
We observe that, for these two matching criteria, a model with deterministic degrees outperforms uniform degrees with the same mean, and Poisson degrees in the second case. Moreover, both algorithms seem to perform better as the variance of the degree distribution gets smaller. 

\begin{figure}[h!]			
			\centering
				\begin{subfigure}{0.4\textwidth}
				\includegraphics[width=.99\textwidth]{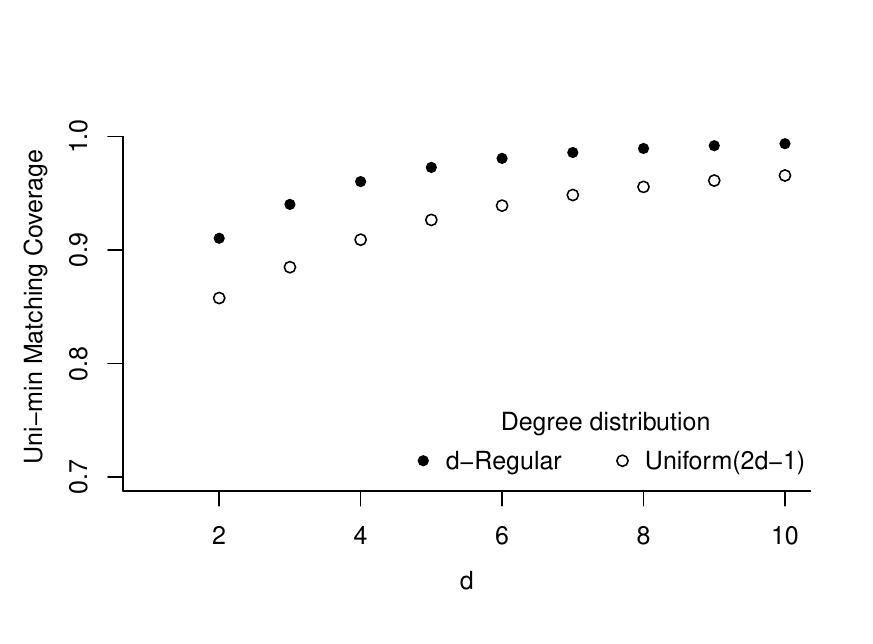}
				\end{subfigure}
			\begin{subfigure}{0.4\textwidth}
				\includegraphics[width=.99\textwidth]{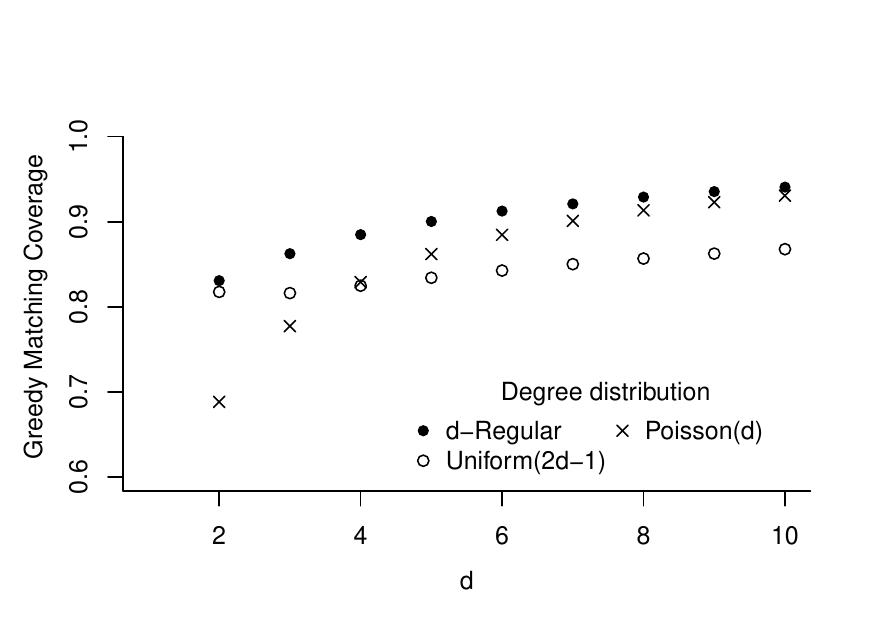}
			\end{subfigure}
	\label{fig:GreedyVSUnimin}
	\caption{Left: Matching coverages for {\sc uni-min} on $d$-regular graphs (bullets) and Uniform degrees on $\llbracket 1,2d-1 \rrbracket$ (circles). 
	Right: Matching coverages for {\sc greedy} on $d$-regular graphs (bullets), Uniform degrees on $\llbracket 1,2d-1 \rrbracket$ (circles) and Poisson degrees of parameter $d$ (crosses).}
\end{figure}

\subsection{Outline}
The rest of the paper is organized as follows: In Section \ref{sec:localalgo}, we introduce a wider class of matching algorithms which we call local, and that includes both {\sc greedy} and {\sc uni-min}, and extend Algorithms \ref{algo:greedy}-\ref{algo:uniminCM} to this more general class. This formalization also allows us to introduce the objects and processes which will be useful for the analysis. In particular, the construction of Section  
\ref{subsec:localCM} on the CM, can be represented by a measure-valued Discrete Time Markov Chain (DTMC), that is introduced in detail in Section \ref{sec:Markov}. 
Our main convergence result, Theorem \ref{thm:main} shows the weak convergence of a sequence of (interpolated version of) this DTMC, in the large-graph limit, to the solution of a deterministic system of ODE's. This result is shown in Section \ref{sec:proof}, using a coupling with a simpler, and more easily tractable dynamics, presented in Section \ref{sec:constructionhat}. Theorems \ref{thm:matchcovgreedy}  and Theorem \ref{thm:matchcovunimin} are then obtain as corollaries of Theorem \ref{thm:main}, in Sections \ref{sec:greedy} and \ref{sec:unimin}, respectively.

\section{Local matching algorithms}
\label{sec:localalgo}
In this section, we define formally a class of matching algorithms, which we call {\em local}, in a sense that will be specified below, both on a an existing graph, and on the Configuration model. This class of algorithms includes the algorithms \textsc{greedy} and \textsc{uni-min}, defined in Section \ref{sec:mainres}. 
For this, we first need to formally define the notion of {\em matching criterion}. 

\subsection{Matching criteria}

\noindent The two matching algorithms introduced in Section \ref{subsec:mainresalgo} can be roughly described as follows: At any step, 
in the remaining graph of unmatched nodes, choose 
a node uniformly at random, and then match it to a node 
that is also chosen uniformly at random, or uniformly at random among the nodes having the smallest degree. The matching criterion is then {\em local} in the sense that the choices of the two matched nodes at any step, depend at most on the degrees of the nodes, and no other information on their neighborhoods. 
We will now formalize this broad class of algorithms. For doing so, we first need to introduce the notions of {\em choice functions} and {\em matching criteria}, as follows.  

\begin{definition}
\label{def:choice}
Let $q\in\N_+$.  
A {\em choice function} on $(\N_+)^q$ is a (possibly random) mapping 
\[{\Phi}_{q}:\begin{cases}
	\N^{q} &\longrightarrow\llbracket 1,q \rrbracket ;\\
	\mathbb x=\left(x(1),\cdots,x(q)\right) &\longmapsto \Phi_q(\mathbb x).
	\end{cases}\]
	\end{definition}

%

\begin{definition}
\label{def:localcriterion}
A \emph{local matching criterion} ${\mathbf \Phi}$ on $\cup_{q\in\N_+}(\N_+)^q$, 
is a family of couples of choice functions 
\[{\mathbf \Phi}=\left\{({\Phi}_{q},\Phi'_q)\,:\,q\in\N_+\right\}.\]
Then, for any $q\in\N_+$ and any $\x=(x(1),\cdots,x(q))\in(\N_+)^q$, we denote by 
\[\mathbf \Phi(\x) = (\Phi_q(\x),\Phi'_q(\x))\in \llbracket 1,q \rrbracket^2.\]
\end{definition}


In the constructions hereafter, a local matching criterion will typically be used to choose at each iteration, first, a node in a graph following a first choice function $\Phi_{|\maA|}$ on some suitable set of nodes $\maA$ (identified with $\llbracket 1,|\maA|\rrbracket$), characterized by their degrees; and second, its {\em match}, namely, a second node chosen following a second choice function $\Phi'_{|\maB|}$ on some set of nodes $\maB$, that may possibly depend of the first chosen node. 
Let us introduce the class of criteria that will be addressed in the constructions below.

\begin{Ex}\label{ex:greedy}\rm 
Accordingly to Algorithm \ref{algo:greedy}, we say that the local matching criterion ${\mathbf{\Phi}}$ is {\sc greedy}, if for any $q$, for any 
$\mathbb x=\left(x(1),\cdots,x(q)\right)\in(\N_+)^q$, $\Phi_q(\x)$ and $\Phi'_q(\x)$ are sampled from the uniform distribution on 
$\llbracket 1,q \rrbracket$, independent of everything else. 
\end{Ex}

\begin{Ex}\label{ex:unimin}\rm
For any $q$, for any  
$\mathbb x=\left(x(1),\cdots,x(q)\right)\in(\N_+)^q$, 
set $x_{\scriptsize{\textsc{min}}} = \min\{x(i)\,:\,i\in\llbracket 1,q \rrbracket\} $ the minimal coordinate of $\mathbb x$, and 
	\[\textsc{argmin}(\mathbb x) =\{ i\in\llbracket 1,q \rrbracket \, : \,x(i) = x_{\scriptsize{\textsc{min}}}\}.\]
	Accordingly to Algorithm \ref{algo:unimin}, we say that the local matching criterion 
	${\mathbf{\Phi}}$ is {\sc uni-min}, if for any $q$ and any $\mathbb x\in(\N_+)^q$, ${\Phi}_q(\x)$ 
	is a uniform sample on $\llbracket 1,q \rrbracket$, and ${\Phi}'_q(\x)$ is a uniform sample on $\textsc{argmin}(\x)$, both independent of everything else. 
	We say that the criterion ${\mathbf{\Phi}}$ is {\sc min-min}, if for any $q\in\N_+$ and any $\x\in(\N_+)^q$, 
	$\Phi_q(\x)$ and $\Phi'_q(\x)$ are two uniform samples on  $\textsc{argmin}(\x)$, 
	independent of everything else. 
\end{Ex}
\begin{Ex}\label{ex:unimax}
\rm Similarly to Example \ref{ex:unimin}, for any $q\in\N_+$ and any $\x\in(\N_+)^q$, define $\textsc{argmax}(\x)$ similarly to $\textsc{argmin}(\mathbb x)$ 
	for {\em maximal} coordinates of $\x$. Then, we say that the local matching criterion ${\mathbf{\Phi}}$ is {\sc uni-max} if  for all $q$ and $\x$ as above, 
	${\Phi}_q(\x)$ is a uniform sample on $\llbracket 1,q \rrbracket$ and ${\Phi}'_q(\x)$ is a uniform sample on $\textsc{argmax}(\mathbb x)$, both independent 
	of everything else. We also say that ${\mathbf{\Phi}}$ is {\sc max-max} if for all $q$ and $\x$,  
	 $\Phi_q(\x)$ and $\Phi'_q(\x)$ are two 
	uniform samples on  $\textsc{argmax}(\x)$, independent of everything else. 
\end{Ex}

\subsection{Local matching algorithms on finite graphs}
\label{subsec:localfinite}
Throughout this section, fix $\mr{G}(\mr{V},\mr{E})$, a (simple, non oriented) graph of size $|\mr{V}|=n$, with vertex set $\mr{V}=\{v_1,\cdots,v_n\}$ and 
edge set $\mr{E}$. When necessary, we identify the set $\mr{V}$ of nodes of $\mr{G}$ with $\llbracket 1,n \rrbracket$. 
For any $i\in\llbracket 1,n \rrbracket$, we denote by 
$d(i)$, the degree of node $v_i$ in $\mr{G}$. We also fix a local matching criterion ${\bphi}$ on $\mr{V}$, denoted by 
\[{\mathbf \Phi}=\left\{\left({\Phi}_{n},{\Phi}'_{n}\right)\,:\,n\in\N_+\right\}.\] 
We now construct a matching algorithm associated to ${\bphi}$. 
At any iteration $j$, we are given two disjoint subgraphs of $\mr{G}$:
\begin{itemize}
\item $\mr{G}_j=(\mr{U}_j, \mr{E}_j)$ is the {\em undiscovered part} of the graph for our procedure, at iteration $j$. The nodes of $\mr{U}_j$ are said to be {\em available}. Those are the nodes whose fate is still to be determined. We also denote, for any $v\in \mr{U}_j$, by $d_j(v)$, the degree of node $v$ in the graph $\mr{G}_j$. 
\item $\mr{G}'_j=(\mr{M}_j,\mr{E}'_j)$ is the {\em matching} at iteration $j$. It is a subgraph of $\mr{G}$ in which all nodes have degree one. 
\end{itemize}
We also define $\mr{I}_j$ as the set of {\em isolated} nodes at iteration $j$, that is, nodes that do neither belong to $\mr U_j$, nor to 
$\mr M_j$, in a way that $\mr V$ can be partionned into 
\[\mr{V}=\mr{U}_j \cup \mr{M}_j\cup \mr{I}_j. \]
In the construction to come, isolated nodes will not be matched at all, because all of their neighbors have already been matched. 

At first, the whole graph is available to be matched, that is, we set 
\[\mr{U}_0=\mr{V},\,\mr{M}_0=\emptyset\quad \mbox{and} \quad\mr{I}_0=\emptyset.\] 
By construction, we also have $d_0(v)=d(v)$ for all $v\in\mr V$. 
Let us also set $\mr{E}_0=\mr{E}$ and $\mr{E}'_0=\emptyset$, in a way that 
$\mr{G}_0=\mr{G}$ and $\mr{G}'_0=(\emptyset, \emptyset)$. 
The matching algorithm on $\mr{G}$ then proceeds as follows. 
\begin{figure}[h!]			
					\centering
					\begin{tikzpicture}[scale = 0.3]
\coordinate (p1) at (0,5);
\coordinate (p2) at (3.53,3.53);
\coordinate (p3) at (5,0);
\coordinate (p4) at (3.53,-3.53);
\coordinate (m1) at (0,-5);
\coordinate (m2) at (-3.53,-3.53);
\coordinate (m3) at (-5,0);
\coordinate (m4) at (-3.53,3.53);

\draw[color=gray!60] (p1) -- (m4);
\draw[color=gray!60] (p2) -- (m1);
\draw[color=gray!60] (p2) -- (m2);
\draw[color=gray!60] (p2) -- (m3);
\draw[color=gray!60] (p2) -- (m4);
\draw[color=gray!60] (p3) -- (m2);
\draw[color=gray!60] (p3) -- (m3);
\draw[color=gray!60] (p4) -- (m3);
\draw[color=gray!60] (p4) -- (m4);
\draw (p1)[color=black,fill] circle (0.2);
\draw[color=black,fill] (p2) circle (0.2);
\draw[color=black,fill] (p3) circle (0.2);
\draw[color=black,fill] (p4) circle (0.2);
\draw[color=black,fill] (m1) circle (0.2);
\draw[color=black,fill] (m2) circle (0.2);
\draw[color=black,fill] (m3) circle (0.2);
\draw[color=black,fill] (m4) circle (0.2);
					\end{tikzpicture}
						\label{fig:init}
						\caption{The graph $\mr{G}_j$ at iteration $j$.}
						
\end{figure}
\begin{enumerate}
\item[{\bf Step $\mathring{0}$.}] If $\mr{U}_j=\emptyset$, go to {Step $\mathring{4}$.} Else, go to {Step $\mathring{1}$.} 
\item[{\bf Step $\mathring{1}$.}] Let (or draw) $\mathring I={\Phi}_{|\mr U_j|}((d_j(v)\,:\,v\in \mr U_j))$. 
%
%
%
%

\item[{\bf Step $\mathring{2}$.}] Let (or draw) 
$\mathring I'={\Phi}'_{|\mr{E}_j(\mathring I)|}((d_j(v)\,:\,v\in \mr{E}_j(\mathring I)))$, and say 
that that node $\mathring I'$ is the {\em match} of $\mathring I$. 
The edge $\{\mathring{I},\mathring{I}'\}$ is added to the matching, and so we set 
\[\left\{\begin{array}{ll}
\mr{U}^-_{j+1}&=\mr{U}_j\setminus\{\mathring{I},\mathring{I}'\},\\
\mr{M}_{j+1}&=\mr{M}_j\cup\{\mathring{I},\mathring{I}'\},\\
\mr{E}'_{j+1}&=\mr{E}'_j\cup\{\{\mathring{I},\mathring{I}'\}\}. 
\end{array}\right.\]
Figure 5 illustrates two criteria: the edge added to the matching is represented in red for {\sc greedy} and {\sc uni-min} respectively, 
with a common draw for the first node $\mathring{I}$. 
\begin{figure}[h!]			
			\centering
				\begin{subfigure}{0.4\textwidth}
				{
					\centering
					\begin{tikzpicture}[scale = 0.3]
\coordinate (p1) at (0,5);
\coordinate (p2) at (3.53,3.53);
\coordinate (p3) at (5,0);
\coordinate (p4) at (3.53,-3.53);
\coordinate (m1) at (0,-5);
\coordinate (m2) at (-3.53,-3.53);
\coordinate (m3) at (-5,0);
\coordinate (m4) at (-3.53,3.53);

\draw[color=gray!60] (p1) -- (m4);
\draw[color=black] (p2) -- (m1);
\draw[color=black] (p2) -- (m2);
\draw[color=red] (p2) -- (m3);
\draw[color=black] (p2) -- (m4);
\draw[color=gray!60] (p3) -- (m2);
\draw[color=gray!60] (p3) -- (m3);
\draw[color=gray!60] (p4) -- (m3);
\draw[color=gray!60] (p4) -- (m4);
\draw (p1)[color=black,fill] circle (0.2);
\draw[color=red,fill] (p2) circle (0.2);
\draw (p2)+(1,0) node {\textcolor{red}{$\mathring{I}$}};
\draw[color=black,fill] (p3) circle (0.2);
\draw[color=black,fill] (p4) circle (0.2);
\draw[color=black,fill] (m1) circle (0.2);
\draw[color=black,fill] (m2) circle (0.2);
\draw[color=red,fill] (m3) circle (0.2);
\draw (m3)+(-1,0) node {\textcolor{red}{${\mathring{I}'}$}};
\draw[color=black,fill] (m4) circle (0.2);

					\end{tikzpicture}
						\label{fig:S2}
						\caption{$\textsc{greedy}$ criterion}
						}
				\end{subfigure}
			\begin{subfigure}{0.4\textwidth}
			{
					\centering
					\begin{tikzpicture}[scale = 0.3]
\coordinate (p1) at (0,5);
\coordinate (p2) at (3.53,3.53);
\coordinate (p3) at (5,0);
\coordinate (p4) at (3.53,-3.53);
\coordinate (m1) at (0,-5);
\coordinate (m2) at (-3.53,-3.53);
\coordinate (m3) at (-5,0);
\coordinate (m4) at (-3.53,3.53);

\draw[color=gray!60] (p1) -- (m4);
\draw[color=red] (p2) -- (m1);
\draw[color=black] (p2) -- (m2);
\draw[color=black] (p2) -- (m3);
\draw[color=black] (p2) -- (m4);
\draw[color=gray!60] (p3) -- (m2);
\draw[color=gray!60] (p3) -- (m3);
\draw[color=gray!60] (p4) -- (m3);
\draw[color=gray!60] (p4) -- (m4);
\draw (p1)[color=black,fill] circle (0.2);
\draw[color=red,fill] (p2) circle (0.2);
\draw (p2)+(1,0) node {\textcolor{red}{$\mathring{I}$}};
\draw[color=black,fill] (p3) circle (0.2);
\draw[color=black,fill] (p4) circle (0.2);
\draw[color=red,fill] (m1) circle (0.2);
\draw[color=black,fill] (m2) circle (0.2);
\draw[color=black,fill] (m3) circle (0.2);
\draw (m1)+(-1,0) node {\textcolor{red}{${\mathring{I}'}$}};
\draw[color=black,fill] (m4) circle (0.2);

				\end{tikzpicture}
					\label{fig:S3}
					\caption{$\textsc{uni-min}$ criterion}
					}
			\end{subfigure}
	\label{fig:poli}
	\caption{Choice of an edge}
\end{figure}

\item[{\bf Step $\mathring{3}$.}] The matched nodes $\mathring{I}$ and ${\mathring{I}'}$, as well as their neighboring edges, have now been explored, and are thus removed from the unexplored graph (in grey). The nodes of null degree in the remaining graph are moved from the class of `Unexplored' to the class of `Isolated' nodes. Specifically, set 
\[\left\{\begin{array}{ll}
\mr{G}_{j+1}&=\mbox{Induced subgraph of }\mr{U}^-_{j+1}\mbox{ in }\mr{G}_j,\\
\mr{I}_{j+1}&=\mr{I}_j\cup\{v\in\mr{G}_{j+1}\,:\,d_{j+1}(v)=0\},\\
\mr{U}_{j+1}&=\mr{U}^-_{j+1}\setminus\{v\in\mr{G}_{j+1}\,:\,d_{j+1}(v)=0\}.
\end{array}\right.\]
\begin{figure}[h!]			
			\begin{center}
				\begin{subfigure}{0.4\textwidth}
				{
					\centering
					\begin{tikzpicture}[scale = 0.3]
\coordinate (p1) at (0,5);
\coordinate (p2) at (3.53,3.53);
\coordinate (p3) at (5,0);
\coordinate (p4) at (3.53,-3.53);
\coordinate (m1) at (0,-5);
\coordinate (m2) at (-3.53,-3.53);
\coordinate (m3) at (-5,0);
\coordinate (m4) at (-3.53,3.53);

\draw[color=gray!60] (p1) -- (m4);
\draw[color=gray!60] (p3) -- (m2);
\draw[color=gray!60] (p4) -- (m4);
\draw (p1)[color=black,fill] circle (0.2);
\draw[color=black,fill] (p3) circle (0.2);
\draw[color=black,fill] (p4) circle (0.2);
\draw[color=black,fill] (m1) circle (0.2);
\draw[color=black,fill] (m2) circle (0.2);
\draw[color=black,fill] (m4) circle (0.2);

				\end{tikzpicture}
					\label{fig:S2f}
					\caption{$\textsc{greedy}$: End of iteration}
					}
							\end{subfigure}
							\begin{subfigure}{0.4\textwidth}
				{
				\centering
					\begin{tikzpicture}[scale = 0.3]
\coordinate (p1) at (0,5);
\coordinate (p2) at (3.53,3.53);
\coordinate (p3) at (5,0);
\coordinate (p4) at (3.53,-3.53);
\coordinate (m1) at (0,-5);
\coordinate (m2) at (-3.53,-3.53);
\coordinate (m3) at (-5,0);
\coordinate (m4) at (-3.53,3.53);

\draw[color=gray!60] (p1) -- (m4);
\draw[color=gray!60] (p3) -- (m2);
\draw[color=gray!60] (p3) -- (m3);
\draw[color=gray!60] (p4) -- (m3);
\draw[color=gray!60] (p4) -- (m4);
\draw (p1)[color=black,fill] circle (0.2);
\draw[color=black,fill] (p3) circle (0.2);
\draw[color=black,fill] (p4) circle (0.2);
\draw[color=black,fill] (m1) node{ };
\draw[color=black,fill] (m2) circle (0.2);
\draw[color=black,fill] (m3) circle (0.2);
\draw[color=black,fill] (m4) circle (0.2);

				\end{tikzpicture}
					\label{fig:S3f}
					\caption{$\textsc{uni-min}$: End of iteration}
					}
			\end{subfigure}
		\end{center}	
\label{fig:iter}
\caption{Explored part of the graph after an iteration}
\end{figure}

\item[{\bf Step $\mathring{4}$.}] Set $j:=j+1$. If $j=n$, terminate the procedure. Else, go to {Step $\mathring{0}$}. 
\end{enumerate}
\medskip

At the terminating point $n$, all the nodes have necessarily been investigated, and all are either matched or isolated. We get 
$$|\mr{I}_{n}|+|\mr{M}_{n}|=|\mr{V}|=n.$$
Extending the definition \eqref{eq:defmatchingcov} to any local matching criterion, the {\em matching coverage} 
$\mathring{\mathbf M}^n_\bphi(\mr{G})$ is then the proportion of initial nodes that end up in the matching at the termination time $n$. 
It can thus be expressed as a simple function of the resulting sets, 
\begin{equation}
\label{eq:ratiolocal}
\mathring{\mathbf M}^n_\bphi(\mr{G})=\frac{|\mr{M}_{n}|}{n}=1-\frac{|\mr{I}_{n}|}{n}\, \,\in [0,1],
\end{equation}

\subsection{Local matching on the configuration model}
\label{subsec:localCM}
In this section, we use the classical procedure of sequential uniform pairing, to produce a realization of a multi-graph by the configuration model.  
We also transpose the local matching algorithm introduced in Section \ref{subsec:localfinite} to the resulting multi-graph, by a simultaneous construction. 
For this, we let $\xi$ be a probability measure on $\N$, and $n$ be a positive integer. Let $\mathbb d:=(d(1),...,d(n))$ be an 
$n$-sample of the probability distribution $\xi$, called {\em degree vector} of the multi-graph. We assume that $\sum_{i=1}^n d(i)$ is even 
(if not, we just substract 1 to an arbitrary positive component of $\mathbb d$). 
We let $$\mu := \sum_{i\leq n} \delta_{d(i)}\in\M^n,$$ be the corresponding \textit{degree measure}. We let $\mathcal{V}=\{v_1,\cdots,v_n\}$ be the set of nodes of the multi-graph to be constructed. 
For all $i \in\llbracket 1,n \rrbracket$, $d(i)$ will be interpreted as the {\em degree} of node $v_i$ in the latter multi-graph. See Figure 7. 
For every $i$, the $d(i)$ half-edges of $v_i$ are to be completed into edges by uniform pairing, and we initially set $a_0(v_i)=d(i)$. 

Fix a local matching criterion 
$\mathbf \Phi$ on $\maV$, 
\[{\mathbf \Phi}=\left\{\left({\Phi}_{n},{\Phi}'_{n}\right)\,:\,n\in\N_+\right\}.\] 
We can now set the trackers for the number of \textit{available} stubs (or half edges) of the nodes. Stubs are to be paired sequentially and {uniformly}, into edges. 
\begin{figure}
\centering
\label{confmodinit}
\caption{A running example: Initial state for $\mathbb{d} = (3,2,1,4,2,2)$}
\begin{tikzpicture}[node distance = {40mm},thick, main/.style = {draw,circle}]
\foreach \ang [count=\n from 1] in {90,30,...,-210}
\node[main] (v\n) at (\ang:2.5cm) {$v_\n$};
\draw (v1.-135) -- +(-135:0.3);
\draw (v4.120) -- +(120:0.3);
\draw (v1.-90) -- +(-90:0.3);
\draw (v3.135) -- +(135:0.3);
\draw (v1.-45) -- +(-45:0.3);
\draw (v2.-180) -- +(180:0.3);
\draw (v2.-135) -- +(-135:0.3);
\draw (v4.100) -- +(100:0.3);
\draw (v4.80) -- +(80:0.3);
\draw (v4.60) -- +(60:0.3);
\draw (v5.45) -- +(45:0.3);
\draw (v5.0) -- +(0:0.3);
\draw (v6.0) -- +(0:0.3);
\draw (v6.-45) -- +(-45:0.3);
\end{tikzpicture}
\end{figure}
At first, all stubs are available and the graph $\mathcal{G}_0$ has no edges. The matching is initially empty, and all the edges have to be discovered. 
We define the following initial sets,   
\[\begin{cases}\mathcal{U}_0 &=\{v\in \maV\,:\,a_0(v)>0\}:=\left\{v_{0_1},\cdots,v_{0_{p_0}}\right\};\\
\mathcal{I}_0 &=\{v\in \maV\,:\,a_0(v)=0\};\\
\maE_0 &=\emptyset;\\
\maM_0 &=\emptyset;\\
\maE'_0&=\emptyset;\\
\maB_0&=\emptyset,
\end{cases}\]
where we observe that $\mathcal{I}_0$ is not necessarily the empty set, in the case where the degree distribution has a mass at zero. 
In our running example, Figure 7, 
we have $p_0=6$ and 
\[a_0(v_1)=3;\,a_0(v_2)=2;\,a_0(v_3)=1;\,a_0(v_4)=4;\,a_0(v_5)=2\mbox{ and }a_0(v_6)=2.\]
We also let $\mathcal{G}'_0=(\emptyset,\emptyset)$ be the empty graph.

\bigskip

\noindent In a similar fashion to Section \ref{subsec:localfinite}, we shall proceed by induction. At iteration $j$, we are given:
		
\begin{itemize}
	\item A multi-graph $\mathcal{G}_j=(\mathcal{V},\mathcal{E}_j)=(\maM_j\cup \,\mathcal{U}_j\cup \mathcal{I}_j,\mathcal{E}_j)$, representing the partially constructed connections between elements of $\mathcal{V}$, 
where we denote by:
		\begin{itemize}
		\item $\maM_j$ the set of {\em matched} nodes at $j$, which are nodes that are fully attached to the multi-graph at $j$ 
		(no available stubs), and belong to the matching at $j$; 
		\item $\mathcal{U}_j$ the set of {\em unexplored} nodes at $j$, that is, nodes that do not belong to the matching at $j$, but can still be attached to it since they have available stubs, which will become edges that can possibly be added to the matching. The nodes of the set $\maU_j$ are indexed as 		
		\[\maU_j=\left\{v_{j_1},\cdots,v_{j_{p_j}}\right\},\]
		for some positive $p_j\le n$. For any $l\in\llbracket 1,p_j \rrbracket$, we denote by $a_j(v_{j_l})>0$, the {\em availability} of node 
		$v_{j_l}\in \maU_j$, that is, the number of its available stubs. We then identify $v_{j_l}$ with a `bunch' of $a_j(v_{j_l})$ stubs. 
		\item $\mathcal{I}_j$, the set of {\em isolated} nodes at $j$, that is, nodes that are already fully attached to the multi-graph at $j$, have no more 
		available stubs, but do not belong to the matching at $j$. These nodes have been isolated because their last completed adjacent edge pointed to 
		an already matched node. 
		\end{itemize}
	\item A maximum {\em matching} $\mathcal{G}'_j=(\maM_j,\mathcal{E}'_j)$ 
       on the induced subgraph of $\maM_j$ in $\mathcal{G}_j$. In particular, 
      $\mathcal{E}'_j$ is a set of pairs of $\maM_j$ of the form $\{v_i,v_{i'}\}$ for $v_i,v_{i'}\in \maM_j$, such that any element of $\maM_j$ appears in exactly one pair of $\mathcal{E}'_j$. By our very construction, all nodes of $\maM_j$ (if any) will have degree at least $1$ in 
$\mathcal{G}_j$. 
      \item $\maB_j$, the set of {\em blocked} nodes at $j$: those are isolated nodes in the same sense as above, 
		but whose last completed adjacent edge was a self-loop. See case 2a) below. 
\end{itemize}
\noindent Our joint construction of a multi-graph, and of a matching on it, goes as follows: 
\begin{enumerate}
 \item[{\bf Step 0.}] We initially have the following cases:
 \begin{itemize}
 \item[{\bf 0a)}] If $\maU_j=\emptyset$ (no more unexplored nodes at $j$), set 
 \[\begin{cases}
  \maU_{j+1} &=\maU_j=\emptyset;\\
  \maI_{j+1} &=\maI_j;\\
   a_{j+1}(v) &=a_j(v)=0,\,v\in\maI_{j+1};\\
  \maE_{j+1}&=\maE_{j};\\
  \maM_{j+1}&=\maM_j;\\
  \mathcal{E}'_{j+1}&=\mathcal{E}'_j;\\
   \maB_{j+1}&=\maB_j,
  \end{cases}\]
 and go to {Step 5}. 
 \item[{\bf 0b)}] Else, go to Step 1. 
 \end{itemize}
  \item[{\bf Step 1.}] Let (or draw) $$I=\Phi_{|\maU_j|}\left(a_j(v_{j_{1}}),\cdots,a_j(v_{j_{p_j}})\right),$$
  and let $K\equiv a_j(I)$. Let $q\in \llbracket 1,p_j\rrbracket$ be such that $I=v_{j_{q}}$. 
    We then apply the uniform pairing procedure, to complete the $K$ stubs of $I$ into edges. More specifically, we draw, 
  {without repetition}, $K$ half-edges $H_1,\cdots,H_K$, uniformly at random among $p_j$ bunches of half-edges of respective sizes 
  $a_j\left(v_{j_l}\right)$, $l \in \llbracket 1,p_j \rrbracket$, to be paired with the half-edges of $I$, to complete the emanating edges of node $I$. 
  Note that this operation may lead to parallel edges or self-loops, whenever several elements of the same bunch of half-edges are chosen. 
  Denote by $I_1,...,I_{\tilde K}$ the indexes of the (possibly equal) bunches to which the $K$ half-edges $H_1,\cdots,H_K$ belong. 
  Note that these indexes may include 
  repetitions in case of multiple edges and/or $I$ itself in case of self-loops, and that  $\tilde K\le K$, with an equality if $I$ has no self-loop. Then we update the set of edges by setting  
$$\mathcal{E}_{j+1^-}=\mathcal{E}_j\,\cup\, \bigl\{\{I,I'_1\},...,\{I,I'_{\tilde K}\}\bigl\},$$
where $\{I,I\}$ is understood as a self-loop at $I$, and the repetition of the same edge $\{I,J\}$ is understood as a multiple edge between $I$ and $J$.   
    Then, we let 
  $$\mathcal{N}_j(I) := \{v_{j_{i_1}},...,v_{j_{i_{\ell}}}\} \subset \left\{v_{j_{1}},...,v_{j_{p_j}}\right\},$$ for some $\ell\equiv\ell_j\le K$, be the set of the $\ell$ 
  {neighbors} of $I$ other than $I$ itself, if any, i.e., the distinct nodes different from $I$, appearing in the list $(I'_1,...,I'_{\tilde K})$, if any. 
For all $l \in \llbracket 1,p_j \rrbracket\setminus\{q\}$, we let $b(v_{j_l})\equiv b_j(v_{j_l})$ be the number of {edges} shared by $v_{j_l}$ with $I$, that is, the (possibly null) number of elements in the bunch associated to $v_{j_l}$, that were chosen in the uniform pairing procedure. In other words, $b(v_{j_l})$ is the number of repetitions of the node $v_{j_l}$ in the list $(I'_1,...,I'_{\tilde K})$. Go to Step 2. 

\medskip

\begin{figure}[h!]			
								\centering
					\begin{tikzpicture}[scale =0.8,node distance = {2.5cm},thick, main/.style = {draw,circle}]
						\def\match{1}
						\def\matchee{0}
						\def\mytestcolor{black}
						\foreach \ang [count=\n from 1] in {90,30,...,-210} {
						\ifnum\n=\match\relax\def\mytestcolor{red}\fi
						\ifnum\n=\matchee\relax\def\mytestcolor{red}\else\fi
						\node[main,color = \mytestcolor] (v\n) at (\ang:2.5cm) {$v_\n$};
						}
						\draw (v1.-135) to[bend left] (v6.0);
						\draw (v1.-90) to[bend right] (v3.135);
						\draw (v1.-45) to[bend right] (v2.180);
						\draw (v4.120) -- +(120:0.3);
						\draw (v2.-135) -- +(-135:0.3);
						\draw (v4.100) -- +(100:0.3);
						\draw (v4.80) -- +(80:0.3);
						\draw (v4.60) -- +(60:0.3);
						\draw (v5.45) -- +(45:0.3);
						\draw (v5.0) -- +(0:0.3);
						\draw (v6.-45) -- +(-45:0.3);	

					\end{tikzpicture}
					\label{fig:e1}
\caption{Chosing $I = v_1$ and discovering its neighbors.}
\end{figure}

In our running example (Figure 8), say that $\Phi$ is the random uniform choice over $\maU_0$, and we draw $I=v_1$. Then we get 
$K=\ell = 3$, $\maN_0(I)=\{v_2,v_3,v_6\}$ and 
\[b(v_1)=0;\,b(v_2)=1;\,b(v_3)=1;\,b(v_4)=0;\,b(v_5)=0\mbox{ and }b(v_6)=1.\]

\item[{\bf Step 2.}] 
We have the following cases: 
  \begin{itemize}
  \item[{\bf 2a)}] If $\ell=0$, i.e., at iteration $j$, $I$ has no other new neighbor than itself, and in particular we get that 
  $b(v_{j_l})=0$ for all $l \in \llbracket 1,p_j \rrbracket\setminus\{q\}$. Then, we say that node $I$ is blocked: 
  it is now fully attached to the graph (with at least one self-loop), but is not matched. We then set $b'(v_{j_l})\equiv b'_j(v_{j_l})=0$ for all 
  $l \in \llbracket 1,p_j \rrbracket$, and then fix 
  \begin{equation}
      \label{eq:lossmasskbis}
      a_{j+1}(v_{j_l})=a_j(v_{j_l}),\quad l \in \llbracket 1,p_j \rrbracket\setminus\{q\} 
      \end{equation}
      and 
  \[\begin{cases}
  \maU_{j+1} &=\maU_j\setminus\{I\};\\
  \maI_{j+1} &=\maI_j;\\
  a_{j+1}(v) &=a_j(v)=0,\,v\in\maI_{j+1};\\
  \maE_{j+1}&=\maE_{j+1^-};\\
  \maM_{j+1}&=\maM_j;\\
  \mathcal{E}'_{j+1}&=\mathcal{E}'_j;\\
  \maB_{j+1}&=\maB_j.
  \end{cases}\]
  Then go to Step 5. 
  \medskip 
  \item[{\bf 2b)}] If $\ell>0$, $I$ has other new neighbors than itself, and we pick the match $I'$ of $I$ within the set 
  $\{v_{j_{i_1}},...,v_{j_{i_{\ell}}}\}$. For this, let (or draw)  
\[I'=\Phi'_{|\mathcal{N}_j(I)|}\left(a_j(v_{j_{i_1}}),\cdots,a_j(v_{j_{i_{\ell}}})\right),\]
and let $K'=a_j(I')$ be the availability of node $I'$. Let also $r\in \llbracket 1,p_j\rrbracket$ be such that $I'=v_{j_{r}}$. 
Henceforth, both nodes $I$ and $I'$ together with the edge $\{I,I'\}$ are added to the matching $\mathcal{G}'_{j}$, that is, we set
\[\begin{cases}
  \maU_{j+1^-} &=\maU_j\setminus\{I,I'\};\\
  \maI_{j+1^-} &=\maI_j;\\
  \maM_{j+1}&=\maM_j\cup \{I,I'\};\\
  \mathcal{E}'_{j+1}&=\mathcal{E}'_j\cup \bigl\{\{I,I'\}\bigl\};\\
   \maB_{j+1}&=\maB_j,
  \end{cases}\]
and go to Step 3. 

\medskip

\indent In the running example (Figure 9), we present two matching criteria for a common draw $I=v_1$: On the left-hand subfigure, 
$\mathbf{\Phi}$ is {\sc greedy} and we obtain $I'=v_2$ and $K'=2$. On the right-hand, $\mathbf{\Phi}$ is {\sc uni-min} and we get $I'=v_3$ and $K'=1$. 
\begin{figure}[ h!]			
			\begin{center}
				\begin{subfigure}{0.4\textwidth}
					\centering
					\begin{tikzpicture}[scale =0.8,node distance = {2.5cm},thick, main/.style = {draw,circle}]
						\def\match{1}
						\def\matchee{2}
						\def\mytestcolor{black}
						\foreach \ang [count=\n from 1] in {90,30,...,-210} {
						\ifnum\n=\match\relax\def\mytestcolor{red}\fi
						\ifnum\n=\matchee\relax\def\mytestcolor{red}\else\fi
						\node[main,color = \mytestcolor] (v\n) at (\ang:2.5cm) {$v_\n$};
						}
						\draw (v1.-135) to[bend left] (v6.0);
						\draw (v1.-90) to[bend right] (v3.135);
						\draw[ultra thick,red] (v1.-45) to[bend right] (v2.180);
						\draw (v4.120) -- +(120:0.3);
						\draw (v2.-135) -- +(-135:0.3);
						\draw (v4.100) -- +(100:0.3);
						\draw (v4.80) -- +(80:0.3);
						\draw (v4.60) -- +(60:0.3);
						\draw (v5.45) -- +(45:0.3);
						\draw (v5.0) -- +(0:0.3);
						\draw (v6.-45) -- +(-45:0.3);	
					\end{tikzpicture}
					\label{gre:e2}
					\caption{$\textsc{greedy}$ chooses the match uniformly}
				\end{subfigure}
				\begin{subfigure}{0.4\textwidth}
					\centering
					\begin{tikzpicture}[scale =0.8,node distance = {2.5cm},thick, main/.style = {draw,circle}]
						\def\match{1}
						\def\matchee{3}
						\def\mytestcolor{black}
						\foreach \ang [count=\n from 1] in {90,30,...,-210} {
						\ifnum\n=\match\relax\def\mytestcolor{red}\fi
						\ifnum\n=\matchee\relax\def\mytestcolor{red}\else\fi
						\node[main,color = \mytestcolor] (v\n) at (\ang:2.5cm) {$v_\n$};
						}
						\draw (v1.-135) to[bend left] (v6.0);
						\draw[ultra thick,red] (v1.-90) to[bend right] (v3.135);
						\draw (v1.-45) to[bend right] (v2.180);
						\draw (v4.120) -- +(120:0.3);
						\draw (v2.-135) -- +(-135:0.3);
						\draw (v4.100) -- +(100:0.3);
						\draw (v4.80) -- +(80:0.3);
						\draw (v4.60) -- +(60:0.3);
						\draw (v5.45) -- +(45:0.3);
						\draw (v5.0) -- +(0:0.3);
						\draw (v6.-45) -- +(-45:0.3);	
					\end{tikzpicture}
					\label{minres:e2}
					\caption{$\textsc{uni-min}$ only has one choice}
				\end{subfigure}
		\end{center}	
\label{fig:e2}
\caption{Matching $I$ with $I'$.}
\end{figure}

\end{itemize}

\item[{\bf Step 3.}]
We determine the edges adjacent to node $I'$. We have the following cases: 
		\begin{itemize} 
      \item[{\bf 3a)}] If $K'=b(I')$, then $I'$ has no more open half-edges to complete. In this case we do not do anything, and just set 
      \[\mathcal{E}_{j+1}=\mathcal{E}_{j+1^-}.\]
      This is the case 
      in the right-hand subfigure of Figure 9. 
      \item[{\bf 3b)}] If $K' > b(I')$, $I'$ still has $K'-b(I')$ incomplete half-edges. To complete them into edges, we reiterate the same procedure of uniform pairing 
      as in Step 1, to 
         determine the indexes $(I_1,\cdots,I_{\tilde K'})$ of the endpoints of all edges emanating from $I'$, other than the already drawn $\{I,I'\}$ edge. 
         The neighborhood of node $I'$ is now complete, and we set 
         \[\mathcal{E}_{j+1}=\mathcal{E}_{j+1^-}\,\cup\, \bigl\{\{I',I_1\},...,\{I',I_{\tilde K'}\}\bigl\},\]
         using the same notational convention as at Step 1. 
      We let $$\left\{v_{j_{s_1}},...,v_{j_{s_{m}}}\right\} \subset \left\{v_{j_1},...,v_{j_{p_j}}\right\}\setminus\{v_{j_q}\},$$ 
      for $m \le  a_j\left(I'\right) - b(I')$,  be the set of the $m$ neighbors of $I'$ other than $I$ and $I'$ itself, i.e., the distinct nodes different from $I$ and $I'$, 
      appearing in the list $(I_1,...,I_{\tilde K'})$, if any. 
      In the left-hand subfigure of Figure 10, we then have $\tilde K'=1$ and $I_1=v_{j_{s_1}}=v_5$. 
      		\end{itemize} 
		\begin{figure}[ h!]			
			\begin{center}
				\begin{subfigure}{0.4\textwidth}
					\centering
					\begin{tikzpicture}[scale =0.8,node distance = {2.5cm},thick, main/.style = {draw,circle}]
						\def\match{1}
						\def\matchee{2}
						\def\mytestcolor{black}
						\foreach \ang [count=\n from 1] in {90,30,...,-210} {
						\ifnum\n=\match\relax\def\mytestcolor{red}\fi
						\ifnum\n=\matchee\relax\def\mytestcolor{red}\else\fi
						\node[main,color = \mytestcolor] (v\n) at (\ang:2.5cm) {$v_\n$};
						}
						\draw (v1.-135) to[bend left] (v6.0);
						\draw (v1.-90) to[bend right] (v3.135);
						\draw[ultra thick,red] (v1.-45) to[bend right] (v2.180);
						\draw (v2.-135) to[bend left] (v5.45);
						\draw (v4.120) -- +(120:0.3);
						\draw (v4.100) -- +(100:0.3);
						\draw (v4.80) -- +(80:0.3);
						\draw (v4.60) -- +(60:0.3);
						\draw (v5.0) -- +(0:0.3);
						\draw (v6.-45) -- +(-45:0.3);	
					\end{tikzpicture}
					\label{gre:e3}
					\caption{$\textsc{greedy}$: $I'=v_2$ has another neighbor}
				\end{subfigure}
				\begin{subfigure}{0.4\textwidth}
					\centering
					\begin{tikzpicture}[scale =0.8,node distance = {2.5cm},thick, main/.style = {draw,circle}]
						\def\match{1}
						\def\matchee{3}
						\def\mytestcolor{black}
						\foreach \ang [count=\n from 1] in {90,30,...,-210} {
						\ifnum\n=\match\relax\def\mytestcolor{red}\fi
						\ifnum\n=\matchee\relax\def\mytestcolor{red}\else\fi
						\node[main,color = \mytestcolor] (v\n) at (\ang:2.5cm) {$v_\n$};
						}
						\draw (v1.-135) to[bend left] (v6.0);
						\draw[ultra thick,red] (v1.-90) to[bend right] (v3.135);
						\draw (v1.-45) to[bend right] (v2.180);
						\draw (v4.120) -- +(120:0.3);
						\draw (v2.-135) -- +(-135:0.3);
						\draw (v4.100) -- +(100:0.3);
						\draw (v4.80) -- +(80:0.3);
						\draw (v4.60) -- +(60:0.3);
						\draw (v5.45) -- +(45:0.3);
						\draw (v5.0) -- +(0:0.3);
						\draw (v6.-45) -- +(-45:0.3);	
					\end{tikzpicture}
					\label{minres:e3}
					\caption{$\textsc{uni-min}$: no other operation}
				\end{subfigure}
		\end{center}	
\label{fig:e3}
\caption{Completing the neighborhood of $I'$.}
\end{figure}

\noindent Go to Step 4. 
\bigskip
\bigskip 

\item[{\bf Step 4.}] 
For all $l \in \llbracket 1,p_j \rrbracket\setminus\{q,r\}$, let  $b'(v_{j_l})\equiv b'_j(v_{j_l})$ be the number of {edges} shared by $v_{j_l}$ with $I'$. 
This number is 0 in case 3a), and in case 3b), it is given by the (possibly null) 
number of elements in the bunch associated to $v_{j_l}$ that were chosen in the uniform pairing procedure. 
We then update the availabilities of all nodes of $\maU_j$, by setting 
      \begin{equation}
      \label{eq:lossmassk}
      a_{j+1}(v_{j_l})=a_j(v_{j_l})-b(v_{j_l})-b'(v_{j_l}),\quad l \in \llbracket 1,p_j \rrbracket\setminus\{q,r\}.
      \end{equation}
      Finally, the set $\maU_{j+1}$ of unexplored nodes at step $k+1$ is obtained from $\maU_j$ by deleting $I$ and $I'$ 
      (which are now elements of $\mathcal M_{j+1}$), together with all elements of $\maU_j$ whose availability is null at step $k+1$, if any, since these 
      nodes become isolated. In other words, set
      \[\begin{cases}
      \maU_{j+1}&=\maU_{j+1^-}\setminus\left\{v_{j_l}\,:\,a_{j+1}(v_{j_l})=0,\,l\in\llbracket 1,p_j\rrbracket \setminus\{q,r\}\right\};\\
      \mathcal I_{j+1}&=\mathcal I_{j+1^-}\cup \left\{v_{j_l}\,:\,a_{j+1}(v_{j_l})=0,\,l\in\llbracket 1,p_j\rrbracket \setminus\{q,r\}\right\};\\
      a_{j+1}(v) &=a_j(v)=0,\,v\in\maI_{j+1}.
      \end{cases}\]
      We then re-index the set $\maU_{j+1}$ as \[\maU_{j+1}:=\left\{v_{(j+1)_1},\cdots,v_{(j+1)_{p_{j+1}}}\right\},\]
      for some $p_{j+1}\le p_j$. Note that, by the very procedure of uniform pairing, the total sum of availabilities 
      $\sum_{l=1}^{p_{j+1}}a_{j+1}(v_{(j+1)_l})$ at iteration $j+1$ is even, because it has the same parity as the sum of availabilities 
      $\sum_{l=1}^{p_{j}}a_{j}(v_{j_l})$ at iteration $j$.  Now go to Step 5. 
     
 \medskip

In the left-hand subfigure of Figure 10, we then have 
      \[b'(v_1)=1;\,b'(v_2)=0;\,b'(v_3)=0;\,b'(v_4)=0;\,b'(v_5)=1\mbox{ and }b'(v_6)=0,\]
while in the right-hand subfigure of Figure 10, we obtain $\maN_0(I')=\{I\}$ and 
      \[b'(v_1)=1\mbox{ and }\,b'(v_2)=b'(v_3)=b'(v_4)=b'(v_5)=b'(v_6)=0.\]
      In our running example, in the case of {\sc greedy} we end up at step $1$ with 
      \[\begin{cases}
      \maU_{1}&=\{v_4,v_5,v_6\},\mbox{ with }a_1(v_4)=4,\,a_1(v_5)=1,\mbox{ and }a_1(v_6)=1,\\
      \mathcal M_{1}&=\{v_1,v_2\},\\
      \mathcal I_1&=\{v_3\}.
      \end{cases}\]
      Regarding {\sc uni-min}, we obtain
      \[\begin{cases}
      \maU_{1}&=\{v_2,v_4,v_5,v_6\},\mbox{ with }a_1(v_2)=1,\,a_1(v_4)=4,\,a_1(v_5)=2,\mbox{ and }a_1(v_6)=1,\\
      \mathcal M_{1}&=\{v_1,v_2\},\\
      \mathcal I_1&=\emptyset.
      \end{cases}\]

\item[{\bf Step 5.}] Set $k:=k+1$. If $k=n$, terminate the procedure. Else, go to {Step ${0}$}.
\end{enumerate}

\medskip

As in Section \ref{subsec:localfinite}, the procedure terminates at step $n$, where we necessarily  have $\mathcal{U}_n=\emptyset$. 
 At that time, we end up with a multi-graph $\mathcal{G}:=\mathcal{G}_n:=(\mathcal{V},\mathcal{E}_n)$, since all stubs have been completed. 
 Moreover all the nodes are either matched or isolated. Extending \eqref{eq:defmatchingcovCM} to any local matching criterion, the {matching coverage} is re-expressed similarly to \eqref{eq:ratiolocal}: 
\begin{equation}
\label{eq:ratioCM}
\mathbf M^n_\bphi(\xi)={|{\maM}_n|\over n}=1-{|\mathcal{I}_n|\over n}-{|\mathcal{B}_n|\over n}\, \,\in [0,1].
\end{equation}

\section{Measure-valued representation and Markov dynamics}
\label{sec:Markov}

We first address a comparison between the matching algorithms respectively defined in Sections \ref{subsec:localfinite} and \ref{subsec:localCM}. 
We let, for all $j\in\llbracket 0,n \rrbracket$, 
\begin{itemize}
\item $\mathring{\mu}_j$
be the empirical degree distribution of all unexplored nodes at time $j$ in the remaining graph $\mr{G}_j$ associated to the construction of Section \ref{subsec:localfinite}, that is, 
\begin{equation}
\label{eq:defmuzero}
\mathring{\mu}_j= \sum_{v \in \mr{U}_j\cup\mr{I}_j}\delta_{d_j(v)} \in \M^{|\mr{U}_j|+|\mr{I}_j|};
\end{equation}
\item $\mu_j$ be the analogous empirical distribution representing the availabilities of all unexplored nodes at $j$, in the construction of Section \ref{subsec:localCM}. Specifically, 
\begin{equation}
\label{eq:defmu}
\mu_j=\sum_{v \in \mathcal{U}_j\cup\mathcal{I}_j}\delta_{a_j(v)} \in \M^{|\mathcal{U}_j|+|\mathcal{I}_j|}. 
\end{equation}
\end{itemize}
Let us also observe the following immediate representation of the matching coverage, 
\begin{proposition}
\label{prop:Matchingcover}
In the construction of Section \ref{subsec:localCM}, the associated matching coverage can be rewritten as 
\be
\label{eq:ratioCMmu}
{\mathbf {M}}_\bphi(\mu_0) = 1-{\mu_n(0)\over n}-{{\maB}_n\over n},
\ee
where $\maB_n$ is the final number of blocked nodes in the construction. 
\end{proposition}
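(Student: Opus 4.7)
The plan is to prove the identity by directly relating the isolated nodes of the construction to the mass at $0$ of the measure $\mu_n$, and then invoking the already-established representation \eqref{eq:ratioCM}. First I would recall that at the terminating step $n$, all nodes have been explored (Step 0a forces $\maU_n = \emptyset$), so the vertex set decomposes as the disjoint union
\[
\maV = \maM_n \sqcup \maI_n \sqcup \maB_n.
\]
The identity \eqref{eq:ratioCM} therefore reads $\mathbf{M}^n_\bphi(\mu_0) = |\maM_n|/n = 1 - |\maI_n|/n - |\maB_n|/n$, so it only remains to show that $|\maI_n| = \mu_n(0)$.

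Next I would read off the definition \eqref{eq:defmu}: since $\maU_n = \emptyset$,
\[
\mu_n = \sum_{v \in \maI_n} \delta_{a_n(v)}.
\]
By inspection of Steps 2a and 4 of the construction, a node is moved from $\maU_\cdot$ to $\maI_\cdot$ precisely at the iteration where its availability drops to $0$, and its availability is then frozen (Step 0a and the line $a_{j+1}(v)=a_j(v)=0$ for $v\in\maI_{j+1}$ inside each step). Consequently $a_n(v) = 0$ for every $v\in\maI_n$, so that $\mu_n = |\maI_n|\delta_0$, and in particular $\mu_n(0) = |\maI_n|$. Substituting into the expression coming from \eqref{eq:ratioCM} immediately yields \eqref{eq:ratioCMmu}.

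The proof is essentially bookkeeping, so I do not anticipate a genuine obstacle. The only point deserving care is the distinction between isolated nodes (last completed adjacent edge hit an already matched node) and blocked nodes (eliminated by a self-loop in Step 2a), which are tracked by two separate sets $\maI_n$ and $\maB_n$; $\mu_n$ only records the nodes in $\maU_n\cup\maI_n$, which is exactly what makes $\mu_n(0)$ coincide with $|\maI_n|$ and not with $|\maI_n|+|\maB_n|$, thus justifying the appearance of the additional term $|\maB_n|/n$ in \eqref{eq:ratioCMmu}.
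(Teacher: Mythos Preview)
Your proposal is correct and follows essentially the same approach as the paper: both arguments reduce to the observation that at termination $\maU_n=\emptyset$, so $\mu_n$ is supported on $\maI_n$ with all atoms at $0$, whence $\mu_n(0)=|\maI_n|$, and then substitute into \eqref{eq:ratioCM}. Your write-up is in fact more explicit about the bookkeeping (freezing of availabilities, the distinction between $\maI_n$ and $\maB_n$) than the paper's own proof.
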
 
\begin{proof}
At each iteration, the Dirac masses $\delta_j$ and (in case 2b), whenever a matching does occur) $\delta_{K'}$ are removed from the degree measure, while the masses associated to their neighboring nodes lose mass according to \eqref{eq:lossmasskbis} and \eqref{eq:lossmassk}. 
Eventually, at step $n$, when all the edges in the resulting multi-graph have been created, the only remaining Dirac masses 
in $\mu_n$  are those associated to isolated nodes with no availability. In other words, the final set of unmatched nodes 
consists of all nodes having availability $0$ at iteration $n$, and the nodes that have been blocked in case 2a) of Step 2. 
Then \eqref{eq:ratioCMmu} follows from \eqref{eq:ratioCM}. 
\end{proof}

The following result establishes a connection between the two constructions. It is a formalization of the intuitive fact, that the matching criteria behave similarly whenever they are ran on a pre-constructed graph, and on a graph sampled from the CM of same degree distribution. 
\begin{theorem}
\label{thm:coupling}
Let $\mr{G}(\mr{V},\mr{E})$ be a (simple) graph of size $n$ and degree vector  $\mathbb{d}$, and let $\suiten{\mr{\mu}_j}$ be the measure-valued random sequence defined by (\ref{eq:defmuzero}), with initial value 
\begin{equation*} 
\mr{\mu}_0 =\sum_{i=1}^n \delta_{d(i)}.
\end{equation*}
Let $\mathcal{G}(\mathcal V,\mathcal E)$ be the resulting multi-graph of the construction of Section \ref{subsec:localCM}, corresponding to $n$ nodes and an initial degree vector $\mathbb d$. Suppose that $\mathcal V=\mr{V}$, and fix a common local matching criterion $\mathbf \Phi$ on $\maV=\mr{V}$. 
Then, for any $j\in\llbracket 0,n \rrbracket$ and any measure $\nu \in \mathcal M_F^n$ we get that 
\[\pr{\mu_j = \nu \mid \mathcal{G} = \mr{G}} = \pr{\mr{\mu}_j = \nu}.\]
\end{theorem}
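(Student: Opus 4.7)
The plan is to proceed by induction on $j \in \llbracket 0,n \rrbracket$, and in fact to exhibit a coupling of the two constructions such that, on the event $\{\mathcal{G}=\mr{G}\}$, the sequences $\suiten{\mu_j}$ and $\suiten{\mr\mu_j}$ coincide almost surely. The distributional equality claimed in the theorem then follows by taking marginals. The base case $j=0$ is trivial, since by construction $\mu_0 = \mr\mu_0 = \sum_{i=1}^n \delta_{d(i)}$ deterministically in both procedures.

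The key technical ingredient for the induction step is a conditional exchangeability property of the uniform pairing: given the final completed multi-graph $\mathcal{G} = \mr{G}$ and the collection of edges already revealed during the sequential exploration of Section \ref{subsec:localCM} up to some intermediate point, the remaining half-edges form a pairing that is uniform among those compatible with the unrevealed edges of $\mr{G}$. This is a straightforward counting statement, following from the fact that the number of uniform pairings of $2m$ half-edges extending a fixed partial pairing depends only on the number of unpaired half-edges. Because $\mr{G}$ is a simple graph, on the event $\{\mathcal{G}=\mr{G}\}$ no self-loops nor multiple edges ever appear in the construction, so the alternative Step 2a is never triggered, and in particular $\mathcal B_n=\emptyset$.

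Assume inductively that, on $\{\mathcal{G}=\mr{G}\}$, the two procedures have been coupled so that $\mathcal{U}_j=\mr U_j$, $\mathcal{I}_j=\mr I_j$ and $a_j(v)=d_j(v)$ for every $v \in \mathcal{U}_j$, from which $\mu_j = \mr\mu_j$ follows. Coupling the two copies of the matching criterion $\bphi$ by using common randomness in $\Phi$ and $\Phi'$, the choice $I$ made in Step 1 of Section \ref{subsec:localCM} equals the choice $\mathring{I}$ of Section \ref{subsec:localfinite}, since both are determined by $\Phi$ applied to the same tuple of availabilities / induced degrees. Invoking the exchangeability lemma, the uniform pairing of $I$'s half-edges, conditional on the remaining pairing completing into $\mr{G}$, reveals precisely the edges incident to $I$ in the induced subgraph of $\mr{G}$ on $\mathcal{U}_j\cup\mathcal{I}_j$, i.e.\ exactly the neighborhood $\mr{E}_j(\mathring I)$. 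The match $I'$ selected in Step 2b is then equal to $\mathring{I}'$ by the same argument applied to $\Phi'$, and completing the remaining half-edges of $I'$ through Step 3 reveals its remaining neighbors in $\mr{G}_j$. The updates \eqref{eq:lossmasskbis}--\eqref{eq:lossmassk} of the availabilities therefore coincide with the degree updates implicit in the transition from $\mr{G}_j$ to $\mr{G}_{j+1}$, and the resulting sets of isolated vertices agree as well. Hence $\mathcal{U}_{j+1}=\mr U_{j+1}$, $\mathcal{I}_{j+1}=\mr I_{j+1}$ and $a_{j+1}\equiv d_{j+1}$ on $\mathcal{U}_{j+1}$, yielding $\mu_{j+1}=\mr\mu_{j+1}$ and closing the induction.

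The main obstacle is stating and verifying the conditional exchangeability lemma cleanly, in a way that handles the fact that Step 1 and Step 3 each reveal several edges simultaneously and that Step 3b reveals a partial neighborhood of $I'$ (of size $K'-b(I')$). This requires an elementary but slightly delicate counting argument comparing the number of uniform pairings of the remaining half-edges that are compatible with $\mr{G}$ before and after revealing the block of edges incident to $I$ (resp.\ $I'$). Once this lemma is in place, the inductive step above requires no further computation.
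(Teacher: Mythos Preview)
Your proposal is correct and follows essentially the same inductive coupling strategy as the paper. The paper's version is slightly more direct: rather than appealing to a conditional exchangeability lemma, it strengthens the induction hypothesis to record that the already-revealed edges of each unexplored node $v$ are exactly those joining $v$ to $\mathcal{M}_j$ (i.e.\ $\mr{E}(v)\cap\mr{M}_j=\mathcal{E}_j(v)$), so that on $\{\mathcal{G}=\mr{G}\}$ the newly revealed neighbors of $I$ (and then of $I'$) are \emph{deterministically} equal to $\mr{E}_j(\mathring I)$ (resp.\ $\mr{E}_j(\mathring I')$), bypassing any probabilistic lemma.
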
   

\begin{proof}
Suppose that $\mathcal{G} = \mr{G}$, that is, the final result of the second construction produces the graph 
$\mr{G}$ on which the first construction is deployed. 
We then index the nodes of $\mathcal{V}=\mr{V}$ consistently in the two constructions. By assumption, we initially have that 
\begin{equation*}
\begin{cases}
\mr{U}_0 &=\mathcal{U}_0=\maV;\\
a_0(v) &=d_0(v)=d(v),\quad\mbox{ for all }v\mbox{ in }\mr{U}_0=\mathcal{U}_0;\\
\mr{M}_0 &=\emptyset = \mathcal M_0;\\
\mr{E}(v)\cap \mr{M}_0 &=\emptyset=\mathcal E_0(v),\quad\mbox{ for all }v\mbox{ in }\mr{U}_0=\mathcal{U}_0;\\
\mr{I}_0 &= \emptyset =\mathcal I_0;\\
\maB_0 &=\emptyset. 
\end{cases}
\end{equation*}
The result is then obtained by induction on $j$. Suppose that, at some time $j\in \llbracket 0,n-1 \rrbracket$ we have 
\begin{equation}
\label{eq:HR}
\begin{cases}
\mr{U}_j &=\mathcal{U}_j\\
a_j(v) &=d_j(v),\quad\mbox{ for all }v\mbox{ in }\mr{U}_j=\mathcal{U}_j,\\
\mr{M}_j &= \mathcal M_j;\\
\mr{E}(v)\cap \mr{M}_j &=\mathcal E_j(v),\quad\mbox{ for all }v\mbox{ in }\mr{U}_j=\mathcal{U}_j;\\
\mr{I}_j &=\mathcal I_j;\\
\maB_j &=\emptyset, 
\end{cases}
\end{equation}
meaning that at iteration $j$: (i) The sets of unexplored nodes are the same in the two constructions; (ii) For any such unexplored node, the degree in the remaining graph of the first construction equals its availability in the second construction; (iii) The matched nodes are the same in the two constructions; 
(iv) For any unexplored node, the neighboring nodes amongst the already matched nodes in the first construction coincide with the neighbors in the already constructed graph in the second construction; (v) The sets of isolated nodes are the same in the two constructions and (vi) There are no blocked nodes in the second construction. 

\medskip

First, in view of \eqref{eq:HR} note that, 
if $\,\mr{U}_j=\maU_j=\emptyset$ (case 0a)) then we keep everything unchanged at iteration $j+1$ 
in both algorithms, so \eqref{eq:HR} remains valid. Then, in case 0b),   
at Step $\mathring{1}$ and $1$ respectively we can set a common realization of 
$${\Phi}_{\mr{U}_j}((d_j(v)\,:\,v\in \mr{U}_j))\quad \mbox{and}\quad{\Phi}_{\mathcal{U}_j}((a_j(v)\,:\,v\in \mathcal{U}_j)),$$
leading to the same values for $\mathring{I}$ and $I$. Now, as $\mathcal G=\mr{G}$, node $I$ has the same neighborhood in $\mr{G}$ and $\mathcal G$. 
But from \eqref{eq:HR}, the neighbors of $I$ amongst the matched nodes in the first construction are exactly the already connected neighbors of $I$ in the graph $\mathcal G_j$ in the second. Therefore, the uniform pairing procedure at Step 2 necessarily leads to the same set of additional neighbors for $I$ as the set of remaining neighbors of $I$ in $\mr{G}_j$, namely 
$$\mr{E}_j(\mathring I)=\maN_j(I)=\{v_{j_{i_1}},...,v_{j_{i_\ell}}\}.$$ 
Then, as $\mr{G}$ is a graph, at Step 2, case 2a) cannot occur. So $I$ necessarily has neighbors in $\maG_j$, and so  
$$\maB_{j+1}=\emptyset.$$ 
We can again set a common realization of 
$${\Phi}'_{\mr{E}_j(\mathring I)}((d_j(v)\,:\,v\in \mr{E}_j(\mathring I)))\quad \mbox{and}\quad{\Phi}'_{\mathcal{N}_j(I)}((a_j(v)\,:\,v\in \mathcal{N}_j(I))),$$
leading to $\mathring{I}' = I'$. We then deduce that  
\[\begin{cases}
\mr{U}_{j+1}&=\mr{U}_j \setminus \{\mathring{I},\mathring{I}'\}=\mathcal{U}_j\setminus \{ I,I'\}=\mathcal{U}_{j+1};\\
\mr{M}_{j+1}&=\mr{M}_j\cup\{\mathring I,\mathring I'\}=\mathcal M_j\cup\{I,I'\}=\mathcal M_{j+1}.
\end{cases}\]
From the same argument as above, the uniform pairing procedure leads again to the same set of additional neighbors for $I'$ as the set of remaining neighbors of $\mathring I'$ in $\mr{G}_j$, namely
$$\mr{E}_j(\mathring I')=\{v_{j_{s_1}},...,v_{j_{s_m}}\}\cup\{v_{j_q}\}.$$ 
Therefore, for any $v\in \mr{U}_{j+1}=\mathcal U_{j+1}$ we are in the following cases:
\begin{itemize}
\item If $v\in\maN_j(I)\cap\maN_j(I)$, $v$ was chosen as a common neighbor of $I$ and $I'$ at step $j$, implying, as $\mr{G}=\mathcal G$, that it is 
a common neighbor of $\mathring I$ and $\mathring I'$ in $\mathcal G$. Then from \eqref{eq:HR} we get 
$$\mathcal E_{j+1}(v)=\mathcal E_j(v)\cup\{I,I'\}=(\mr{E}(v)\cap \mr{M}_j)\cup\{\mathring I,\mathring I'\}=\mr{E}(v)\cap \mr{M}_{j+1}.$$
Also, as $\mr{G}$ is a (simple) graph, 
$v$ loses exactly 2 neighbors at iteration $j+1$ of the first algorithm, and we also have that $b(v)=b'(v)=1$ in the second one. 
Thus \eqref{eq:lossmassk} implies 
\[a_{j+1}(v)=a_j(v)-2=d_j(v)-2=d_{j+1}(v).\]
\item Likewise, if $v\in\maN_j(I)\cap\maN_j(I)^c$, then $v$ shares an edge with $\mathring I$ and not with $\mathring I'$ in $\mr{G}$, and so
$$\mathcal E_{j+1}(v)=\mathcal E_j(v)\cup\{I\}=(\mr{E}(v)\cap \mr{M}_j)\cup\{\mathring I\}=\mr{E}(v)\cap \mr{M}_{j+1}.$$
Moreover, $v$ loses exactly one neighbor at iteration $j+1$ of the first algorithm, and so 
$$a_{j+1}(v)=a_j(v)-1=d_j(v)-1=d_{j+1}(v).$$
\item Similarly, if $v\in\maN_j(I)^c\cap\maN_j(I)$, we get 
$$\mathcal E_{j+1}(v)=\mathcal E_j(v)\cup\{I'\}=(\mr{E}(v)\cap \mr{M}_j)\cup\{\mathring I'\}=\mr{E}(v)\cap \mr{M}_{j+1};$$
\[a_{j+1}(v)=a_j(v)-1=d_j(v)-1=d_{j+1}(v).\]
\item Finally, if $v\in\maN_j(I)^c\cap\maN_j(I)^c$, we readily obtain that  
$$\mathcal E_{j+1}(v)=\mathcal E_j(v)=(\mr{E}(v)\cap \mr{M}_j)=\mr{E}(v)\cap \mr{M}_{j+1};$$
\[a_{j+1}(v)=a_j(v)=d_j(v)=d_{j+1}(v).\]
\end{itemize}
In particular, the nodes that become isolated (because their degree / their availability becomes null) at iteration $j+1$ are the same in the two constructions, in other words
\[\mr{I}_{j+1}\setminus \mr{I}_j = \mathcal I_{j+1} \setminus \mathcal I_{j},\]
which implies in turn that $\mr{I}_{j+1}=\mathcal I_{j+1}$. As a conclusion, assertion (\ref{eq:HR}) holds at iteration $j+1$. 

Thus we can conclude that there exists a coupling of the two systems, such that (\ref{eq:HR}) holds for all $j\in\llbracket 0,n \rrbracket$. Finally, (\ref{eq:defmuzero}) and (\ref{eq:defmu}) imply in particular that $\mathring{\mu}_j = \mu_j$ for all $j\in\llbracket 0,n \rrbracket$, concluding the proof. 

\end{proof}

The latter result establishes that, whenever the configuration model produces a given graph, the local matching algorithm simply behaves like an exploration algorithm on the prescribed graph. Therefore, to capture the asymptotic properties of the latter, it is sufficient to study the dynamics of the former. 
In the following Proposition, we write any point measure $\mu\in\M^n$ as 
\begin{equation}
\label{eq:indexingmu}
\mu :=\sum_{l=1}^{\mu\left(\N_+\right)} \delta_{a_{l}(\mu)}+\mu(0)\delta_0,
\end{equation}
where a sum over an empty set is fixed to zero, and where the $a_{l}(\mu),\,l\in \llbracket 1,\mu\left(\N_+\right)\rrbracket$, are the non-zero atoms of $\mu$, if any, indexed arbitrarily. 

\begin{proposition}
\label{prop:Mchain}
For any local matching criterion $\mathbf{\Phi}$, the 
sequence $\procn{\mu_j}$ defined by \eqref{eq:defmu}, is an homogeneous DTMC having transition kernel
\be
\label{eq:defL}
\mathfrak{L}F(\mu) := \E_{\mu} \( F(\mu_0 + \vartheta(\mu_0))  - F(\mu_0)  \),
\ee
for $\mu\in \M^n$, $F$ a bounded continuous function: $\M^n \rightarrow \R$, and for $\vartheta(\mu)$ defined as 
\be
\label{eq:defvartheta}
    \vartheta(\mu) =
    -\left\{\delta_{K_{{\mathbf \Phi}}(\mu)} + \left(\delta_{K'_{{\mathbf \Phi}}(\mu)}+\sum_{l=1}^{\mu\left(\N_+\right)} \(\delta_{a_{l}(\mu)} - \delta_{a_{l}(\mu) - b_{\bphi,l}(\mu)-b'_{\bphi,l}(\mu)} \)\right)\mathbb 1_{\mathscr E(\mu)}\right\}\mathbb 1_{\{\cro{\mu,\chi}>0\}}, 
\ee
where $\mathscr E(\mu)$ is an event independent of everything else and $K_{{\mathbf \Phi}}$, $K'_{{\mathbf \Phi}}$, $b_{\bphi,l}$ and $b'_{\bphi,l},\,l\in\N_+$,  
are mappings that depend only on draws independent of everything else, to be specified 
in the proof below. 
\end{proposition}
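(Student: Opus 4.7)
The plan is to verify two things in sequence: first, that $(\mu_j)$ satisfies the Markov property and time-homogeneity; second, that the one-step jump has the claimed form \eqref{eq:defvartheta}.

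For the Markov and homogeneity properties, I would observe that each transition $\mu_j\to\mu_{j+1}$ is driven by random draws whose conditional law given the past depends only on $\mu_j$ together with auxiliary randomness independent of everything else. Concretely, Step~1 of the construction of Section~\ref{subsec:localCM} applies $\Phi_{|\maU_j|}$ to the vector of availabilities of unexplored nodes, whose multiset is exactly $\mu_j$ restricted to $\N_+$; the uniform pairing of the stubs of $I$ with the remaining open half-edges is a hypergeometric-type draw whose distribution is entirely determined by the bunch sizes, hence by $\mu_j$; and the analogous statements hold for the draw of $I'$ in Step~2b) and the completion of the neighborhood of $I'$ in Step~3b). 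Crucially, none of these mechanisms involve $j$ explicitly. It follows that, conditionally on $\sigma(\mu_0,\dots,\mu_j)$, the law of $\mu_{j+1}$ coincides with its law conditionally on $\mu_j$, and the latter depends on $j$ only through $\mu_j$, giving both Markovianity and time-homogeneity.

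For the identification of $\vartheta(\mu)$, I would follow the branches of the construction. If $\cro{\mu,\chi}=0$ (Case~0a)), then $\maU_j=\emptyset$ and nothing changes, consistent with the factor $\mathbb 1_{\cro{\mu,\chi}>0}$ in \eqref{eq:defvartheta}. If $\cro{\mu,\chi}>0$, Step~1 removes from $\maU_j$ a node $I$ of availability $K_{\bphi}(\mu):=a_{\Phi_{\mu(\N_+)}(a_1(\mu),\dots,a_{\mu(\N_+)}(\mu))}(\mu)$; regardless of the subsequent branching, the Dirac mass $\delta_{K_{\bphi}(\mu)}$ is therefore subtracted from $\mu_j$. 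Let then $\mathscr E(\mu)$ denote the event that the uniform pairing of the $K_{\bphi}(\mu)$ stubs of $I$ reveals at least one neighbor distinct from $I$ itself; by construction this event is a measurable function of $\mu$ and of independent uniform draws. On $\mathscr E(\mu)^c$ (Case~2a)), $I$ is blocked and no further update occurs, so $\vartheta(\mu)=-\delta_{K_{\bphi}(\mu)}$. On $\mathscr E(\mu)$ (Case~2b)), a match $I'$ of availability $K'_{\bphi}(\mu)$ is drawn via $\Phi'$, contributing $-\delta_{K'_{\bphi}(\mu)}$; finally, for each remaining atom indexed by $l$, the availability drops from $a_l(\mu)$ to $a_l(\mu)-b_{\bphi,l}(\mu)-b'_{\bphi,l}(\mu)$, where $b_{\bphi,l}(\mu)$ (resp.\ $b'_{\bphi,l}(\mu)$) is the number of stubs of node $l$ selected in the uniform pairing of $I$ (resp.\ of $I'$), so that the contribution to the jump is $-(\delta_{a_l(\mu)}-\delta_{a_l(\mu)-b_{\bphi,l}(\mu)-b'_{\bphi,l}(\mu)})$. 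Assembling these contributions, with the convention $b_{\bphi,l}(\mu)=b'_{\bphi,l}(\mu)=0$ for the indices $l$ corresponding to $I$ or $I'$ themselves, yields exactly \eqref{eq:defvartheta}, and hence \eqref{eq:defL}.

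The main obstacle is that $\mu$ encodes only the multiset of availabilities and not any node labels, whereas Section~\ref{subsec:localCM} manipulates labeled nodes. I must therefore verify that the distribution of the jump really depends on $\mu$ alone. This reduces to three facts already built into the setup: the choice functions $\Phi_q,\Phi'_q$ of Definition~\ref{def:choice} act only on the vector of availabilities; the uniform-pairing procedure is exchangeable across bunches of equal size, so that the joint law of $(b_{\bphi,l},b'_{\bphi,l})$ is invariant under permutations preserving $\mu$; and the arbitrary but fixed indexing of non-zero atoms chosen in \eqref{eq:indexingmu} is immaterial, since the sum $\sum_{l=1}^{\mu(\N_+)}$ can be rewritten as an integral against the point measure $\mu$. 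Once this relabeling invariance is checked, the case analysis above delivers \eqref{eq:defL}.
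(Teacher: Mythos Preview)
Your proposal is correct and follows essentially the same case analysis as the paper: split on whether $\cro{\mu,\chi}>0$ (equivalently $\mu(\N_+)>0$), then on $\mathscr E(\mu)$ versus $\mathscr E(\mu)^c$, identifying $K_{\bphi}(\mu)$, $K'_{\bphi}(\mu)$, and the $b_{\bphi,l},b'_{\bphi,l}$ along the way with the same convention at the indices of $I$ and $I'$. Your explicit discussion of the Markov/homogeneity property and of the label-invariance issue is a welcome elaboration of points the paper handles more tersely (the paper simply re-indexes via $a_l(\mu)=a_j(v_{j_l})$ and exhibits $\mu_{j+1}$ as a function of $\mu_j$ and independent draws), but the underlying argument is the same.
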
 

\begin{proof}
Recall the construction of Section \ref{subsec:localCM}, and fix $\mu\in \M^n$, written as in \eqref{eq:indexingmu}. Fix also $j\in \llbracket 0,n-1 \rrbracket$. First observe that if $\mu_{j}=\mu$ 
such that $\mu(\N_+)=0$, then there are no more unexplored nodes at $j$ (case 0a)), and we get 
$\mu_{j+1}=\mu_j$ and thereby $\vartheta(\mu)=\mathbf 0$. 

Otherwise , we are in case 0b) and we have that $p_j=|\maU_j|=\mu(\N_+)$. We can then re-index the nodes of $\maU_j$ in a way that $a_{l}(\mu)=a_j(v_{j_l})$ for all $l\in\llbracket 1,p_j \rrbracket=\llbracket 1,\mu(\N_+) \rrbracket$. 
Then, at Step 1 we have that 
\begin{equation*}
I=v_{j_q}=\Phi_{|\maU_j|}\left(a_j(v_{j_1}),\cdots,a_j(v_{j_{p_j}})\right)=\Phi_{\mu\left(\N_+\right)}\left(a_\mu(1),\cdots,a_\mu(\mu\left(\N_+\right))\right).
\end{equation*}
Thus we can set 
\begin{equation}
\label{eq:defK}
K=a_j(I)=a_j(v_{j_q})=a_\mu(q)=:K_{\mathbf \Phi}(\mu),
\end{equation}
where $K_{\mathbf \Phi}$ is a mapping : $\M^n\to \N$, depending only on $\mathbf \Phi$ on a draws that are independent of everything else. 

Now, observe that the number $\ell$ of neighbors of $I$ in the uniform pairing procedure can also be written as $\ell=\ell(\mu)$, where $\ell(.)$ is a random mapping that depends only on $K_{\mathbf \Phi}(\mu)$, and on draws that are independent of everything else. 
Thus, given that $\mu_j=\mu$, the event 
\begin{equation}
\label{eq:defeventE}\mathscr E(\mu):=\{\mbox{node $I$ has new neighbors different from $I$ itself at iteration $j$}\}
\end{equation}
is indeed independent of $k$, and indeed depends only on $\mu$ and on independent draws. 
We have the following cases: 
\begin{itemize}
\item On the event $\{\mu_j=\mu\}\cap \mathscr E(\mu)^c$, we are in case 2a), i.e., $\ell(\mu)=0$. Then node $I$ is blocked, we let $K'_{\mathbf \Phi}(\mu)\equiv 0$, and in view of \eqref{eq:lossmasskbis} we obtain that 
\[\mu_{j+1} =\mu-\delta_K=\mu-\delta_{K_{\mathbf\Phi}(\mu)}. 
\]
\item On the event $\{\mu_j=\mu\}\cap \mathscr E(\mu)$, we are in case 2b). For all 
$l=1,...,\ell(\mu)$, the index $j_{i_l}$ of the $l$-th neighbor of $I$ obtained by uniform pairing, is just a function of $\mu$ and of independent draws. 
It can thus be written as $j_{i_l}=:j_{i_l(\mu)}$, where $i_l(.)$ is also a mapping that depends only on independent draws. Thus, at Step 2 we get  that 
\begin{equation*}
I'=v_{j_r}=\Phi'_{|\maN_j(I)|}\left(a_j(v_{j_{i_1}}),\cdots,a_j(v_{j_{i_{\ell}}})\right)=\Phi'_{\ell(\mu)}\left(a_\mu(i_1(\mu)),\cdots,a_\mu(i_{\ell(\mu)}(\mu))\right),
\end{equation*}
and we can set 
\begin{equation}
\label{eq:defK'}
K'=a_j(I')=a_j(v_{j_r})=a_\mu(r)=:K'_{\mathbf \Phi}(\mu),
\end{equation}
where $K'_{\mathbf \Phi}$ is another mapping depending only on $\mathbf \Phi$ and on independent draws. 
Then the two nodes $I$ and $I'$ are matched, 
and the corresponding Dirac masses at $j$ and $K'$ are substracted from $\mu$. Also, for any 
$l\in\llbracket 1,p_j \rrbracket\setminus\{q,r\}$, 
the number of edges $b(v_{j_l})$ (resp., $b'(v_{j_l})$) shared by $v_{j_l}$ with $I$ (resp., $I'$) after the procedure of uniform pairing of Step 1 (resp. Step 3b), is a quantity $b_{\bphi,l}(\mu)$ (resp., $b'_{\bphi,l}(\mu)$) that depend only on $\mu$ and of independent draws. 
Therefore, from \eqref{eq:lossmassk} we get that 
 \[\mu_{j+1} =\mu_j-\delta_j-\delta_{K'}
=\mu-\delta_{K_{\mathbf\Phi}(\mu)}-\delta_{K'_{\mathbf\Phi}(\mu)}-\sum_{l=1}^{\mu\left(\N_+\right)} \(\delta_{a_{l}(\mu)} - \delta_{a_{l}(\mu) - b_{\bphi,l}(\mu)-b'_{\bphi,l}(\mu)}\),
\]
with the convention $b_{\bphi,q}(\mu)=b_{\bphi,r}(\mu)=b'_{\bphi,q}(\mu)=b'_{\bphi,r}(\mu)=0$. 
\end{itemize} 
This completes the proof. 
\end{proof}

\medskip

{For any $\mu\in\M^n$, let us denote by 
$\maK_{\mathbf \Phi}(\mu)$ the distribution on $\N_+$ of the r.v. $K_{\mathbf \Phi}(\mu)$ defined by \eqref{eq:defK}, that is, 
\begin{equation}
\label{eq:distribK}
\maK_{\mathbf \Phi}(\mu)(k) = \pr{K_{\mathbf \Phi}(\mu)=k},\quad k \in \N_+. 
\end{equation}
Also, for all $k\in\N_+$ we let ${\maK}'_{\bphi}(\mu,k)$ be the distribution on $\N$ of the  r.v. $K'_{\mathbf \Phi}(\mu)$ defined by \eqref{eq:defK'}, 
conditional on $\{K_{\mathbf \Phi}(\mu)=k\}$, namely, 
\begin{equation}
\label{eq:distribK'}
{\maK}'_{\bphi}(\mu,k)(k')=\P\left(K'_{\mathbf \Phi}(\mu)=k'\,|\,j_{\mathbf \Phi}(\mu)=k\right),\,k'\in\N,
\end{equation}
observing that we have by definition \[{\maK}'_{\bphi}(\mu,k)(0)=\P\left({\mathscr E}(\mu)^c\,|\,j_{\mathbf \Phi}(\mu)=k\right),\,k\in\N_+.\]}

%
%
%
%

\bigskip

{To conclude this section, observe that the DTMC $\suiten{\mu_j}$} looses mass over time, 
since the availabilities of the nodes decrease as they loose half-edges. 
We formalize this intuitive fact as follows, 
\begin{proposition}
\label{Momest1}
Let {$f: \N \to \R$} be a nonnegative and increasing function. Then we have that 
\[
\cro{\mu_j,f} \geq \cro{\mu_{j+1},f},\quad j\in \llbracket 0,n-1 \rrbracket. 
\]
\end{proposition}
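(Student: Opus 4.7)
The plan is to read off the inequality directly from the one-step transition formula \eqref{eq:defvartheta}, which writes $\mu_{j+1} = \mu_j + \vartheta(\mu_j)$. Since $\cro{\mu_{j+1},f} - \cro{\mu_j,f} = \cro{\vartheta(\mu_j),f}$, it suffices to show $\cro{\vartheta(\mu),f}\le 0$ for any $\mu\in\M^n$ and any nonnegative increasing $f$.

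If $\cro{\mu,\chi}=0$, then $\vartheta(\mu)=\mathbf 0$ and there is nothing to prove. Otherwise, I would split according to the event $\mathscr E(\mu)$ introduced in the proof of Proposition \ref{prop:Mchain}. On the complement $\mathscr E(\mu)^c$ (the blocked case 2a)), the formula reduces to $\vartheta(\mu)=-\delta_{K_{\mathbf\Phi}(\mu)}$, hence
\[\cro{\vartheta(\mu),f} = -f\bigl(K_{\mathbf\Phi}(\mu)\bigr)\le 0,\]
because $f$ is nonnegative. On $\mathscr E(\mu)$ (case 2b)), I expand
\[\cro{\vartheta(\mu),f} = -f\bigl(K_{\mathbf\Phi}(\mu)\bigr)-f\bigl(K'_{\mathbf\Phi}(\mu)\bigr)-\sum_{l=1}^{\mu(\N_+)}\Bigl\{f\bigl(a_l(\mu)\bigr)-f\bigl(a_l(\mu)-b_{\bphi,l}(\mu)-b'_{\bphi,l}(\mu)\bigr)\Bigr\}.\]
The first two terms are $\le 0$ because $f\ge 0$. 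For the sum, observe that by construction the quantities $b_{\bphi,l}(\mu)$ and $b'_{\bphi,l}(\mu)$ are nonnegative integers with $b_{\bphi,l}(\mu)+b'_{\bphi,l}(\mu)\le a_l(\mu)$, since they count the edges shared by node $v_{j_l}$ with $I$ and $I'$ in the uniform pairing procedure, and the updated availability $a_{j+1}(v_{j_l})=a_l(\mu)-b_{\bphi,l}(\mu)-b'_{\bphi,l}(\mu)$ must be a nonnegative integer. Hence $a_l(\mu)-b_{\bphi,l}(\mu)-b'_{\bphi,l}(\mu)\le a_l(\mu)$ and, since $f$ is increasing,
\[f\bigl(a_l(\mu)\bigr)-f\bigl(a_l(\mu)-b_{\bphi,l}(\mu)-b'_{\bphi,l}(\mu)\bigr)\ge 0,\]
so the whole sum is $\le 0$.

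Combining both cases yields $\cro{\vartheta(\mu_j),f}\le 0$, hence $\cro{\mu_{j+1},f}\le \cro{\mu_j,f}$. There is no real obstacle in this argument: the statement is essentially the observation that the dynamics only removes atoms and shifts the remaining atoms to the left on $\N$, which is compatible with the two monotonicity assumptions on $f$. The only minor point to mention is the nonnegativity of the residual availabilities, which is built into the construction of Section \ref{subsec:localCM}.
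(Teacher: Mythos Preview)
Your proof is correct and follows essentially the same approach as the paper: reduce to showing $\cro{\vartheta(\mu),f}\le 0$, expand via \eqref{eq:defvartheta}, and use nonnegativity of $f$ on the $-f(K_{\mathbf\Phi}(\mu))$ and $-f(K'_{\mathbf\Phi}(\mu))$ terms together with monotonicity of $f$ on the sum. You spell out the case distinction and the nonnegativity of residual availabilities a bit more explicitly than the paper does, but the argument is the same.
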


\begin{proof}
{For all such $j$ and $f$}, in view of Proposition \ref{prop:Mchain} we get that 
\begin{equation*}
\cro{\mu_{j+1},f} - \cro{\mu_j,f} = \cro{\vartheta(\mu_j),f},\quad j\in \llbracket 0,n-1 \rrbracket. 
\end{equation*}
But if follows from \eqref{eq:defvartheta} that for all $\mu$, 
\begin{multline*}
\cro{\vartheta(\mu),f}\\
 = - \left\{ f(K_{{\mathbf \Phi}}(\mu)) + \left(f(K'_{{\mathbf \Phi}}(\mu)) + \sum_{l=1}^{\mu\left(\N_+\right)} \(f\left(a_{l}(\mu)\right) - f\left(a_{l}(\mu) - b_{\bphi,l}(\mu)-b'_{\bphi,l}(\mu)\right) \)\right) \mathbb 1_{\mathscr E(\mu)}\right\}\mathbb 1_{\{\cro{\mu,\chi}>0\}},
\end{multline*}
a quantity that is non-positive since $f$ is non-negative and non-decreasing. 
\end{proof}

\bigskip 
\bigskip

\section{Approximated dynamics}
\label{sec:constructionhat}
\subsection{An alternative construction}
To identify the large-graph limiting behavior of the DTMC $\procn{\mu_j}$ defined in Section \ref{sec:Markov}, it is convenient to 
couple the construction of Section \ref{subsec:localCM} to an alternative one, that is defined as follows. Fix a counting measure $\mu \in \M_p$ such that {$p:=\cro{\mu,\mathbb 1_{\N_+}}>0$}, 
denoted as in \eqref{eq:indexingmu}. 
 To each $i \in \N_+$ are associated $\mu(i)$ non-empty buckets of $i$ items. ({\em Buckets} and {\em items} correspond to {\em nodes} and {\em half-edges} in the original construction.)
So there are $p$ buckets and a total
of $M=\cro{\mu,\chi}$ items. We label the buckets arbitrarily, as $B_1,...,B_p$. 
For any $l=1,...,p$, we set the cardinality of bucket $B_l$ as $a_l:=a_l(\mu)$. We also label the items from $1$ to $M$ as follows: 
items 1 to $a_1$ are the elements of bucket $B_1$ labeled arbitrarily, items $a_1+1,...,a_1+a_2$ are the elements of bucket $B_2$, and so on. 

We perform the following random experiment, {which mimics the dynamics of Section \ref{subsec:localCM} at any given step, but when the sampling is performed with (rather than {\em without}) replacement:}
\begin{itemize}
\item[(i)] We draw $$J=\Phi_p(a_1,...,a_p)\in\llbracket 1,p \rrbracket,$$ 
and set $\wh I:=B_J$, the corresponding bucket. We denote by $\wh{K}=a_J$, its cardinality. 
{Observe that, by identifying buckets with nodes and items with half-edges, $\wh K$ follows the very distribution $\maK_{\bphi}(\mu)$ 
defined by \eqref{eq:distribK}.} 
\item[(ii)] Then we draw uniformly at random, and \emph{with replacement}, $\wh{K}$ (possibly equal) items among $M$,  
denoted by $\wh{H}_1,...,{\wh H}_{\wh K}$. For all $l \in \llbracket 1,\wh K\rrbracket$ we denote by 
$B_{j_l}$, the bucket $\wh H_l$ belongs to. Note, that various indexes $j_l$ may be equal to one another and/or equal to $j$. Then we distinguish two cases: 
	\begin{itemize}
	\item[(iia)] On the event 
	$$\wh{\mathscr E}(\mu)^c:=\left\{\mbox{All indexes $j_l,\,l\in \llbracket 1,\wh K \rrbracket$, are equal to $J$}\right\},$$ 
	we set $\wh{K}':=0$, and terminate the procedure. 
	\item[(iib)] Else, on $\wh{\mathscr E}(\mu)$, we let $\ell$ be the number of distinct buckets, and distinct from $\wh I$, that were chosen at step (i), 
	and denote 
	by $B_{s_1},...,B_{s_\ell}$, these distinct buckets. We draw $$J'=\Phi'_{\ell}(a_{s_1},...,a_{s_{\ell}})\in \llbracket 1,\ell \rrbracket,$$  
        and set $\wh I':=B_{j'}$, the corresponding bucket. Note that we have $J'=j_s$ for some $s\in\llbracket 1,\wh K \rrbracket$. 
        We denote by $\wh{K}'=a_{J'}$ the cardinality of $\wh I'$, and go to step (iii).
	\end{itemize}
	{For all $k\in\N_+$, 
        we let $\wh{\maK}'_{\bphi}(\mu,k)$ be the 
        distribution of $\wh K'$ on $\N$ conditional on $\{\wh K=k\}$, namely, 
\begin{equation}
\label{eq:distribhatK'}
\P_\mu\left(\wh K'=k'\,|\,\wh K=k\right)=\wh{\maK}'_{\bphi}(\mu,k)(k'),\,k'\in\N,
\end{equation}
and by definition we have that 
\[\wh{\maK}'_{\bphi}(\mu,k)(0)=\P_\mu\left(\wh{\mathscr E}(\mu)^c\,|\,\wh K=k\right),\,\mbox{for all }k.\]
However, as the choices are made with replacement, for any $k$ the distribution $\wh{\maK}'_{\bphi}(\mu,k)$ does not coincide 
in general with ${\maK}'_{\bphi}(\mu,k)$ defined by \eqref{eq:distribK'}.}
\medskip
\item[(iii)] We are in the following cases: 
	\begin{itemize} 
	\item[(iiia)] If $\wh K'=1$, we terminate the procedure;
	\item[(iiib)] If $\wh K'>1$, we draw uniformly at random, and with replacement, $\wh{K}'-1$ items among $M$,  
         denoted by ${i}'_1,...,{i}'_{\wh K'-1}$. Then, for all $l \in \llbracket 1,\wh K'-1 \rrbracket$ we denote by $B_{j'_l}$, 
         the bucket $i'_l$ belongs to, and terminate the procedure. 
         \end{itemize}
\end{itemize}
Let us denote, for all $y\in\N_+$, by
\[\begin{cases}
\wh{N}_\mu(y) &=\mbox{Card} \left\{l\in \llbracket 1,\wh K\rrbracket\setminus\{s\}\,:\,a_{j_l}=y\right\};\\
\wh{N}'_\mu(y) &=\mbox{Card} \left\{l\in \llbracket 1,\wh K'-1\rrbracket\,:\,a_{j'_l}=y\right\}\mathbb 1_{\wh K'>1},
\end{cases}\]
the number of (possibly equal) items drawn at step (ii) other than $i_s$ (respectively, at step (iii)) and belonging to a bucket of size $y$, 
%
and define 
{\begin{equation}
\wh{\vartheta}(\mu) =-\left(\delta_{\wh{K}} + \left(\delta_{\wh{K}'}
+\sum_{y=1}^{\infty}
\left(\wh{N}_\mu(y)+\wh{N}'_\mu(y)\right)\left(\delta_{y}-\delta_{y-1}\right)\right)\mathbb 1_{\wh{\maE}(\mu)}
\right)\mathbb 1_{\cro{\mu,\chi}>0}. 
\label{eq:defwhvartheta}
\end{equation}}

\noindent For all $f\in\maC_b$, define the operator $\wh{\mathfrak L}_f$ on $\M^n$ by 
\begin{equation*}
\wh{\mathfrak L}_f(\mu)=\E_\mu\left[\cro{\wh{\vartheta}(\mu),f}\right],\quad \mu\in \M^n.
\end{equation*}
Observe the following result, 
\begin{lemma}
\label{lemma:defLbar}
For all $\mu\in\M^n$ such that $\cro{\mu,\chi}>0$, for all $f\in\maC_b$ we have that 
\begin{equation}
\label{eq:defLhat}
\wh{\mathfrak L}_f(\mu)=- \cro{\maK_{\bphi}(\mu),f+\cro{\wh{\maK}'_{\bphi}(\mu,.),f}}
-{\cro{\mu,\chi\nabla f}\over \cro{\mu,\chi}}
\cro{\maK_{\bphi}(\mu),\chi-\mathbb 1+\cro{{\wh{\maK}'_\bphi(\mu,.)},\chi-\mathbb 1}},
\end{equation}
where the measures $\maK_\bphi(\mu)$ and ${\wh{\maK}'_\bphi(\mu,k)}$, $k\in\N_+$, are defined respectively by \eqref{eq:distribK} and \eqref{eq:distribhatK'}.
\end{lemma}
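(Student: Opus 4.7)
The plan is to compute $\wh{\mathfrak L}_f(\mu)=\E_\mu[\cro{\wh\vartheta(\mu),f}]$ directly by expanding the definition \eqref{eq:defwhvartheta}. Since $\cro{\mu,\chi}>0$, the outer indicator equals $1$; applying $\cro{\cdot,f}$ linearly and using $\cro{\delta_y-\delta_{y-1},f}=\nabla f(y)$ yields
\[
\cro{\wh\vartheta(\mu),f}=-f(\wh K)-\Bigl(f(\wh K')+\sum_{y\geq 1}(\wh N_\mu(y)+\wh N'_\mu(y))\nabla f(y)\Bigr)\mathbb 1_{\wh{\maE}(\mu)},
\]
so taking expectation reduces the proof to evaluating four separate terms.

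The first two expectations follow directly from the definitions \eqref{eq:distribK} and \eqref{eq:distribhatK'}: one has $\E_\mu[f(\wh K)]=\cro{\maK_\bphi(\mu),f}$, and conditioning on $\wh K=k$ together with the construction-level identity $\{\wh K'\geq 1\}=\wh{\maE}(\mu)$ gives $\E_\mu[f(\wh K')\mathbb 1_{\wh{\maE}(\mu)}\mid \wh K=k]=\sum_{k'\geq 1}f(k')\wh{\maK}'_\bphi(\mu,k)(k')$, which integrated in $\wh K$ contributes $\cro{\maK_\bphi(\mu),\cro{\wh{\maK}'_\bphi(\mu,\cdot),f}}$ to \eqref{eq:defLhat}, modulo the boundary $k'=0$ atom discussed below.

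The two remaining expectations are the crux of the proof and exploit the sampling-with-replacement structure of the alternative construction: at both step (ii) and step (iii), the items are drawn i.i.d.\ uniformly among the $M=\cro{\mu,\chi}$ items, so that any individual draw lies in a bucket of size $y$ with probability $y\mu(y)/\cro{\mu,\chi}$. Rewriting the $\nabla f$-sums over $y$ as sums over the draws themselves,
\[
\sum_{y\geq 1}\wh N_\mu(y)\nabla f(y)=\sum_{l\in\llbracket 1,\wh K\rrbracket\setminus\{s\}}\nabla f(a_{j_l}),\qquad \sum_{y\geq 1}\wh N'_\mu(y)\nabla f(y)=\sum_{l=1}^{\wh K'-1}\nabla f(a_{j'_l}),
\]
and using $a_{j_s}=\wh K'$ on $\wh{\maE}(\mu)$, a direct conditional-expectation computation on $(\wh K,\wh K')$ returns $(k-1)\cro{\mu,\chi\nabla f}/\cro{\mu,\chi}$ and $(k'-1)\cro{\mu,\chi\nabla f}/\cro{\mu,\chi}$ respectively. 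The shift by $-1$ reflects the excluded draw at position $s$ for step (ii), and the half-edge of $\wh I'$ already used up by the edge $\{\wh I,\wh I'\}$ for step (iii). Integration in $(\wh K,\wh K')$ against $\maK_\bphi(\mu)$ and $\wh{\maK}'_\bphi(\mu,\cdot)$ then produces the prefactor $\cro{\mu,\chi\nabla f}/\cro{\mu,\chi}$ multiplied by $\cro{\maK_\bphi(\mu),\chi-\mathbb 1+\cro{\wh{\maK}'_\bphi(\mu,\cdot),\chi-\mathbb 1}}$.

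The main technical obstacle is the bookkeeping of the boundary event $\wh{\maE}(\mu)^c$: on this event $\wh K'=0$ and $\wh N_\mu=\wh N'_\mu=0$, while the formal pairings on the right-hand side of \eqref{eq:defLhat} still receive contributions from the atom $\wh{\maK}'_\bphi(\mu,k)(\{0\})=\P_\mu(\wh{\maE}(\mu)^c\mid\wh K=k)$. Verifying that the $k'=0$ contributions $f(0)$ in $\cro{\wh{\maK}'_\bphi(\mu,\cdot),f}$ and $-1$ in $\cro{\wh{\maK}'_\bphi(\mu,\cdot),\chi-\mathbb 1}$ exactly compensate the discrepancies between the $\sum_{k'\geq 1}$-restricted expectations actually produced in the previous step and the full $\N$-pairings appearing on the right-hand side of \eqref{eq:defLhat} completes the identification.
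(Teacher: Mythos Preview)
Your overall strategy matches the paper's: expand $\cro{\wh\vartheta(\mu),f}$ via \eqref{eq:defwhvartheta}, take expectations term by term, and use the i.i.d.\ size-biased law of the bucket sizes at steps (ii) and (iii). The paper proceeds identically, arriving at \eqref{eq:stem} and then identifying each of the three expectations with the corresponding pairings.

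Your closing step, however---the claim that the $k'=0$ contributions $f(0)$ in $\cro{\wh\maK'_\bphi(\mu,\cdot),f}$ and $-1$ in $\cro{\wh\maK'_\bphi(\mu,\cdot),\chi-\mathbb 1}$ exactly compensate---does not hold. The two boundary terms coming from the atom $\wh\maK'_\bphi(\mu,k)(0)$ combine to
\[
\Bigl(-f(0)+\tfrac{\cro{\mu,\chi\nabla f}}{\cro{\mu,\chi}}\Bigr)\cro{\maK_\bphi(\mu),\wh\maK'_\bphi(\mu,\cdot)(0)},
\]
which has no reason to vanish for general $f$ (take $\mu=2\delta_1$: then $\cro{\mu,\chi\nabla f}/\cro{\mu,\chi}=f(1)-f(0)$, while $f(0)$ is free). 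A related imprecision is your conditional-expectation claim for the $\wh N_\mu$ sum: conditioning further on $\wh K'=k'$ generally distorts the joint law of the remaining draws (under \textsc{uni-min}, for instance, it forces all non-$J$ bucket sizes to be $\geq k'$), so the asserted value $(k-1)\cro{\mu,\chi\nabla f}/\cro{\mu,\chi}$ does not follow from that conditioning. The paper sidesteps both points by writing every $k'$-sum explicitly over $\N_+$ (so the atom at $0$ never enters) and conditioning the $\wh N_\mu$ computation on $\wh K$ only; this is consistent with the limiting measures $\bar\maK'_\bphi(\bar\mu,k)\in\M_F(\N_+)$ of Definition~\ref{def:polCont}, and the residual $O\bigl(\P_\mu(\wh{\maE}(\mu)^c)\bigr)$ discrepancy is of order $1/n$ on $\M^n_{\beta,M}$ and is absorbed into the estimate of Proposition~\ref{prop:generatorapprox}.
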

\begin{proof}
From \eqref{eq:defwhvartheta}, we immediately get that 
\begin{equation}
\wh{\mathfrak L}_f(\mu)=-\E_\mu\left[f\left(\wh{K}\right)\right] - \E_\mu\left[f\left(\wh{K}'\right)\right]
-\sum_{y=1}^{\infty}
\E_\mu\left[\wh{N}_\mu(y)+\wh{N}'_\mu(y)\right]\nabla f(y).\label{eq:stem}
\end{equation}

\medskip

\noindent Now observe, first, that  
\[
\E_{\mu}\left[f\left(\wh K\right)\right] = \cro{\maK_{\bphi}(\mu),f},\]
and second,
\[\E_{\mu}\left[f\left(\wh K'\right)\right] = \displaystyle\sum_{k\in\N_+}\displaystyle\sum_{k'\in\N_+} f(k')\wh{\maK}'_{\bphi}(\mu,k)(k'){\maK}_{\bphi}(\mu)(k)=\cro{\maK_{\bphi}(\mu),\cro{\wh{\maK}'_{\bphi}(\mu,.),f}}.\]
Also, given that $\wh K_{\bphi}(\mu)=k\in\N_+$, at step (ii), for any $l \in \llbracket 1,k \rrbracket$, 
the size $a_{j_l}$ of the bucket to which item $i_s$ belongs follows, independently of everything else, the size-biased distribution associated to $\mu$, 
namely for all $y\in\N_+$, 
\[\mathbb P_\mu(a_{j_l}=y)={y\mu(y)\over \cro{\mu,\chi}}\cdot\]
Consequently, for every $y\in\N_+$, $\wh{N}_\mu(y)$ follows the Binomial distribution with parameters $k-1$ and ${y\mu(y)\over \cro{\mu,\chi}}$, understood as $0$ a.s. if $k=1$.  
Likewise, given that $\wh j_{\bphi}(\mu)=k\in\N_+$ and $\wh K'_{\bphi}(\mu)=k'\in\N_+$, for every $y\in\N_+$, $\wh{N}_\mu(y)$ follows 
the Binomial distribution with parameters $k'-1$ (understood as 0 a.s. if $k'=1$) and ${y\mu(y)\over \cro{\mu,\chi}}$. 
All in all, \eqref{eq:stem} implies that 
\begin{multline*}
\E_\mu\left[\cro{\wh{\vartheta}(\mu),f}\right] =
- \cro{\maK_{\bphi}(\mu),f}-\cro{\maK_{\bphi}(\mu),\cro{\wh{\maK}'_{\bphi}(\mu,.),f}}\\
-\sum_{y=1}^{\infty}\nabla f(y)\sum_{k\in\N_+}
\left\{(k-1){y\mu(y)\over \cro{\mu,\chi}}+\sum_{k'\in\N_+}(k'-1){y\mu(y)\over \cro{\mu,\chi}}\wh\maK'_{\bphi}(\mu,k)(k')\right\}\maK_{\bphi}(\mu)(k)\\
\shoveleft{= - \cro{\maK_{\bphi}(\mu),f}-\cro{\maK_{\bphi}(\mu),\cro{\wh{\maK}'_{\bphi}(\mu,.),f}}
-{\cro{\mu,\chi\nabla f}\over \cro{\mu,\chi}}
\Bigl\{\cro{\maK_{\bphi}(\mu),\chi-\mathbb 1}+\cro{\maK_{\bphi}(\mu),\cro{\wh\maK'_{\bphi}(\mu,.),\chi-\mathbb 1}}\Bigl\}}\\
=- \cro{\maK_{\bphi}(\mu),f+\cro{\wh{\maK}'_{\bphi}(\mu,.),f}}
-{\cro{\mu,\chi\nabla f}\over \cro{\mu,\chi}}
\Bigl\{\cro{\maK_{\bphi}(\mu),\chi-\mathbb 1+\cro{\wh\maK'_{\bphi}(\mu,.),\chi-\mathbb 1}}\Bigl\},
\end{multline*}
as desired. 
\end{proof}

\noindent Now let us define the following event, 
\begin{equation}
\label{eq:defeventT}
\wh{\mathscr T}(\mu) =\left\{\mbox{The buckets }\wh I,\,\,B_{j_1},...,\,B_{j_{\wh K}},\,B_{j'_1},\,...,\,B_{j'_{\wh K'-1}}\mbox{ are all distinct}\right\},
\end{equation}
with an obvious meaning if $\wh K'=1$. Observe that we clearly have  $\wh{\mathscr T}(\mu) \subset \wh{\mathscr E}(\mu)$. 
We have the following result. 
\begin{lemma} 
\label{lemma:simple}
For all $\mu\in\M_p$ having positive first moment and finite third moment, the event defined by \eqref{eq:defeventT} satisfies 
\begin{equation*} 
\P_\mu\left( \wh{\mathscr T}(\mu)^c\right) \le 
 {\cro{\mu,\chi^2} \over \cro{\mu,\chi}^2}\cro{\maK_\bphi(\mu),\chi^2+\cro{{\wh{\maK}'_\bphi(\mu,.)},\chi^2}+(\mathbb 1+2\chi)\cro{{\wh{\maK}'_\bphi(\mu,.)},\chi}}+{\cro{\maK_\bphi(\mu),\chi^2}\over \cro{\mu,\chi}}\cdot 
\end{equation*}
\end{lemma}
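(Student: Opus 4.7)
The plan is a straightforward union bound over pairs. The event $\wh{\mathscr T}(\mu)^c$ occurs when at least two entries of the list $\wh I, B_{j_1}, \ldots, B_{j_{\wh K}}, B_{j'_1}, \ldots, B_{j'_{\wh K'-1}}$ coincide as buckets. I condition on $(\wh K,\wh K')=(k,k')$ (with $k'=0$ corresponding to case (iia), where $\wh{\mathscr T}(\mu)^c$ holds trivially but is already accounted for by the ``some $\wh H_l$ lies in $\wh I$'' contribution) and examine each type of pair. Since every drawn item at step (ii) or step (iii) is marginally uniform on the $M:=\cro{\mu,\chi}$ items, the relevant per-pair collision probabilities are of three kinds: a pair $(\wh I,\cdot)$ collides with probability $k/M$ (since $\wh I$ has $k$ items); a pair of two ``generically independent'' drawn items collides with probability $\sum_{y\ge 1}\mu(y)(y/M)^2=\cro{\mu,\chi^2}/M^2$; and the special pair $(B_{j_s},B_{j'_{l'}})$, coupled through $B_{j_s}=\wh I'$ of size $k'$, collides with probability $k'/M$.

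Counting the pairs of each type, the union bound yields
\[\P_\mu\left(\wh{\mathscr T}(\mu)^c \mid \wh K=k,\wh K'=k'\right)\le \frac{k^2+k(k'-1)_+ + k'(k'-1)_+}{M}+\binom{k+(k'-1)_+}{2}\frac{\cro{\mu,\chi^2}}{M^2}.\]
Averaging over $(\wh K,\wh K')$ against the kernels $\maK_\bphi(\mu)$ and $\wh\maK'_\bphi(\mu,\cdot)$, using $(k'-1)_+\le k'$ and $\binom{a}{2}\le a^2/2$, and recognising the moments as $\E_\mu[\wh K^2]=\cro{\maK_\bphi(\mu),\chi^2}$, $\E_\mu[\wh K'^2]=\cro{\maK_\bphi(\mu),\cro{\wh\maK'_\bphi(\mu,\cdot),\chi^2}}$, $\E_\mu[\wh K']=\cro{\maK_\bphi(\mu),\cro{\wh\maK'_\bphi(\mu,\cdot),\chi}}$, and $\E_\mu[\wh K\wh K']=\cro{\maK_\bphi(\mu),\chi\cro{\wh\maK'_\bphi(\mu,\cdot),\chi}}$, this rearranges to an expression of the form $\E_\mu[\wh K^2]/M$ plus a $\cro{\mu,\chi^2}/M^2$-term multiplied by a quadratic combination of the moments of $\wh K$ and $\wh K'$ (which is why the finite third moment assumption on $\mu$ guarantees finiteness of all quantities involved).

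The final step invokes the inequality $\cro{\mu,\chi^2}\ge\cro{\mu,\chi}=M$, valid because $y^2\ge y$ for $y\ge 1$. It implies $1/M\le\cro{\mu,\chi^2}/M^2$, which allows me to transfer the surviving $1/M$ contributions (those carrying $\E_\mu[\wh K\wh K']$ and $\E_\mu[\wh K'(\wh K'-1)_+]$) into the $\cro{\mu,\chi^2}/M^2$-prefactor at the cost of enlarging coefficients, reaching exactly the compact measure-theoretic form in the statement. The only genuine (but minor) bookkeeping obstacle is correctly isolating the coupled pair $(B_{j_s}, B_{j'_{l'}})$, whose collision probability is $k'/M$ rather than the generic $\cro{\mu,\chi^2}/M^2$; once this pair is pulled out of the ``generic'' count, the remainder is a routine reshuffling of moments.
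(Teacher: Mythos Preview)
Your overall plan (a union bound over pairs of list entries, followed by averaging over $(\wh K,\wh K')$ and the absorption $1/M\le\cro{\mu,\chi^2}/M^2$) is exactly the paper's strategy, and the final reshuffling of moments is routine once the intermediate bound is in place.

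There is, however, a genuine gap in the conditioning. You write ``I condition on $(\wh K,\wh K')=(k,k')$'' and then claim that each step-(ii) item is marginally uniform on the $M$ items, so that the pairs $(\wh I,B_{j_l})$ and $(B_{j_l},B_{j_{l'}})$ collide with probabilities $k/M$ and $\cro{\mu,\chi^2}/M^2$ respectively. But $\wh K'$ is a deterministic function of the step-(ii) draws: once you condition on $\wh K'=k'$, \emph{every} $B_{j_l}$ (not only $B_{j_s}$) is biased. For instance, under \textsc{uni-min}, $\wh K'=k'$ forces every distinct non-$\wh I$ bucket among the $B_{j_l}$ to have size at least $k'$. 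So the ``generic'' per-pair probabilities you invoke are simply not valid after this conditioning, and your displayed conditional bound on $\P_\mu(\wh{\mathscr T}(\mu)^c\mid \wh K=k,\wh K'=k')$ is unjustified. Your instinct to isolate the coupled pair $(B_{j_s},B_{j'_{l'}})$ is right but insufficient.

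The paper circumvents this by splitting the union bound into three blocks and conditioning differently for each: (a) collisions \emph{within} step~(ii) (including $B_{j_l}=\wh I$) are bounded conditioning \emph{only} on $\wh K=k$, where uniformity is legitimate; (b) collisions within step~(iii) are bounded conditioning on $(\wh K,\wh K')$, which is fine since the step-(iii) draws are fresh; (c) for the cross block $\{B_{j'_{l'}}\in\{\wh I,B_{j_1},\ldots,B_{j_k}\}\}$ the paper does condition on $(\wh K,\wh K')$ but \emph{avoids} per-pair probabilities for the step-(ii) side altogether: it bounds via the total random mass $\wh D=\sum_i a_{s_i}-a_{J'}$ of the distinct step-(ii) buckets (so that $\P(B_{j'_{1}}\in\{\ldots\}\mid\wh K,\wh K',\wh D)\le (k+k'+\wh D)/M$), and then controls $\E[\wh D\mid\wh K,\wh K']$ by $(k-1)\cro{\mu,\chi^2}/\cro{\mu,\chi}$. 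That last step is what replaces your flawed ``generic cross-pair'' probability and yields the $(2k+k')\cro{\mu,\chi^2}/M^2$ contribution you would need.
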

\begin{proof}
\noindent Plainly, 
\begin{multline*} 
\P_\mu\left(\wh{\mathscr T}(\mu)^c\right)
\le \P_\mu\left(B_{j_s} = B_{j_l} \mbox{ for some }s\ne l \in \llbracket 1,\wh K\rrbracket \right)
+ \P_\mu \left(B_{j'_s} = B_{j'_l} \mbox{ for some }s\ne l \in \llbracket 1,\wh K'-1\rrbracket \right)\\
\shoveright{+ \P_\mu\left(B_{j'_s} \in \{ \wh{I}, B_{j_1},...,B_{j_{\wh K}} \}\mbox{ for some }s\in \llbracket 1,\wh K'-1 \rrbracket \right)
+\P_\mu\left(\wh B_{j_s}=\wh{I} \mbox{ for some }s\in \llbracket 1,\wh K \rrbracket \right)}\\
\shoveleft{\le \sum_{k=2}^{\infty} \sum_{s\ne l\in \llbracket 1,k \rrbracket}\!\!\! \P_\mu\left(B_{j_s} = B_{j_l} \,|\,\wh K=k\right)\P_\mu\left(\wh K=k\right)
+ \sum_{k=1}^{\infty} \sum_{s\in \llbracket 1,k \rrbracket}\!\!\! \P_\mu\left(B_{j_s} = \wh I \,|\,\wh K=k\right)\P_\mu\left(\wh K=k\right)}\\ 
\shoveright{+ \sum_{k=1}^{\infty}\sum_{k'=3}^{\infty}\sum_{s\ne l\in \llbracket 1,k'-1 \rrbracket}\!\!\!\P_\mu\left(B_{j'_s} = B_{j'_s}\, | \, \wh K=k, \, \wh K'=k'\right)
\P_\mu\left(\wh K=k , \, \wh K'=k'\right)}\\
 +\sum_{k=2}^{\infty} \sum_{k'=2}^{\infty} \sum_{s=1}^{k'-1} \P_\mu\left( B_{j'_s} \in \{ \wh{I},B_{j_1},...,B_{j_{\wh K}} \}\, | \, \wh K=k, \, \wh K'=k'\right)\P_\mu\left(\wh K=k , \, \wh K'=k'\right).
\end{multline*}
\noindent By the symmetry of uniform draws, we deduce that
\begin{multline}
\P_\mu\left(\wh{\mathscr T}(\mu)^c\right)\le \sum_{k=2}^{\infty}{k\choose 2} \P_\mu\left(B_{j_1} = B_{j_2} \,|\,\wh K=k\right)\maK_\bphi(\mu)(k)
+\sum_{k=1}^{\infty} k\P_\mu\left(B_{j_1} = \wh I \,|\,\wh K=k\right)\maK_\bphi(\mu)(k)\\
\shoveright{+  \sum_{k=1}^{\infty}\sum_{k'=3}^{\infty}{k'-1\choose 2} \P_\mu\left(B_{j'_1} = B_{j'_2}\,|\,\wh K=k,\,\wh K'=k'\right){\wh{\maK}'_\bphi(\mu,k)(k')}\maK_\bphi(\mu)(k)}\\
+ \sum_{k=2}^{\infty}\sum_{k'=2}^{\infty}(k'-1)\P_\mu\left(B_{j'_1} \in \{ \wh{I},B_{j_1},...,B_{j_{\wh K}} \}\,|\,\wh K=k,\,\wh K'=k'\right)
{\wh{\maK}'_\bphi(\mu,k)(k')}\maK_\bphi(\mu)(k)\cdot
\label{eq:approx0}
\end{multline}

\noindent But, for all $k\ge 2$, recalling that the size of $B_{j_1}$ is size-biased we obtain that 
\begin{align}
 \P_\mu\left(B_{j_1} = B_{j_2} \,|\,\wh K=k\right)
&= \sum_{y=1}^{\infty} \P_\mu\left(B_{j_1} = B_{j_2}\,|\,\wh K=k,a_{j_1}(\mu)=y\right)\P_\mu\left(a_{j_1}(\mu)=y\,|\,\wh K=k\right)
\notag\\
&=\sum_{y=1}^{\infty} {y\over \cro{\mu,\chi}}{y\mu(y)\over \cro{\mu,\chi}}={\cro{\mu,\chi^2} \over \cro{\mu,\chi}^2}\cdot\label{eq:approx1}
\end{align}
\noindent Likewise, for all $k\ge 1$ we have 
\begin{equation}
\P_\mu\left(B_{j_1} = \wh I \,|\,\wh K=k\right) ={k\over \cro{\mu,\chi}}\cdot\label{eq:approx2}
\end{equation}
and for all $k\ge 1$ and $k'\ge 3$ we obtain that
\begin{equation}
\P_\mu\left(B_{j'_1} = B_{j'_2}\,|\,\wh K=k,\,\wh K'=k'\right)={\cro{\mu,\chi^2} \over \cro{\mu,\chi}^2}\cdot\label{eq:approx3}
\end{equation}

\noindent Regarding the final term, denote
\[\wh{D} = \sum_{i=1}^{\ell}  a_{s_i}(\mu)-a_{J'}(\mu),\]
the sum of the sizes of all distinct buckets chosen at step (i) other than $\wh I'$, if any. 
\noindent Then, for all $k\ge 2,k' \ge 2$ and $d \ge 1$ we get that 
\begin{multline*}
\P_\mu\left(B_{j'_1} \in \{ \wh{I},B_{j_1},...,B_{j_{\wh K}} \}\,|\,\wh K=k,\,\wh K'=k'\right)\\
\begin{aligned}
&=\sum_{d=0}^\infty \P_\mu(B_{j'_1} \in \{ \wh{I},B_{j_1},...,B_{j_{\wh K}} \}\,|\,\wh K=k,\wh K'=k', \wh D=d)
\P_\mu\left(\wh D=d\,|\,\wh K=k,\wh K'=k'\right)\\
&\le \sum_{d=0}^\infty{k+k'+d \over \cro{\mu,\chi}}\P_\mu\left(\wh D=d\,|\,\wh K=k,\wh K'=k'\right)
={k+k'+\mathbb{E}_\mu\left[\wh D  \,|\,\wh K=k,\wh K'=k'\right] \over\cro{\mu,\chi}}\cdot
\end{aligned}
\end{multline*}
But given that $\{\wh K=k\}$ we have $\ell\le k$ a.s., and thus by symmetry we obtain that 
\[\mathbb{E}_\mu\left[\wh D  \,|\,\wh K=k,\wh K'=k'\right] \le (k-1) \mathbb{E}_\mu\left[a_{s_1}(\mu)\,|\,\wh K=k,\wh K'=k'\right].\]
As $a_{s_1}(\mu)$ follows the size-biased distribution associated to $\mu$, we conclude that 
$$\P_\mu\left(B_{j'_1} \in \{ \wh{I},B_{j_1},...,B_{j_{\wh K}} \}\,|\,\wh K=k,\,\wh K'=k'\right)
\le {k+k'+(k-1){\cro{\mu,\chi^2}\over \cro{\mu,\chi}} \over \cro{\mu,\chi}}\le  {\cro{\mu,\chi^2}\over \cro{\mu,\chi}^2}\left(2k+k'\right), 
$$ 
because ${\cro{\mu,\chi^2}\over \cro{\mu,\chi}}\ge 1$. This, combined with \eqref{eq:approx1}, \eqref{eq:approx2} and \eqref{eq:approx3} in \eqref{eq:approx0}, yields that 
\begin{multline*}
\P_\mu\left(\wh{\mathscr T}(\mu)^c\right)
\le {\cro{\mu,\chi^2} \over \cro{\mu,\chi}^2} \cro{\maK_\bphi(\mu),\chi^2}
+{\cro{\maK_\bphi(\mu),\chi^2}\over \cro{\mu,\chi}}
+ {\cro{\mu,\chi^2} \over \cro{\mu,\chi}^2} \sum_{k=1}^{\infty}  \cro{{\wh{\maK}'_\bphi(\mu,k)},\chi^2}\maK_\bphi(\mu)(k)\\
\shoveright{+  {\cro{\mu,\chi^2} \over \cro{\mu,\chi}^2}\sum_{k=2}^{\infty}\sum_{k'=2}^{\infty}(k'-1)\left(2k+k'\right)
{\wh{\maK}'_\bphi(\mu,k)(k')}\maK_\bphi(\mu)(k)}\\
\le {\cro{\mu,\chi^2} \over \cro{\mu,\chi}^2}\cro{\maK_\bphi(\mu),\chi^2+\cro{{\wh{\maK}'_\bphi(\mu,.)},\chi^2}+(\mathbb 1+2\chi)\cro{{\wh{\maK}'_\bphi(\mu,.)},\chi}}+{\cro{\maK_\bphi(\mu),\chi^2}\over \cro{\mu,\chi}},
\end{multline*}
concluding the proof. 
\end{proof}

\subsection{Generator approximation}
\label{subsec:genapprox}
We now show that the operator $\wh{\mathfrak{L}}$ defined in \eqref{eq:defLhat} is indeed an approximation of the original generator. 
\begin{proposition}
\label{prop:generatorapprox}
Let $\mu$ a counting measure of $\M^n$ {such that $\cro{\mu,\chi}>0$,} and let $f\in \maC_b$. Then, for some constant $C>0$, 
\begin{multline*}
\left|\wh{\mathfrak L}_f(\mu) - \mathfrak{L}\Pi_f(\mu) \right|\\
\leq C \lVert f \rVert
\left( {\cro{\mu,\chi^2} \over \cro{\mu,\chi}^2}\cro{\maK_\bphi(\mu),\chi^2+\cro{{\wh{\maK}'_\bphi(\mu,.)},\chi^2}+(\mathbb 1+2\chi)\cro{{\wh{\maK}'_\bphi(\mu,.)},\chi}}+{\cro{\maK_\bphi(\mu),\chi^2}\over \cro{\mu,\chi}}\right)^{1/2}\\
\times\left(2+\cro{{\maK}_\bphi(\mu),2\chi^2+\cro{{\wh{\maK}'_\bphi(\mu,.)},\chi^2}+\cro{{{\maK}'_\bphi(\mu,.)},\chi^2}}\right)^{1/2},
\end{multline*}
where the operators $\mathfrak{L}$ and $\wh{\mathfrak L}_f$ are respectively defined by \eqref{eq:defL} and \eqref{eq:defLhat}. 
\end{proposition}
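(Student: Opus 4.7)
The plan is to compare the two operators via an explicit coupling of the without-replacement and with-replacement dynamics on a common probability space. Writing $\Pi_f:\nu \mapsto \cro{\nu,f}$, the definition \eqref{eq:defL} gives $\mathfrak{L} \Pi_f(\mu) = \E_\mu\!\left[\cro{\vartheta(\mu),f}\right]$, while by construction $\wh{\mathfrak L}_f(\mu) = \E_\mu\!\left[\cro{\wh\vartheta(\mu),f}\right]$. First I would realize the two random measures $\vartheta(\mu)$ and $\wh\vartheta(\mu)$ jointly by using a common realization of the choice functions $\Phi, \Phi'$ and a common underlying sequence of uniform item draws, used with replacement to produce $\wh\vartheta(\mu)$ and with rejection of repetitions to produce $\vartheta(\mu)$. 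Inspecting \eqref{eq:defeventT}, on the event $\wh{\mathscr T}(\mu)$ every drawn item already belongs to a distinct bucket, so the two procedures select the same lists of neighbors, the same match $I'=\wh I'$, and the same sizes; hence $\vartheta(\mu)=\wh\vartheta(\mu)$ pointwise on $\wh{\mathscr T}(\mu)$.

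Consequently the difference is supported on $\wh{\mathscr T}(\mu)^c$, and the Cauchy--Schwarz inequality yields
\begin{equation*}
\left|\wh{\mathfrak L}_f(\mu) - \mathfrak{L}\Pi_f(\mu)\right| \le \P_\mu\!\left(\wh{\mathscr T}(\mu)^c\right)^{1/2} \, \E_\mu\!\left[\left(\cro{\wh\vartheta(\mu),f}-\cro{\vartheta(\mu),f}\right)^2\right]^{1/2}.
\end{equation*}
Lemma \ref{lemma:simple} bounds the first factor by exactly the first radical appearing in the statement, so only the $L^2$-norm of $\cro{\wh\vartheta(\mu),f}-\cro{\vartheta(\mu),f}$ remains to be controlled.

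For that second factor, I would extract deterministic pointwise bounds from \eqref{eq:defvartheta} and \eqref{eq:defwhvartheta}: since each telescoping term $f(a_l(\mu)) - f(a_l(\mu) - b_{\bphi,l}(\mu) - b'_{\bphi,l}(\mu))$ is bounded by $2\lVert f \rVert$ and contributes only for indices $l$ with $b_{\bphi,l}(\mu) + b'_{\bphi,l}(\mu) \ge 1$, of which there are at most $K_{\bphi}(\mu) + K'_{\bphi}(\mu) - 2$ (each stub incident to $I$ or $I'$ lands in at most one other bucket), one obtains $|\cro{\vartheta(\mu),f}| \le C\lVert f\rVert\bigl(1 + K_{\bphi}(\mu) + K'_{\bphi}(\mu)\bigr)$ and, analogously, $|\cro{\wh\vartheta(\mu),f}| \le C\lVert f\rVert(1 + \wh K + \wh K')$. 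Squaring, using $(a-b)^2 \le 2a^2 + 2b^2$, taking expectations, and identifying the moments via \eqref{eq:distribK}, \eqref{eq:distribK'}, \eqref{eq:distribhatK'} together with the fact that $\wh K$ has distribution $\maK_{\bphi}(\mu)$, should produce a bound of the form
\begin{equation*}
C\lVert f\rVert^2 \left(2 + \cro{\maK_{\bphi}(\mu),\, 2\chi^2 + \cro{\wh{\maK}'_{\bphi}(\mu,\cdot),\chi^2} + \cro{\maK'_{\bphi}(\mu,\cdot),\chi^2}}\right).
\end{equation*}
Taking the square root and combining with Lemma \ref{lemma:simple} then yields the announced inequality.

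The main obstacle is the coupling step: one must verify rigorously that, under the synchronized realization, the with-replacement uniform sampling of $\wh K$ items restricted to the event that the draws are pairwise distinct has the same conditional distribution as a without-replacement uniform draw of $K$ items against the same pool of stubs, and that the match-selection rule $\Phi'$, depending only on the multiset of sizes of the selected buckets, returns the same value in both procedures. Once this combinatorial compatibility is established, the indicator $\mathbb 1_{\wh{\mathscr T}(\mu)^c}$ becomes the sole source of discrepancy and the above moment calculation finishes the proof.
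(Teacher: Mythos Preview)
Your plan is correct and follows the paper's structure closely: both proofs couple the two dynamics so that they agree on $\wh{\mathscr T}(\mu)$, apply Cauchy--Schwarz to isolate $\P_\mu(\wh{\mathscr T}(\mu)^c)^{1/2}$ (controlled by Lemma~\ref{lemma:simple}), and then bound $\E_\mu[\cro{\vartheta,f}^2]$ and $\E_\mu[\cro{\wh\vartheta,f}^2]$ via the pointwise estimates you describe, arriving at the same final expression.

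The one genuine difference is in how the coupling is built. You propose a \emph{pathwise} coupling: feed both procedures a common i.i.d.\ stream of uniform items, and obtain the without-replacement draws by rejection. This works, but---as you correctly flag---it obliges you to check that rejection sampling reproduces the exact marginal of $\vartheta$, including the change of pool between Step~1 and Step~3b. The paper circumvents this verification entirely by arguing \emph{distributionally}: it introduces the analogue $\mathscr T(\mu)$ of $\wh{\mathscr T}(\mu)$ for the original construction, observes that $\P_\mu(\wh\vartheta=\nu\mid\wh{\mathscr T})=\P_\mu(\vartheta=\nu\mid\mathscr T)$ (with-replacement conditioned on distinctness equals without-replacement) and that $\P_\mu(\mathscr T)\ge\P_\mu(\wh{\mathscr T})$, and then \emph{defines} $\widetilde\vartheta=\wh\vartheta\,\mathbb 1_{\wh{\mathscr T}}+\breve\vartheta\,\mathbb 1_{\wh{\mathscr T}^c}$, where on $\wh{\mathscr T}^c$ the variable $\breve\vartheta$ is an independent draw from a suitable mixture of $\P_\mu(\vartheta\in\cdot\mid\mathscr T)$ and $\P_\mu(\vartheta\in\cdot\mid\mathscr T^c)$. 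A short computation shows $\widetilde\vartheta\stackrel{d}{=}\vartheta$, so one may replace $\E_\mu[\cro{\vartheta,f}]$ by $\E_\mu[\cro{\widetilde\vartheta,f}]$ and proceed exactly as you do. This buys a cleaner argument: no need to track pools or verify that the rejection rule is consistent with the two-round structure of the original construction.
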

\begin{proof}
Fix a point measure $\mu$, and at any iteration $j$ of the construction of Section \ref{subsec:localCM}, denote the event 
\begin{multline*}
\maT(\mu) =\Bigl\{\mbox{$I$ and $I'$ do not have any self-loops/multiple edges,}\\
\mbox{ and do not share any common neighbor, given that the degree measure is $\mu$}\Bigl\},\end{multline*} 
and observe that we also have $\maT(\mu)\subset \mathscr E(\mu)$, for the event defined by \eqref{eq:defeventE}. 
Throughout this proof, for notational simplicity we skip the dependence in $\mu$ of
$\vartheta$, $\wh{\vartheta}$, $\maT$ and $\wh{\maT}$. 

\medskip

Recall \eqref{eq:defvartheta} and \eqref{eq:defwhvartheta}. 
First observe that a uniform sampling with replacement and conditioned on not drawing twice the same element, has the same distribution as 
a uniform sampling without replacement. Therefore, as we use the same local matching criterion in both constructions, the distribution of 
$K'_{\bphi}(\mu)$ conditional on ${\mathscr T}(\mu)$ coincides with that of $\wh K'_{\bphi}$ conditional on $\wh{\mathscr T}(\mu)$. 
Second, on ${\mathscr T}(\mu)$, in \eqref{eq:defvartheta} the quantities 
$b_{\bphi,l}(\mu)$ and $b_{\bphi,l}(\mu)$, $l\in\llbracket 1,\mu(\N_+) \rrbracket$ are all 0 or 1, and moreover the indexes $l$ for which 
$b_{\bphi,l}(\mu)\ne 0$ and those for which $b'_{\bphi,l}(\mu)\ne 0$ form two disjoint subsets of $\llbracket 1,\mu(\N_+) \rrbracket$. So we get that for any point measure $\nu$, 
\begin{equation}
\label{eq:withwithout}
\P_\mu\left(\wh{\vartheta}=\nu \,|\,\wh{\maT}\right) = \P_\mu\left({\vartheta}=\nu \,|\,\maT\right).
\end{equation} 
Also, it is immediate to observe that self-loops and multiple edges occur with a larger probability if 
draws of half-edges are performed with replacement, with respect to draws without replacement. 
Thus we get that $\P_\mu\left(\maT\right) \ge \P_\mu\left(\wh{\maT}\right),$ 
 and set 
\[q_{\mu}:={\P_{\mu}\left(\maT\right) - \P_{\mu}\left(\wh{\maT}\right) \over 1 - \P_{\mu}\left(\wh{\maT}\right)} \in
(0,1).\]
Let the $\M_p$-valued r.v. $\breve{\vartheta}$ be drawn from the distribution $\P_{\mu}\left(\vartheta= . \,|\,\maT\right)$ with probability
$q_\mu$, and independently, from the distribution $\P_{\mu}\left(\vartheta= . \,|\,\maT^c\right)$ with probability
$1-q_\mu$, and let us set  
\[\widetilde{\vartheta}=\wh{\vartheta}\ind_{\wh{\maT}} + \breve{\vartheta} \ind_{\wh{\maT}^c}.\]
So defined, $\widetilde{\vartheta}$ is a $\M_p$-valued r.v. that coincides with $\wh{\vartheta}$ on $\wh{\maT}$, and that has the same distribution as $\vartheta$. To see this, observe that for all $v\in \M_p$, 
\begin{align*}
&\P_{\mu}\left(\widetilde{\vartheta}= v\right) = \P_{\mu}\left(\widetilde{\vartheta}= v \,|\, \wh{\maT}\right)
\P_{\mu}\left(\wh{\maT}\right) + \P_{\mu}\left(\widetilde{\vartheta} \,|\, \wh{\maT}^c\right)\P_{\mu}\left(\wh{\maT}^c\right)\\
&= \P_{\mu}\left(\wh{\vartheta} = v \,|\, \wh{\maT}\right)\P_{\mu}\left(\wh{\maT}\right)+ \biggl(\P_{\mu}\left(\vartheta = v \,|\, \maT\right)q_{\mu}
+\P_{\mu}\left(\vartheta = v \,|\, \maT^c\right) (1-q_{\mu})\biggl)\P_{\mu}\left(\wh{\maT}^c\right)\\
&= \P_{\mu}\left(\vartheta = v \,|\, \maT\right)\P_{\mu}\left(\wh{\maT}\right) + \P_{\mu}\left(\vartheta = v \,|\, \maT\right)\left(\P_{\mu}\left(\maT\right) - \P_{\mu}\left(\wh{\maT}\right)\right) + \P_{\mu}\left(\vartheta = v \,|\, \maT^c\right)\P_{\mu}\left(\maT^c\right)\\
&= \P_{\mu}\left(\vartheta = v\right),
\end{align*}
where we used (\ref{eq:withwithout}) in the third equality.
Therefore, as $\wh{\maT} \subset \{\wh{\vartheta} = \widetilde{\vartheta}\}$, we obtain that
\begin{align}
\left| \wh{\mathfrak L}_f(\mu) - \mathfrak{L}\Pi_f(\mu) \right|
&= \left| \E_{\mu}\left[\cro{\wh{\vartheta},f}\right]-\E_{\mu}\left[\cro{\vartheta,f}\right] \right|\nonumber\\
&= \left| \E_{\mu}\left[\cro{\wh{\vartheta},f}\right]-\E_{\mu}\left[\cro{\widetilde\vartheta,f}\right] \right|\nonumber\\
&= \left|\E_{\mu}\left[\cro{\wh{\vartheta}-\widetilde{\vartheta},f}
\ind_{\wh{\maT}^c}\right]\right|\nonumber\\
&\le \E_{\mu}\left[\cro{\wh{\vartheta}-\widetilde{\vartheta},f}^2\right]^{1/2}\P_{\mu}\left(\wh{\maT}^c
\right)^{1/2}\nonumber\\
&\le 
\left(2\E_{\mu}\left[\cro{\wh\vartheta,f}^2\right]+2\E_{\mu}\left[\cro{\widetilde{\vartheta},f}^2\right]\right)^{1/2}
\P_{\mu}\left(\wh{\maT}^c\right)^{1/2}\nonumber\\
&= 
\left(2\E_{\mu}\left[\cro{\wh\vartheta,f}^2\right]+2\E_{\mu}\left[\cro{{\vartheta},f}^2\right]\right)^{1/2}
\P_{\mu}\left(\wh{\maT}^c\right)^{1/2}.\label{eq:badday}
\end{align}
But, first, observe that the sum in \eqref{eq:defwhvartheta} has at most $\wh K-1+\wh K'-1$ non-zero terms, hence 
\begin{align*}
\E_{\mu}\left[\cro{\wh{\vartheta},f}^2\right]
\le 
\parallel f \parallel^2\E_{\mu}\left[\left(2+2(\wh K+\wh K')\right)^2\right]
&\le 12\parallel f \parallel^2\left(1+\E_{\mu}\left[\wh K^2\right]+\E_\mu\left[(\wh K')^2\right]\right)\\
&= 12 \parallel f \parallel^2\left(1+\cro{{\maK}_\bphi(\mu),\chi^2+\cro{{\wh{\maK}'_\bphi(\mu,.)},\chi^2}}\right)
\end{align*}
and by the exact same argument, 
\begin{equation*}
\E_{\mu}\left[\cro{{\vartheta},f}^2\right]
\le 12\parallel f \parallel^2\left(1+\cro{{\maK}_\bphi(\mu),\chi^2+\cro{{{\maK}'_\bphi(\mu,.)},\chi^2}}\right)
\end{equation*}
which concludes the proof using \eqref{eq:badday} and Lemma \ref{lemma:simple}. 
\end{proof}
Let us now define the following sets of measures: for any $n\in\N_+$, $\beta>0$ and $M>0$, 
\begin{equation}
\label{eq:defMbetaM}
\M^n_{\beta,M} = \left\{\mu \in \M^n\,:\,\cro{\mu,\chi^3}\le nM\mbox{ and }\cro{\mu,\mathbb 1_{\N_+}} \ge n\beta\right\}.
\end{equation}
\begin{equation}
\label{eq:defMbetaMbar}
\bar\M_{\beta,M} 
=\left\{\bar\mu \in \bar\M\,:\,\cro{\bar\mu,\chi^3} \le M\mbox{ and }\cro{\bar\mu,\mathbb 1_{\N_+}}\ge \beta\right\},
\end{equation}
and observe that for any $n$ and $\mu\in\M^n_{\beta,M}$, the measure ${1\over n}\mu$ is an element of $\bar\M_{\beta,M}$. 
To prepare the way for our generator approximations, we need to impose 
some moment assumptions on the degree distributions of the matched nodes, defined respectively by \eqref{eq:distribK} and \eqref{eq:distribhatK'}. 
\begin{definition}
\label{def:polPM}
The local matching criterion $\bphi$ is said to \emph{preserve moments} (up to order $2$) if, for all $\beta>0$ and $M>0$, there exists 
a positive constant $C'(\beta,M)$ such that, for every large enough $n$, 
	\[\sup_{\mu\in\M^n_{\beta,M}}\max\left\{\cro{\maK_{\bphi}({\mu}),\chi^{2}}\,,\,\sup_{k\in\N_+}\cro{\wh\maK'_{\bphi}({\mu},k),\chi^{2}}
	\right\}\le C'(\beta,M).\]
\end{definition}


As the choices of $K'$ and $\wh K'$ correspond to the same random draw, respectively {\em without} and {\em with} replacement, it is natural that the distributions of these r.v.'s, respectively given by \eqref{eq:distribK'} and \eqref{eq:distribhatK'}, become somewhat close, as the size $n$ of the graph (resp. the number of $n$ of buckets) goes large. However, the accuracy of this approximation in function of $n$ clearly depends on the matching criterion. Hence the following definition, which means that the $L^2$ distance between the two distributions is asymptotically of order 1. 
\begin{definition}
\label{def:polWB}
The local matching criterion $\bphi$ is said to be \emph{well behaved} if, for all $\beta>0$ and $M>0$, there exists 
a positive constant $C'(\beta,M)$ such that, for every large enough $n$, 
\[\sup_{\mu\in\M^n_{\beta,M}}\sup_{k\in\N_+}\left|\cro{\maK'_{\bphi}(\mu,k),\chi^2} - \cro{\wh\maK'_{\bphi}(\mu,k),\chi^2}\right| 
\le C''(\beta,M).\]
 \end{definition}
\noindent Observe the following immediate bound,
\begin{proposition}
\label{prop:boundKK'}
If the local matching criterion $\bphi$ is moment preserving and well-behaved, then for all $\beta>0$ and $M>0$, for every large enough $n$ we get that 
\[
\sup_{\mu\in\M^n_{\beta,M}}\left\{\esp{(K_\bphi(\mu))^2} \vee \esp{(K'_\bphi(\mu))^2}\right\} \le C'(\beta,M)+C''(\beta,M),
\]
for the constants $C'(\beta,M)$ and $C''(\beta,M)$ of Definitions \ref{def:polPM} and \ref{def:polWB}. 
\end{proposition}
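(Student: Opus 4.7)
The proof will be a direct combination of the two definitions, with essentially no technical machinery. First, I would unpack what $\esp{(K_\bphi(\mu))^2}$ and $\esp{(K'_\bphi(\mu))^2}$ mean in terms of the measures $\maK_\bphi(\mu)$ and $\maK'_\bphi(\mu,k)$ introduced in \eqref{eq:distribK} and \eqref{eq:distribK'}. By the very definitions of these distributions, we have
\[\esp{(K_\bphi(\mu))^2}=\cro{\maK_\bphi(\mu),\chi^2},\qquad \esp{(K'_\bphi(\mu))^2}=\sum_{k\in\N_+}\cro{\maK'_\bphi(\mu,k),\chi^2}\,\maK_\bphi(\mu)(k),\]
so the question reduces to bounding these two quantities uniformly in $\mu\in\M^n_{\beta,M}$.

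Next, I would apply Definition \ref{def:polPM} directly to the first term: for $n$ large enough, $\cro{\maK_\bphi(\mu),\chi^2}\le C'(\beta,M)\le C'(\beta,M)+C''(\beta,M)$, which gives the claimed bound on $\esp{(K_\bphi(\mu))^2}$.

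For the second term, the bound on $\cro{\maK'_\bphi(\mu,k),\chi^2}$ is \emph{not} assumed directly in Definition \ref{def:polPM}; there, only the second moment of $\wh\maK'_\bphi(\mu,k)$ (the ``with replacement'' version) is controlled. So I would use Definition \ref{def:polWB} as a bridge: for every $k\in\N_+$,
\[\cro{\maK'_\bphi(\mu,k),\chi^2}\le \cro{\wh\maK'_\bphi(\mu,k),\chi^2}+\left|\cro{\maK'_\bphi(\mu,k),\chi^2}-\cro{\wh\maK'_\bphi(\mu,k),\chi^2}\right|\le C'(\beta,M)+C''(\beta,M),\]
the two summands being controlled by Definition \ref{def:polPM} and Definition \ref{def:polWB} respectively, uniformly in $\mu\in\M^n_{\beta,M}$ and $k\in\N_+$. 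Taking the expectation against $\maK_\bphi(\mu)$ preserves this uniform bound since $\maK_\bphi(\mu)$ is a probability measure on $\N_+$, yielding $\esp{(K'_\bphi(\mu))^2}\le C'(\beta,M)+C''(\beta,M)$.

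Combining the two bounds under a maximum gives the result. There is no real obstacle here: this is purely an assembly step that packages the two moment/approximation assumptions into a single uniform second-moment estimate that will be convenient for later use (notably in conjunction with Proposition \ref{prop:generatorapprox}). The only point requiring minimal care is keeping track that all three statements (Definitions \ref{def:polPM}, \ref{def:polWB} and the claim) involve the same range of $n$ large enough and the same parameters $\beta,M$.
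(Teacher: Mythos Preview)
Your proposal is correct and follows essentially the same approach as the paper: both bound $\esp{(K_\bphi(\mu))^2}$ directly from Definition \ref{def:polPM}, and for $\esp{(K'_\bphi(\mu))^2}$ add and subtract $\cro{\wh\maK'_\bphi(\mu,k),\chi^2}$ to invoke Definitions \ref{def:polPM} and \ref{def:polWB} on the two resulting pieces. The only cosmetic difference is that the paper performs the add-and-subtract after integrating against $\maK_\bphi(\mu)$, whereas you do it pointwise in $k$ first and then integrate.
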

\begin{proof}
Fix $n$ large enough in Definitions \ref{def:polPM} and \ref{def:polWB}, and $\mu\in\M^n_{\beta,M}$. Then by the very Definition \ref{def:polPM} we readily get that 
\[\esp{(K_\bphi(\mu))^2}=\esp{\cro{\maK_\bphi(\mu),\chi^2}}\le C'(\beta,M).\]
Further, we also have that 
\begin{align*}
\esp{(K'_\bphi(\mu))^2}&=\esp{\cro{\maK_\bphi(\mu),\cro{\maK'_\bphi(\mu,.),\chi^2}}}\\
				   &= \esp{\cro{\maK_\bphi(\mu),\cro{\wh{\maK}'_\bphi(\mu,.),\chi^2}}} + \esp{\cro{\maK_\bphi(\mu),\cro{\maK'_\bphi(\mu,.),\chi^2}-\cro{\wh{\maK}'_\bphi(\mu,.),\chi^2}}}\\
				   &\le C'(\beta,M)+C''(\beta,M),
\end{align*}
concluding the proof. 
\end{proof}

\noindent Recall the operators defined by \eqref{eq:defL} and \eqref{eq:defLhat}. We obtain the following large graph approximation,
\begin{corollary}
\label{cor:approxgen}
Let $\bphi$ be a well-behaved and moment preserving local criterion. Then,  
for all $f\in\maC_b$ and $\beta,M>0$, there exists a constant $C'''(\beta,M,f)>0$ such that for every large enough $n$, 
{\begin{equation*}
\sup_{\mu\in\M^n_{\beta,M}}\left|\wh{\mathfrak{L}}_f\left(\mu\right) - {\mathfrak{L}}\Pi_f(\mu)\right| \le {C'''(\beta,M,f)\over\sqrt{n}}\cdot
\end{equation*}}
\end{corollary}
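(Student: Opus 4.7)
The plan is to invoke Proposition \ref{prop:generatorapprox} and bound, uniformly over $\mu \in \M^n_{\beta,M}$, each quantity appearing on its right-hand side, using the definition \eqref{eq:defMbetaM} of $\M^n_{\beta,M}$ together with the hypotheses that $\bphi$ is moment-preserving and well behaved. The target is to show that the first square-root factor is of order $n^{-1/2}$ while the second remains $O(1)$, which will produce the claimed bound with $C'''(\beta,M,f)$ proportional to $\|f\|$.

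First, I would control the moments of $\mu$ itself. Since $\chi \ge \mathbb{1}_{\N_+}$ pointwise on $\N$, one has $\cro{\mu,\chi} \ge \cro{\mu,\mathbb{1}_{\N_+}} \ge n\beta$. Applying H\"older's inequality with exponents $3/2$ and $3$ to the pointwise identity $\chi^2 = (\chi^3)^{2/3}\cdot \mathbb{1}_{\N_+}^{1/3}$, together with $\cro{\mu,\mathbb{1}_{\N_+}} \le n$, yields
\[
\cro{\mu,\chi^2} \le \cro{\mu,\chi^3}^{2/3}\,\cro{\mu,\mathbb{1}_{\N_+}}^{1/3} \le (nM)^{2/3}\, n^{1/3} = M^{2/3}\,n.
\]
Consequently $\cro{\mu,\chi^2}/\cro{\mu,\chi}^2 \le M^{2/3}/(\beta^2 n)$, so this ratio is of order $1/n$; likewise $\cro{\maK_\bphi(\mu),\chi^2}/\cro{\mu,\chi} \le C'(\beta,M)/(n\beta) = O(1/n)$ by Definition \ref{def:polPM}.

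Next, I would bound the compound expectations appearing under the square roots. By Definition \ref{def:polPM}, both $\cro{\maK_\bphi(\mu),\chi^2}$ and $\sup_{k\in\N_+}\cro{\wh{\maK}'_\bphi(\mu,k),\chi^2}$ are at most $C'(\beta,M)$, and Cauchy--Schwarz on the (sub)probability measure $\wh{\maK}'_\bphi(\mu,k)$ gives $\cro{\wh{\maK}'_\bphi(\mu,k),\chi} \le \sqrt{C'(\beta,M)}$. Since $\maK_\bphi(\mu)$ is itself a probability measure on $\N_+$, the outer expectation $\cro{\maK_\bphi(\mu),\cdot}$ inherits these uniform bounds; the factor $(\mathbb{1}+2\chi)$ inside is controlled via $\cro{\maK_\bphi(\mu),\chi} \le \sqrt{C'(\beta,M)}$, so the full compound expectation in the first factor of Proposition \ref{prop:generatorapprox} is bounded by a constant depending only on $\beta$ and $M$. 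For the second factor, the additional term $\cro{\maK_\bphi(\mu),\cro{\maK'_\bphi(\mu,\cdot),\chi^2}}$ is at most $C'(\beta,M)+C''(\beta,M)$, by applying Definition \ref{def:polWB} inside the outer expectation.

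Putting these estimates together, the first square-root factor in Proposition \ref{prop:generatorapprox} is $O(n^{-1/2})$ and the second is $O(1)$, uniformly in $\mu \in \M^n_{\beta,M}$, which gives the desired $C'''(\beta,M,f)/\sqrt{n}$ bound with $C'''(\beta,M,f) = C\|f\|\cdot \kappa(\beta,M)$ for some $\kappa(\beta,M)$ depending on $C'(\beta,M)$ and $C''(\beta,M)$. No serious obstacle is anticipated: the corollary is essentially a bookkeeping exercise propagating uniform moment bounds through Proposition \ref{prop:generatorapprox}. The only subtlety worth emphasizing is that the interpolation estimate on $\cro{\mu,\chi^2}$ really does require the third moment $\cro{\mu,\chi^3}\le nM$ built into the definition of $\M^n_{\beta,M}$; this is what guarantees that the $1/\sqrt{n}$ rate is uniform over this set.
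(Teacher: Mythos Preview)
Your proposal is correct and follows essentially the same route as the paper: invoke Proposition~\ref{prop:generatorapprox}, then use moment preservation and well-behavedness to show that the first square-root factor is $O(n^{-1/2})$ uniformly over $\M^n_{\beta,M}$ while the second is $O(1)$. The only cosmetic difference is that the paper rescales to $\frac{1}{n}\mu$ and bounds $\cro{\frac{1}{n}\mu,\chi^2}\le \cro{\frac{1}{n}\mu,\chi^3}\le M$ via the pointwise inequality $\chi^2\le\chi^3$ on $\N_+$, whereas you obtain the equivalent control $\cro{\mu,\chi^2}\le M^{2/3}n$ by H\"older; both yield the same $O(1/n)$ bound on $\cro{\mu,\chi^2}/\cro{\mu,\chi}^2$.
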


\begin{proof}
Fix $\beta$, $M$, $f$ and $n$ satisfying Proposition \ref{prop:boundKK'}. Let $\mu\in\M^n_{\beta,M}$. 
Then Proposition \ref{prop:generatorapprox} together with the well-behavedness of $\bphi$ yield that 
\begin{multline*}
\left|\wh{\mathfrak L}_f(\mu) - {\mathfrak{L}} \Pi_f(\mu) \right|\\
\leq C \lVert f \rVert
\left( {\cro{\mu,\chi^2} \over \cro{\mu,\chi}^2}\cro{\maK_\bphi(\mu),\chi^2+\cro{{\wh{\maK}'_\bphi(\mu,.)},\chi^2}+(\mathbb 1+2\chi)\cro{{\wh{\maK}'_\bphi(\mu,.)},\chi}}+{\cro{\maK_\bphi(\mu),\chi^2}\over \cro{\mu,\chi}}\right)^{1/2}\\
\times\left(2+\cro{{\maK}_\bphi(\mu),2\chi^2+\cro{{\wh{\maK}'_\bphi(\mu,.)},\chi^2}+C''(\beta,M)\mathbb 1}\right)^{1/2}\\
\shoveleft{= {C \lVert f \rVert \over \sqrt{n}}
\left( {\cro{{1\over n}\mu,\chi^2} \over \cro{{1\over n}\mu,\chi}^2}\cro{\maK_\bphi(\mu),\chi^2+\cro{{\wh{\maK}'_\bphi(\mu,.)},\chi^2}+(\mathbb 1+2\chi)\cro{{\wh{\maK}'_\bphi(\mu,.)},\chi}}+{\cro{\maK_\bphi(\mu),\chi^2}\over \cro{{1\over n}\mu,\chi}}\right)^{1/2}}\\
\times\left(2+\cro{{\maK}_\bphi(\mu),2\chi^2+2\cro{{\wh{\maK}'_\bphi(\mu,.)},\chi^2}+C''(\beta,M)\mathbb 1}\right)^{1/2}.
\end{multline*}
{But as $\bphi$ is moment preserving, we obtain that 
\begin{multline*}
\left|\wh{\mathfrak L}_f(\mu) - {\mathfrak{L}} \Pi_f(\mu) \right|\\
\shoveleft{\le {C \lVert f \rVert \over \sqrt{n}}
\left( {M \over \beta^2}\cro{\maK_\bphi(\mu),\chi^2+C'(\beta,M)(2\mathbb 1+2\chi)}+{M\over \beta}\right)^{1/2}}\\
\shoveright{\times\left(2+\cro{{\maK}_\bphi(\mu),2\chi^2+2C'(\beta,M)\mathbb 1+C''(\beta,M)\mathbb 1}\right)^{1/2}}\\
\shoveleft{\le {C \lVert f \rVert \over \sqrt{n}}
\left({M \over \beta^2}\left(C'(\beta,M)+4C'(\beta,M)^2\right)+{M\over \beta}\right)^{1/2}}\\
\shoveright{\times\left(2+2C'(\beta,M)+2C'(\beta,M)^2+C''(\beta,M)C'(\beta,M)
\right)^{1/2}}\\
=:{C'''(\beta,M,f)\over \sqrt{n}},
\end{multline*}
which concludes the proof. 
}
\end{proof}

\section{Hydrodynamic limits}
\label{sec:Main}
In this Section, we introduce our main result. We extend the measure-valued DTMC introduced in Section \ref{sec:Markov} for the construction of Section \ref{subsec:localCM}, to a continuous-time measure-valued process, which we scale in turn, so as to obtain a large graph approximation by a deterministic, continuous measure-valued function. 

Throughout this section, we fix the size $n$ of the multi-graph produced by the CM, and append a superscript $^n$ to all the corresponding variables. 
We first need to make specific assumptions on the initial conditions of the process at hand. 
\begin{hypo}
\label{hypo:Ho}
For some $\nu \in \M_F$ such that
\begin{align*}
\cro{\nu,\mathbb 1_{\N_+}}&>0,\\
\cro{\nu,\chi^{3.5+\varepsilon}}&<\infty,\, \mbox{ for some }\varepsilon>0. 
\end{align*}
 the sequence of initial measures $\suite{{1\over n}\mu^n_0}$  
satisfies 
\[
\cro{{1\over n}\mu^n_0,f}\xrightarrow{(n,\P)} \cro{\nu,f},\,{\mbox{ for all $f\in\maC_b$.}} 
 \]				
\end{hypo}

\begin{remark}\rm
Having a finite moment of order $3.5 +\varepsilon$ for the limiting initial distribution is a technical assumption that will be useful to prove uniqueness of the 
solution of a given system of ODE's, see Section \ref{sec:greedy} below. 
However, the interest of a bound at least for the third moment of $\nu$, is not limited to this technical aspect. The average number of neighbors at distance $2$ of a typical node in the configuration model CM$(\nu)$ is shown to be close to $\cro{\nu,\chi^2} - \cro{\nu,\chi}$ (see \cite{newman2018networks,10.5555/3086985,angel2017limit}), and each of those second neighbors has a degree that follows again the size-biased degree distribution associated to $\nu$. Satisfying Assumption \ref{hypo:Ho} guarantees a control of the number of such neighbors. Thus one can define dynamics on the considered graph, that depend at most on the second neighbors of a given node, such as the local matching algorithms that are introduced here. This assumption also allows to control the number of loops and multiple edges, since the number of such edges then converges to Poisson r.v.'s of order 1 (see e.g. Proposition 7.9 in \cite{10.5555/3086985}, or \cite{angel2017limit}), implying that their influence on the {proportion of matched nodes} vanishes as $n$ goes to infinity. 
\end{remark}
\medskip

To show the convergence of a scaled version of the DTMC $\suiten{\mu^n_j}$, we first extend this random sequence onto a RCLL process of the Skorokhod space 
$\mathbb{D}([0,1],\maM_F)$, as follows: For any $0\leq t\leq 1$, denote 
\begin{equation*}
\label{mubar}
\bar{\mu}_t^n = \frac{1}{n} \mu^n_{\lfloor nt \rfloor}. 
\end{equation*} 
One immediate downside of that extension is that for any $n$, the piecewise constant process $$\bar{\mu}^n:=\procun{\bar\mu^n_t}$$ is clearly not Markov 
on $\mathbb{D}([0,1],\maM_F)$. 
On the other hand, from \eqref{eq:ratioCMmu}, the matching coverage of the construction of Section \ref{subsec:localCM} conveniently becomes 
\be
\label{eq:ratioCMmubar}
{\mathbf {M}}^n_\Phi(\mu^n_0) =  1-{\bar{\mu}^n_1(0)}-{\maB_n\over n}\cdot
\ee

\begin{definition}
\label{def:polCont}
A local matching criterion $\bphi$ 
is said to be \emph{continuous} on $\bar{\M}$ if,
for any $\bar\mu\in\bar\M$, for any sequence $\suite{\bar\mu^n}$ of $\bar\M$ such that $\bar\mu^n\in \bar\M^n$ for all $n$, and 
such that $\bar\mu^n \xRightarrow{n} \bar\mu$ for some measure $\bar\mu\in  \bar\M$ satisfying $\cro{\bar\mu,\mathbb 1_{\N_+}}>0$, 
there exist measures $\bar{\maK}_{\bphi}(\bar{\mu})$ 
and $\bar{\maK}'_{\bphi}(\bar{\mu},k),\,k\in\N_+$, of $\M_F(\N_+)$, and such that, in the weak topology, 
	{\begin{align}
	\maK_{\bphi}(n\bar\mu^n) &\xRightarrow{n} \bar{\maK}_{\bphi}(\bar{\mu});\label{eq:defKbar}\\
	\wh\maK'_{\bphi}(n\bar\mu^n,k) &\xRightarrow{n} \bar{\maK}'_{\bphi}(\bar{\mu},k),\quad \mbox{for all }k\in\N_+.\label{eq:defK'bar}
	\end{align}}
 \end{definition}

We are now in a position to state our main convergence results: Under suitable assumptions, the sequence of measure-valued stochastic processes 
$\suite{\procun{\bar\mu^n_t}}$ associated to the local algorithm under consideration converges to a deterministic measure-valued function. And by extension, the corresponding matching coverages converge to a deterministic value. 

\begin{theorem}
\label{thm:main}
Let $\bphi$ be a local matching criterion that is moment preserving, well-behaved and continuous. 
Define the linear operator $\bar{\mathfrak{L}}$ as follows: {For all $f\in\maC_b$, for all $\bar\mu\in \bar{\M}$}, 
{
\begin{multline}
\label{eq:defLbar}
\bar{\mathfrak{L}}_f(\bar\mu) \\= 
- \left\{\cro{\bar\maK_{\bphi}(\bar\mu),f+\cro{\bar{\maK}'_{\bphi}(\bar\mu,.),f}}
+{\cro{\bar\mu,\chi\nabla f}\over \cro{\bar\mu,\chi}}
\cro{\bar\maK_{\bphi}(\bar\mu),\chi-\mathbb 1+\cro{\bar\maK'_{\bphi}(\bar\mu,.),\chi-\mathbb 1}}\right\}\mathbb 1_{\{\cro{\bar\mu,\mathbb 1_{\N_+}}>0\}},
\end{multline}
for $\bar\maK_{\bphi}(\bar\mu)$ and $\bar{\maK}'_{\bphi}(\bar\mu,.)$ respectively defined by \eqref{eq:defKbar} and \eqref{eq:defK'bar}.} 
Suppose that the sequence of initial conditions $\suite{\bar{\mu}^n_0}$ satisfies Assumption \ref{hypo:Ho} for some measure $\nu$, and 
that the system of integral equations
\be
\label{eq:limhydro}
\cro{\eta_t,f} = \cro{\nu,f} + \int_0^t \bar{\mathfrak{L}}_f(\eta_s)\d s,\quad f \in \maC_b(\R),\,t\in[0,1]
\ee
admits at most one solution in $\mathbb D\left([0,1],\bar\M\right).$ 
 Then, the system \eqref{eq:limhydro} admits a unique solution $\procun{\bar{\mu}^*_t}$ in $\mathbb C\left([0,1],\bar\M\right),$ and the following convergence holds: \begin{equation}
\label{MainConv}
\sup_{t \leq 1 } |\cro{\bar{\mu}^n_t,f} -  \cro{\bar{\mu}^*_t,f}| \xrightarrow{(n,\P)} 0.
\end{equation}
\end{theorem}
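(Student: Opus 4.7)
I would follow the differential equation method à la Wormald in its infinite-dimensional measure-valued incarnation, combining a Doob-decomposition of the DTMC $\procn{\mu^n_j}$, the generator approximation from Corollary \ref{cor:approxgen}, and a tightness/identification argument in $\mathbb{D}([0,1],\bar\M)$.

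\medskip

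\textbf{Step 1: moment bounds and localization.} I would first use Proposition \ref{Momest1} applied to the increasing functions $f=\chi^p$ for $p\le 3.5+\varepsilon$ to deduce that $\cro{\bar\mu^n_t,\chi^p}\le \cro{\bar\mu^n_0,\chi^p}$ for all $t\in[0,1]$, which, together with Assumption \ref{hypo:Ho}, implies that for any $M>\cro{\nu,\chi^{3.5+\varepsilon}}$, the event $\{\cro{\bar\mu^n_t,\chi^{3.5+\varepsilon}}\le M,\,\forall t\in[0,1]\}$ has probability tending to one. To control the lower bound required in \eqref{eq:defMbetaM}, I would introduce for each $\beta>0$ the stopping time $\tau^n_\beta=\inf\{t\in[0,1]\,:\,\cro{\bar\mu^n_t,\mathbb{1}_{\N_+}}<\beta\}$, noting that by construction $\vartheta(\mu)=\mathbf 0$ whenever $\cro{\mu,\chi}=0$, so the process is frozen past exhaustion.

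\medskip

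\textbf{Step 2: Doob decomposition and control of the martingale.} From Proposition \ref{prop:Mchain}, write
\begin{equation*}
\cro{\bar\mu^n_t,f}=\cro{\bar\mu^n_0,f}+\frac{1}{n}\sum_{i=0}^{\lfloor nt\rfloor-1}\mathfrak{L}\Pi_f(\mu^n_i)+\bar M^{n,f}_t,
\end{equation*}
where $\bar M^{n,f}$ is the scaled Doob martingale. Using Proposition \ref{prop:boundKK'} to bound $\esp{\cro{\vartheta(\mu^n_i),f}^2}$ on the event $\{\mu^n_i\in \M^n_{\beta,M}\}$ uniformly by a constant depending on $\beta,M,\lVert f\rVert$, I would control the quadratic variation as $\langle \bar M^{n,f}\rangle_{t\wedge\tau^n_\beta}=O(1/n)$, hence $\sup_{t\le\tau^n_\beta}|\bar M^{n,f}_t|\xrightarrow{(n,\P)}0$ by Doob's inequality. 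Then, Corollary \ref{cor:approxgen} allows me to replace $\mathfrak{L}\Pi_f(\mu^n_i)$ by $\wh{\mathfrak{L}}_f(\mu^n_i)$ at a uniform cost of $C'''(\beta,M,f)/\sqrt n$ per term, so summing $n$ terms and dividing by $n$ gives an error of order $1/\sqrt{n}$ on $[0,\tau^n_\beta]$.

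\medskip

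\textbf{Step 3: tightness and identification.} I would then establish tightness of $\suite{\bar\mu^n}$ in $\mathbb{D}([0,1],\bar\M)$ via Jakubowski's criterion. Compact containment in $\bar\M$ follows from the moment bound on $\chi^{3.5+\varepsilon}$, and tightness of each real-valued projection $\procun{\cro{\bar\mu^n_t,f}}$ for $f\in\maC_K$ follows from the decomposition above: the martingale part is negligible, and the drift part is Lipschitz in $t$ with constant bounded on $[0,\tau^n_\beta]$. Since all individual jumps are $O(1/n)$, every subsequential limit $\bar\mu^*$ belongs to $\mathbb{C}([0,1],\bar\M)$. On $[0,\tau^*_\beta]$ (with $\tau^*_\beta$ the analogous stopping time for $\bar\mu^*$), the continuity assumption on $\bphi$ (Definition \ref{def:polCont}) and the scale invariance of the ratio $\cro{\mu,\chi\nabla f}/\cro{\mu,\chi}$ imply that $\wh{\mathfrak{L}}_f(\mu^n_i)\Rightarrow \bar{\mathfrak L}_f(\bar\mu^*_{i/n})$; by a standard Riemann-sum/continuous-mapping argument this identifies $\bar\mu^*$ as a solution of \eqref{eq:limhydro} on $[0,\tau^*_\beta]$. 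The assumed uniqueness of solutions then forces all subsequential limits to coincide, producing a single continuous solution, and convergence in $\mathbb{D}$ to a deterministic continuous limit upgrades to uniform convergence in probability.

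\medskip

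\textbf{Main obstacle.} The delicate point is the limit $\beta\to 0$, since the constant $C'''(\beta,M,f)$ from Corollary \ref{cor:approxgen} blows up as $\beta\to 0$, and the drift $\bar{\mathfrak L}_f$ contains the singular factor $\cro{\bar\mu,\chi\nabla f}/\cro{\bar\mu,\chi}$ near the boundary $\cro{\bar\mu,\mathbb 1_{\N_+}}=0$. I would handle this by showing that the contribution to the drift on the interval $[\tau^n_\beta,1]$ is negligible in total variation as $\beta\to 0$, using the fact that once $\cro{\bar\mu^n_t,\mathbb 1_{\N_+}}$ becomes small the process exhausts quickly: the number of remaining iterations is at most $n\cro{\bar\mu^n_{\tau^n_\beta},\mathbb 1_{\N_+}}\le n\beta$, so the remaining time interval has length at most $\beta$, and \eqref{MainConv} follows by sending $\beta\downarrow 0$ after $n\to\infty$.
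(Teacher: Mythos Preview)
Your proposal is correct and follows the same compactness--uniqueness architecture as the paper: moment localization via the stopping time $\tau^n_\beta$, negligibility of the martingale part by a quadratic-variation bound of order $1/n$, replacement of $\mathfrak L\Pi_f$ by $\wh{\mathfrak L}_f$ through Corollary \ref{cor:approxgen}, tightness plus identification of subsequential limits as solutions of \eqref{eq:limhydro}, and a final $\beta\downarrow 0$ argument exploiting that past $\tau^n_\beta$ only $O(n\beta)$ nodes remain active.

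The one genuine methodological difference is that you work \emph{directly} with the piecewise-constant process $\bar\mu^n$ through the discrete-time Doob decomposition of the DTMC $\procn{\mu^n_j}$, whereas the paper first embeds into a continuous-time Markov chain $\tilde\mu^n$ (Poissonized jump times), proves tightness and identification for $\tilde\mu^n$ via the Feller generator and the Roelly/M\'el\'eard--Roelly criteria, and then separately shows $\sup_t|\cro{\bar\mu^n_t,f}-\cro{\tilde\mu^n_t,f}|\to 0$ (Proposition \ref{prop:approxtildebar}). Your route is closer to the classical Wormald formulation and spares the CTMC detour entirely; the paper's route buys cleaner access to semimartingale/generator machinery at the cost of that extra comparison step. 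Both lead to the same identification argument (continuity of $\bphi$ plus continuous mapping on the various functionals of $\bar\mu$), and both handle the boundary $\cro{\bar\mu,\mathbb 1_{\N_+}}\to 0$ by the same observation you make: the residual mass and the residual change in $\bar\mu^n_t(0)$ after $\tau^n_\beta$ are each bounded by $\beta$, so the error on $[\tau^n_\beta,1]$ is $O(\|f\|\beta)$ uniformly in $n$.
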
 

\begin{corollary}[Convergence of the matching coverage]
\label{cor:maincov}
Under the above assumptions, for any $\nu \in {\bar\M}$ we get
\begin{equation}
\label{MainConvCover}
{\mathbf {M}}^n_\bphi(\nu) \xrightarrow{(n,\P)} {\mathbf {M}}_\Phi(\nu) := 1 - \bar{\mu}^*_1 (0).
\end{equation}
\end{corollary}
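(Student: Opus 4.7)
The plan is to decompose the matching coverage according to \eqref{eq:ratioCMmubar},
\[
{\mathbf{M}}^n_\bphi(\nu) = 1 - \bar\mu^n_1(0) - \frac{\maB_n}{n},
\]
and handle the two remaining terms separately. The first will be controlled directly via Theorem \ref{thm:main}, while the second will be shown to be negligible thanks to the classical self-loop estimates on the CM.

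First, I would apply Theorem \ref{thm:main} to the test function $f = \mathbb{1}_{\{0\}}$. Since $\N$ is discrete, every bounded real-valued function on $\N$ is Borel, so $\mathbb{1}_{\{0\}} \in \maC_b$. The uniform convergence in probability provided by \eqref{MainConv} thus specialises to
\[
\sup_{t \in [0,1]} \left| \bar\mu^n_t(0) - \bar\mu^*_t(0) \right| \xrightarrow{(n,\P)} 0,
\]
and evaluating at $t=1$ yields $\bar\mu^n_1(0) \xrightarrow{(n,\P)} \bar\mu^*_1(0)$.

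The second task is to prove $\maB_n/n \xrightarrow{(n,\P)} 0$. The key observation is the pathwise bound $\maB_n \leq L_n$, where $L_n$ denotes the number of self-loops in the final multi-graph $\maG_n$. Indeed, by the very definition of case 2a) of Step 2 in Section \ref{subsec:localCM}, a node $v$ becomes blocked only when the uniform pairing at Step 1 matched every one of its half-edges to another half-edge of $v$ itself; in particular $v$ carries at least one self-loop in $\maG_n$, and self-loops coming from distinct blocked nodes are disjoint. Now Assumption \ref{hypo:Ho} ensures $\cro{\nu,\chi^2} < \infty$, and it is a classical fact for the configuration model (see e.g.\ Proposition 7.9 of \cite{10.5555/3086985}) that under a finite second moment condition the number of self-loops $L_n$ converges in distribution to a Poisson random variable of finite parameter, so that $\mathbb{E}[L_n] = O(1)$. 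Markov's inequality then gives $\maB_n/n \to 0$ in probability, and combined with the first step this concludes the proof via \eqref{eq:ratioCMmubar}.

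I do not expect any genuine obstacle here: the heavy analytical lifting is already carried out in Theorem \ref{thm:main}, and the control of $\maB_n$ reduces to a routine application of the classical self-loop estimates for the CM under the finite second moment condition provided by Assumption \ref{hypo:Ho}. The only delicate point is checking that blocked nodes indeed inject disjoint self-loops into $\maG_n$, which is immediate from the construction.
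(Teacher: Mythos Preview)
Your proposal is correct and follows essentially the same route as the paper's own proof: decompose via \eqref{eq:ratioCMmubar}, apply Theorem \ref{thm:main} with $f=\mathbb 1_{\{0\}}$ to handle $\bar\mu^n_1(0)$, and bound $\maB_n$ by the number of self-loops, which is controlled via Proposition 7.9 of \cite{10.5555/3086985}. The only cosmetic difference is that the paper concludes $\maB_n/n\to 0$ directly from the weak convergence of the self-loop count to a finite Poisson variable (hence tightness), whereas you pass through $\mathbb E[L_n]=O(1)$ and Markov's inequality; both are fine.
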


\begin{proof}
First observe that for all $n$, the number $\maB^n_n$ of blocked nodes at the end of the procedure in the $n$-th system is upper bounded by the number, say $\mathcal S^n_n$, of self-loops at the end of the procedure, in the resulting graph $\maG^n$. But it follows from Proposition 7.9 in \cite{10.5555/3086985} that the sequence $\suite{\mathcal S^n_n}$ converges weakly to a Poisson r.v. of order $1$. Therefore, 
\[{\maB^n_n \over n}\le {\mathcal S^n_n \over n} \xrightarrow{(n,\P)} 0.\]
The result then follows by taking the limit in probability in Equation (\ref{eq:ratioCMmubar}), applying Theorem \ref{thm:main} and the Continuous Mapping 
Theorem (see e.g. \cite{Bill}).   
\end{proof}
This result is to be related to the heuristic hydrodynamic approximations obtained in (21) and (22) of \cite{aoudi:hal-03294781}, respectively in the case where 
$\bphi=\textsc{greedy}$ and $\bphi=\textsc{uni-min}$, in the related (but not identical case) where the resulting graph is bipartite, and produced by the 
bipartite CM, as in \cite{OlvChen}. Theorem \ref{thm:main} gives a mathematical justification of the large-graph convergence of the suitably normalized process under consideration to the hydrodynamic limit, for general (instead of bipartite) graphs. 

As in Section 7 of \cite{aoudi:hal-03294781}, Corollary \ref{cor:maincov} can then be used to provide a large graph approximation of the matching coverage for 
various degree distributions, by solving numerically the corresponding differential system.

\section{Proof of Theorem \ref{thm:main}}
\label{sec:proof}
Throughout the proof, we fix a local matching criterion $\bphi$ satisfying the assumptions of Theorem \ref{thm:main}. 
Our strategy of proof is as follows: we first extend the sequence of (non-Markov) processes $\suite{\bar\mu^n}$ to a sequence of CTMC's $\suite{\tilde\mu^n}$,  that approximate their dynamics. 
Then, we follow a standard compactness-uniqueness approach: 
First, we show that the law of $\suite{\tilde\mu^n}$ is tight, and thus relatively compact, in the Skorokhod topology. 
{Then we show that $\suite{\tilde\mu^n}$ is asymtotically driven by a deterministic equation controlled by its generator. Finally, we show that the large graph generator of $\suite{\tilde\mu^n}$ is approximated by the operator $\bar{\mathfrak{L}}$ defined in Theorem \ref{thm:main}. In particular, any subsequential limit of $\suite{\tilde\mu^n}$ is driven by $\bar{\mathfrak{L}}$. And since $\bar{\mathfrak{L}}$ characterizes a unique process $\bar\mu$, all the subsequential limits are in fact equal to $\bar\mu$.}

First, observe that, in view of Assumption \ref{hypo:Ho}, there exist $M>0$ and an arbitrary small $\beta>0$, that are such that 
\begin{align}
\cro{\nu,\mathbb 1_{\N_+}}&>\beta,\label{eq:condbeta}\\
\cro{\nu,\chi^3}&<M. \notag
\end{align}
Both are fixed until the end of the section.  

\subsection{Continuous time Markov chain}
Fix again $n\in\N_+$, and a matching criterion $\bphi$. 
To extend the DTMC $\mu^n$ onto a CTMC, we let $\suitei{\theta_i}$ be a sequence of independent identically distributed exponential random variables with parameter $1$, independent of  $\bar{\mu}^n$. Define the sequence $\suitei{\tau_i}$ of random times, by 
\[\tau_0 = 0\quad\mbox{ and }\quad \tau_i:= \sum_{j=1}^{i} {\theta_{j}\over n},\quad i\in \N.\]
We then define the process $\tilde{\mu}^n$ of $\mathbb{D}(\R_+,\M_F)$, by 
\begin{equation}
\label{eq:defmutilde}
\tilde{\mu}^n_t = 
\bar{\mu}^n_{l/n} =  \frac{1}{n}\mu^n_l\mathbb 1_{[0,1]}(t),\quad \text{for $\tau_l \leq t < \tau_{l+1}$},\,l \in \N.
\ee
Throughout, we append a ``tilde'' to all variables related to the process 
$\tilde{\mu}^n$. Observe that, as $\procn{\mu^n_j}$ is a DTMC, the process $\tilde{\mu}^n$ is itself clearly a CTMC. 
Its generator is specified hereafter, 
\begin{proposition}
\label{Mapprox}
For all $n\in\N_+$, $\tilde{\mu}^n$ is a Feller Markov process with generator $\tilde{\mathfrak{L}}^n$ defined by 	
        \begin{equation}
	\label{eq:fight0}
	\tilde{\mathfrak{L}}^n F(\bar{\mu}) := n\E_{\bar{\mu}} \left(F\left(\bar{\mu} + \frac{1}{n}\vartheta(n\bar{\mu})\right)  - F(\bar\mu)\right),\quad 
	F\in \maC_b(\bar\M^n),\, \bar{\mu}\in \bar{\M}^n, 
	\end{equation}
	for $\vartheta$ defined by \eqref{eq:defvartheta}. In particular, for all $f\in \maC_b(\N)$ we get that 
	\begin{equation}
	\label{eq:fight1}
	\tilde{\mathfrak{L}}^n \Pi_f(\bar{\mu}) = \E_{\bar{\mu}} \cro{\vartheta(n\bar{\mu}),f}={\mathfrak{L}}^n \Pi_f(n\bar{\mu}),\quad 
	\bar{\mu}\in \bar{\M}^n. 
	\end{equation}
\end{proposition}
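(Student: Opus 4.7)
The plan is to recognize that $\tilde{\mu}^n$ is nothing but the standard continuous-time embedding (``Poissonization'') of the DTMC $\procn{\mu^n_j}$ identified in Proposition \ref{prop:Mchain}, rescaled by $1/n$ on the state space $\bar{\M}^n$. Indeed, by construction $\tilde{\mu}^n$ remains at state $\bar{\mu}^n_{l/n}=\mu^n_l/n$ during the interval $[\tau_l,\tau_{l+1})$, whose length $\tau_{l+1}-\tau_l=\theta_{l+1}/n$ is an exponential r.v.\ with parameter $n$, independent of the DTMC and of all previous holding times. The Markov property of $\tilde{\mu}^n$ thus follows from the strong Markov property of $(\mu^n_l)$ combined with the memoryless property of the exponential distribution, and time-homogeneity is immediate.

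Second, to identify the generator I would use the standard fact that a pure-jump Markov process that jumps at rate $\lambda$ according to a probability kernel $Q$ has infinitesimal generator $F\mapsto \lambda\int(F(y)-F(x))Q(x,\mathrm{d}y)$. Here $\lambda=n$ and, by Proposition \ref{prop:Mchain}, the one-step transition kernel on $\bar\M^n$ sends $\bar\mu$ to $\bar\mu+\frac{1}{n}\vartheta(n\bar\mu)$, where the law of $\vartheta(n\bar\mu)$ under $\P_{n\bar\mu}$ is the increment distribution of the DTMC. Substituting yields \eqref{eq:fight0}.

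Third, the Feller property: $\bar{\M}^n$ is a countable discrete subset of $\bar\M$ (finite sums of atoms with denominator $n$), and the total jump rate $n$ is uniformly bounded in the state. Hence the semigroup maps bounded continuous functions into bounded continuous functions and is strongly continuous, so $\tilde{\mu}^n$ is Feller by the classical theory of bounded-rate jump Markov chains; there is no real obstacle at this step.

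Finally, the identity \eqref{eq:fight1} is obtained by plugging $F=\Pi_f:\bar\mu\mapsto\cro{\bar\mu,f}$ for $f\in\maC_b$ into \eqref{eq:fight0}: linearity of the pairing gives
\[\cro{\bar\mu+\tfrac1n\vartheta(n\bar\mu),f}-\cro{\bar\mu,f}=\tfrac1n\cro{\vartheta(n\bar\mu),f},\]
and the prefactor $n$ in \eqref{eq:fight0} cancels the $1/n$, leaving $\E_{\bar\mu}\cro{\vartheta(n\bar\mu),f}$, which equals $\mathfrak{L}\Pi_f(n\bar\mu)$ by \eqref{eq:defL}. The main obstacle of the paper is not here but in the subsequent sections, where this generator must be shown to converge, under suitable compactness, to the limiting operator $\bar{\mathfrak{L}}$ of Theorem \ref{thm:main}.
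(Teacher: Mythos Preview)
Your proposal is correct and follows essentially the same approach as the paper: both recognize $\tilde\mu^n$ as a pure-jump Markov process with bounded (constant, equal to $n$) jump rate and transition kernel inherited from the DTMC $(\mu^n_j)$, and both derive the generator and the Feller property from this structure. The only difference is presentational: where you invoke the standard formula $\lambda\int(F(y)-F(x))Q(x,\mathrm{d}y)$ for the generator of a jump process, the paper carries out the explicit small-$h$ expansion $\P(\tau_1<h,\tau_2>h)=nh+o(nh)$ to reach the same conclusion.
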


\begin{proof} {Fix $n\in\N_+$.} 
Since $\tilde{\mu}^n$ is a pure jump markov process with bounded intensities, it is Feller continuous (see Theorem 12.18 of \cite{Kallenberg2002}, or \cite{ethier2009markov} p. 163). 
Thus it admits a strongly defined generator: For all $F\in \maC_b(\bar\M^n)$ and $\bar\mu\in \bar\M^n$, for all $h>0$, 
\begin{align*}
\E_{\bar\mu}(F(\tilde{\mu}^n_h) - F(\tilde{\mu}^n_0)) 
= \,& \P(\tau_1<h,\, \tau_2>h) \E_{\tilde{\mu}^n_0}(F(\tilde{\mu}^n_{\tau_1}) - F(\tilde{\mu}^n_0) \mid\tau_1<h, \, \tau_2>h) \\
&\, + \P(\tau_1<h, \tau_2<h)\E_{\tilde{\mu}^n_0}(F(\tilde{\mu}^n_h) - F(\tilde{\mu}^n_0)\mid\tau_1<h, \tau_2<h).
\end{align*}
Notice that 
$$\P(\tau_1<h, \, \tau_2>h) = \P(\theta_1/n<h,\theta_1/n + \theta_2/n >h ) = hn + o(nh)$$
and likewise 
$$\P(\tau_1<h, \tau_2<h) = o(nh),$$
implying that 
\begin{align*}
\E_{\bar\mu}\left(F(\tilde{\mu}^n_h) - F(\tilde{\mu}^n_0)\right) &= nh\E_{\bar\mu}\left(F(\tilde{\mu}^n_{\tau_1}) - F(\tilde{\mu}^n_0) \mid\tau_1<h, \tau_2>h\right) + o(nh) \\
&= nh\E_{\bar{\mu}}\left(F(\bar{\mu}^n_0 + (1/n)\vartheta(n\bar{\mu}^n_0)) - F(\bar{\mu}^n_0)\right) +o(nh),
\end{align*}
proving {that} the generator of the Feller process $\tilde \mu^n$ reads as 
\[
\lim_{h\rightarrow 0} \frac{1}{h}  \E_{\bar{\mu}}(F(\tilde{\mu}^n_h) - F(\tilde{\mu}^n_0))=\tilde{\mathfrak{L}}^n F(\bar\mu),\,F\in \maC_b(\bar\M^n),\,\bar\mu\in\bar\M_n. 
\]
\end{proof}


{We have just proven that for all $n$, $\tilde{\mu}^n$ is a Feller process having} generator $\tilde{\mathfrak{L}}^n$. 
Posing the associated martingale problem yields the following result: 
\begin{proposition}
{For any $f\in\maC_b$ and $n \geq 1$, the process defined for all $t\in[0,1]$, by}
\be
\label{Martingale}
\maM_t^{f,n} = \cro{\tilde{\mu}^n_t,f} - \cro{\tilde{\mu}^n_0,f} - \int_0^{t}\tilde{\mathfrak{L}}^n \Pi_f(\tilde{\mu}^n_s) \d s
\ee
is a square integrable martingale. Moreover, its predictable quadratic variation is given by 
\be
\label{varMartingale}
\llangle\maM^{f,n}\rrangle_t = \frac{1}{n}\int_0^t \E_{\tilde{\mu}^n_s}\cro{\vartheta({n}\tilde{\mu}^n_s),f}^2\d s,\quad t\in[0,1],
\ee
for $\vartheta$ defined by \eqref{eq:defvartheta}. 
\end{proposition}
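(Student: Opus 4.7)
The strategy is standard Dynkin theory for pure jump Feller Markov processes. Proposition \ref{Mapprox} has established that $\tilde\mu^n$ is such a process with generator $\tilde{\mathfrak{L}}^n$, so that applying the general theory (see e.g. \cite{ethier2009markov}) with $F=\Pi_f$ (where $\Pi_f(\bar\mu):=\cro{\bar\mu,f}$) immediately yields that $\maM^{f,n}$ is a local martingale. The problem then reduces to (i) upgrading this local martingale to a genuine, square-integrable one on $[0,1]$, and (ii) identifying its predictable quadratic variation via the carré du champs operator.

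For (i), the key observation is the deterministic bound
\[
\left|\cro{\vartheta(\mu),f}\right|\le C\,\bigl(1+\cro{\mu,\chi}\bigl)\|f\|,
\]
for some universal constant $C$, which follows directly from \eqref{eq:defvartheta} by counting nonzero contributions: the two Dirac terms $\delta_{K_\bphi(\mu)}$ and $\delta_{K'_\bphi(\mu)}$ contribute at most $2\|f\|$, while the sum has at most $K_\bphi(\mu)+K'_\bphi(\mu)\le 2\cro{\mu,\chi}$ nonzero differences (since $\sum_l b_{\bphi,l}(\mu)\le K_\bphi(\mu)$ and $\sum_l b'_{\bphi,l}(\mu)\le K'_\bphi(\mu)$), each bounded by $2\|f\|$ in absolute value. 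Combining this with the mass monotonicity $\cro{\tilde\mu^n_t,\chi}\le\cro{\tilde\mu^n_0,\chi}$ of Proposition \ref{Momest1} and the moment assumption on $\mu^n_0$ coming from Assumption \ref{hypo:Ho}, I get uniform $L^2$ control of both $\tilde{\mathfrak{L}}^n\Pi_f(\tilde\mu^n_s)$ and $\E_{\tilde\mu^n_s}\cro{\vartheta(n\tilde\mu^n_s),f}^2$, which is more than enough to localize and to conclude that $\maM^{f,n}$ is a square-integrable martingale on $[0,1]$.

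For (ii), I would apply the classical carré du champs identity $\llangle\maM^{f,n}\rrangle_t=\int_0^t\Gamma(\Pi_f)(\tilde\mu^n_s)\d s$, where $\Gamma(\Pi_f):=\tilde{\mathfrak{L}}^n(\Pi_f^2)-2\Pi_f\,\tilde{\mathfrak{L}}^n\Pi_f$. Applying \eqref{eq:fight0} to $F=\Pi_f^2$ and expanding the square,
\[
\tilde{\mathfrak{L}}^n(\Pi_f^2)(\bar\mu)=n\,\E_{\bar\mu}\!\left[\tfrac{2}{n}\cro{\bar\mu,f}\cro{\vartheta(n\bar\mu),f}+\tfrac{1}{n^2}\cro{\vartheta(n\bar\mu),f}^2\right]=2\Pi_f(\bar\mu)\,\tilde{\mathfrak{L}}^n\Pi_f(\bar\mu)+\tfrac{1}{n}\E_{\bar\mu}\cro{\vartheta(n\bar\mu),f}^2,
\]
so that $\Gamma(\Pi_f)(\bar\mu)=\tfrac{1}{n}\E_{\bar\mu}\cro{\vartheta(n\bar\mu),f}^2$, which is precisely the expression claimed in \eqref{varMartingale}. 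The only real subtlety in the whole argument is (i): securing the $L^2$ bounds needed both to upgrade the local martingale and to justify applying Dynkin's formula to the unbounded test function $\Pi_f^2$. Once the pointwise bound on $\cro{\vartheta,f}$ displayed above is combined with Proposition \ref{Momest1} and the hypothesis on $\cro{\nu,\chi^{3.5+\varepsilon}}$, everything else is formal algebra.
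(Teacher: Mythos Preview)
Your approach via Dynkin's formula and the carr\'e du champ is exactly the paper's, and your computation of $\tilde{\mathfrak L}^n\Pi_f^2$ is identical. The one point where you diverge is the square-integrability bound. You control $|\cro{\vartheta(\mu),f}|$ by $C(1+\cro{\mu,\chi})\|f\|$ and then invoke Proposition~\ref{Momest1} together with Assumption~\ref{hypo:Ho} to get $L^2$ control of $\cro{\mu^n_0,\chi}$. This is both unnecessary and slightly off: the proposition is stated for any $n\ge 1$ without reference to Assumption~\ref{hypo:Ho}, and that assumption only gives weak convergence in probability of $\frac{1}{n}\mu^n_0$, not $L^2$ moments of $\cro{\mu^n_0,\chi}$. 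The paper instead uses the trivial observation that $n\tilde\mu^n_s\in\M^n$ has at most $n$ atoms, so the sum in $\vartheta(n\tilde\mu^n_s)$ has at most $n$ terms, yielding the deterministic bound $|\cro{\vartheta(n\tilde\mu^n_s),f}|\le (2+2n)\|f\|$ and hence $\llangle\maM^{f,n}\rrangle_t\le \frac{1}{n}\|f\|^2(2+2n)^2$. This settles square integrability with no moment hypotheses at all.
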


\begin{proof}
Fix $n$ and $f$. Then it is a classical fact that the process $\maM^{f,n}$ is a local martingale 
(see e.g. Section 5.1.2 in \cite{10.1007/BFb0084190}; Chapter 4 of \cite{ethier2009markov}). Applying Theorem 7.15 in \cite{MoyStoModel}, its quadratic variation is given by the process   
\begin{align}
t\longmapsto \llangle \maM^{f,n} \rrangle_t &= \int_0^t\left(\tilde{\mathfrak{L}}^n \Pi_f^2({\tilde{\mu}^n_s}) - 2\cro{{\tilde{\mu}^n_s},f} \tilde{\mathfrak{L}}^n \Pi_f({\tilde{\mu}^n_s})\right) \d s. \label{eq:fight2}
\end{align}
But for all $\bar\mu\in\bar\M^n$, we get 
\begin{align*}
\tilde{\mathfrak{L}}^n\Pi_f^2(\bar\mu) &= n\E_{\bar\mu} \left( \Pi_f^2\left(\bar\mu+ \frac{1}{n}\vartheta(n\bar\mu)\right) - \Pi_f^2(\bar\mu) \right)\\
&= n\E_{\bar\mu} \left(\cro{\bar\mu,f}^2 + \frac{2}{n}\cro{\vartheta(n\bar\mu),f}\cro{\bar\mu,f} + \frac{1}{n^2}\cro{\vartheta(n\bar\mu),f}^2 - \cro{\bar\mu,f}^2\right)\\
&= \E_{\bar\mu} \left(2\cro{\vartheta({n}\bar\mu),f}\cro{\bar\mu,f} + \frac{1}{n}\cro{\vartheta({n}\bar\mu),f}^2\right) 
\end{align*}
{which, together with \eqref{eq:fight1} and \eqref{eq:fight2}, implies \eqref{varMartingale}.}
To conclude, recalling \eqref{eq:defvartheta} we get that a.s. for all $t\in[0,1]$, 
\begin{multline}
\label{eq:boundcrochet}
\llangle \maM^{f,n} \rrangle_t = \frac{1}{n}{\int_0^t} \E_{\tilde{\mu}^n_s}( \cro{\vartheta({n}\tilde{\mu}^n_s),f}^2) {\d s}\\
\shoveleft{= \frac{1}{n} {\int_0^t}\E_{\tilde{\mu}^n_s}\Biggl\{\mathbb 1_{\{\tilde{\mu}^n_s(\N_+)>0\}}\Biggl(f(K_{{\mathbf \Phi}}({n}\tilde{\mu}^n_s))}\\
 \left.\left.+ \left(f(K'_{{\mathbf \Phi}}({n}\tilde{\mu}^n_s)) + \sum_{l=1}^{{n}\tilde{\mu}^n_s\left(\N_+\right)} 
\(f\left(a_{l}\left({n}\tilde{\mu}^n_s\right)\right) - f\left(a_{l}\left({n}\tilde{\mu}^n_s\right) - b_{\bphi,l}\left({n}\tilde{\mu}^n_s\right)-b'_{\bphi,l}\left({n}\tilde{\mu}^n_s\right)\right) \)\right)\mathbb 1_{\mathscr E({n}\tilde{\mu}^n_s)} \right)^2\right\}{\d s}. 
\end{multline}
The above is clearly bounded by $ {1\over n}\lVert f \rVert^2(2 + 2n)^2$, 
implying that $\maM^{f,n}$ is indeed a square integrable martingale.
\end{proof}

\subsection{Tightness}
\label{tightv}
Define the sequence of stopping times
\begin{equation}
\label{eq:deftaun}
\tau^n_\beta=\inf\left\{t\ge 0\,:\,\cro{\bar\mu^n_t,\chi} < \beta\right\},\,n\in\N_+,
\end{equation}
and for any càdlàg process $X$ over $\mathbb D\left([0,1],E\right)$, for $E$ a Polish space, denote for all $n\in\N_+$ and $\beta>0$, 
\[t\longmapsto X^{\tau^n_\beta}_t=X_{t\wedge \tau^n_\beta},\,t\in[0,1],\]
the process $X$ stopped at time ${\tau^n_\beta}$.

We first prove the relative compactness of the laws of the processes $\suite{\tilde\mu^{n,\tau^n_\beta}}$ in $\mathbb{D}([0,1], (\maM,\tau_v))$, 
where $\tau_v$ denotes the {vague} topology. Then we extend this result to the weak topology. 
\begin{lemma}
\label{lemma:tightv}
Under the conditions of Theorem \ref{thm:main}, the sequence $\suite{\tilde\mu^{n,\tau^n_\beta}}$ is $\mathbb C$-tight in the space  
$\mathbb{D}([0,1], (\maM,\tau_v))$, that is, it is tight in $\mathbb{D}([0,1], (\maM,\tau_v))$ and any subsequential limit is an 
element of $\mathbb{C}([0,1], (\maM,\tau_v))$. 
\end{lemma}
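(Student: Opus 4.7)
The plan is to apply a classical tightness criterion for càdlàg measure-valued processes in the vague topology, reducing the problem to univariate tightness tested against a countable separating family. Since $\N$ is discrete, the indicator functions $\mathbb 1_{\{k\}}$, $k\in\N$, form a countable separating family in $\maC_K$; moreover, $\bar{\M}$ is compact in the vague topology (Banach--Alaoglu), so the compact-containment condition is automatic. It thus suffices to prove $\mathbb C$-tightness in $\mathbb D([0,1],\R)$ of each real-valued sequence $\suite{\cro{\tilde\mu^{n,\tau^n_\beta},f}}$ for $f\in\maC_K$, which I will do via the Aldous--Rebolledo criterion applied to the semimartingale decomposition \eqref{Martingale}.

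The core pointwise estimate is
$$\bigl|\cro{\vartheta(\mu),f}\bigr|\le\lVert f\rVert\bigl(2+K_{\bphi}(\mu)+K'_{\bphi}(\mu)\bigr),$$
which follows directly from the structure of \eqref{eq:defvartheta}: the sum over $l$ contains at most $K+K'$ nonzero summands (those indices with $b_{\bphi,l}(\mu)+b'_{\bphi,l}(\mu)>0$). To exploit the moment-preservation of $\bphi$, I must verify that $n\tilde\mu^{n,\tau^n_\beta}_s\in\M^n_{\beta',M}$ for appropriate $\beta',M$, with high probability. The third moment is controlled by Proposition \ref{Momest1} (monotonicity of $\cro{\mu^n_j,\chi^3}$) together with the convergence $\cro{\bar\mu^n_0,\chi^3}\to\cro{\nu,\chi^3}<\infty$, which follows from Assumption \ref{hypo:Ho} combined with the uniform integrability supplied by the extra $3.5+\varepsilon$-th moment of $\nu$. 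A lower bound on $\cro{\bar\mu^n_s,\mathbb 1_{\N_+}}$ for $s\le\tau^n_\beta$ follows from the stopping rule \eqref{eq:deftaun} and the Cauchy--Schwarz inequality $\cro{\bar\mu,\chi}^2\le\cro{\bar\mu,\chi^2}\cro{\bar\mu,\mathbb 1_{\N_+}}$. Proposition \ref{prop:boundKK'} then yields $\sup_{s\le\tau^n_\beta}\E\bigl[K_{\bphi}^2+(K'_{\bphi})^2\bigr]\le C(\beta,M)$.

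From these uniform moment controls, the drift $\int_{\sigma}^{(\sigma+\delta)\wedge\tau^n_\beta}\tilde{\mathfrak L}^n\Pi_f(\tilde\mu^n_s)\,\d s$ is bounded by $C_f\delta$, while the quadratic-variation increment (via \eqref{eq:boundcrochet}) is at most $C_f\delta/n$; both vanish for any bounded stopping time $\sigma$ and $\delta\to 0$, giving the Aldous--Rebolledo condition and hence tightness of the marginals. To upgrade to $\mathbb C$-tightness, I bound the maximal jump size of $\cro{\tilde\mu^{n,\tau^n_\beta},f}$ by $\lVert f\rVert(2+K+K')/n$ at any jump, and the moment bound on $\nu$ forces the largest atom of $\mu^n_0$ to be $o(n^{1/(3.5+\varepsilon)})=o(n)$ in probability, so the maximal jump vanishes and every subsequential limit is continuous. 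The main technical difficulty is precisely this coordination of $\tau^n_\beta$, the moment monotonicity, and the max-degree estimate, all underpinned by Assumption \ref{hypo:Ho}.
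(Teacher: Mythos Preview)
Your proposal is correct and follows essentially the same route as the paper: the M\'el\'eard--Roelly reduction to real-valued processes tested against $f\in\maC_K$, then the Aldous--Rebolledo/Roelly criterion applied to the semimartingale decomposition \eqref{Martingale}, with the drift and quadratic-variation increments controlled via Proposition~\ref{prop:boundKK'} on the good event $\{\mu^n_0\in\M^n_{\beta,M}\}$, and vanishing jump size for $\mathbb C$-tightness. Two minor remarks: your Cauchy--Schwarz bridge from the $\chi$-based stopping time \eqref{eq:deftaun} to the $\mathbb 1_{\N_+}$-condition defining $\M^n_{\beta,M}$ is a useful clarification that the paper glosses over; on the other hand, the uniform integrability of $\chi^3$ under $\bar\mu^n_0$ does not follow from a moment condition on the \emph{limit} $\nu$ alone (it is a property of the approximating sequence), though the paper takes the same convergence for granted in \eqref{eq:convfinal2}.
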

\begin{proof}
Méléard and Roelly (\cite{Mlard1993SurLC}) show that for $\suite{\tilde\mu^{n,\tau^n_\beta}}$ to be tight in $\mathbb{D}([0,1], (\maM,\tau_v))$, 
it is enough that, for any $f\in \maC_K$, the sequence of processes $\suite{\cro{\tilde\mu^{n,\tau^n_\beta}_.,f}}$ be tight in $\mathbb{D}([0,1], \R)$. 
To show the latter, we fix $n$ and $f\in \maC_K$, and recall the semi-martingale decomposition (\ref{Martingale}): 
For $t\in [0,1]$, $\cro{\tilde\mu^{n,\tau^n_\beta}_t,f}$ can be rewritten as 
	\begin{equation}
	\label{eq:devsemimart}
	\cro{\tilde\mu^{n,\tau^n_\beta}_t,f} = \maM_{t} ^{f,n,\tau^n_\beta} + V_{t}^{f,n,\tau^n_\beta}
	=\maM_{t\wedge \tau^n_\beta} ^{f,n} + V_{t\wedge \tau^n_\beta}^{f,n},
	\end{equation}
	where the finite variation part is given by
	\[t\longmapsto V^{f,n}_t=\cro{\tilde{\mu}^n_0,f} + \int_0^{t}\tilde{\mathfrak{L}}^n \Pi_f(\tilde{\mu}^n_s) \d s,\quad t\in[0,1].\]
	Then, we use Roelly's tightness criterion \cite{doi:10.1080/17442508608833382}, which states that 
the following conditions are sufficient for the tightness of $\suite{\cro{\tilde\mu^{n,\tau^n_\beta}_.,f}}$ in $\mathbb{D}([0,1], \R)$: Recalling the notation 
in \eqref{eq:devsemimart}, 
\begin{itemize}
	\item For all $t$, $\suite{\cro{\tilde\mu^{n,\tau^n_\beta}_t,f}}$ is tight;
	\item For all $\epsilon>0$ and $\eta >0$, there exists $\delta>0$ such that if $\suite{(S_n,T_n)}$ is a sequence of stopping times with 
	$0\leq S_n \leq T_n \leq (S_n + \delta) \wedge 1$, there exists $n_0$ such that 
	\begin{align}
\sup_{n\geq n_0} &\pr{\left|\llangle\maM^{f,n,\tau^n_\beta}\rrangle_{T_n} - \llangle\maM^{f,n,\tau^n_\beta}\rrangle_{S_n}\right| > \epsilon}\leq \eta;\label{eq:tightnew1}\\
		\sup_{n\geq n_0} &\pr{\left|V^{f,n,\tau^n_\beta}_{T_n} - V^{f,n,\tau^n_\beta}_{S_n}\right| > \epsilon}\leq \eta.\label{eq:tightnew2}
		\end{align}
\end{itemize}
To show the first assertion, for all $t\geq 0$ it suffices to write that for all $n$,
$$\left|\cro{\tilde\mu^{n,\tau^n_\beta}_t,f}\right| \leq \lVert f \rVert \left|\cro{\tilde\mu^{n,\tau^n_\beta}_t,\mathbb 1}\right| \leq \lVert f \rVert.$$ 
Hence the sequence $\suite{\cro{\tilde\mu^{n,\tau^n_\beta}_t,f}}$ is tight since it is bounded.  
Now, regarding the second assertion, fix $\epsilon>0$ and $\eta>0$. 
First, fix $\delta > 0$, $n\ge 1$, and two stopping times $S_n$ and $T_n$ such that $S_n \leq T_n \leq S_n + \delta$.  
	On the one hand, using the Markov inequality we have 
	\begin{multline}\label{eq:boundcrochet0}
	\pr{ \left|\llangle\maM^{f,n,\tau^n_\beta}\rrangle_{T_n} - \llangle\maM^{f,n,\tau^n_\beta}\rrangle_{S_n} \right| > \epsilon}\\
	\begin{aligned}
	&\le \pr{\left\{\left|\llangle\maM^{f,n,\tau^n_\beta}\rrangle_{T_n} - \llangle\maM^{f,n,\tau^n_\beta}\rrangle_{S_n} \right| > \epsilon\right\}
	\cap \left\{\mu^n_0\in \M^n_{\beta,M}\right\}}+\pr{\mu^n_0\not\in \M^n_{\beta,M}} \\
	&= \pr{\left\{\left|\llangle\maM^{f,n}\rrangle_{T_n\wedge \tau^n_\beta} - \llangle\maM^{f,n}\rrangle_{S_n\wedge\tau^n_\beta} \right| > \epsilon
	\right\}\cap \left\{\mu^n_0\in \M^n_{\beta,M}\right\}}
	+\pr{\mu^n_0\not\in \M^n_{\beta,M}}\\
	&\leq {1\over \epsilon} \esp{\left|\llangle\maM^{f,n}\rrangle_{T_n\wedge \tau^n_\beta} - \llangle\maM^{f,n}\rrangle_{S_n\wedge\tau^n_\beta}\right|
	\mathbb 1_{\{\mu^n_0\in \M^n_{\beta,M}\}}}+\pr{\mu^n_0\not\in \M^n_{\beta,M}}.
	\end{aligned}\end{multline}
But let us first observe that  
\begin{multline}
\label{eq:convfinal2}
\pr{\mu^n_0\not\in \M^n_{\beta,M}} 
\le \pr{\cro{\bar\mu^n_0,\mathbb 1_{\N_+}} < \beta} + \pr{\cro{\bar\mu^n_0,\chi^3} > M}\\
\le \pr{\cro{\bar\mu^n_0,\mathbb 1_{\N_+}} < \cro{\nu,\mathbb 1_{\N_+}}-{ \cro{\nu,\mathbb 1_{\N_+}} - \beta \over 2} } 
+ \pr{\cro{\bar\mu^n_0,\chi^3} >  \cro{\nu,\chi^3}+{M-\cro{\nu,\chi^3}  \over 2} }
\xlongrightarrow{n} 0,
\end{multline}
in view of Assumption \ref{hypo:Ho}. Second, recalling \eqref{eq:boundcrochet} we have that 
\begin{multline}\label{eq:boundcrochetbis}
\left|\llangle\maM^{f,n}\rrangle_{T_n\wedge \tau^n_\beta} - \llangle\maM^{f,n}\rrangle_{S_n\wedge\tau^n_\beta}\right|
=\frac{1}{n}\left|\int^{T_n\wedge \tau^n_\beta}_{S_n\wedge \tau^n_\beta} \E_{\tilde{\mu}^n_s}\cro{\vartheta(n\tilde{\mu}^n_s),f}^2\d s\right| \\
\shoveleft{\le \frac{2}{n}\int^{T_n\wedge \tau^n_\beta}_{S_n\wedge \tau^n_\beta} \E_{\tilde{\mu}^n_s}\left[\mathbb 1_{\{\tilde{\mu}^n_s(\N_+)>0\}}f(K_{{\mathbf \Phi}}({n}\tilde{\mu}^n_s))^2\right]\d s}\\
 \shoveleft{+{2\over n}\int^{T_n\wedge \tau^n_\beta}_{S_n\wedge \tau^n_\beta}\E_{\tilde{\mu}^n_s}\Biggl[\mathbb 1_{\{\tilde{\mu}^n_s(\N_+)>0\}\cap\mathscr E({n}\tilde{\mu}^n_s)}
 \Biggl(f(K'_{{\mathbf \Phi}}({n}\tilde{\mu}^n_s))}\\
 \shoveright{ \left.+ \sum_{l=1}^{{n}\tilde{\mu}^n_s\left(\N_+\right)} 
\(f\left(a_{l}\left({n}\tilde{\mu}^n_s\right)\right) - f\left(a_{l}\left({n}\tilde{\mu}^n_s\right) - b_{\bphi,l}\left({n}\tilde{\mu}^n_s\right)-b'_{\bphi,l}\left({n}\tilde{\mu}^n_s\right)\right) \)\Biggl)^2 \right]\d s}\\
\shoveleft{\le \frac{2}{n}\int^{T_n\wedge \tau^n_\beta}_{S_n\wedge \tau^n_\beta} \E_{\tilde{\mu}^n_s}\left[\mathbb 1_{\{\tilde{\mu}^n_s(\N_+)>0\}}f(K_{{\mathbf \Phi}}({n}\tilde{\mu}^n_s))^2\right]\d s}\\
 \shoveleft{+{2\over n}\int^{T_n\wedge \tau^n_\beta}_{S_n\wedge \tau^n_\beta} \E_{\tilde{\mu}^n_s}\Biggl[\mathbb 1_{\{\tilde{\mu}^n_s(\N_+)>0\}\cap\mathscr E({n}\tilde{\mu}^n_s)}
 \left(K_{{\mathbf \Phi}}({n}\tilde{\mu}^n_s)+K'_{{\mathbf \Phi}}({n}\tilde{\mu}^n_s)-1\right)}\\
 \shoveright{\times \left\{f(K'_{{\mathbf \Phi}}({n}\tilde{\mu}^n_s))^2+
 \sum_{l=1}^{{n}\tilde{\mu}^n_s\left(\N_+\right)} 
\(f\left(a_{l}\left({n}\tilde{\mu}^n_s\right)\right) - f\left(a_{l}\left({n}\tilde{\mu}^n_s\right) - b_{\bphi,l}\left({n}\tilde{\mu}^n_s\right)-b'_{\bphi,l}\left({n}\tilde{\mu}^n_s\right)\right) \)^2\right\} \Biggl]\d s}\\
\shoveleft{\le \frac{2}{n}\int^{T_n\wedge \tau^n_\beta}_{S_n\wedge \tau^n_\beta} \parallel f \parallel^2\d s}\\
 \shoveright{+{2\over n}\int^{T_n\wedge \tau^n_\beta}_{S_n\wedge \tau^n_\beta} \E_{\tilde{\mu}^n_s}\Biggl[
 \left(K_{{\mathbf \Phi}}({n}\tilde{\mu}^n_s)+K'_{{\mathbf \Phi}}({n}\tilde{\mu}^n_s)-1\right)\Bigl\{\parallel f \parallel^2\left(1+2\left(K_{{\mathbf \Phi}}({n}\tilde{\mu}^n_s)+K'_{{\mathbf \Phi}}({n}\tilde{\mu}^n_s)-2\right)\right)
 \Bigl\} \Biggl]\d s}\\
\le \frac{2\parallel f \parallel^2}{n}\int^{T_n\wedge \tau^n_\beta}_{S_n\wedge \tau^n_\beta} \left(1+\E_{\tilde{\mu}^n_s}\Biggl[
 \left(K_{{\mathbf \Phi}}({n}\tilde{\mu}^n_s)+K'_{{\mathbf \Phi}}({n}\tilde{\mu}^n_s)\right)\left(2K_{{\mathbf \Phi}}({n}\tilde{\mu}^n_s)+2K'_{{\mathbf \Phi}}({n}\tilde{\mu}^n_s)\right)\Biggl]\right)\d s, 
	\end{multline}
	\noindent {where, in the first two inequalities we use again the fact that for all $s$, $b_{\bphi,l}\left({n}\tilde{\mu}^n_s\right)+b'_{\bphi,l}\left({n}\tilde{\mu}^n_s\right)$ is 
	non-zero for at most $K_{{\mathbf \Phi}}({n}\tilde{\mu}^n_s)+K'_{{\mathbf \Phi}}({n}\tilde{\mu}^n_s)-2$ indexes $l$. 
	But, as the process $s\mapsto \cro{\tilde{\mu}^n_s,\chi^3}$ is a.s. non-decreasing, on the event $\{\mu^n_0\in \M^n_{\beta,M}\}$ 
	the process $\left(\tilde{\mu}^n_s,\,s\in[0,\tau^n_\beta]\right)$ takes values in $\bar\M^n_{\beta,M}$. Therefore, as $\bphi$ is moment preserving and 
	well-behaved, applying Proposition \ref{prop:boundKK'} and Cauchy-Schwarz inequality to the right-hand side of \eqref{eq:boundcrochetbis}, 
	we obtain that 
	\begin{equation}
	\esp{\left|\llangle\maM^{f,n}\rrangle_{T_n\wedge \tau^n_\beta} - \llangle\maM^{f,n}\rrangle_{S_n\wedge\tau^n_\beta}\right|
	\mathbb 1_{\{\mu^n_0\in \M^n_{\beta,M}\}}}
\le \frac{2\delta\parallel f \parallel^2}{n}\left(1+C\right),\label{eq:boundcrochetter}
	\end{equation}
         where $C$ is a constant that depends only on $\beta$ and $M$. 
 Injecting this together with \eqref{eq:convfinal2} in \eqref{eq:boundcrochet0}, 
shows that \eqref{eq:tightnew1} holds for a small enough $\delta$ and a large enough $n_0$.} 

Reasoning similarly, fixing again $\delta > 0$, $n\ge 1$, and two stopping times $S_n$ and $T_n$ such that $S_n \leq T_n \leq S_n + \delta$, 
we get  
\begin{multline}
\left|V^{f,n}_{T_n\wedge \tau^n_\beta} - V^{f,n}_{S_n\wedge ,\tau^n_\beta}\right| 
= \left|\int^{T_n\wedge \tau^n_\beta}_{S_n\wedge \tau^n_\beta} \E_{\tilde{\mu}^n_s}\cro{\vartheta(n\tilde{\mu}^n_s),f}\d s\right| \\
\shoveleft{\le \int^{T_n\wedge \tau^n_\beta}_{S_n\wedge \tau^n_\beta} \E_{\tilde{\mu}^n_s}\left[\mathbb 1_{\{\tilde{\mu}^n_s(\N_+)>0\}}\left|f(K_{{\mathbf \Phi}}({n}\tilde{\mu}^n_s))\right|\right]\d s}\\
 \shoveleft{+\int^{T_n\wedge \tau^n_\beta}_{S_n\wedge \tau^n_\beta} \E_{\tilde{\mu}^n_s}\Biggl[\mathbb 1_{\{\tilde{\mu}^n_s(\N_+)>0\}\cap\mathscr E({n}\tilde{\mu}^n_s)}
 \Biggl( \left|f(K'_{{\mathbf \Phi}}({n}\tilde{\mu}^n_s))\right|}\\
\shoveright{ \left.+ \sum_{l=1}^{{n}\tilde{\mu}^n_s\left(\N_+\right)} 
\left|f\left(a_{l}\left({n}\tilde{\mu}^n_s\right)\right) - f\left(a_{l}\left({n}\tilde{\mu}^n_s\right) - b_{\bphi,l}\left({n}\tilde{\mu}^n_s\right)-b'_{\bphi,l}\left({n}\tilde{\mu}^n_s\right)\right) \right| \Biggl)\right]\d s}\\
\le \int^{T_n\wedge \tau^n_\beta}_{S_n\wedge \tau^n_\beta}\parallel f \parallel\left(2+2\E_{\tilde{\mu}^n_s}\left[K_{{\mathbf \Phi}}({n}\tilde{\mu}^n_s)+K'_{{\mathbf \Phi}}({n}\tilde{\mu}^n_s)-2\right] \right)\d s 
\le {4 \delta\parallel f \parallel C',}\label{eq:boundcrochet}
	\end{multline}
	for some constant $C'$ that also depends only on $\beta$ and $M$. 
	We can then conclude, as above, that \eqref{eq:tightnew2} holds for 
	a small enough $\delta$ and a large enough $n_0$.
\noindent {This completes the proof of tighness of $\suite{\tilde\mu^n}$ in $\mathbb{D}([0,1], (\maM,\tau_v))$.}
%
%

To prove that the sequence is $\mathbb C$-tight, we need to show additionally that for any 
$f\in \maC_K$ the sequence of processes $\suite{\cro{\tilde\mu^{n,\tau^n_\beta}_.,f}}$ is $\mathbb C$-tight in 
$\mathbb{D}([0,1], \R)$. For this, for all $n$ and $t\in [0,1]$ we let $\delta \cro{\tilde\mu^{n,\tau^n_\beta}_t,f}$ 
be the jump of $\cro{\tilde\mu^n_t,f}$ at time $t$. Then, we get that 
$\delta \cro{\tilde\mu^{n,\tau^n_\beta}_t,f}$ is either 0 or $\frac{1}{n} \cro{\vartheta\left(n\tilde\mu^{n,\tau^n_\beta}_t\right)}$, and 
\[
\frac{1}{n} \left|\cro{\vartheta\left(n\tilde\mu^{n,\tau^n_\beta}_t\right),f}\right| \leq \frac{1}{n} \lVert f \rVert (2 + 4 o(\sqrt(n))), 
\]
which shows that $\delta \cro{\tilde\mu^{n,\tau^n_\beta}_t,f} \xlongrightarrow{n} 0$ a.s. for all $t$, proving that {(vague)} subsequential limits 
indeed have continuous paths. 
\end{proof}

\noindent We can deduce the tightness of $\suite{\tilde\mu^{n,\tau^n_\beta}}$ in $\mathbb{D}([0,1], (\maM,\tau_w))$ for any $\beta>0$, where $\tau_w$ 
denotes the weak topology. We stress that weak tightness is indeed needed, since the matching coverage depends on non-compactly supported functions, such as $\mathbb 1_0$.

\begin{corollary}
\label{cor:tightw}
The sequence of processes $\suite{\tilde\mu^{n,\tau^n_\beta}}$ is {$\mathbb C$-tight} in $\mathbb{D}([0,1], (\maM,\tau_w))$.
\end{corollary}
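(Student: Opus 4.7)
The plan is to upgrade the vague $\mathbb{C}$-tightness of Lemma \ref{lemma:tightv} to $\mathbb{C}$-tightness in the weak topology. Since the state space $\N$ is discrete, compact subsets are finite sets, so a standard criterion reduces the problem to controlling the mass at infinity uniformly in $n$ and $t$: it will suffice to show that for every $\epsilon,\eta>0$, there exist $L_0\in\N$ and $n_0\in\N_+$ such that
\[
\sup_{n\geq n_0}\P\left(\sup_{t\in[0,1]}\cro{\tilde\mu^{n,\tau^n_\beta}_t,\mathbb 1_{[L_0,\infty)}}>\epsilon\right)<\eta.
\]
Once this is granted, for any $f\in\maC_b$, splitting $f=f\mathbb 1_{[0,L-1]}+f\mathbb 1_{[L,\infty)}$ reduces the weak convergence of $\cro{\tilde\mu^{n,\tau^n_\beta}_\cdot,f}$ to a compactly supported part (handled by vague $\mathbb{C}$-tightness) plus a remainder uniformly controlled in sup-norm by $\lVert f\rVert$ times the tail estimate above. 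This gives weak tightness, and continuous paths are inherited from the vague part.

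The key ingredient will be Proposition \ref{Momest1}. Since $\mathbb 1_{[L,\infty)}$ is non-negative and non-decreasing on $\N$, the discrete-time monotonicity transfers, via \eqref{eq:defmutilde} and the stopping at $\tau^n_\beta$, to the pathwise bound
\[
\sup_{t\in[0,1]}\cro{\tilde\mu^{n,\tau^n_\beta}_t,\mathbb 1_{[L,\infty)}}\leq \cro{\bar\mu^n_0,\mathbb 1_{[L,\infty)}}.
\]
Markov's inequality then yields $\cro{\bar\mu^n_0,\mathbb 1_{[L,\infty)}}\leq L^{-3}\cro{\bar\mu^n_0,\chi^3}$, and by Assumption \ref{hypo:Ho}, $\cro{\bar\mu^n_0,\chi^3}\xrightarrow{(n,\P)}\cro{\nu,\chi^3}<\infty$. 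Choosing $L_0$ large so that $\epsilon L_0^3$ comfortably exceeds $\cro{\nu,\chi^3}$, and $n_0$ large so that the initial third moment is close to its limit with probability at least $1-\eta$, yields the required bound.

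I do not expect any substantial obstacle: the argument reduces to combining the monotonicity of $j\mapsto\cro{\mu^n_j,f}$ for non-decreasing $f$, already established in Proposition \ref{Momest1}, with the third-moment control on the initial condition provided by Assumption \ref{hypo:Ho}. The only technical point, which is routine, is to verify that the weak-tightness criterion invoked above applies on the discrete state space $\N$ and that the upgrade from vague to weak topology preserves the $\mathbb{C}$-part.
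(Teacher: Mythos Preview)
Your proof is correct, and it takes a genuinely different route from the paper. The paper invokes the M\'el\'eard--Roelly criterion from \cite{Mlard1993SurLC}, which states that weak $\mathbb C$-tightness follows from vague $\mathbb C$-tightness together with tightness of the total mass process $\suite{\cro{\tilde\mu^{n,\tau^n_\beta}_\cdot,\mathbb 1}}$ in $\mathbb D([0,1],\R)$; it then proves the latter by re-running the semimartingale decomposition and Aldous--Rebolledo argument of Lemma~\ref{lemma:tightv} with the particular test function $f\equiv\mathbb 1$.

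Your approach instead bypasses the semimartingale machinery for the mass process entirely: you exploit the pathwise monotonicity of Proposition~\ref{Momest1} (applied to the non-decreasing function $\mathbb 1_{[L,\infty)}$) to reduce the uniform-in-$t$ tail control to a statement about the initial condition only, and then invoke the moment bound on $\bar\mu^n_0$. This is more elementary and more structural---it uses a feature specific to this model (mass only moves downward) rather than the generic stochastic-analysis toolkit. The paper's approach, on the other hand, is more mechanical and reuses machinery already set up, at the cost of re-verifying the Roelly criterion for one more test function. One small remark: your appeal to $\cro{\bar\mu^n_0,\chi^3}\xrightarrow{(n,\P)}\cro{\nu,\chi^3}$ is not literally contained in Assumption~\ref{hypo:Ho} as stated (which only guarantees convergence against bounded $f$), but the paper itself uses exactly this in \eqref{eq:convfinal2}, so your interpretation is consistent with the paper's; alternatively, you could simply note that $\mathbb 1_{[L,\infty)}\in\maC_b$ and apply Assumption~\ref{hypo:Ho} directly to get $\cro{\bar\mu^n_0,\mathbb 1_{[L,\infty)}}\xrightarrow{(n,\P)}\cro{\nu,\mathbb 1_{[L,\infty)}}$, which avoids the third moment altogether.
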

\begin{proof}
Following \cite{Mlard1993SurLC}, $\suite{\tilde\mu^{n,\tau^n_\beta}}$ is weakly {$\mathbb C$-tight}, 
if it is vaguely $\mathbb C$-tight and the mass process $\suite{\cro{\tilde\mu^{n,\tau^n_\beta}_.,\mathbb 1}}$ is tight in 
$\mathbb{D}([0,1], \R)$. So we have yet to show the tightness of $\suite{\cro{\tilde\mu^{n,\tau^n_\beta}_.,\mathbb 1}}$ in $\mathbb{D}([0,1], \R)$. 
The proof of the latter is exactly the same as in the proof of Lemma \ref{lemma:tightv}, for $f\equiv \mathbb 1$.
\end{proof}

\subsection{Convergence to the unique solution}
The above result shows the existence of subsequential limits for the sequence of stopped processes $\suite{\tilde\mu^{n,\tau^n_\beta}}$. 
To show that this implies the convergence of the sequence $\suite{\tilde\mu^{n,}}$ in a suitable sense, we will need to control the martingale term 
in the semi-martingale decomposition \eqref{Martingale}. 
\begin{lemma}
\label{lemma:Doob}
For any $f\in\maC_b$, the sequence of processes $\suite{\procun{\maM^{f,n}_{t\wedge \tau^n_\beta}}}$ defined by \eqref{Martingale}, 
converges in probability to $0$ uniformly over $[0,1]$. 
\end{lemma}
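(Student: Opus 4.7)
The plan is to apply Doob's $L^2$ maximal inequality to the stopped martingale $\maM^{f,n,\tau^n_\beta} = \procun{\maM^{f,n}_{t\wedge \tau^n_\beta}}$, and then show that its predictable quadratic variation, controlled suitably on the ``good'' event $\{\mu^n_0 \in \M^n_{\beta,M}\}$, tends to zero in expectation at rate $O(1/n)$, while the ``bad'' event $\{\mu^n_0 \notin \M^n_{\beta,M}\}$ has vanishing probability by Assumption \ref{hypo:Ho}.

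More precisely, fix $f \in \maC_b$ and $\epsilon>0$. First, we note that conditionally on $\mu^n_0$, the process $\maM^{f,n}$ remains a square-integrable martingale, so that $\maM^{f,n,\tau^n_\beta}$ is also a square-integrable martingale (as a stopped martingale). Applying Doob's maximal inequality conditionally on $\mu^n_0$, then multiplying by $\mathbb 1_{\{\mu^n_0 \in \M^n_{\beta,M}\}}$ and taking expectations,
\[
\E\left[\mathbb 1_{\{\mu^n_0 \in \M^n_{\beta,M}\}}\sup_{t\in[0,1]} \left|\maM^{f,n}_{t\wedge \tau^n_\beta}\right|^2\right] \le 4\,\E\left[\mathbb 1_{\{\mu^n_0 \in \M^n_{\beta,M}\}}\llangle\maM^{f,n}\rrangle_{1\wedge \tau^n_\beta}\right].
\]
The right-hand side is precisely the quantity already estimated in \eqref{eq:boundcrochetter} (used there with $T_n=1$ and $S_n=0$): since $\bphi$ is moment preserving and well-behaved, on $\{\mu^n_0 \in \M^n_{\beta,M}\}$ the stopped process $\tilde\mu^{n,\tau^n_\beta}$ takes values in $\bar\M^n_{\beta,M}$, hence Proposition \ref{prop:boundKK'} yields a uniform bound on $\E_{\tilde\mu^n_s}[(K_\bphi(n\tilde\mu^n_s))^2 + (K'_\bphi(n\tilde\mu^n_s))^2]$ depending only on $\beta$ and $M$. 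This gives a constant $C(\beta,M,f)>0$ such that
\[
\E\left[\mathbb 1_{\{\mu^n_0 \in \M^n_{\beta,M}\}}\llangle\maM^{f,n}\rrangle_{1\wedge \tau^n_\beta}\right] \le \frac{C(\beta,M,f)}{n}\cdot
\]

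Combining with Markov's inequality, we obtain
\[
\pr{\sup_{t\in[0,1]} \left|\maM^{f,n}_{t\wedge \tau^n_\beta}\right| > \epsilon} \le \pr{\mu^n_0 \notin \M^n_{\beta,M}} + \frac{4\,C(\beta,M,f)}{\epsilon^2\, n}\cdot
\]
By \eqref{eq:convfinal2} (which was itself a consequence of Assumption \ref{hypo:Ho} and the choice of $\beta, M$), the first term on the right-hand side vanishes as $n\to\infty$, and the second term trivially does. Letting $n\to\infty$ yields the announced convergence in probability to zero, uniformly over $[0,1]$.

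No substantial obstacle is expected in this proof, since the key input — the control of the predictable quadratic variation — has already been established as a byproduct of the tightness argument in Lemma \ref{lemma:tightv}; the only additional ingredient is Doob's inequality, applied conditionally to handle the event $\{\mu^n_0 \notin \M^n_{\beta,M}\}$ separately.
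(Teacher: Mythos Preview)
Your proof is correct and follows essentially the same approach as the paper's: bound the predictable quadratic variation via the estimate \eqref{eq:boundcrochetter} and conclude by Doob's maximal inequality. The paper's own proof is terser and does not make the split on $\{\mu^n_0\in\M^n_{\beta,M}\}$ explicit, so your version is in fact a cleaner write-up of the same argument.
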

\begin{proof}
For all $n$ and $t$, reasonning as in \eqref{eq:boundcrochetter} we immediately get that 
$$\esp{\left(\maM^{f,n}_{t\wedge \tau^n_\beta}\right)^2} = \esp{\left\langle\maM^{f,n}\right\rangle_{t\wedge \tau^n_\beta}} \le \frac{2t \parallel f \parallel^2}{n}\left(1+C\right)
,$$ 
and we conclude using Doob's inequality.
\end{proof} 
\noindent We deduce the following result. 
\begin{proposition}
\label{prop:convt2beta}
Under the assumptions of Theorem \ref{thm:main}, there exists a deterministic time $0<t^*_{2\beta}\le 1$ such that 
a unique solution $\bar\mu^*$ to \eqref{eq:limhydro} exists on $[0,t^*_{2\beta}]$, and such that 
\[\sup_{t \in [0, t^*_{2\beta}]} \left|\cro{\tilde{\mu}^n_t,f} -  \cro{\bar{\mu}^*_t,f}\right| \xrightarrow{(n,\P)} 0.\]
\end{proposition}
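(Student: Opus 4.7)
The plan is to combine the $\mathbb{C}$-tightness of the stopped processes from Corollary \ref{cor:tightw}, the semi-martingale decomposition \eqref{Martingale}, the martingale vanishing in Lemma \ref{lemma:Doob}, and the generator approximation of Corollary \ref{cor:approxgen}, to show that any subsequential limit of $\suite{\tilde\mu^{n,\tau^n_\beta}}$ solves the integral equation \eqref{eq:limhydro} on a suitable deterministic interval $[0,t^*_{2\beta}]$. The uniqueness hypothesis of Theorem \ref{thm:main} then upgrades subsequential convergence into convergence of the full sequence.

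First, by Corollary \ref{cor:tightw} and Prokhorov's theorem, we would extract a subsequence (still indexed by $n$) along which $\tilde\mu^{n,\tau^n_\beta}$ converges in law in $\mathbb D([0,1],(\bar\M,\tau_w))$ to some continuous process $\bar\mu^*$. Invoking Skorokhod's representation theorem allows to realize this convergence almost surely; continuity of $\bar\mu^*$ then yields locally uniform convergence of $\cro{\tilde\mu^{n,\tau^n_\beta}_t,f}$ to $\cro{\bar\mu^*_t,f}$ for every $f\in\maC_b$, and Assumption \ref{hypo:Ho} identifies $\bar\mu^*_0=\nu$. Up to slightly reducing $\beta$ in \eqref{eq:condbeta}, one may assume $\cro{\nu,\chi}>2\beta$. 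Since $s\mapsto\cro{\tilde\mu^n_s,\chi}$ is non-increasing by Proposition \ref{Momest1}, passing to the limit yields that $t\mapsto\cro{\bar\mu^*_t,\chi}$ is continuous and non-increasing, so that the deterministic time
\[
t^*_{2\beta}:=\inf\bigl\{t\in[0,1]\,:\,\cro{\bar\mu^*_t,\chi}\le 2\beta\bigr\}\wedge 1
\]
is strictly positive. By uniform convergence of the mass trajectories, $\P(\tau^n_\beta\ge t)\to 1$ for every $t<t^*_{2\beta}$, so the stopping is asymptotically inactive on this window and $\tilde\mu^{n,\tau^n_\beta}_t$ and $\tilde\mu^n_t$ coincide with high probability there.

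The core step is to pass to the limit in \eqref{Martingale} for arbitrary $f\in\maC_b$. Using \eqref{eq:fight1}, the drift equals $\int_0^{t\wedge\tau^n_\beta}\mathfrak L\Pi_f(n\tilde\mu^n_s)\d s$. On the event $\{\mu^n_0\in\M^n_{\beta,M}\}$ (whose probability tends to $1$ by Assumption \ref{hypo:Ho}, cf.\ \eqref{eq:convfinal2}), the third-moment bound $\cro{\tilde\mu^n_s,\chi^3}\le M$ is propagated up to $\tau^n_\beta$ via Proposition \ref{Momest1}, and Corollary \ref{cor:approxgen} provides
\[
\sup_{s\in[0,\tau^n_\beta]}\bigl|\mathfrak L\Pi_f(n\tilde\mu^n_s)-\wh{\mathfrak L}_f(n\tilde\mu^n_s)\bigr|\le \frac{C'''(\beta,M,f)}{\sqrt n}\longrightarrow 0.
\]
The continuity of $\bphi$ (Definition \ref{def:polCont}), combined with the moment-preserving and well-behaved assumptions (Definitions \ref{def:polPM} and \ref{def:polWB}), then gives the pointwise convergence $\wh{\mathfrak L}_f(n\tilde\mu^n_s)\to\bar{\mathfrak L}_f(\bar\mu^*_s)$ for each $s<t^*_{2\beta}$. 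Together with Lemma \ref{lemma:Doob}, which makes the martingale term vanish uniformly in $t$, and a bounded/dominated convergence argument, one concludes that
\[
\cro{\bar\mu^*_t,f}=\cro{\nu,f}+\int_0^t\bar{\mathfrak L}_f(\bar\mu^*_s)\d s,\qquad t\in[0,t^*_{2\beta}],\,f\in\maC_b.
\]

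The main obstacle is precisely this limiting identification of the drift: the formula \eqref{eq:defLhat} features the ratio $\cro{\mu,\chi\nabla f}/\cro{\mu,\chi}$ and integrals of the a priori unbounded function $\chi-\mathbb 1$ against $\maK_\bphi(\mu)$ and $\wh\maK'_\bphi(\mu,\cdot)$, whereas Definition \ref{def:polCont} grants convergence of those measures only against bounded continuous test functions. Upgrading to convergence of the required integrals demands the uniform integrability of $\chi$ and $\chi^2$ under those measures, which is exactly what the moment-preserving property of Definition \ref{def:polPM} supplies, jointly with the propagated bound $\cro{\tilde\mu^n_s,\chi^3}\le M$. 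Once the limiting equation is in hand, the uniqueness hypothesis of Theorem \ref{thm:main} forces all subsequential limits to coincide with a single $\bar\mu^*\in\mathbb C([0,t^*_{2\beta}],\bar\M)$, and the uniform convergence in probability of $\cro{\tilde\mu^n_t,f}$ to $\cro{\bar\mu^*_t,f}$ on $[0,t^*_{2\beta}]$ then follows from weak convergence to a continuous limit combined with the high-probability identification of $\tilde\mu^n$ and $\tilde\mu^{n,\tau^n_\beta}$ on this window.
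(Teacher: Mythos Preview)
Your overall strategy matches the paper's: extract a subsequential limit of the stopped processes via Corollary~\ref{cor:tightw} and Prokhorov, use Skorokhod representation, pass to the limit in the semi-martingale decomposition~\eqref{Martingale} using Corollary~\ref{cor:approxgen} and Lemma~\ref{lemma:Doob}, identify the limit as a solution of~\eqref{eq:limhydro}, and conclude by uniqueness. There are, however, two points where your presentation deviates from the paper and where some care is needed.

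First, a logical ordering issue. When you first introduce $t^*_{2\beta}$ you call it ``the deterministic time'', but at that stage $\bar\mu^*$ is only a subsequential weak limit, hence a priori a \emph{random} element of $\mathbb C([0,1],\bar\M)$, and its hitting time of level $2\beta$ is likewise random. The paper handles this correctly: it introduces the (possibly random) time $\tau^\dag_{2\beta}$, shows the limit $\bar\mu^\dag$ solves~\eqref{eq:limhydro} on $[0,\tau^\dag_{2\beta}]$, and only then invokes uniqueness to conclude that $\tau^\dag_{2\beta}$ is deterministic and equal to $t^*_{2\beta}$. Your argument that $\P(\tau^n_\beta\ge t)\to 1$ for $t<t^*_{2\beta}$ implicitly uses this determinism before it is available; the fix is simply to reorder as the paper does.

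Second, two related technical choices. You define the threshold time via $\cro{\bar\mu^*_t,\chi}$, whereas the paper uses $\cro{\bar\mu^*_t,\mathbb 1_{\N_+}}$. Since $\chi$ is unbounded, passing $\cro{\tilde\mu^{n,\tau^n_\beta}_t,\chi}$ to the limit from weak convergence alone requires an extra uniform-integrability step (supplied, as you note, by the propagated third-moment bound); the paper's choice of the bounded test function $\mathbb 1_{\N_+}$ avoids this. Similarly, for the drift you invoke a ``bounded/dominated convergence argument'' to get $\wh{\mathfrak L}_f(n\tilde\mu^n_s)\to\bar{\mathfrak L}_f(\bar\mu^*_s)$; the paper instead establishes Skorokhod-continuity of each constituent map in the expression~\eqref{eq:defLbar} (quotients, $\cro{\cdot,\chi\nabla f}$, the maps $\bar\mu\mapsto\bar\maK_\bphi(\bar\mu)$ and $\bar\mu\mapsto\bar\maK'_\bphi(\bar\mu,k)$) and applies the Continuous Mapping Theorem to the integrated drift as a functional on path space. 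Your route can be made to work with the moment-preservation hypothesis, but the paper's is more explicit about exactly which continuity is being used.
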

\begin{proof}
The argument are related, in another context, to those of Proposition 4.6 in \cite{Jam}. 
{From Corollary \ref{cor:tightw}, the sequence $\suite{\tilde\mu^{n,\tau^n_\beta}}$ is $\mathbb C$-tight for the weak topology. 
Therefore, from Prohorov's Lemma (see \cite{ethier2009markov}, p.104), it converges weakly (at least along a subsequence) to a subsequential limit $\bar\mu^\dag$. 
By the very Assumption \ref{hypo:Ho}, we therefore have that $\bar\mu^\dag_0=\nu$, and let us set the (possibly random) time 
\[\tau^\dag_{2\beta}=\inf\left\{0\le t\le 1\,:\,\cro{\bar\mu^\dag_t,\mathbb 1_{\N_+}}\le 2\beta\right\}\wedge 1.\]
Fix $f\in\maC_b$. For all $n\in\N_+$ and $t\in[0,1]$, we have the following identity,
\begin{multline}
\label{eq:convfinal0}
\int_0^{t\wedge \tau^n_\beta \wedge \tau^\dag_{2\beta}} \bar{\mathfrak{L}}_f\left(\tilde\mu^{n,\tau^n_\beta}_s\right)\d s\\
=\left(\int_0^{t\wedge \tau^\dag_{2\beta}} \bar{\mathfrak{L}}_f\left(\tilde\mu^{n,\tau^n_\beta}_s\right)\d s\right)
\mathbb 1_{\{\tau^n_\beta>\tau^\dag_{2\beta}\}}+\left(\int_0^{t\wedge \tau^n_\beta} \bar{\mathfrak{L}}_f\left(\tilde\mu^{n,\tau^n_\beta}_s\right)\d s\right)\mathbb 1_{\{\tau^n_\beta\le \tau^\dag_{2\beta}\}}.
\end{multline}
Now, it follows from Lemma A.5 in \cite{decreusefond2012} that the map
\[\begin{cases}
\mathbb D\left([0,1],\bar {\M}_{\beta,M}\right) &\longrightarrow \mathbb D\left([0,1],\R_+\right)\\
\bar\mu_. &\longmapsto \cro{\bar\mu_.,\mathbb 1_{\N_+}}
\end{cases}
\]
is continuous for the Skorokhod topology. Therefore, as the map 
\[\begin{cases}
\mathbb D\left([0,1],\R\right) &\longrightarrow \R\\
x_. &\longmapsto \inf_{t\in[0,1]} x_t
\end{cases}
\]
is also continuous, it follows from the Continuous Mapping Theorem that, along the above subsequence, the following convergence in distribution holds:
\[\inf_{t\in[0,1]} \cro{\tilde\mu^{n,\tau^n_\beta}_{t\wedge \tau^\dag_{2\beta}},\mathbb 1_{\N_+}} 
\Longrightarrow \inf_{t\in[0,1]}\cro{\bdmu_{t\wedge \tau^\dag_{2\beta}},\mathbb 1_{\N_+}},\]
and Fatou's Lemma thus implies that 
\begin{multline}
\label{eq:convfinal-1}
 \lim_{n\to\infty}\pr{\tau^n_\beta>\tau^\dag_{2\beta}}\ge  \lim_{n\to\infty}\pr{\inf_{t\in[0,1]} \cro{\tilde\mu^{n,\tau^n_\beta}_{t\wedge \tau^\dag_{2\beta}},\mathbb 1_{\N_+}}>\beta}
\ge  \pr{ \inf_{t\in[0,1]}\cro{\bdmu_{t\wedge \tau^\dag_{2\beta}},\mathbb 1_{\N_+}}>\beta}=1,
\end{multline}
thereby implying that the second term on the right-hand side of \eqref{eq:convfinal0} converges in probability to 0. 
We now investigate the left-hand term of \eqref{eq:convfinal0}. For this, let us observe, again in view of Lemma A.5 in \cite{decreusefond2012}, that the mappings 
\[\begin{cases}
\mathbb D\left([0,1],\bar {\M}_{\beta,M}\right) &\longrightarrow \mathbb D\left([0,1],\R\right)\\
\bar\mu_. &\longmapsto \cro{\bar\mu_.,f}\\
	        &\longmapsto \cro{\bar\mu_.,\chi}\\
	        &\longmapsto \cro{\bar\mu_.,\chi-1}\\
	        &\longmapsto \cro{\bar\mu_.,\chi\nabla f}
\end{cases}
\]
are continuous for the Skorokhod topology, as well as 
\[\begin{cases}
\mathbb D\left([0,1],\R\times \R^*\right) &\longrightarrow \mathbb D\left([0,1],\R\right)\\
(x_.,y_.) &\longmapsto {x_.\over y_.}\cdot
\end{cases}
\]
Moreover, it follows from the continuity assumption on $\bphi$ (recall Definition \ref{def:polCont}), 
that the mappings $\bar{\maK}_\bphi$ and $\bar{\maK}'_\bphi(.,k)$, $k\in\N_+$, 
respectively defined by \eqref{eq:defKbar} and \eqref{eq:defK'bar}, are continuous from $\bar {\M}_{\beta,M}$ to $\M_F(\N_+)$. 
Therefore the mappings 
\[\begin{cases}
\bar {\M}_{\beta,M} &\longrightarrow \R\\
\bar\mu &\longmapsto \cro{\bar\maK_\bphi(\bar\mu),f};\\
             &\longmapsto \cro{\bar\maK'_\bphi(\bar\mu,k),f},\,\, k\in \N_+;\\
             &\longmapsto \cro{\bar\maK_\bphi(\bar\mu),\chi-\mathbb 1};\\
             &\longmapsto \cro{\bar\maK'_\bphi(\bar\mu,k),\chi-\mathbb 1},\,\, k\in \N_+
\end{cases}
\]
are also continuous, which implies in turn, as $\bphi$ is moment preserving, and by dominated convergence, that the mappings 
\[\begin{cases}
\bar {\M}_{\beta,M} &\longrightarrow \R\\
\bar\mu &\longmapsto \cro{\bar\maK_\bphi(\bar\mu),\cro{\bar\maK'_\bphi(\bar\mu,.),f}};\\
             &\longmapsto \cro{\bar\maK_\bphi(\bar\mu),\cro{\bar\maK'_\bphi(\bar\mu,.),\chi-\mathbb 1}}
             \end{cases}
\]
are also continuous. It can then finally be deduced exactly as in Lemma A.5 in \cite{decreusefond2012} that the mappings 
\[\begin{cases}
\mathbb D\left([0,1],\bar {\M}_{\beta,M}\right) &\longrightarrow \mathbb D\left([0,1],\R\right)\\
\bar\mu_. &\longmapsto \cro{\bar\maK_\bphi(\bar\mu_.),f};\\
	     &\longmapsto \cro{\bar\maK_\bphi(\bar\mu_.),\cro{\bar\maK'_\bphi(\bar\mu_.,.),f}};\\
	     &\longmapsto \cro{\bar\maK_\bphi(\bar\mu_.),\chi-\mathbb 1};\\
             &\longmapsto \cro{\bar\maK_\bphi(\bar\mu_.),\cro{\bar\maK'_\bphi(\bar\mu_.,.),\chi-\mathbb 1}}
\end{cases}
\]
are also continuous for the Skorokhod topology. All in all, as the mapping 
\[\begin{cases}
\mathbb D\left([0,1],\R\right) &\longrightarrow \mathbb C\left([0,1],\R\right)\\
x_. &\longmapsto \displaystyle\int_0^. x_s\d s
\end{cases}
\]
is also continuous, we deduce from \eqref{eq:convfinal0}, the above remark and the Continuous Mapping Theorem that, along the same sub-sequence, the following weak convergence holds in $\mathbb C\left([0,1],\R\right)$: 
\begin{equation}
\label{eq:convfinal1}
\int_0^{.\wedge \tau^n_\beta \wedge \tau^\dag_{2\beta}} \bar{\mathfrak{L}}_f\left(\tilde\mu^{n,\tau^n_\beta}_s\right)\d s
\,\,\xRightarrow{n}\,\, \int_0^{.\wedge \tau^\dag_{2\beta}} \bar{\mathfrak{L}}_f\left(\bar\mu^{\dag}_s\right)\d s.
\end{equation}
Now, recalling \eqref{eq:fight1} and \eqref{Martingale} we get that a.s. for all $n\in\N_+$ and $t\in[0,1]$, 
\begin{multline}
\label{eq:convfinal0}
\cro{\tilde\mu^{n,\tau^n_\beta}_{t\wedge \tau^\dag_{2\beta}},f} =\cro{\tilde{\mu}^n_0,f} + \int_0^{t\wedge \tau^n_\beta \wedge\tau^\dag_{2\beta}}
\tilde{\mathfrak{L}}^n \Pi_f(\tilde{\mu}^n_s) \d s +\maM_{t\wedge \tau^n_\beta \wedge\tau^\dag_{2\beta}}^{f,n}\\
\shoveleft{= \cro{\tilde{\mu}^n_0,f} + \int_0^{t\wedge \tau^n_\beta \wedge\tau^\dag_{2\beta}}
{\mathfrak{L}}^n \Pi_f(n\tilde{\mu}^n_s) \d s +\maM_{t\wedge \tau^n_\beta \wedge\tau^\dag_{2\beta}}^{f,n}}\\
\shoveleft{= \cro{\tilde{\mu}^n_0,f} + \left(\int_0^{t\wedge \tau^n_\beta \wedge\tau^\dag_{2\beta}}
{\mathfrak{L}}^n \Pi_f(n\tilde{\mu}^n_s) \d s\right)\mathbb 1_{\{\mu^n_0\in \M^n_{\beta,M}\}}}\\\
+ \left(\int_0^{t\wedge \tau^n_\beta \wedge\tau^\dag_{2\beta}}{\mathfrak{L}}^n \Pi_f(n\tilde{\mu}^n_s) \d s\right)\mathbb 1_{\{\mu^n_0\not\in \M^n_{\beta,M}\}} +\maM_{t\wedge \tau^n_\beta \wedge\tau^\dag_{2\beta}}^{f,n}.
\end{multline}
But \eqref{eq:convfinal2} implies in particular that for all $n,t$ and $\eta>0$,  
\begin{equation}
\label{eq:convfinal3}
\pr{\left(\int_0^{t\wedge \tau^n_\beta \wedge\tau^\dag_{2\beta}}{\mathfrak{L}}^n \Pi_f(n\tilde{\mu}^n_s) \d s\right)\mathbb 1_{\{\mu^n_0\not\in \M^n_{\beta,M}\}}>\eta} 
\le \pr{\mu^n_0\not\in \M^n_{\beta,M}}\xlongrightarrow{n} 0. 
\end{equation}
Second, it is immediate, as the mapping $s\mapsto \cro{n\tilde{\mu}^n_s,\chi^3}$ is clearly a.s. non-increasing, that on the event 
$\{\mu^n_0\in \M^n_{\beta,M}\}$ the measure $n\tilde{\mu}^n_s$ is an element of 
$\M^n_{\beta,M}$ for any $s\le t\wedge \tau^n_\beta \wedge\tau^\dag_{2\beta}$. Therefore, from Corollary \ref{cor:approxgen} there exists a process 
$\epsilon^{n,\beta}$ that vanishes in probability and uniformly over compact sets, and such that a.s. for all $n\in\N_+$ and $t\in[0,1]$,
 \begin{align}
\left(\int_0^{t\wedge \tau^n_\beta \wedge\tau^\dag_{2\beta}}
{\mathfrak{L}}^n \Pi_f(n\tilde{\mu}^n_s) \d s\right)\mathbb 1_{\{\mu^n_0\in \M^n_{\beta,M}\}}
&= \left(\int_0^{t\wedge \tau^n_\beta \wedge\tau^\dag_{2\beta}}
\wh{\mathfrak{L}}_f(n\tilde{\mu}^n_s) \d s+\epsilon^{n,\beta}_t\right)\mathbb 1_{\{\mu^n_0\in \M^n_{\beta,M}\}}\notag\\
&= \left(\int_0^{t\wedge \tau^n_\beta \wedge\tau^\dag_{2\beta}}
\wh{\mathfrak{L}}_f\left(n\tilde{\mu}^{n,\tau^n_\beta}_s\right) \d s+\epsilon^{n,\beta}_t\right)\mathbb 1_{\{\mu^n_0\in \M^n_{\beta,M}\}}\notag\\
&= \left(\int_0^{t\wedge \tau^n_\beta \wedge\tau^\dag_{2\beta}}
\bar{\mathfrak{L}}_f\left(\tilde{\mu}^{n,\tau^n_\beta}_s\right) \d s+\epsilon^{n,\beta}_t\right)\mathbb 1_{\{\mu^n_0\in \M^n_{\beta,M}\}},\label{eq:convfinal4}
\end{align}
where the third equality is obtained by comparing \eqref{eq:defLhat} to \eqref{eq:defLbar}, and recalling the definitions \eqref{eq:defKbar} and \eqref{eq:defK'bar}. 
Therefore, in view of  \eqref{eq:convfinal1} and \eqref{eq:convfinal2} we obtain the following weak convergence in $\mathbb D\left([0,1],\R\right)$, along the above subsequence, 
\begin{equation*}
\left(\int_0^{.\wedge \tau^n_\beta \wedge\tau^\dag_{2\beta}}
{\mathfrak{L}}^n \Pi_f(n\tilde{\mu}^n_s) \d s\right)\mathbb 1_{\{\mu^n_0\in \M^n_{\beta,M}\}} \xRightarrow{n} 
\int_0^{.\wedge \tau^\dag_{2\beta}} \bar{\mathfrak{L}}_f\left(\bar\mu^{\dag}_s\right)\d s.
\end{equation*}
Gathering this with \eqref{eq:convfinal3}, Assumption \ref{hypo:Ho} and Lemma \ref{lemma:Doob}, 
we obtain that the process on the right-hand side of \eqref{eq:convfinal0} converges weakly in $\mathbb D\left([0,1],\R\right)$, along the same subsequence, 
to 
\[\cro{\nu,f} + \int_0^{.\wedge \tau^\dag_{2\beta}} \bar{\mathfrak{L}}_f\left(\bar\mu^{\dag}_s\right)\d s.\]
Thus, in view of Skorokhod Representation Theorem, we obtain that on some probability space, a.s. 
\[\cro{\bar\mu^{\dag}_{t\wedge \tau^\dag_{2\beta}},f}=\cro{\nu,f} + \int_0^{t\wedge \tau^\dag_{2\beta}} \bar{\mathfrak{L}}_f\left(\bar\mu^{\dag}_s\right)\d s.\]
Therefore the process $\procun{\bar\mu^\dag_t}$ is a.s. an element of $\mathbb C\left([0,1],\bar{\M}\right)$ having deterministic initial value $\nu$, and 
solving the system \eqref{eq:limhydro} on the interval $[0,\tau^\dag_{2\beta}]$. But as the latter system has at most one solution in 
$\mathbb D\left([0,1],\bar{\M}\right)$, we conclude, first, that $\tau^\dag_{2\beta}$ is deterministic and second, that there exists a unique solution 
$\bar\mu^*$ to \eqref{eq:limhydro} on $[0,\tau^\dag_{2\beta}]$, and that this solution coincides a.s. with $\proc{\bar\mu^\dag_t}$ on 
$[0,\tau^\dag_{2\beta}]$. In turn, we obtain that $\tau^\dag_{2\beta}=t^*_{2\beta}$, where
\[t^*_{2\beta}:=\inf\left\{0\le t\le 1\,:\,\cro{\bar\mu^*_t,\mathbb 1_{\N_+}}\le 2\beta\right\}\wedge 1,\]
which is strictly positive in view of the continuity of the paths of $\bar\mu^*$. We can then replicate the arguments that led to \eqref{eq:convfinal1}, to obtain that 
for all $f\in\maC_b$, the following weak convergence holds in $\mathbb C\left([0,1],\R\right)$, 
\begin{equation*}
\int_0^{.\wedge \tau^n_\beta \wedge t^*_{2\beta}} \bar{\mathfrak{L}}_f\left(\tilde\mu^{n,\tau^n_\beta}_s\right)\d s
\,\,\xRightarrow{n}\,\, \int_0^{.\wedge t^*_{2\beta}} \bar{\mathfrak{L}}_f\left(\bar\mu^*_s\right)\d s.
\end{equation*}
Applying again Skorokhod Representation Theorem together with the fact that the Skorokhod topology and the topology of uniform convergence coincide 
on $\mathbb C\left([0,1],\R\right)$ (see e.g. \cite{Bill}, p.112), we deduce, on some probability space, the a.s. convergence
\begin{equation*}
\sup_{t\in[0,1]}\left|\int_0^{t\wedge \tau^n_\beta \wedge t^*_{2\beta}} \bar{\mathfrak{L}}_f\left(\tilde\mu^{n,\tau^n_\beta}_s\right)\d s
-\int_0^{t\wedge t^*_{2\beta}} \bar{\mathfrak{L}}_f\left(\bar\mu^*_s\right)\d s\right| \xlongrightarrow{n} 0\quad \mbox{a.s.,}
\end{equation*}
implying in turn the convergence in probability 
\begin{equation}
\label{eq:convfinal6}
\sup_{t\in[0,t^*_{2\beta}]}\left|\int_0^{t\wedge \tau^n_\beta} \bar{\mathfrak{L}}_f\left(\tilde\mu^{n,\tau^n_\beta}_s\right)\d s
-\int_0^{t} \bar{\mathfrak{L}}_f\left(\bar\mu^*_s\right)\d s\right| \xlongrightarrow{(n,\mathbb P)} 0.
\end{equation}
To conclude, it follows from \eqref{Martingale}, \eqref{eq:convfinal0} and \eqref{eq:convfinal4} that for all $f\in \maC_b$, and all $\eta>0$, 
\begin{multline*}
\pr{\sup_{t\in[0,t^*_{2\beta}]}\left|\cro{\bar\mu^n_t,f}-\cro{\bar\mu^*_t,f}\right|>\eta}\\
\shoveleft{\le \pr{\left\{\sup_{t\in[0,t^*_{2\beta}]}\left|\int_0^{t\wedge \tau^n_\beta} \bar{\mathfrak{L}}_f\left(\tilde\mu^{n,\tau^n_\beta}_s\right)\d s
-\int_0^{t} \bar{\mathfrak{L}}_f\left(\bar\mu^*_s\right)\d s\right|>\eta/4\right\}\cap\left\{\tau^n_\beta>t^*_{2\beta}\right\}}}\\
+\pr{\left|\cro{\tilde{\mu}^n_0,f}-\cro{\nu,f}\right|>\eta/4} + \pr{\sup_{t\in[0,t^*_{2\beta}]}\left|\epsilon^{n,\beta}_t\right|>\eta/4} \\
\shoveright{+ \pr{\left\{\sup_{t\in[0,t^*_{2\beta}]}\left|\maM_{t\wedge \tau^n_\beta}^{f,n}\right|>\eta/4\right\}\cap\left\{\tau^n_\beta>t^*_{2\beta}\right\}}+\pr{\tau^n_\beta\le t^*_{2\beta}}}\\
\xlongrightarrow{n} 0,
\end{multline*}
applying \eqref{eq:convfinal6}, and again Assumption \ref{hypo:Ho}, Lemma \ref{lemma:Doob} and \eqref{eq:convfinal-1}. 
This concludes the proof.}
\end{proof}
\noindent We now quantity the error made in the approximation of $\procun{\bar\mu^n_t}$ by the CTMC $\procun{\tilde\mu^n_t}$ on 
the interval $[0,t^*_{2\beta}]$. We have the following result. 

\begin{proposition}
\label{prop:approxtildebar}
Under the conditions of Theorem \ref{thm:main}, for all $f\in \maC_b$ and $\epsilon>0$ we have that 
\[\pr{\sup_{t\in[0,t^*_{2\beta}]}\left|\cro{\bar{\mu}^n_t,f} - \cro{\tilde{\mu}^n_t,f}\right |>\epsilon} \xlongrightarrow{n} 0.\]
\end{proposition}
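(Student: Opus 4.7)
The plan is to exploit the fact that the two processes visit \emph{exactly} the same sequence of states $\{\mu^n_l/n\}_{l=0,\ldots,n}$, only at different epochs. Indeed, the very definition \eqref{eq:defmutilde} gives $\tilde{\mu}^n_t = \mu^n_l/n$ on $[\tau_l, \tau_{l+1})$, while $\bar{\mu}^n_t = \mu^n_{\lfloor nt \rfloor}/n$, so one has the exact pathwise identity
$$\bar{\mu}^n_t \;=\; \tilde{\mu}^n_{\tau_{\lfloor nt \rfloor}},\quad t \in [0,1].$$
The task thus reduces to comparing $\tilde{\mu}^n$ at the two nearby times $\tau_{\lfloor nt \rfloor}$ and $t$. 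Since $n\tau_l = \sum_{j=1}^l \theta_j$ is the $l$-th partial sum of i.i.d.~$\mathrm{Exp}(1)$ variables, the strong law of large numbers, combined with the Dini--P\'olya lemma on uniform convergence of non-decreasing functions to a continuous non-decreasing limit, would yield the uniform law of large numbers
$$\sup_{t \in [0,1]} \left| \tau_{\lfloor nt \rfloor} - t \right| \;\xrightarrow{(n,\P)}\; 0.$$

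Next, I would invoke Proposition \ref{prop:convt2beta} applied with a strictly smaller parameter $\beta' < \beta$. Because $\langle \bar\mu^*_{t^*_{2\beta}}, \mathbb 1_{\N_+}\rangle = 2\beta > 2\beta'$ and the map $s \mapsto \langle \bar\mu^*_s, \mathbb 1_{\N_+}\rangle$ is continuous and strictly decreases as long as $\bar\mu^*$ carries unmatched degrees, one obtains a strict gap $t^*_{2\beta'} - t^*_{2\beta} > 0$, together with the extended convergence
$$\sup_{s \in [0, t^*_{2\beta'}]} |\langle \tilde\mu^n_s - \bar\mu^*_s, f\rangle| \;\xrightarrow{(n,\P)}\; 0.$$
On the event $\{\sup_{s \le 1}|\tau_{\lfloor ns\rfloor} - s| \le t^*_{2\beta'} - t^*_{2\beta}\}$, whose probability tends to $1$ by the previous step, the random times $\tau_{\lfloor nt\rfloor}$ stay inside $[0, t^*_{2\beta'}]$ for every $t \in [0, t^*_{2\beta}]$, so the triangle inequality
$$|\langle \bar\mu^n_t - \tilde\mu^n_t, f\rangle| \le |\langle \tilde\mu^n_{\tau_{\lfloor nt\rfloor}} - \bar\mu^*_{\tau_{\lfloor nt\rfloor}}, f\rangle| + |\langle \bar\mu^*_{\tau_{\lfloor nt\rfloor}} - \bar\mu^*_t, f\rangle| + |\langle \bar\mu^*_t - \tilde\mu^n_t, f\rangle|$$
bounds the first and third summands uniformly by $\sup_{s \le t^*_{2\beta'}} |\langle \tilde\mu^n_s - \bar\mu^*_s, f\rangle|$, which vanishes in probability. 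The middle term is controlled by the uniform continuity of the continuous mapping $s \mapsto \langle \bar\mu^*_s, f\rangle$ on the compact $[0, t^*_{2\beta'}]$, combined with the uniform convergence $\tau_{\lfloor nt\rfloor} - t \to 0$ from the first step.

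The main obstacle I anticipate lies in justifying the extension step: one must verify that Proposition \ref{prop:convt2beta} can indeed be re-applied with the smaller parameter $\beta'$, and that the associated stopping time $\tau^n_{\beta'}$ defined in \eqref{eq:deftaun} continues to dominate $t^*_{2\beta'}$ with high probability. This requires retracing the tightness and identification steps of Sections~\ref{tightv}--\ref{sec:proof} on the enlarged moment set $\M^n_{\beta',M}$; the argument goes through unchanged because $\beta$ was chosen arbitrarily small at the outset in \eqref{eq:condbeta}, so one may simply fix $\beta'$ once and for all at the beginning of the proof of Theorem \ref{thm:main}. Once this extension is in place, the three-term triangle bound above delivers the announced uniform convergence in probability.
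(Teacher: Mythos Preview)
Your approach is correct and genuinely different from the paper's. The paper proceeds directly through the semimartingale decomposition \eqref{Martingale}: setting $\gamma^n_t=\tau_{\lfloor nt\rfloor}$ so that $\bar\mu^n_t=\tilde\mu^n_{\gamma^n_t}$, it writes
\[
\cro{\bar\mu^{n,\tau^n_\beta}_t,f}-\cro{\tilde\mu^{n,\tau^n_\beta}_t,f}
=\int_{t\wedge\tau^n_\beta}^{\gamma^n_t\wedge\tau^n_\beta}\tilde{\mathfrak L}^n\Pi_f(\tilde\mu^n_s)\,\d s
+\bigl(\maM^{f,n}_{\gamma^n_t\wedge\tau^n_\beta}-\maM^{f,n}_{t\wedge\tau^n_\beta}\bigr),
\]
and then bounds the second moment of each piece on $\{\mu^n_0\in\M^n_{\beta,M}\}$ by recycling the drift and quadratic--variation estimates from the tightness section, together with the elementary computations $\esp{(\gamma^n_t-t)^2}=O(1/n)$ and $\esp{|\gamma^n_t-t|}=O(1/\sqrt n)$. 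This is self--contained: it uses the stopping time $\tau^n_\beta$ and the event $\{\tau^n_\beta>t^*_{2\beta}\}$ from \eqref{eq:convfinal-1}, but does not re--invoke the convergence statement of Proposition~\ref{prop:convt2beta}.

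Your route---pass through the deterministic limit $\bar\mu^*$ via a three--term triangle inequality, using Proposition~\ref{prop:convt2beta} at a smaller level $\beta'<\beta$ on the enlarged window $[0,t^*_{2\beta'}]$---is more economical, since it recycles the convergence already established rather than redoing second--moment bookkeeping. The one point you should make airtight is the strict gap $t^*_{2\beta'}>t^*_{2\beta}$: your justification relies on $\cro{\bar\mu^*_{t^*_{2\beta}},\mathbb 1_{\N_+}}=2\beta$, which presupposes $t^*_{2\beta}<1$. This is in fact always the case here---since $\bar\maK_\bphi(\bar\mu)$ and each $\bar\maK'_\bphi(\bar\mu,k)$ are probability measures on $\N_+$, one has $\bar{\mathfrak L}_{\mathbb 1_{\N_+}}(\bar\mu)\le -2$ whenever $\cro{\bar\mu,\mathbb 1_{\N_+}}>0$, forcing $t^*_{2\beta}\le\tfrac12\cro{\nu,\mathbb 1_{\N_+}}\le\tfrac12$---but you should state this explicitly rather than leave it implicit in the phrase ``strictly decreases''.
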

\begin{proof} 
Fix $t\in[0,1]$ and $n\in\N_+$. 
By setting $\gamma^n_t = \tau_{\lfloor nt \rfloor}$, we obtain that $\bar{\mu}_t^n = \tilde{\mu}^n_{\gamma^n_t}$, and thereby from \eqref{Martingale}, that 
\[\cro{\bar{\mu}^n_t,f} = \cro{\bar{\mu}^n_0,f} + \int_0^{\gamma^n_t}{\tilde{\mathfrak{L}}^n}\Pi_f\left(\tilde{\mu}^n_s\right) \d s + \maM^{f,n}_{\gamma^n_t},\]
implying in turn that 
\begin{equation}
\label{eq:diamond0}
\left(\cro{\bar{\mu}^{n,\tau^n_\beta}_t,f} - \cro{\tilde{\mu}^{n,\tau^n_\beta}_t,f}\right)^2 \le 2\left(\int^{\gamma^n_t\wedge \tau^n_\beta}_{t\wedge \tau^n_\beta} \E_{\tilde{\mu}^n_s}\cro{\vartheta(n\tilde{\mu}^n_s),f}\d s\right)^2+2\left(\maM^{f,n}_{\gamma^n_t}-\maM^{f,n}_{t}\right)^2,
\end{equation}
But, first, reasoning as in \eqref{eq:boundcrochet} we get that
\begin{equation}
\label{eq:diamond1}
\esp{\left(\int^{\gamma^n_t\wedge \tau^n_\beta}_{t\wedge \tau^n_\beta} \E_{\tilde{\mu}^n_s}\cro{\vartheta(n\tilde{\mu}^n_s),f}\d s\right)^2\mathbb 1_{\{\mu^n_0\in \M^n_{\beta,M}\}}}\le  {16 \esp{(\gamma^n_t - t)^2}\parallel f \parallel^2 (C')^2}.
\end{equation}
On another hand, setting $\procun{\mathscr F^n_t}$, the natural filtration associated to $\procun{\mu^n_t}$, 
\begin{align*}
\esp{\left(\maM^{f,n}_{\gamma^n_t}-\maM^{f,n}_{t}\right)^2\mathbb 1_{\{\mu^n_0\in \M^n_{\beta,M}\}}}
&= \esp{\esp{\left(\maM^{f,n}_{\gamma^n_t}-\maM^{f,n}_{t}\right)^2\mathbb 1_{\{\mu^n_0\in \M^n_{\beta,M}\}}\mid \mathscr F^n_{\gamma^n_t\wedge t}}}\notag\\
&=\esp{\mathbb 1_{\{\mu^n_0\in \M^n_{\beta,M}\}}\esp{\left(\maM^{f,n}_{\gamma^n_t\vee t}-\maM^{f,n}_{\gamma^n_t\wedge t}\right)^2\mid \mathscr F^n_{\gamma^n_t\wedge t}}}\notag\\
&=\esp{\mathbb 1_{\{\mu^n_0\in \M^n_{\beta,M}\}}\esp{\left(\maM^{f,n}_{\gamma^n_t\vee t}\right)^2-\left(\maM^{f,n}_{\gamma^n_t\wedge t}\right)^2\mid \mathscr F^n_{\gamma^n_t\wedge t}}}\notag\\
&=\esp{\mathbb 1_{\{\mu^n_0\in \M^n_{\beta,M}\}}\esp{\llangle\maM^{f,n}\rrangle_{\gamma^n_t\vee t}-\llangle\maM^{f,n}\rrangle_{\gamma^n_t\wedge t}\mid \mathscr F^n_{\gamma^n_t\wedge t}}}\notag\\
&=\esp{\left|\llangle\maM^{f,n}\rrangle_{\gamma^n_t}-\llangle\maM^{f,n}\rrangle_{t}\right|\mathbb 1_{\{\mu^n_0\in \M^n_{\beta,M}\}}}\notag\\
&\le  {{2\esp{\left|\gamma^n_t - t\right|}\parallel f \parallel^2 (1+C)\over n},}
\end{align*}
where we apply the same argument as in \eqref{eq:boundcrochetbis}. This, together with \eqref{eq:diamond1} in \eqref{eq:diamond0}, yields to 
\begin{multline}
\label{eq:diamond2}
\esp{\left(\cro{\bar{\mu}^{n,\tau^n_\beta}_t,f} - \cro{\tilde{\mu}^{n,\tau^n_\beta}_t,f}\right)^2\mathbb 1_{\{\mu^n_0\in \M^n_{\beta,M}\}}}\\
\le {32 \esp{(\gamma^n_t - t)^2}\parallel f \parallel^2 (C')^2} +  {{4\esp{\left|\gamma^n_t - t\right|}\parallel f \parallel^2 (1+C)\over n}\cdot}
\end{multline}
But 
\begin{align}
\esp{|\gamma^n_t - t|} &= \esp{ \left|\gamma^n_t - {\lfloor nt \rfloor \over n} - \frac{nt - \lfloor nt \rfloor}{n}\right|}\notag\\
&\leq\esp{\left| \sum_{i=1}^{\lfloor nt \rfloor} \frac{\theta_{i} - 1}{n}\right|}+ {1\over n}\notag\\
&\leq  \esp{\sqrt{\sum_{i=1}^{\lfloor nt \rfloor} \frac{\theta_{i} - 1}{n}\sum_{j=1}^{\lfloor nt \rfloor} \frac{\theta_{j} - 1}{n}}}+ {1\over n}
\leq  \left(\sum_{i=1}^{\lfloor nt \rfloor} \frac{1}{n^2}\right)^{1/2}+ {1\over n}
\leq \left({1\over n}\right)^{1/2}+ {1\over n},\label{eq:diamond3}
\end{align}
using Jensen's inequality and the fact that the $\theta_i$'s are IID with second moment $\esp{ \theta_i^2} = 2$ for all $i$. Likewise, we get 
\begin{align}
\esp{(\gamma^n_t - t)^2} 
&\leq2\esp{\left( \sum_{i=1}^{\lfloor nt \rfloor} \frac{\theta_{i} - 1}{n}\right)^2}+ {2\over n^2}\notag\\
&\leq  \sum_{i=1}^{\lfloor nt \rfloor} \frac{2}{n^2}+ {2\over n^2}
\leq {2\over n}+ {2\over n^2}\cdot\label{eq:diamond4}
\end{align}
Finally, applying Bienaymé-Cebicev inequality, 
\begin{multline*}
\pr{\sup_{t\in[0,t^*_{2\beta}]}\left|\cro{\bar{\mu}^n_t,f} - \cro{\tilde{\mu}^n_t,f}\right |>\epsilon}\\
\begin{aligned}
&\le \pr{\sup_{t\in[0,t^*_{2\beta}]}\left|\cro{\bar{\mu}^{n,\tau^n_\beta}_t,f} - \cro{\tilde{\mu}^{n,\tau^n_\beta}_t,f}\right |>\epsilon}+\pr{\tau^n_\beta\le t^*_{2\beta}}\\
&\le \pr{\sup_{t\in[0,t^*_{2\beta}]}\left|\cro{\bar{\mu}^{n,\tau^n_\beta}_t,f} - \cro{\tilde{\mu}^{n,\tau^n_\beta}_t,f}\right |\mathbb 1_{\{\mu^n_0\in \M^n_{\beta,M}\}}>\epsilon/2}
+ \pr{\mu^n_0\not\in \M^n_{\beta,M}}+\pr{\tau^n_\beta\le t^*_{2\beta}}\\
& \le {4\over \epsilon^2}\esp{\sup_{t\in[0,t^*_{2\beta}]}\left(\cro{\bar{\mu}^{n,\tau^n_\beta}_t,f} - \cro{\tilde{\mu}^{n,\tau^n_\beta}_t,f}\right )^2\mathbb 1_{\{\mu^n_0\in \M^n_{\beta,M}\}}}
+ \pr{\mu^n_0\not\in \M^n_{\beta,M}}+\pr{\tau^n_\beta\le t^*_{2\beta}}\xlongrightarrow{n} 0,
\end{aligned}
\end{multline*}
gathering \eqref{eq:diamond3} with \eqref{eq:diamond4} in \eqref{eq:diamond2} and applying Fatou's Lemma regarding the first term, and from \eqref{eq:convfinal2} and \eqref{eq:convfinal-1} regarding the other two. This completes the proof. 
\end{proof}
\noindent We are now in a position to prove Theorem \ref{thm:main}. 
\begin{proof}[Proof of Theorem \ref{thm:main}]
In the proof of Proposition \ref{prop:convt2beta}, we have shown the existence of a unique solution $\bar\mu^*$ to \eqref{eq:limhydro} on $[0,t^*_{2\beta}]$. As this 
solution has continuous paths, by letting $\beta$ vanish to 0, the solution can be extended until $t^*_0$, defined by 
\[t^*_{0}:=\inf\left\{0\le t\le 1\,:\,\cro{\bar\mu^*_t,\mathbb 1_{\N_+}}\le 0\right\}\wedge 1.\]
Then, it trivially follows from \eqref{eq:defLbar}, that it necessary and sufficient to extend the latter process by just setting $\bar\mu^*_t=\bar\mu^*_{t^*_0}$, for all $t\ge t^*_0$, to solve the equation \eqref{eq:limhydro} on $[0,1]$.  This shows the existence and uniqueness of the solution $\procun{\bar\mu^*_t}$ on $[0,1]$. 

\medskip

We now turn to the proof of convergence to this solution. Fix $f\in\maC_b$ and $\epsilon>0$. The parameter $\beta$ in \eqref{eq:condbeta} and a positive number 
$\eta$ can be chosen in a way that 
\begin{equation}
\label{eq:condeta}\parallel f\parallel (8\beta+2\eta) \le \eps.
\end{equation}
Then, first, if $t^*_{2\beta}\ge 1$ we immediately deduce from Proposition \ref{prop:convt2beta} and Proposition \ref{prop:approxtildebar}, that 
\begin{multline*}
\pr{\sup_{t\in[0,1]} \left| \cro{\bar{\mu}^n_t,f} - \cro{\bar\mu^*_t,f}  \right| >\epsilon}\\
\le \pr{\sup_{t\in[0,1]} \left| \cro{\bar{\mu}^n_t,f} - \cro{\tilde\mu^*_n,f}  \right| >{\epsilon\over 2}}+\pr{\sup_{t\in[0,1]} \left| \cro{\tilde{\mu}^n_t,f} - \cro{\bar\mu^*_t,f}  \right| >{\epsilon\over 2}}\xlongrightarrow{n} 0.
\end{multline*}
Let us now address the case where $t^*_{2\beta}< 1$. We have 
\begin{multline}
\label{eq:convfinalfinal0}
\pr{\sup_{t\in[0,1]} \left| \cro{\bar{\mu}^n_t,f} - \cro{\bar\mu^*_t,f}  \right| >\epsilon}\\
\shoveleft{\le \pr{\sup_{t\in[0,t^*_{2\beta}]} \left| \cro{\bar{\mu}^n_t,f} - \cro{\bar\mu^*_t,f}  \right| >\epsilon} 
+ \pr{\sup_{t\in[t^*_{2\beta},1]} \left| \cro{\bar{\mu}^n_t,f} - \cro{\bar\mu^*_t,f}  \right| >\epsilon}}\\
\shoveleft{\le \pr{\sup_{t\in[0,t^*_{2\beta}]} \left| \cro{\bar{\mu}^n_t,f} - \cro{\tilde\mu^n_t,f}  \right| >{\epsilon\over 2}} 
+\pr{\sup_{t\in[0,t^*_{2\beta}]} \left| \cro{\tilde{\mu}^n_t,f} - \cro{\bar\mu^*_t,f}  \right| >{\epsilon\over 2}}}\\ 
+ \pr{\sup_{t\in[t^*_{2\beta},1]} |f(0)|\left| \bar\mu^n_t(0)-\bar\mu^*_t(0)\right| >{\epsilon \over 2}}
+\pr{\sup_{t\in[t^*_{2\beta},1]} \left| \cro{\bar{\mu}^n_t,f\mathbb 1_{\N_+}} - \cro{\bar\mu^*_t,f\mathbb 1_{\N_+}}  \right| >{\epsilon \over 2}}.
\end{multline}
Let us first address the third term on the right-hand side of \eqref{eq:convfinalfinal0}. For this, first observe that for 
any $s\le 1$, 
\[\cro{\bar\mu^*_s,\chi\nabla \mathbb 1_{\N_+}}=\sum_{i=0}^{+\infty} (\chi\nabla \mathbb 1_{\N_+})(i)\bar\mu^*_s(i)=\sum_{i=1}^{+\infty} i(\mathbb 1_{\N_+}(i)-\mathbb 1_{\N_+}(i-1))\bar\mu^*_s(i)=\bar\mu^*_s(1).\]
Thus, applying \eqref{eq:limhydro} to $f\equiv \mathbb 1_{\N_+}$, for all $t\in[t^*_{2\beta},1]$ we obtain that 
\begin{multline*}
\cro{\bar\mu^*_{t},\mathbb 1_{\N_+}} =  \cro{\bar\mu^*_{t^*_{2\beta}},\mathbb 1_{\N_+}} - \int_{t^*_{2\beta}}^{t\wedge t_0} \cro{\bar\maK_{\bphi}(\bar\mu^*_s),\mathbb 1_{\N_+}}
\d s-\int_{t^*_{2\beta}}^{t\wedge t_0} \cro{\bar\maK_{\bphi}(\bar\mu^*_s),\cro{\bar{\maK}'_{\bphi}(\bar\mu^*_s,.),\mathbb 1_{\N_+}}}\d s\\
\shoveright{-\int_{t^*_{2\beta}}^{t\wedge t_0}{\cro{\bar\mu^*_s,\chi\nabla \mathbb 1_{\N_+}}\over \cro{\bar\mu^*_s,\chi}}
\cro{\bar\maK_{\bphi}(\bar\mu^*_s),\chi-\mathbb 1+\cro{\bar\maK'_{\bphi}(\bar\mu^*_s,.),\chi-\mathbb 1}} \d s}\\
=\cro{\bar\mu^*_{t^*_{2\beta}},\mathbb 1_{\N_+}} -2\int_{t^*_{2\beta}}^{t\wedge t_0} 1\d s - \int_{t^*_{2\beta}}^{t\wedge t_0}{\bar\mu^*_s(1)\over \cro{\bar\mu^*_s,\chi}}
\cro{\bar\maK_{\bphi}(\bar\mu^*_s),\chi-\mathbb 1+\cro{\bar\maK'_{\bphi}(\bar\mu^*_s,.),\chi-\mathbb 1}} \d s.
\end{multline*}
Therefore, as $\cro{\bar\mu^*_{t},\mathbb 1_{\N_+}} \ge 0$ for all $t\ge t^*_{2\beta}$, we deduce that 
\begin{equation}
\label{eq:convfinalfinal11}
\int_{t^*_{2\beta}}^{t\wedge t_0}{\bar\mu^*_s(1)\over \cro{\bar\mu^*_s,\chi}}
\cro{\bar\maK_{\bphi}(\bar\mu^*_s),\chi-\mathbb 1+\cro{\bar\maK'_{\bphi}(\bar\mu^*_s,.),\chi-\mathbb 1}} \d s \le \cro{\bar\mu^*_{t^*_{2\beta}},\mathbb 1_{\N_+}}, \quad t\in [t^*_{2\beta},1]. 
\end{equation}
On another hand, for all $s\le 1$, 
\[\cro{\bar\mu^*_s,\chi\nabla \mathbb 1_0}=\sum_{i=0}^{+\infty} (\chi\nabla \mathbb 1_0)(i)\bar\mu^*_s(i)=
\sum_{i=0}^{+\infty} i(\mathbb 1_0(i)-\mathbb 1_0(i-1))\bar\mu^*_s(i)=\sum_{i=1}^{+\infty} i(\mathbb 1_0(i)-\mathbb 1_1(i))\bar\mu^*_s(i)=-\bar\mu^*_s(1).\]
Therefore, it follows from applying \eqref{eq:limhydro} 
to $f\equiv \mathbb 1_0$, that for all $t\in[t^*_{2\beta},1]$, 
\begin{multline}
\label{eq:convfinalfinal12}
\bar\mu^*_{t}(0) =  \bar\mu^*_{t^*_{2\beta}}(0) - \int_{t^*_{2\beta}}^{t\wedge t_0} \bar\maK_{\bphi}(\bar\mu^*_s)(0) 
\d s-\int_{t^*_{2\beta}}^{t\wedge t_0} \cro{\bar\maK_{\bphi}(\bar\mu^*_s),\bar{\maK}'_{\bphi}(\bar\mu^*_s,.)(0)}\d s\\
\shoveright{-\int_{t^*_{2\beta}}^{t\wedge t_0}{\cro{\bar\mu^*_s,\chi\nabla \mathbb 1_0}\over \cro{\bar\mu^*_s,\chi}}
\cro{\bar\maK_{\bphi}(\bar\mu^*_s),\chi-\mathbb 1+\cro{\bar\maK'_{\bphi}(\bar\mu^*_s,.),\chi-\mathbb 1}} \d s}\\
=\bar\mu^*_{t^*_{2\beta}}(0) + \int_{t^*_{2\beta}}^{t\wedge t_0}{\bar\mu^*_s(1)\over \cro{\bar\mu^*_s,\chi}}
\cro{\bar\maK_{\bphi}(\bar\mu^*_s),\chi-\mathbb 1+\cro{\bar\maK'_{\bphi}(\bar\mu^*_s,.),\chi-\mathbb 1}} \d s.
\end{multline}
Now, for all $t\in[t^*_{2\beta},1]$ we have 
\begin{equation}
\label{eq:convfinalfinal13}\bar\mu^n_t(0)=\bar\mu^n_{t^*_{2\beta}}(0)+{1\over n}Y^{n,\beta}_t,
\end{equation}
where, if $\tau_i\le t^*_{2\beta}<\tau_{i+1}$ and $\tau_j\le t<\tau_{j+1}$ in \eqref{eq:defmutilde}, $Y^{n,\beta}_t$ is, in the $n$-th graph, the number of nodes of degree at least 1 at step $i$ and that have become of degree 0 by step $j$. 
It is then immediate that we have 
\[{1\over n}Y^{n,\beta}_t \le \cro{\bar\mu^n_{t^*_{2\beta}},\mathbb 1_{\N_+}},\quad t\in[t^*_{2\beta},1],\]
and so gathering \eqref{eq:convfinalfinal11}, \eqref{eq:convfinalfinal12} and \eqref{eq:convfinalfinal13} we obtain that 
for all $t\in[t^*_{2\beta},1]$, 
\begin{align}
|f(0)|\left|\bar\mu^n_{t}(0) - \bar\mu^*_t(0)\right| &\le |f(0)|\Biggl(\left| \bar\mu^n_{t^*_{2\beta}}(0) -  \bar\mu^*_{t^*_{2\beta}}(0)\right| + {1\over n}Y^{n,\beta}_t\notag\\
&\quad\quad\quad\quad\quad\quad\quad +\int_{t^*_{2\beta}}^{t\wedge t_0}{\bar\mu^*_s(1)\over \cro{\bar\mu^*_s,\chi}}
\cro{\bar\maK_{\bphi}(\bar\mu^*_s),\chi-\mathbb 1+\cro{\bar\maK'_{\bphi}(\bar\mu^*_s,.),\chi-\mathbb 1}} \d s\Biggl)\notag\\
&\le  |f(0)|\left(\left| \bar\mu^n_{t^*_{2\beta}}(0) -  \bar\mu^*_{t^*_{2\beta}}(0)\right| +2\cro{\bar\mu^n_{t^*_{2\beta}},\mathbb 1_{\N_+}}\right)\notag\\
&\le \parallel f\parallel \left(\sup_{t\in[0,t^*_{2\beta}]} \left| \bar\mu^n_{t}(0) -  \bar\mu^*_{t}(0)\right|+2\cro{\bar\mu^n_{t^*_{2\beta}},\mathbb 1_{\N_+}}\right). 
\label{eq:convfinalfinal14}
\end{align}
But by continuity of the mapping $t\mapsto \cro{\bar\mu^*_{t},\mathbb 1_{\N_+}}$ on $[0,1]$, we have that 
\begin{equation}
\label{eq:convfinalfinal15}
\cro{\bar\mu^*_{t^*_{2\beta}},\mathbb 1_{\N_+}}=2\beta.
\end{equation}
Thus, it follows from \eqref{eq:convfinalfinal14} that
\begin{multline}\label{eq:convfinalfinal1}
\pr{\sup_{t\in[t^*_{2\beta},1]} |f(0)|\left| \bar\mu^n_t(0)-\bar\mu^*_t(0)\right| >{\epsilon \over 2}} 
\le \pr{\parallel f\parallel \left(\sup_{t\in[0,t^*_{2\beta}]} \left| \bar\mu^n_{t}(0) -  \bar\mu^*_{t}(0)\right|+2\cro{\bar\mu^n_{t^*_{2\beta}},\mathbb 1_{\N_+}}\right)>{\epsilon \over 2}}\\
\shoveleft{\le \pr{\parallel f\parallel \left(\eta+2\beta\right)>{\epsilon \over 2}} + \pr{\sup_{t\in[0,t^*_{2\beta}]} \left| \bar\mu^n_{t}(0) -  \bar\mu^*_{t}(0)\right|>\eta}}\\
\le \pr{\parallel f\parallel \left(\eta+2\beta\right)>{\epsilon \over 2}} + \pr{\sup_{t\in[0,t^*_{2\beta}]} \left| \bar\mu^n_{t}(0) -  \tilde\mu^n_{t}(0)\right|>{\eta\over 2}}
+ \pr{\sup_{t\in[0,t^*_{2\beta}]} \left| \tilde\mu^n_{t}(0) -  \bar\mu^*_{t}(0)\right|>{\eta\over 2}}
\xlongrightarrow{n} 0,
\end{multline}
from \eqref{eq:condeta}, and applying Propositions \ref{prop:convt2beta} and \ref{prop:approxtildebar} to $f=\mathbb 1_{0}$. 
We now turn to the fourth term on the right-hand side of \eqref{eq:convfinalfinal0}. 
We have that 
\begin{multline}\label{eq:convfinalfinal21}
\pr{\sup_{t\in[t^*_{2\beta},1]} \left| \cro{\bar{\mu}^n_t,f\mathbb 1_{\N_+}} - \cro{\bar\mu^*_t,f\mathbb 1_{\N_+}}  \right| >{\epsilon\over 2}}
\le \pr{\sup_{t\in[0,t^*_{2\beta}]} \left| \cro{\bar{\mu}^n_t,\mathbb 1_{\N_+}} - \cro{\bar\mu^*_t,\mathbb 1_{\N_+}}  \right| >\eta}\\
+ \pr{\left\{\sup_{t\in[t^*_{2\beta},1]} \left| \cro{\bar{\mu}^n_t,f\mathbb 1_{\N_+}} - \cro{\bar\mu^*_t,f\mathbb 1_{\N_+}}  \right| >{\epsilon \over 2}\right\}
\cap\left\{\sup_{t\in[0,t^*_{2\beta}]} \left| \cro{\bar{\mu}^n_t,\mathbb 1_{\N_+}} - \cro{\bar\mu^*_t,\mathbb 1_{\N_+}}  \right| \le \eta\right\}}\\
\shoveleft{\le \pr{\sup_{t\in[0,t^*_{2\beta}]} \left| \cro{\bar{\mu}^n_t,\mathbb 1_{\N_+}} - \cro{\tilde\mu^n_t,\mathbb 1_{\N_+}}  \right| >{\eta\over 2}}
+\pr{\sup_{t\in[0,t^*_{2\beta}]} \left| \cro{\tilde{\mu}^n_t,\mathbb 1_{\N_+}} - \cro{\bar\mu^*_t,\mathbb 1_{\N_+}}  \right| >{\eta\over 2}}}\\
+ \pr{\left\{\sup_{t\in[t^*_{2\beta},1]} \left| \cro{\bar{\mu}^n_t,f\mathbb 1_{\N_+}} - \cro{\bar\mu^*_t,f\mathbb 1_{\N_+}}  \right| >{\epsilon \over 2}\right\}
\cap\left\{\sup_{t\in[0,t^*_{2\beta}]} \left| \cro{\bar{\mu}^n_t,\mathbb 1_{\N_+}} - \cro{\bar\mu^*_t,\mathbb 1_{\N_+}}  \right| \le \eta\right\}}.
\end{multline}
It then readily follows from \eqref{eq:convfinalfinal15}, \eqref{eq:defLbar} and \eqref{eq:limhydro}, 
that $\cro{\bar\mu^*_t,\mathbb 1_{\N_+}}\le 2\beta$ for all $t\in\left[t^*_{2\beta},1\right]$. 
On the other hand, \eqref{eq:convfinalfinal15} also implies that, on the event 
$\left\{\sup_{t\in[0,t^*_{2\beta}]} \left| \cro{\bar{\mu}^n_t,\mathbb 1_{\N_+}} - \cro{\bar\mu_t,\mathbb 1_{\N_+}}  \right| \le \eta\right\}$ 
we have $$\cro{\bar\mu^n_{t^*_{2\beta}},\mathbb 1_{\N_+}}\le 2\beta +\eta\,\mbox{ a.s.}$$ and thus, as the mapping 
$t\mapsto  \cro{\bar\mu^n_{t},\mathbb 1_{\N_+}}$ is a.s. non-decreasing, we get that 
$\cro{\bar\mu^n_{t},\mathbb 1_{\N_+}}\le 2\beta +\eta$ a.s. for all $t\in\left[t^*_{2\beta},1\right]$. Therefore, 
\begin{multline*}
\pr{\left\{\sup_{t\in[t^*_{2\beta},1]} \left| \cro{\bar{\mu}^n_t,f\mathbb 1_{\N_+}} - \cro{\bar\mu^*_t,f\mathbb 1_{\N_+}}  \right| >{\epsilon \over 2}\right\}
\cap\left\{\sup_{t\in[0,t^*_{2\beta}]} \left| \cro{\bar{\mu}^n_t,\mathbb 1_{\N_+}} - \cro{\bar\mu^*_t,\mathbb 1_{\N_+}}  \right| \le \eta\right\}}\\
\le \pr{\left\{ \parallel f\parallel \left(\sup_{t\in[t^*_{2\beta},1]}\cro{\bar{\mu}^n_t,\mathbb 1_{\N_+}} + \sup_{t\in[t^*_{2\beta},1]}\cro{\bar{\mu}^*_t, \mathbb 1_{\N_+}}\right)   >{\epsilon \over 2}\right\}
\cap\left\{\sup_{t\in[0,t^*_{2\beta}]} \left| \cro{\bar{\mu}^n_t,\mathbb 1_{\N_+}} - \cro{\bar\mu^*_t,\mathbb 1_{\N_+}}  \right| \le \eta\right\}}\\
\le \pr{\parallel f\parallel \left(4\beta+\eta\right)   >{\epsilon \over 2}}=0. 
\end{multline*}
Injecting this and applying Propositions \ref{prop:convt2beta} and \ref{prop:approxtildebar} to $f=\mathbb 1_{\N_+}$ in \eqref{eq:convfinalfinal21} implies that 
\begin{equation*}
\pr{\sup_{t\in[t^*_{2\beta},1]} \left| \cro{\bar{\mu}^n_t,f\mathbb 1_{\N_+}} - \cro{\bar\mu^*_t,f\mathbb 1_{\N_+}}  \right| >{\epsilon \over 2}} \xlongrightarrow{n} 0.
\end{equation*}
Gathering this with \eqref{eq:convfinalfinal1} in \eqref{eq:convfinalfinal0}, and applying again Propositions \ref{prop:convt2beta} and \ref{prop:approxtildebar} to $f$, we obtain that 
\[\pr{\sup_{t\in[0,1]} \left| \cro{\bar{\mu}^n_t,f} - \cro{\bar\mu^*_t,f}  \right| >\epsilon}\xlongrightarrow{n} 0,\]
concluding the proof. 
\end{proof}


{We now provide three examples of local matching criteria, for which our main result holds. It is demonstrated that the algorithms {\sc greedy}, {\sc min-min} 
and {\sc uni-max} verify all the hypotheses of Theorem \ref{thm:main}, and thus that their respective matching coverages can be predicted by solving the corresponding ODE.}

\section{Proof of Theorem \ref{thm:matchcovgreedy}}
\label{sec:greedy}
Let us come back to the case of the matching criterion \textsc{greedy}, introduced in Section \ref{subsec:mainresalgo}, and formally defined in Example \ref{ex:greedy}. 
Fix $\mu$ such that $\cro{\mu,\mathbb 1_{N_+}}>0$, and recall the definition \eqref{eq:distribK}. Then, for any $k\in\N_+$ we clearly get that 
\[\maK_{\textsc{greedy}}(\mu)(k)={\mu(k)\over \cro{\mu,\mathbb 1_{\N_+}}},\,k\in\N_+,\]
and more generally for all $f\in\maC_b$, 
\begin{equation}
\label{eq:Phigreedy1}
\cro{\maK_{\textsc{greedy}}(\mu),f}  = {\cro{\mu,f\mathbb 1_{\N_+}}\over \cro{\mu,\mathbb 1_{\N_+}}}\cdot
\end{equation}
Also, for any $k\in \N_+$, given that $K_{\textsc{greedy}}(\mu)=k$, by the uniformity of the second choice, it is immediate that the distribution of 
$K'_{\textsc{greedy}}(\mu)$ is size-biased, namely, recalling \eqref{eq:distribK'}, 
\[\maK'_{\textsc{greedy}}(\mu,k)(k')={k'\mu(k')\over \cro{\mu,\chi}},\,k'\in\N_+.\]
Therefore, for all $k\in\N_+$, for all $f\in\maC_b$ we get 
\begin{equation}
\label{eq:Phigreedy2}
\cro{\maK'_{\textsc{greedy}}(\mu,k),f}=\sum_{k'\in\N_+}f(k')\maK'_{\textsc{greedy}}(\mu,k)(k')= {\cro{\mu,\chi f}\over\cro{\mu,\chi}},
\end{equation}
and thus for all such $f$, 
\begin{equation*}
\cro{\maK_{\textsc{greedy}}(\mu),\cro{\maK'_{\textsc{greedy}}(\mu,.),f}}=\sum_{k\in\N_+}{\cro{\mu,\chi f}\over\cro{\mu,\chi}}\maK_{\textsc{greedy}}(\mu)(k)
={\cro{\mu,\chi f}\over\cro{\mu,\chi}}\cdot 
\end{equation*}
{Observe that, by uniformity, the distribution $\wh\maK'_{\textsc{greedy}}(\mu,k)$ defined by \eqref{eq:distribhatK'} coincides with 
$\maK'_{\textsc{greedy}}(\mu,k)$ for all $k$, which trivially implies that {\sc greedy} is well-behaved. 
Second, for all $f\in\maC_b$ we thus also have that 
\begin{equation}
\label{eq:hatPhigreedy2}
\cro{\maK_{\textsc{greedy}}(\mu),\cro{\wh\maK'_{\textsc{greedy}}(\mu,.),f}} 
={\cro{\mu,\chi f}\over\cro{\mu,\chi}}\cdot 
\end{equation}}
We immediately check that {\sc greedy} is continuous in the sense of Definition \ref{def:polCont}: fix a sequence 
$\suite{\bar\mu^n}$ of $\bar\M$ such that $\bar\mu^n\in \bar\M^n$ for all $n$, and such that $\bar\mu^n\xRightarrow{n}\bar\mu$ for some 
$\bar\mu \in\bar\M$ such that $\cro{\bar\mu,\mathbb 1_{\N_+}}>0$. 
In view of \eqref{eq:Phigreedy1}, for any $f\in\maC_b$ we get that 
\[\cro{\maK_{\textsc{greedy}}(n\bar\mu^n),f}={\cro{n\bar\mu^n,f\mathbb 1_{\N_+}}\over \cro{n\bar\mu^n,\mathbb 1_{\N_+}}}
={\cro{\bar\mu^n,f\mathbb 1_{\N_+}}\over \cro{\bar\mu^n,\mathbb 1_{\N_+}}}\xlongrightarrow{n} {\cro{\bar\mu,f\mathbb 1_{\N_+}}\over \cro{\bar\mu,\mathbb 1_{\N_+}}}=:\cro{\bar{\maK}_{\textsc{greedy}}(\bar\mu),f}.\]
Likewise, for all $k\in\N_+$, for all $f\in\maC_b$, \eqref{eq:Phigreedy2} entails that 
\[\cro{\wh{\maK}'_{\textsc{greedy}}(n\bar\mu^n,k),f}=\cro{{\maK}'_{\textsc{greedy}}(n\bar\mu^n,k),f}={\cro{n\bar\mu^n,\chi f}\over\cro{n\bar\mu^n,\chi}}={\cro{\bar\mu^n,\chi f}\over\cro{\bar\mu^n,\chi}}
\xlongrightarrow{n}{\cro{\bar\mu,\chi f}\over\cro{\bar\mu,\chi}}=:\cro{\bar{\maK}'_{\textsc{greedy}}(\bar\mu,k),f}.\]
Hence the {\sc greedy} criterion is continuous, and moreover the operator $\bar{\mathfrak{L}}$ can be made explicit in the present case. 
Indeed, plugging 
the above into \eqref{eq:defLbar}, we immediately get that for all $\bar\mu\in \bar{\M}$ such that 
$\cro{\bar\mu,\mathbb 1_{\N_+}}>0$,  
\begin{multline*}
\bar{\mathfrak{L}}_f(\bar\mu)
=- \cro{\bar\maK_{\textsc{greedy}}(\bar\mu),f+\cro{\bar{\maK}'_{\textsc{greedy}}(\bar\mu,.),f}}\\
\shoveright{-{\cro{\bar\mu,\chi\nabla f}\over \cro{\bar\mu,\chi}}
\Bigl\{\cro{\bar\maK_{\textsc{greedy}}(\bar\mu),\chi-\mathbb 1+\cro{\bar\maK'_{\textsc{greedy}}(\bar\mu,.),\chi-\mathbb 1}}\Bigl\}}\\
=-\Biggl\{{\cro{\bar\mu,f\mathbb 1_{\N_+}}\over \cro{\bar\mu,\mathbb 1_{\N_+}}} +{\cro{\bar\mu,\chi f}\over\cro{\bar\mu,\chi}}
+ {\cro{\bar\mu,\chi\nabla f} \over \cro{\bar\mu,\chi}}\left\{{\cro{\bar\mu,(\chi- \mathbb 1_{\N_+})}\over \cro{\bar\mu,\mathbb 1_{\N_+}}}
+ {\cro{\bar\mu,\chi^2 - \chi} \over \cro{\bar\mu,\chi}}\right\} \Biggl\}.
\end{multline*}
We deduce the following result, which readily implies Theorem \ref{thm:matchcovgreedy} in view of Corollary \ref{cor:maincov}. 
\begin{corollary}[Convergence Theorem for the \textsc{greedy} criterion]
\label{cor:MainGRE}
If $\mathbf\Phi=\textsc{greedy}$ and the sequence of processes $\suite{\bar{\mu}^n}$ satisfies Assumption \ref{hypo:Ho}, 
for every $f\in \maC_b$ we get the convergence 
\begin{equation*}
\sup_{t \leq 1 } |\cro{\bar{\mu}^n_t,f} -  \cro{\bar{\mu}^{\textsc{greedy}}_t,f}| \xrightarrow{(n,\P)} 0,
\end{equation*}
where $\procun{\bar{\mu}_t^{\textsc{greedy}}}$ is the unique solution of \eqref{eq:limhydro}, for $\bar{\mathfrak{L}}$ defined, for all $f\in\maC_b,\bar\mu\in \bar{\M}$, by 
\begin{equation}
\label{eq:defLbarGRE}
\bar{\mathfrak{L}}_f(\bar\mu)=-\Biggl\{{\cro{\bar\mu,f\mathbb 1_{\N_+}}\over \cro{\bar\mu,\mathbb 1_{\N_+}}} +{\cro{\bar\mu,\chi f}\over\cro{\bar\mu,\chi}}
+ {\cro{\bar\mu,\chi\nabla f} \over \cro{\bar\mu,\chi}}\left\{{\cro{\bar\mu,(\chi- \mathbb 1_{\N_+})}\over \cro{\bar\mu,\mathbb 1_{\N_+}}}
+ {\cro{\bar\mu,\chi^2 - \chi} \over \cro{\bar\mu,\chi}}\right\} \Biggl\}\mathbb 1_{\cro{\bar\mu,\mathbb 1_{\N_+}}>0}.
\end{equation}
\end{corollary}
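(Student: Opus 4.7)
The plan is to derive Corollary \ref{cor:MainGRE} as an application of Theorem \ref{thm:main}. The computations preceding the statement already identify the limiting operator as \eqref{eq:defLbarGRE} and verify that {\sc greedy} is continuous in the sense of Definition \ref{def:polCont}, while it is trivially well-behaved (Definition \ref{def:polWB}) because $\maK'_{\textsc{greedy}}(\mu,k) = \wh\maK'_{\textsc{greedy}}(\mu,k)$ for every $\mu$ and $k$. Thus only two further items need to be checked: moment preservation for {\sc greedy} and uniqueness of the limiting ODE \eqref{eq:limhydro}.

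Moment preservation follows from elementary estimates. For any $\mu\in\M^n_{\beta,M}$, using $\chi^2\mathbb 1_{\N_+}\le\chi^3$ and $\chi\ge\mathbb 1_{\N_+}$ on $\N$ together with \eqref{eq:Phigreedy1}--\eqref{eq:Phigreedy2},
\[\cro{\maK_{\textsc{greedy}}(\mu),\chi^2} = \frac{\cro{\mu,\chi^2\mathbb 1_{\N_+}}}{\cro{\mu,\mathbb 1_{\N_+}}} \le \frac{M}{\beta}, \qquad \cro{\wh\maK'_{\textsc{greedy}}(\mu,k),\chi^2} = \frac{\cro{\mu,\chi^3}}{\cro{\mu,\chi}} \le \frac{M}{\beta},\]
uniformly in $k\in\N_+$, so one may take $C'(\beta,M)=M/\beta$ in Definition \ref{def:polPM}.

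The main obstacle is uniqueness of solutions to \eqref{eq:limhydro}. I would proceed in two steps. First, I would establish a priori moment bounds: testing \eqref{eq:limhydro} against the truncated monomials $\chi^p\wedge N \in \maC_b$ and passing to the limit $N\to\infty$ by monotone convergence, each bracketed term in \eqref{eq:defLbarGRE} is non-negative (since $\chi\nabla(\chi^p\wedge N)\ge 0$ and the other two ratios have non-negative integrands when the test function is non-negative), giving
\[\cro{\eta_t,\chi^p}\le\cro{\nu,\chi^p},\quad t\in[0,1],\]
for every $p$ with $\cro{\nu,\chi^p}<\infty$; in particular, under Assumption \ref{hypo:Ho}, all moments up to order $3.5+\varepsilon$ are uniformly bounded over $[0,1]$. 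Second, given two solutions $\eta^1,\eta^2$ starting from the same $\nu$, I would work on the interval $[0,t^*_{2\beta}]$ where the denominators $\cro{\eta^i_t,\mathbb 1_{\N_+}}$ and $\cro{\eta^i_t,\chi}$ are uniformly bounded below by a positive constant, and derive a Gronwall-type inequality for a weighted distance of the form $g(t)=\cro{|\eta^1_t-\eta^2_t|,\mathbb 1+\chi^2}$. Using the standard identity $a_1/b_1-a_2/b_2=(a_1-a_2)/b_1-a_2(b_1-b_2)/(b_1b_2)$ together with the lower bounds on the denominators and the uniform moment bounds of step one, each of the three ratios in \eqref{eq:defLbarGRE} evaluated at $\eta^1$ minus its value at $\eta^2$ is controlled by a constant multiple of $g(s)$; the excess moment $\chi^{3.5+\varepsilon}$ enters via Hölder's inequality in order to distribute the weight $\chi^2$ against the cross terms $\cro{\eta,\chi^2-\chi}$ and $\cro{\eta,\chi\nabla f\,(\chi^2-\chi)}$ that arise when the distance is weighted by $\chi^2$ (the strict inequality $\varepsilon>0$ provides the strict Hölder exponent one needs). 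Gronwall's lemma then forces $g\equiv 0$ on $[0,t^*_{2\beta}]$, and since \eqref{eq:defLbarGRE} freezes the dynamics as soon as $\cro{\eta_t,\mathbb 1_{\N_+}}=0$ (the boundary argument at the end of the proof of Theorem \ref{thm:main} can be repeated on any candidate solution), uniqueness extends to $[0,1]$. Applying Theorem \ref{thm:main} then yields the convergence and identifies $\bar\mu^{\textsc{greedy}}$ as the unique continuous solution of \eqref{eq:limhydro}.
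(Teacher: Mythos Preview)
Your verification of moment preservation, well-behavedness and continuity is correct and coincides with what the paper does. The substantive difference is in the uniqueness argument, and here your sketch has a genuine gap.

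The difficulty is the shift term. Testing \eqref{eq:defLbarGRE} against $f=\mathbb 1_i$ gives $\cro{\bar\mu,\chi\nabla\mathbb 1_i}=i\bar\mu(i)-(i+1)\bar\mu(i+1)$, so the equation for $\kappa_t(i)=\eta^1_t(i)-\eta^2_t(i)$ contains, besides terms in $m_\kappa,M_\kappa,V_\kappa$ multiplied by $\xi(i)$ or $\xi(i+1)$, a term of the form $C_t\bigl(i\kappa_t(i)-(i+1)\kappa_t(i+1)\bigr)$ with a bounded coefficient $C_t$. If you try to control $g(t)=\cro{|\kappa_t|,\mathbb 1+\chi^2}$ from the integral equation, this shift term produces $\sum_i(1+i^2)(i+1)|\kappa_t(i+1)|\asymp\cro{|\kappa_t|,\chi^3}$, one moment more than $g$ carries. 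Your proposed remedy---interpolating $\chi^3$ between $\chi^2$ and $\chi^{3.5+\varepsilon}$ by H\"older and using the a priori bound $\cro{|\kappa_t|,\chi^{3.5+\varepsilon}}\le 2\cro{\nu,\chi^{3.5+\varepsilon}}$---only yields $\cro{|\kappa_t|,\chi^3}\le C\,g(t)^a$ for some $a<1$. The resulting differential inequality $g'\le C(g+g^a)$ with $g(0)=0$ does \emph{not} force $g\equiv 0$, because $x\mapsto x^a$ is not Lipschitz at the origin (compare $g'=g^{1/2}$, which admits $g(t)=t^2/4$). So the Gronwall loop does not close.

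The paper circumvents this by working with the weighted $\ell^2$ quantity $\Gamma_t=\sum_{i>0} i^{\alpha}\kappa_t(i)^2$ with $\alpha=5+\varepsilon$. Differentiating $\Gamma_t$ brings in the product $\kappa_t(i)\kappa_t'(i)$, and the dangerous contribution is isolated as
\[
a(i)\;\le\; C\, i^{\alpha+1}\bigl(\kappa_t(i+1)^2-\kappa_t(i)^2\bigr),
\]
whose sum over $i$ is handled by an Abel summation: $\sum_i i^{\alpha+1}(\kappa(i+1)^2-\kappa(i)^2)$ telescopes up to a boundary term that vanishes because $\cro{\nu,\chi^{(\alpha+1)/2}}<\infty$, and the remainder is bounded by $(\alpha+1)\sum_i (i+1)^\alpha\kappa(i+1)^2\le C\,\Gamma_t$. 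The remaining cross terms are controlled via Cauchy--Schwarz, using $|V_\kappa|\le C\Gamma_t^{1/2}$ (which needs $\alpha>5$) and the finiteness of $\cro{\xi_t,\chi^{(\alpha+2)/2}}=\cro{\xi_t,\chi^{3.5+\varepsilon/2}}$. This is precisely where the moment hypothesis of order $3.5+\varepsilon$ enters, and it yields a genuine linear Gronwall bound $\Gamma_t'\le C_\beta\Gamma_t$ on $[0,T_\beta]$, hence $\Gamma\equiv 0$. The squared structure is essential for the telescoping; a weighted $L^1$ distance does not expose the same cancellation.
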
 

\begin{proof}
We apply Theorem \ref{thm:main}. 
We have just shown that 
$\bphi=\textsc{greedy}$ is well-behaved and continuous. 
It also preserves the moments. To see this, observe that for all $\beta,M$ and $n$, for all $\mu \in {\M}^n_{\beta,M}$, in view of 
\eqref{eq:Phigreedy1} we have that 
 \[\cro{\maK_{\textsc{greedy}}(\mu),\chi^2} = {\cro{\mu,\chi^2}\over \cro{\mu,\mathbb 1_{\N_+}}}{\le {M\over \beta}}\cdot\]
 Likewise, from \eqref{eq:Phigreedy2} we get that 
 \[\sup_{k\in\N_+}\cro{\maK'_{\textsc{greedy}}({\mu},k),\chi^2}
 ={\cro{\mu,\chi^{3}}\over\cro{\mu,\chi}}{\le {M\over \beta}},\]
 {proving that \textsc{greedy} preserves the moments up to two.} 
{In view of Theorem \ref{thm:main}, it remains to show that 
the system of integral equations \eqref{eq:limhydro}, 
for $\bar{\mathfrak{L}}$ defined by \eqref{eq:defLbarGRE}, admits the only solution $\bar\mu^*$.} 
For this, recalling \ref{hypo:Ho}, let us first observe that  
\begin{equation}
\label{eq:momentboundsol}
\cro{\bar\mu^*_t , \chi^{k}}<\infty,\mbox{ for all $t\in[0,1]$, all $k\le 3.5 + \varepsilon$ and every solution $\bar\mu^*$ of \eqref{eq:limhydro}}.
\end{equation}
Indeed, for any such $\bar\mu^*$, $t$ and $k$, it is an immediate consequence of Assumption \ref{hypo:Ho} that 
the initial measure $\bar\mu^*_0$ admits a moment of order $k$. But the mapping 
$t\mapsto \bar{\mathfrak{L}}_{\chi^{k}}(\bar\mu^*_t)$ is the derivative of the $k$-th moment of $\bar\mu^*_t$, and is thus well defined and negative. 
So the mapping $t\longmapsto \cro{\bar\mu^*_t ,\chi^{k}}$ is non-increasing, proving \eqref{eq:momentboundsol}. 

We now let $\xi$ and $\zeta$ be two solutions of \eqref{eq:limhydro}. We show that $\xi_t = \zeta_t$ for all $t\in [0,1]$. 
First, in the obvious case where $\cro{\xi_0, \mathbb 1_{\N_+}} = \cro{\zeta_0, \mathbb 1_{\N_+}} = 0 $, we readily get that 
$\bar{\mathfrak{L}}_f(\xi_0) = \bar{\mathfrak{L}}_f(\zeta_0)$, and it follows that the solutions are both constant, namely $\xi_t=\zeta_t = \nu$ for all $t\in[0,1]$. 
Else, as both mappings $t\longmapsto\cro{\xi_t, \mathbb 1_{\N_+}}$ and $t\longmapsto\cro{\zeta_t, \mathbb 1_{\N_+}}$ are non-increasing, 
we can define for all $\beta\ge 0$, 
\[T^\xi_\beta= \sup \left\{t>0\,:\, \cro{\xi_t, \mathbb 1_{\N_+}}>\beta\right\},\quad T^\zeta_\beta= \sup \left\{t>0 \,:\, \cro{\zeta_t, \mathbb 1_{\N_+}}>\beta\right\}\]
and 
\[T_\beta = T^\xi_\beta \wedge T^\zeta_\beta.\]
We first fix $\beta>0$, and show that $\xi$ and $\zeta$ coincide on $[0,T_\beta]$.
For this, let us set 
$$\kappa_t = \xi_t - \zeta_t,\quad t\in[0,T_\beta],$$ 
and observe that, from \eqref{eq:momentboundsol}, $\kappa_t$ admits a moment of order $3.5 +\varepsilon$ for any $t\in[0,1]$. 
Now let $\alpha = 5+\varepsilon$, and define 
$$\Gamma_t = \sum_{i>0} i^\alpha \kappa_t(i)^2,\quad t\in[0,T_\beta]. $$
Observe that the above is finite for all $t$ since, from Cauchy-Schwarz inequality,
\[\Gamma_t \leq \left( \sum_{i>0} i^{\frac{\alpha}{2}} |\kappa_t(i)| \right)^2<\infty.\]
We will show that $\Gamma_t=0$ for all $t\in[0,T_\beta]$. 
For this, as $\Gamma_0 = 0$ by assumption, it is enough to show that for some constant $C_\beta$, 
\be
\label{majGammaDeriv}
{\d\over \d t}(\Gamma_t) \leq C_\beta\Gamma_t,\quad 0\le t \le T_\beta,
\ee
since we will then have, for all such $t$, 
${\d\over \d t}(\exp(-C_\beta t)\Gamma_t)\leq 0$ and thus $\exp(-C_\beta t)\Gamma_t \leq \Gamma_0 =0$. 
Differentiating $\Gamma$ shows that for all $t \le T_\beta$,
\begin{equation}
\label{serieGammaDer}
{\d\over \d t}\Gamma_t  = 2 \sum_{i>0} i^\alpha \kappa_t(i) {\d\over \d t}\kappa_t(i),
\end{equation}
and we are rendered to upper-bound the above expression. For this, to simplify the notation, in the computations below we omit the dependence in 
$t \leq T_\beta$, and we denote by $f'$, the derivative w.r.t. $t$ of a function $t\mapsto f(t)$. 
Also, for any measure $\mu$ we introduce the following quantities: 
\[\begin{cases}
	m_\mu &= \cro{\mu,\mathbb 1_{\N_+}};\\
	M_\mu &= \cro{\mu,\chi};\\
	V_\mu &= \cro{\mu,\chi^2}.
\end{cases}\]
Fix $i\in\N_+$. From \eqref{eq:limhydro} we have that 
\begin{equation*}
\kappa(i)'  =  \cro{\kappa, \mathbb{1}_{i}}' = \cro{\xi, \mathbb{1}_{i}}' -  \cro{\zeta, \mathbb{1}_{i}}'  
 = \bar{\mathfrak{L}}_{\mathbb{1}_{i}}(\xi) - \bar{\mathfrak{L}}_{\mathbb{1}_{i}}(\zeta). 
\end{equation*}
Then \eqref{eq:defLbarGRE} implies that 
\begin{eqnarray*}
\bar{\mathfrak{L}}_{\mathbb{1}_{i}}(\xi)  & = & -\Biggl({\cro{\xi, \mathbb{1}_{i} \mathbb 1_{\N_+}} \over \cro{\xi,\mathbb 1_{\N_+}}} +{\cro{\xi,\chi \mathbb{1}_{i}}\over\cro{\xi,\chi}}
+ {\cro{\xi,\chi\nabla \mathbb{1}_{i}} \over \cro{\xi,\chi}}\left({\cro{\xi,\chi- \mathbb 1_{\N_+}}\over \cro{\xi,\mathbb 1_{\N_+}}}
+ {\cro{\xi,\chi^2 - \chi} \over \cro{\xi,\chi}}\right) \Biggl) \\
& = &  -\Biggl({\xi(i)\mathbb 1_{\N_+}(i) \over m_\xi} +{i \xi(i) \over M_\xi}
+ {i \xi(i) - (i+1)\xi(i+1) \over M_\xi }\left({M_\xi - m_\xi \over m_\xi}
+ {V_\xi - M_\xi \over M_\xi }\right) \Biggl) \\
& = &  - \frac{\xi(i) }{m_\xi} - \frac{i \xi(i)}{M_\xi} - i \xi(i)  \frac{M_\xi - m_\xi }{M_\xi m_\xi}  - i \xi(i)  \frac{V_\xi - M_\xi }{M_\xi M_\xi} \\
& & \qquad + (i+1)\xi(i+1)  \frac{M_\xi - m_\xi }{M_\xi m_\xi} + (i+1)\xi(i+1) \frac{V_\xi - M_\xi }{M_\xi M_\xi}\\
& = &  - \frac{\xi(i) }{m_\xi} - \frac{i \xi(i)}{m_\xi}  - i \xi(i)  \frac{V_\xi - M_\xi }{M_\xi M_\xi} \\
& & \qquad + (i+1)\xi(i+1)  \frac{M_\xi - m_\xi }{M_\xi m_\xi} + (i+1)\xi(i+1) \frac{V_\xi - M_\xi }{M_\xi M_\xi} ,
\end{eqnarray*}
with a similar exression for  $\bar{\mathfrak{L}}_{\mathbb{1}_{i}}(\zeta)$, in a way that 
\begin{eqnarray*}
\kappa(i)' & = &  \left( \frac{\zeta(i)}{m_\zeta}  - \frac{\xi(i)}{m_\xi} \right) 
+ i \left( \frac{\zeta(i)}{m_\zeta} - \frac{\xi(i)}{m_\xi} \right)
+ i \left(  \zeta(i)  \frac{V_\zeta - M_\zeta}{M_\zeta M_\zeta} -   \xi(i)  \frac{V_\xi - M_\xi }{M_\xi M_\xi}  \right) \\
& & \quad + (i+1) \left( \xi(i+1)  \frac{M_\xi - m_\xi }{M_\xi m_\xi} - \zeta(i+1)  \frac{M_\zeta - m_\zeta}{M_\zeta m_\zeta} \right) \\
& & \quad + (i+1) \left( \xi(i+1) \frac{V_\xi - M_\xi }{M_\xi M_\xi} -  \zeta(i+1) \frac{V_\zeta - M_\zeta }{M_\zeta M_\zeta}  \right).
\end{eqnarray*}
The five terms of the r.h.s. of the above expression have a similar structure. We show how to simplify the first and the last one, the three other ones can be treated similarly. We have 
\begin{equation*}
 \frac{\zeta(i)}{m_\zeta}  - \frac{\xi(i)}{m_\xi}  = \frac{\zeta(i) -\xi(i)}{m_\zeta} + \left( \frac{1}{m_\zeta} - \frac{1}{m_\xi} \right)   \xi(i) 
   = - \frac{\kappa(i)}{m_\zeta} + \frac{m_\kappa}{m_\xi m_\zeta} \xi(i) 
\end{equation*}
and 
\begin{eqnarray*}
 \xi(i+1) \frac{V_\xi - M_\xi }{M_\xi M_\xi} -  \zeta(i+1) \frac{V_\zeta - M_\zeta }{M_\zeta M_\zeta}  & = & \xi(i+1) \left( \frac{V_\xi - M_\xi }{M_\xi^2} - \frac{V_\zeta - M_\zeta }{M_\zeta^2}  \right) + (\xi(i+1) -  \zeta(i+1) )\frac{V_\zeta - M_\zeta }{M_\zeta^2} \\
 & = & \xi(i+1)  \frac{V_\xi M_\zeta^2 - V_\zeta M_\xi^2 + M_\zeta M_\xi^2 - M_\xi M_\zeta^2 }{M_\xi^2 M_\zeta^2}  + \kappa(i+1) \frac{V_\zeta - M_\zeta }{M_\zeta^2} \\
 & = &  \xi(i+1)  \frac{V_\kappa M_\zeta^2 - V_\zeta (M_\xi^2 -M_\zeta^2) + M_\xi M_\zeta M_\kappa }{M_\xi^2 M_\zeta^2}  + \kappa(i+1) \frac{V_\zeta - M_\zeta }{M_\zeta^2} \\
 & = &  \xi(i+1)  \frac{V_\kappa M_\zeta^2 + M_\kappa( M_\xi M_\zeta - V_\zeta (M_\xi +M_\zeta))}{M_\xi^2 M_\zeta^2}  + \kappa(i+1) \frac{V_\zeta - M_\zeta }{M_\zeta^2}\cdot
\end{eqnarray*}
All in all, we obtain that 
\begin{eqnarray}
\kappa(i)' & = & \left( - \frac{\kappa(i)}{m_\zeta} + \frac{m_\kappa}{m_\xi m_\zeta}  \xi(i)  \right) + i \left(  - \frac{\kappa(i)}{m_\zeta} +  \frac{m_\kappa}{m_\xi m_\zeta} \xi(i)   \right) \nonumber \\
& & \quad + i \left( - \frac{V_\zeta - M_\zeta}{M_\zeta^2} \kappa(i)  -\frac{M_\zeta^2 V_\kappa + (M_\xi M_\zeta -V_\zeta (M_\xi+M_\zeta))M_\kappa}{M_\xi^2 M_\zeta^2} \xi(i)  \right) \nonumber \\
& & \quad + (i+1) \left( \frac{M_\zeta -m_\zeta}{M_\zeta m_\zeta} \kappa(i+1) + \frac{-M_\xi M_\zeta m_\kappa + m_\xi m_\zeta M_\kappa}{M_\xi m_\xi M_\zeta m_\zeta} \xi(i+1) \right) \nonumber \\
& & \quad + (i+1) \left(  \frac{V_\zeta - M_\zeta }{M_\zeta^2} \kappa(i+1)  +   \frac{M_\zeta^2 V_\kappa + ( M_\xi M_\zeta - V_\zeta (M_\xi +M_\zeta)) M_\kappa}{M_\xi^2 M_\zeta^2} \xi(i+1)  \right),\nonumber
\end{eqnarray}
in a way that the general term of the series in the r.h.s. of \eqref{serieGammaDer} reads 
\begin{eqnarray}
i^\alpha \kappa(i) \kappa(i)'&=& i^\alpha\left( - \frac{\kappa(i)^2}{m_\zeta} + \frac{m_\kappa}{m_\xi m_\zeta}  \xi(i) \kappa(i) \right) + i^{\alpha+1} \left(  - \frac{\kappa(i)^2}{m_\zeta} +  \frac{m_\kappa}{m_\xi m_\zeta} \xi(i)  \kappa(i) \right) \nonumber \\
& & \quad + i^{\alpha+1}  \left( - \frac{V_\zeta - M_\zeta}{M_\zeta^2} \kappa(i)^2  -\frac{M_\zeta^2 V_\kappa + (M_\xi M_\zeta -V_\zeta (M_\xi+M_\zeta))M_\kappa}{M_\xi^2 M_\zeta^2} \xi(i)  \kappa(i) \right) \nonumber \\
& & \quad + i^\alpha (i+1) \kappa(i) \left( \frac{M_\zeta -m_\zeta}{M_\zeta m_\zeta} \kappa(i+1) + \frac{-M_\xi M_\zeta m_\kappa + m_\xi m_\zeta M_\kappa}{M_\xi m_\xi M_\zeta m_\zeta} \xi(i+1) \right) \nonumber \\
& & \quad + i^\alpha (i+1) \kappa(i) \left(  \frac{V_\zeta - M_\zeta }{M_\zeta^2} \kappa(i+1)  +   \frac{M_\zeta^2 V_\kappa + ( M_\xi M_\zeta - V_\zeta (M_\xi +M_\zeta)) M_\kappa}{M_\xi^2 M_\zeta^2} \xi(i+1)  \right). \nonumber
\end{eqnarray}
Therefore we obtain that 
\begin{eqnarray}
i^\alpha \kappa(i) \kappa(i)'&=& a(i) + i^\alpha\left( - \frac{\kappa(i)^2}{m_\zeta} + \frac{m_\kappa}{m_\xi m_\zeta}  \xi(i) \kappa(i) \right) \nonumber \\
& & \quad + i^{\alpha+1}   \frac{m_\kappa}{m_\xi m_\zeta} \xi(i)  \kappa(i)  \nonumber \\
& & \quad + i^{\alpha+1}  \left(  -\frac{M_\zeta^2 V_\kappa + (M_\xi M_\zeta -V_\zeta (M_\xi+M_\zeta))M_\kappa}{M_\xi^2 M_\zeta^2} \xi(i)  \kappa(i) \right) \nonumber \\
& & \quad + i^\alpha \kappa(i)  \frac{M_\zeta -m_\zeta}{M_\zeta m_\zeta} \kappa(i+1) +  i^\alpha (i+1) \kappa(i) \frac{-M_\xi M_\zeta m_\kappa + m_\xi m_\zeta M_\kappa}{M_\xi m_\xi M_\zeta m_\zeta} \xi(i+1)  \nonumber \\
& & \quad + i^\alpha  \kappa(i)   \frac{V_\zeta - M_\zeta }{M_\zeta^2} \kappa(i+1)  +   i^\alpha (i+1) \kappa(i) \frac{M_\zeta^2 V_\kappa + ( M_\xi M_\zeta - V_\zeta (M_\xi +M_\zeta)) M_\kappa}{M_\xi^2 M_\zeta^2} \xi(i+1)  \nonumber \\
& = &  a(i) + i^\alpha\left( - \frac{\kappa(i)^2}{m_\zeta} + \left( \frac{M_\zeta -m_\zeta}{M_\zeta m_\zeta}  + \frac{V_\zeta - M_\zeta }{M_\zeta^2} \right) \kappa(i) \kappa(i+1) \right) \nonumber \\
& & \quad +   \frac{m_\kappa}{m_\xi m_\zeta} i^\alpha \kappa(i)  \xi(i) \nonumber \\
& & \quad  + \left( \frac{-m_\kappa}{m_\xi m_\zeta} +   \frac{M_\zeta^2 V_\kappa + (2 M_\xi M_\zeta - V_\zeta (M_\xi +M_\zeta)) M_\kappa}{M_\xi^2 M_\zeta^2}  \right)  i^\alpha \kappa(i)  \xi(i+1)   \nonumber \\
& & \quad -\frac{M_\zeta^2 V_\kappa + (M_\xi M_\zeta -V_\zeta (M_\xi+M_\zeta))M_\kappa}{M_\xi^2 M_\zeta^2}  i^{\alpha+1} \kappa(i) \xi(i)   \nonumber \\
& & \quad + \left( \frac{-m_\kappa}{m_\xi m_\zeta} +   \frac{M_\zeta^2 V_\kappa + (2 M_\xi M_\zeta - V_\zeta (M_\xi +M_\zeta)) M_\kappa}{M_\xi^2 M_\zeta^2}  \right)  i^{\alpha+1} \kappa(i)  \xi(i+1),
\label{termegeneder}
\end{eqnarray}
with 
\begin{eqnarray}
a(i) &=& i^{\alpha+1} \left(  - \frac{\kappa(i)^2}{m_\zeta}   - \frac{V_\zeta - M_\zeta}{M_\zeta^2} \kappa(i)^2  + \frac{M_\zeta -m_\zeta}{M_\zeta m_\zeta} \kappa(i) \kappa(i+1) +  \frac{V_\zeta - M_\zeta }{M_\zeta^2} \kappa(i)\kappa(i+1)   \right) \nonumber \\
& = & \frac{i^{\alpha+1}}{M_\zeta^2 m_\zeta} \bigg( \underbrace{( M_\zeta^2 + V_\zeta m_\zeta - 2M_\zeta m_\zeta )}_{ \geq 0} \underbrace{\kappa(i)\kappa(i+1)}_{\leq \frac{\kappa(i)^2 + \kappa(i+1)^2}{2}} - \underbrace{( M_\zeta^2 + V_\zeta m_\zeta - M_\zeta m_\zeta )}_{\geq M_\zeta^2 + V_\zeta m_\zeta - 2M_\zeta m_\zeta } \kappa(i)^2 \bigg) \nonumber \\
& \leq &  \frac{M_\zeta^2 + V_\zeta m_\zeta - 2M_\zeta m_\zeta }{2 M_\zeta^2 m_\zeta} i^{\alpha+1} (\kappa(i+1)^2 - \kappa(i)^2).\nonumber
\end{eqnarray}
We address one by one the terms of \eqref{termegeneder}. For this, first note that 
all linear combinations of $m_\kappa$, $M_\kappa$, $V_\kappa$ can be 
easily upper-bounded, by observing that, as $\alpha>5$, Cauchy-Schwarz inequality implies 
\begin{equation*}
|V_\kappa|  \leq  \sum_i i^2 |\kappa(i)| = \sum_i i^{2-\frac{\alpha}{2}} i^{\frac{\alpha}{2}} |\kappa(i)| 
\leq \left(\sum_i i^{4-\alpha} \right) ^{\frac{1}{2}} \Gamma^{\frac{1}{2}} \leq  C \Gamma^{\frac{1}{2}},
\end{equation*}
for some $C$ that is independent of $t$. We now control the series of general term $a(i)$. For this, using Abel's transformation, 
for all $N>0$ we obtain  
\begin{eqnarray*}
\sum_{i=1}^N i^{\alpha+1} (\kappa(i+1)^2 - \kappa(i)^2) & = & \sum_{i=0}^N i^{\alpha+1} (\kappa(i+1)^2 - \kappa(i)^2) \\
& = &  \sum_{i=0}^N \left( (i+1)^{\alpha+1} \kappa(i+1)^2 - i^{\alpha+1} \kappa(i)^2 \right) - \left( (i+1)^{\alpha+1} - i^{\alpha+1} \right)\kappa(i+1)^2 \\
& = & (N+1)^{\alpha+1} \kappa(N+1)^2 - \sum_{i=0}^N \underbrace{\left( (i+1)^{\alpha+1} - i^{\alpha+1} \right)}_{\leq (\alpha+1) (i+1)^\alpha}\kappa(i+1)^2.
\end{eqnarray*}
But as $\kappa$ has a finite moment of order $\frac{\alpha + 1}{2} = 3 + \frac{\varepsilon}{2}$, we have that 
\[(N+1)^{\alpha+1} \kappa(N+1)^2 \xrightarrow{N} 0,\]
implying the existence of a constant $C'$ that does not depend on $t$, and such that 
\begin{equation*}
\left| \sum_{i>0} a(i) \right| \leq C' \Gamma. 
\end{equation*}
The other terms of (\ref{termegeneder}) are of two main types: 
\begin{enumerate}
\item[(i)] The terms involving the factors $i^\alpha \kappa(i)^2$ and $i^\alpha \kappa(i)\kappa(i+1)$ can be upper-bounded by 
$$|\kappa(i)\kappa(i+1)|\leq \frac{1}{2} \kappa(i)^2 + \frac{1}{2} \kappa(i+1)^2,$$ 
whose series is readily upper-bounded by somme $C'' \Gamma$; 
\medskip 
\item[(ii)] The other 
terms involve the quantities $i^\alpha \kappa(i)\xi(i)$, $i^\alpha \kappa(i)\xi(i+1)$, $i^{\alpha+1} \kappa(i)\xi(i)$ and $i^{\alpha+1} \kappa(i)\xi(i+1)$.
To upper-bound the corresponding series, we again use Cauchy-Schwarz inequality. We only detail the development of 
the term in $i^{\alpha+1} \kappa(i)\xi(i+1)$, which necessitates the stronger moment assumption on $\xi$. The other terms can be treated similarly. 
We have 
\begin{eqnarray*}
\sum_{i\geq 1}  i^{\alpha+1} |\kappa(i) \xi(i+1)| & = & \sum_{i\geq 1} i^{\frac{\alpha}{2}+1} |\xi(i+1)| i^{\frac{\alpha}{2}}  |\kappa(i)|  \\
& \leq & \sum_{i\geq 1} (i+1)^{\frac{\alpha}{2}+1} |\xi(i+1)| i^{\frac{\alpha}{2}}  |\kappa(i)| \\
& \leq & \left( \sum_{i\geq 1} (i+1)^{\alpha+2} \xi(i+1)^2 \right)^{\frac{1}{2}}  \Gamma^{\frac{1}{2}} ,\\
& \leq &C'''\Gamma^{\frac{1}{2}} ,
\end{eqnarray*}
as $\xi$ has a finite moment of order $\frac{\alpha+2}{2}  = \frac{7 + \varepsilon}{2}\cdot $ 
\end{enumerate}
Injecting all these series bounds into \eqref{termegeneder}, we conclude that there exists a constant $C_\beta>0$ that does not depend on 
$t\in [0,T_\beta]$, and such that \eqref{majGammaDeriv} holds. 

\bigskip 

Now, \eqref{majGammaDeriv} implies that $\Gamma_t=0$ for all $t\in[0,T_\beta]$, and in turn, that 
\begin{equation}
\label{eq:fff1}
\cro{\kappa_t,\mathbb 1_{\N_+}}=0,\mbox{ for all }t\in [0,T_\beta].
\end{equation} 
But we have 
\[\cro{\xi_0,\mathbb 1_{\N}}-\cro{\zeta_0,\mathbb 1_{\N}}=\cro{\nu,\mathbb 1_{\N}}-\cro{\nu,\mathbb 1_{\N}}=0\]
and, from \eqref{eq:limhydro} and \eqref{eq:defLbarGRE},
\[{\d\over \d t}\cro{\xi_t, \mathbb{1}_{\N}}={\d\over \d t}\cro{\zeta_t, \mathbb{1}_{\N}}=-2,\,t\in[0,T_\beta],\]
implying that 
\begin{equation*}
\cro{\xi_t,\mathbb{1}_\N}= \cro{\zeta_t,\mathbb{1}_\N},\mbox{ for all }t\in [0,T_\beta].
\end{equation*}
This, together with \eqref{eq:fff1}, shows that $\kappa_t(0)=0$ for all $t\in [0,T_\beta]$ and thus, using again \eqref{eq:fff1}, that 
\begin{equation*}
\kappa_t= \mathbf 0,\mbox{ for all }t\in [0,T_\beta].
\end{equation*}
In particular, we get that $T^\xi_\beta=T^\zeta_\beta=T_\beta$. 
But the mapping $\Pi_{\mathbb 1_{\N_+}}$ is continuous on $\bar\M$, implying in turn that the mapping $t\mapsto \Pi_{\mathbb 1_{\N_+}}(\mu_t)$ is continuous. 
As the above holds for any $\beta>0$, taking $\beta$ to zero we obtain in turn that $T^\xi_0=T^\zeta_0=T_0$. By the very definition \eqref{eq:defLbarGRE} of $\bar{\mathfrak L}$, 
the uniqueness of a solution to \eqref{eq:limhydro} is then trivially extended to $[0,1]$. This concludes the proof.
\end{proof}

%

\section{Proof of Theorem \ref{thm:matchcovunimin}} 
\label{sec:unimin}
We now come back to the matching criterion {\sc uni-min}, introduced in Section \ref{subsec:mainresalgo} and formalized in Example \ref{ex:unimin}. 
Fix $N\in\N_+$, 
and let us assume that at all times, the graphs have degree bounded by $N$, implying that for all $n$, $\mu_0^n$ has support in $\llbracket 0,N \rrbracket$. 
In this slightly restricted case, we can show that Theorem \ref{thm:main} applies also 
for $\bphi=\textsc{uni-min}.$ 
Indeed, in this case, for any $j\in\llbracket 0,n \rrbracket$ we readily obtain that 
$$\mu^n_j := \sum_{i=0}^N \mu_j^n(i)\delta_i,$$
and it follows that the measure $\mu^n_j$ can be identified with the vector $(\mu^n_j(0),...,\mu^n_j(N))$ on $[0,1]^{N+1} \subset \R^{N+1}$.

Fix $\mu\in\M$ such that $\cro{\mu,\mathbb 1_{\N_+}}>0$, and recall again the definition \eqref{eq:distribK}. 
Then, we again obtain that for all $f\in\maC_b$, 
\begin{equation*}
\cro{\maK_{\textsc{uni-min}}(\mu),f}  = {\cro{\mu,f\mathbb 1_{\N_+}}\over \cro{\mu,\mathbb 1_{\N_+}}}\cdot
\end{equation*}
Also, for any $k\in \N_+$, recalling \eqref{eq:distribK'}, we get that for all $k'\in\N_+$, 
\begin{align*}
\maK'_{\textsc{uni-min}}(\mu,k)(k') &=\maK'_{\textsc{uni-min}}(\mu,k)\left([k',\infty)\right)-\maK'_{\textsc{uni-min}}(\mu,k)\left([k'+1,\infty)\right).
\end{align*}
Now, recall that the outcome of a sequence of independent uniform draws without replacement, equals in law the outcome of a sequence of independent uniform draws with replacement, conditioned on not drawing twice the same element. Then, by the very definition of a matching criterion, by comparing the constructions of Section \ref{subsec:localCM} and \ref{sec:constructionhat}, we obtain that for all $\mu$ and $k$, there exists a mapping $\varphi$ that depend only 
on $\bphi$ and possibly on an independent draw, and such that $\wh K'=\varphi(\wh H_1,\cdots,\wh H_{\wh K})$. 
Therefore, for all $\mu\in\M^n_{\beta,M}$ and all $k$ and $k'$, 
conditioned on that draw of $\varphi$ and on $\{\wh K=k\}$ and $\{K=k\}$, we get that 
\begin{align}
\maK'_{\textsc{uni-min}}(\mu,k)(k')&=\sum_{\substack{\mbox{\tiny{distinct}} (h_1,\cdots,h_k):\\\varphi(h_1,\cdots,h_k)=k'}}\prm{\left(H_1,\cdots,H_{K}\right)=(h_1,\cdots,h_k)}\notag\\
&=\sum_{\substack{\mbox{\tiny{distinct}} (h_1,\cdots,h_k):\\\varphi(h_1,\cdots,h_k)=k'}}\prm{\left(\wh H_1,\cdots,\wh H_{\wh K}\right)=(h_1,\cdots,h_k) \mid (\wh H_1,\cdots,\wh H_{\wh K})\mbox{ \small{are distinct}}}\notag\\
&=\sum_{\substack{\mbox{\tiny{distinct}} (h_1,\cdots,h_k):\\\varphi(h_1,\cdots,h_k)=k'}}{\prm{\left(\wh H_1,\cdots,\wh H_{\wh K}\right)=(h_1,\cdots,h_k)}\over \prm{(\wh H_1,\cdots,\wh H_{\wh K})\mbox{ \small{are distinct}}}}\notag\\
&={1 \over \prm{(\wh H_1,\cdots,\wh H_{\wh K})\mbox{ \small{are distinct}}}}\sum_{\substack{\mbox{\tiny{distinct}} (h_1,\cdots,h_k):\\\varphi(h_1,\cdots,h_k)=k'}}\prm{\left(\wh H_1,\cdots,\wh H_{\wh K}\right)=(h_1,\cdots,h_k)}\notag\\
&={\wh \maK'_{\textsc{uni-min}}(\mu,k)(k') \over \prm{\left(\wh H_1,\cdots,\wh H_{\wh K}\right)\mbox{ \small{are distinct}}}}\cdot\label{eq:WBunimin}
\end{align}
But as the draws of the construction of Section \ref{sec:constructionhat} are made with replacement, we have that
\begin{align*}
\prm{\left(\wh H_1,\cdots,\wh H_{\wh K}\right)\mbox{ \small{are distinct}}}
&={\cro{\mu,\mathbb 1_{\N_+}}! \over (\cro{\mu,\mathbb 1_{\N_+}} - k)!\cro{\mu,\mathbb 1_{\N_+}}^k}\\
&\ge {\cro{\mu,\mathbb 1_{\N_+}}! \over (\cro{\mu,\mathbb 1_{\N_+}} - N)!\cro{\mu,\mathbb 1_{\N_+}}^N}\cdot
\end{align*}
As $\cro{\mu,\mathbb 1_{\N_+}} \ge n\beta$, the above probability clearly tends to 1 as $n$ goes large. 
Therefore from \eqref{eq:WBunimin}, we obtain that for all 
$k$, 
\begin{align*}
\left|\cro{\maK'_{\textsc{uni-min}}(\mu,k)-\wh \maK'_{\textsc{uni-min}}(\mu,k),\chi^2}\right|
&\le \sum_{k'=1}^N (k')^2 \left|\maK'_{\textsc{uni-min}}(\mu,k)(k')-\wh \maK'_{\textsc{uni-min}}(\mu,k)(k')\right|\\
&\le N^2 \left|\maK'_{\textsc{uni-min}}(\mu,k)(k')-\wh \maK'_{\textsc{uni-min}}(\mu,k)(k')\right|
\end{align*}
can be made as small as desired for a large enough $n$. This implies that the criterion $\bphi$ is well-behaved, in the sense of Definition \ref{def:polWB}.

Now, from the very construction of Section \ref{sec:constructionhat}, the degree of each drawn bucket is size-biased, independently of everything else, 
and so by the very definition 
of {\sc uni-min}, for any $k,y\in\N_+$ we get that 
\begin{equation}
\label{eq:K'unimin}
\wh\maK'_{\textsc{uni-min}}(\mu,k)\left([y,\infty)\right)= \left(\sum_{j=y}^\infty {j\mu(j)\over \cro{\mu,\chi}}\right)^k=\bar F_\mu(y-1)^k,
\end{equation}
recalling \eqref{eq:defbarF}. We deduce that for all $k\in \N_+$ and $k'\in\N_+$, 
\begin{equation}
\label{eq:Phiunimin2}\maK'_{\textsc{uni-min}}(\mu,k)(k') = \bar F_\mu(k'-1)^k-\bar F_\mu(k')^k.
\end{equation}
We then immediately check that {\sc uni-min} is continuous in the sense of Definition \ref{def:polCont}: fix a sequence 
$\suite{\bar\mu^n}$ of $\bar\M$ such that $\bar\mu^n\in \bar\M^n$ for all $n$, and such that $\bar\mu^n\xRightarrow{n}\bar\mu$ for some 
measure $\bar\mu \in\bar\M$ supported in $\llbracket 0,N \rrbracket$, and such that $\cro{\bar\mu,\mathbb 1_{\N_+}}>0$. 
As for $\bphi=$ {\sc greedy}, for all $f\in\maC_b$ we get 
\[\cro{\maK_{\textsc{uni-min}}(n\bar\mu^n),f}={\cro{n\bar\mu^n,f\mathbb 1_{\N_+}}\over \cro{n\bar\mu^n,\mathbb 1_{\N_+}}}
={\cro{\bar\mu^n,f\mathbb 1_{\N_+}}\over \cro{\bar\mu^n,\mathbb 1_{\N_+}}}\xlongrightarrow{n} {\cro{\bar\mu,f\mathbb 1_{\N_+}}\over \cro{\bar\mu,\mathbb 1_{\N_+}}}=:\cro{\bar{\maK}_{\textsc{uni-min}}(\bar\mu),f}.\]
Likewise, for all $k\in\N_+$, for all $f\in\maC_b$, \eqref{eq:Phiunimin2} entails that 
\begin{align*}
\cro{\wh{\maK}'_{\textsc{uni-min}}(n\bar\mu^n,k),f}&=\sum_{k'\in\N_+}f(k')\left\{\bar F_{n\bar\mu^n}(k'-1)^k-\bar F_{n\bar\mu^n}(k')^k\right\}\\
&=\sum_{k'=1}^N f(k')\left\{\bar F_{\bar\mu^n}(k'-1)^k-\bar F_{\bar\mu^n}(k')^k\right\}\\
 &\xlongrightarrow{n} \sum_{k'=1}^Nf(k')\left\{\bar F_{\bar\mu}(k'-1)^k-\bar F_{\bar\mu}(k')^k\right\}=:\cro{\bar{\maK}'_{\textsc{uni-min}}(\bar\mu,k),f},
 \end{align*}
 showing the continuity of the {\sc uni-min} criterion. Moreover, plugging the above 
 into \eqref{eq:defLbar}, we can again make the operator $\bar{\mathfrak{L}}$ explicit in the present case: For any measure $\bar\mu\in \bar{\M}$ that is 
 supported in $\llbracket 0,N \rrbracket$ and such that $\cro{\bar\mu,\mathbb 1_{\N_+}}>0$, for any $f\in\maC_b$, 
\begin{multline}
\label{eq:defLbarMIN}
\bar{\mathfrak{L}}_f(\bar\mu)
=- \cro{\bar\maK_{\textsc{uni-min}}(\bar\mu),f+\cro{\bar{\maK}'_{\textsc{uni-min}}(\bar\mu,.),f}}\\
\shoveright{-{\cro{\bar\mu,\chi\nabla f}\over \cro{\bar\mu,\chi}}
\Bigl\{\cro{\bar\maK_{\textsc{uni-min}}(\bar\mu),\chi-\mathbb 1+\cro{\bar\maK'_{\textsc{uni-min}}(\bar\mu,.),\chi-\mathbb 1}}\Bigl\}}\\
\shoveleft{=-\Biggl\{{\cro{\bar\mu,f\mathbb 1_{\N_+}}\over \cro{\bar\mu,\mathbb 1_{\N_+}}} + {\sum_{k\in\N_+}\bar\mu(k)\sum_{k'\in\N_+}f(k')
\left\{\bar F_{\bar\mu}(k'-1)^k-\bar F_{\bar\mu}(k')^k\right\}\over\cro{\bar\mu,\mathbb 1_{\N_+}}}}\\
\left.+ {\cro{\bar\mu,\chi\nabla f} \over \cro{\bar\mu,\chi}}\left\{{\cro{\bar\mu,(\chi- \mathbb 1_{\N_+})}\over \cro{\bar\mu,\mathbb 1_{\N_+}}}
+{\sum_{k\in\N_+}\bar\mu(k)\sum_{k'\in\N_+}\left(k'-1\right)\left\{\bar F_{\bar\mu}(k'-1)^k-\bar F_{\bar\mu}(k')^k\right\}\over\cro{\bar\mu,\mathbb 1_{\N_+}}}\right\} \right\}.
\end{multline}

\noindent We have the following result, which implies Theorem \ref{thm:matchcovunimin} in view of Corollary \ref{cor:maincov}. 
\begin{corollary}[Convergence for the \textsc{uni-min} criterion]
\label{cor:MainMIN}
If $\mathbf\Phi=\textsc{uni-min}$, if the sequence of initial degree distributions $\suite{\mu^n_0}$ are supported in $\llbracket 0,N \rrbracket$ for some 
$N\in \N_+$ and satisfy Assumption \ref{hypo:Ho} for some measure $\nu$, then for all $f\in \maC_b$ we get the convergence 
\begin{equation*}
\sup_{t \leq 1 } |\cro{\bar{\mu}^n_t,f} -  \cro{\bar{\mu}^{\textsc{uni-min}}_t,f}| \xrightarrow{(n,\P)} 0,
\end{equation*}
where $\procun{\bar{\mu}_t^{\textsc{uni-min}}}$ is the unique solution of \eqref{eq:limhydro}, for $\bar{\mathfrak{L}}$ defined, for all $f\in\maC_b$ and all measures 
$\bar\mu\in\bar\M$ supported in $\llbracket 0,N \rrbracket$,  by \eqref{eq:defLbarMIN} if $\cro{\bar\mu,\mathbf 1_{\N_+}}>0$, and by 
$\bar{\mathfrak{L}}_f(\bar\mu) = 0$ if $\cro{\bar\mu,\mathbf 1_{\N_+}}=0$. 
\end{corollary}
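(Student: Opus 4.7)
The plan is to apply Theorem \ref{thm:main}, so we need to verify that the \textsc{uni-min} criterion satisfies its three structural hypotheses (moment preserving, well-behaved, continuous) and that the integral system \eqref{eq:limhydro} driven by the operator \eqref{eq:defLbarMIN} admits at most one solution in $\mathbb D([0,1],\bar\M)$. Continuity of \textsc{uni-min} and its well-behavedness have already been established in the discussion preceding the statement: well-behavedness comes from the computation \eqref{eq:WBunimin} and the bounded support of $\mu$, which force the probability of drawing $\wh K \le N$ distinct half-edges to tend to $1$ uniformly on $\M^n_{\beta,M}$; continuity follows from \eqref{eq:K'unimin} and the fact that for measures supported on a finite set the weak convergence $\bar\mu^n \Rightarrow \bar\mu$ implies convergence of the tails $\bar F_{\bar\mu^n}(y)^k \to \bar F_{\bar\mu}(y)^k$ for every $y,k\in\llbracket 1,N\rrbracket$.

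Next I would check that \textsc{uni-min} preserves moments. Because the support is included in $\llbracket 0,N\rrbracket$, both $K_{\textsc{uni-min}}(\mu)$ and $K'_{\textsc{uni-min}}(\mu)$ are bounded by $N$ deterministically, so for every $\mu\in\M^n_{\beta,M}$ we trivially get
\[
\cro{\maK_{\textsc{uni-min}}(\mu),\chi^2} \le N^2,\qquad \sup_{k\in\N_+}\cro{\wh\maK'_{\textsc{uni-min}}(\mu,k),\chi^2}\le N^2,
\]
so one can take $C'(\beta,M)=N^2$ in Definition \ref{def:polPM}. The initial condition $\nu$ of Assumption \ref{hypo:Ho} is inherited from the support restriction, and its moments of all orders are finite.

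The remaining point, and the only delicate one, is the uniqueness of the solution to \eqref{eq:limhydro}. Here the bounded support is the crucial simplification: any solution $\bar\mu^*$ must remain supported on $\llbracket 0,N\rrbracket$ for all $t$ since the operator $\bar{\mathfrak L}$ cannot create mass on indices outside the initial support (all jumps in the pre-limit dynamics only decrease availabilities). Hence \eqref{eq:limhydro} reduces to the $(N+1)$-dimensional ODE system obtained by testing against $f=\mathbb 1_j$, $j\in\llbracket 0,N\rrbracket$, with unknown $(x_0(t),\ldots,x_N(t))$; this is exactly system \eqref{eq:ODEunimind=n}. The plan is then to apply the Cauchy–Lipschitz theorem on the open domain
\[
D_\beta = \left\{(x_0,\ldots,x_N)\in [0,1]^{N+1}\,:\,\sum_{i=1}^N x_i>\beta\right\},
\]
for each $\beta>0$. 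On such a set, the denominators $\cro{\bar\mu,\mathbb 1_{\N_+}}$ and $\cro{\bar\mu,\chi}$ are bounded away from $0$, the numerators are polynomial in $(x_0,\ldots,x_N)$, and the tail functions $y\mapsto \bar F_{\bar\mu}(y)^k$ are smooth functions of $(x_0,\ldots,x_N)$ on $D_\beta$; hence the vector field in the right-hand side of \eqref{eq:ODEunimind=n} is of class $\maC^1$, hence locally Lipschitz, on $D_\beta$. Uniqueness on $[0,\tau^*_\beta]$, where $\tau^*_\beta=\inf\{t\,:\,\cro{\bar\mu^*_t,\mathbb 1_{\N_+}}\le \beta\}\wedge 1$, follows. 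Letting $\beta\downarrow 0$ extends uniqueness up to $\tau^*_0$, and past $\tau^*_0$ both solutions are constant since $\bar{\mathfrak L}_f(\bar\mu)=0$ whenever $\cro{\bar\mu,\mathbb 1_{\N_+}}=0$, so uniqueness holds on the whole of $[0,1]$.

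Having verified all four hypotheses, Theorem \ref{thm:main} yields the claimed uniform convergence in probability, with limit the unique solution $\bar\mu^{\textsc{uni-min}}$ of \eqref{eq:limhydro} driven by \eqref{eq:defLbarMIN}. The main obstacle, by far the most substantial step, is the uniqueness argument; it is however significantly lighter than in the \textsc{greedy} case treated in Section \ref{sec:greedy} precisely because the bounded-support assumption collapses the infinite-dimensional measure equation to a finite-dimensional smooth ODE, so no weighted-moment estimate of Gronwall type (as in the bound on $\Gamma_t$ used for \textsc{greedy}) is required.
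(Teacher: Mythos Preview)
Your proposal is correct and follows essentially the same route as the paper: verify the three structural hypotheses of Theorem~\ref{thm:main} (with moment preservation trivial thanks to the bounded support), then reduce uniqueness to a finite-dimensional ODE on $[0,1]^{N+1}$, restrict to the set where $\sum_{i\ge 1} x_i>\beta$, and apply Cauchy--Lipschitz. The only cosmetic difference is that the paper proves Lipschitz continuity of the vector field by bounding each building block explicitly (Lemma~\ref{lemma:Lip}), whereas you observe that the right-hand side is $\maC^1$ on $D_\beta$ and hence locally Lipschitz; both arguments are equivalent here.
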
 
\begin{proof}
{We apply again Theorem \ref{thm:main}. As the initial measures $\suite{\mu^n_0}$ are supported in 
$\llbracket 0,N \rrbracket$ and satisfy Assumption \ref{hypo:Ho}, the limiting initial measure $\nu$ is, clearly, also supported in $\llbracket 0,N \rrbracket$. 
This is then also the case for any $\bar\mu^*_t$, $t\in[0,1]$,  for $\bar\mu^*$ a solution to \eqref{eq:limhydro}. 
Second, we have just shown that the {\sc uni-min} criterion is well-behaved and continuous. It is also immediate that the sequence of processes 
$\suite{\bar\mu^n}$ take value in the set of measures of mass less than 1 and supported in $\llbracket 0,N \rrbracket$, and so the moment preservation property holds, if one restricts for any $n$, to measures $\mu\in\M^n_{\beta,M}$ that are supported in $\llbracket 0,N \rrbracket$. 

 In view of Theorem \ref{thm:main}, it remains to show that the system of integral equations \eqref{eq:limhydro}, 
this time for $\bar{\mathfrak{L}}$ defined by \eqref{eq:defLbarMIN} for measures $\bar\mu$ such that $\cro{\bar\mu,\mathbb 1_{\N_+}}>0$, admits the only solution $\bar\mu^*$. For this, we adopt a similar approach to the proof of Corollary \ref{cor:MainGRE}, and keep the notation therein. 
Again, let us first observe that, for any $1 \le p \le 3$, 
$$\sup_{t \in [0,1]} \cro{\bar\mu_t, \chi^p}  < M,$$
and let again et $\xi$ and $\zeta$ be two solutions of \eqref{eq:limhydro} under \textsc{uni-min}. As in the proof of Corollary \ref{cor:MainGRE}, 
$\xi$ and $\zeta$ trivially coincide if 
$\cro{\xi_0, \mathbb 1_{\N_+}} = \cro{\zeta_0, \mathbb 1_{\N_+}} = 0$. 
Else, as above we fix $\beta>0$, and first show that $\xi$ and $\zeta$ coincide at least up to $T_\beta$.} 
In the case of bounded degrees, the proof of this fact becomes significantly easier. Indeed, notice that the uniqueness of $\xi$ is equivalent to that of the vector-valued process
$\left(X_t\,:\,t\in[0,T_\beta]\right)$, where 
\[X_t  :=   \begin{bmatrix}
           \xi_t(0) \\
           \xi_t(1) \\
           \vdots \\
           \xi_t(N)
     \end{bmatrix} \in \left[0,1\right]^{N+1},\quad t\in[0,T_\beta], 
 \]
and that we can rewrite the measure valued ODE associated to the algorithm as a $\R^{N+1}_+$-valued ODE :
\begin{equation}
\label{eq:systminres}
    \frac{\d}{\d t} X_t =
		F\left(X_t \right),\,t\in[0,T_\beta], \qquad \textrm{with } 
F\left(X_t \right) = 
		\begin{bmatrix}
           F_0\left(X_t\right) \\
           F_1\left(X_t\right) \\
           \vdots \\
           F_N\left(X_t\right)
     \end{bmatrix} = \begin{bmatrix}
           \bar{\mathfrak{L}}_{\mathbb{1}_{0}}(\xi_t) \\
           \bar{\mathfrak{L}}_{\mathbb{1}_{1}}(\xi_t)\\
           \vdots \\
           \bar{\mathfrak{L}}_{\mathbb{1}_{N}}(\xi_t)
     \end{bmatrix}  \in \R^{N+1}.
\end{equation}
By the Cauchy-Lipschitz theorem, we just have to prove that $F$ is $K$-Lipschitz Continuous on $\left[0,1\right]^{N+1}$, for some well-chosen norm 
and some $K>0$. For all $X = (X(0),...,X(N)) \in \left[0,1\right]^{N+1}$, it follows from \eqref{eq:defLbarMIN} that we have 
\begin{equation}
\label{eq:systminres0}
F_0(X) = -\frac{M_X - m_X}{m_X}\frac{X(1)}{M_X} - \frac{X(1)}{M_X}\sum_{l=1}^N(l-1)\sum_{k=1}^N \frac{{X(k)}}{m_X}\left\{Q_X(l)^k - Q_X(l+1)^k\right\},
\end{equation}
and for all $i\in \llbracket  1, N \rrbracket$,
\begin{multline}
F_i(X) = -\frac{X(i)}{m_X} -\frac{M_X - m_X}{m_X}\frac{iX(i) - (i+1) X(i+1)}{M_X} -\sum_{k=1}^N \frac{X(k)}{m_X}\left\{Q_X(i)^k - Q_X(i+1)^k\right\}\\
 - \frac{iX(i) - (i+1) X(i+1)}{M_X}\sum_{l=1}^N(l-1)\sum_{k=1}^N \frac{X(k)}{m_X}\left\{Q_X(l)^k - Q_X(l+1)^k\right\}, \label{eq:systminresi}
\end{multline}
where we have set $X(N+1)=0$ and defined
$$
m_X = \sum_{k=1}^N X(k), \quad M_X = \sum_{k=1}^N k X(k), \quad Q_X(i) = \frac{1}{M_X}\sum_{k=i}^N k X(k).
$$
The following intermediate Lemma gives us the desired result : 
\begin{lemma}
\label{lemma:Lip}
For any $\beta \in (0,1)$, on the set 
\begin{equation*}
G_{\beta}=\big\{ X \in \left[ 0, 1 \right]^{N+1}\,:\, m_X \geq \beta \big\} 
\end{equation*}
the mappings $$X \longmapsto m_X, \quad X \longmapsto \frac{1}{m_X}, \quad X \longmapsto M_X,\quad X \longmapsto \frac{1}{M_X}, \quad X \longmapsto Q_X(i)\quad \text{and} \quad X \longmapsto Q_X(i)^k$$ 
for $1\leq i \leq N$ and $1\leq k \leq N$, are all Lipschitz continuous and bounded from $\left[0,1\right]^{N+1}$ to $\R$, 
for the $1$-norm defined by $$\|X\|_1 = \sum_{k=0}^N |X(k)|,\quad X\in \left[0,1\right]^{N+1}.$$ 
\end{lemma}
\begin{proof}[Proof of Lemma \ref{lemma:Lip}]
Let $X, Y \in G_{\beta}$. 
We first have that $$|m_X - m_Y| \leq \sum_{k=1}^N |X(k) - Y(k)| \leq  \|X-Y\|_1. $$
Moreover, we have that  
$$ \beta \leq  m_X = \sum_{k=1}^N X(k) \leq N,$$
which also implies that  
$$ \frac{1}{m_X}  \leq \frac{1}{\beta},$$
and thus 
$$
\left| \frac{1}{m_X}  - \frac{1}{m_Y}  \right| = \left|  \frac{m_Y-m_X}{m_X m_Y}  \right|\leq \frac{1}{\beta^2} \|X-Y\|_1.
$$
Next, we have that 
\begin{align*}
|M_X - M_Y| &= \left|\sum_{k=1}^N k X(k) - k Y(k)\right|\\
						&\leq \sum_{k=1}^N k|X(k) - Y(k)|
						\leq \sum_{k=1}^N N|X(k) - Y(k)|= N \|X-Y\|_1
\end{align*}
and moreover 
$$
\beta \leq  m_X \leq \sum_{k=1}^N X(k) \leq \sum_{k=1}^N kX(k) =M_X \le \sum_{k=1}^N NX(k) \leq N^2,
$$
implying in particular that 
$$
\left|\frac{1}{M_X}  - \frac{1}{M_Y} \right| = \left|\frac{M_Y - M_X}{M_XM_Y} \right| 
								\leq \frac{N \|X-Y\|_1}{\beta^2}\cdot
$$
Finally, fix $1\le i \le N$. We have 
$$
0\leq Q_X(i) = \frac{1}{M_X}\sum_{k=i}^N k X(k) \leq \frac{1}{M_X}\sum_{k=1}^N k X(k) = 1
$$
and thus $0\leq Q_X(i)^k \leq 1$ for all $1\le k \le N$. 
Gathering the above, we obtain that 
\begin{align*}
\left|Q_X(i) - Q_Y(i)\right| &= \left|   \frac{1}{M_X}\sum_{k=i}^N k X(k) -  \frac{1}{M_Y}\sum_{k=i}^N k Y(k)    \right|\\
						&= \left|   \left(\frac{1}{M_X} -  \frac{1}{M_Y} \right) \sum_{k=i}^N k X(k) +  \frac{1}{M_Y} \left(\sum_{k=i}^N k X(k)  -\sum_{k=i}^N k Y(k) \right)   \right|\\
						&\leq \left| \frac{1}{M_X} -  \frac{1}{M_Y} \right| \cdot\left| \sum_{k=i}^N k X(k) \right| +  \frac{1}{M_Y} \left| \sum_{k=i}^N k X(k)  -\sum_{k=i}^N k Y(k)   \right| \\
						&\leq \frac{N^3}{\beta^2} \|X-Y\|_1 + \frac{N}{\beta} \|X-Y\|_1
						\leq \max\left(\frac{N^3}{\beta^2} , \frac{N}{\beta}  \right) \|X-Y\|_1. 
\end{align*}
Thus for all $1\le k \le N$ we obtain that 
\begin{align*}
\left|Q_X(i)^k - Q_Y(i)^k\right| &= \left| (Q_X(i) - Q_Y(i))  \sum_{l=0}^{k-1} Q_X(i)^{k-1-l} Q_Y(i)^l  \right|\\
						& \leq  \left|Q_X(i) - Q_Y(i)\right| \sum_{l=0}^{k-1}  \left| Q_X(i)^{k-1-l} Q_Y(i)^l \right|   \\
				&\leq k \max\left(\frac{N^3}{\beta^2} , \frac{N}{\beta}  \right) \|X-Y\|_1 \le \max\left(\frac{N^4}{\beta^2} , \frac{N^2}{\beta}  \right) \|X-Y\|_1,
\end{align*}
concluding the proof. 
\end{proof}
We deduce from Lemma \ref{lemma:Lip} together with \eqref{eq:systminres0} and \eqref{eq:systminresi}, 
that for any $i\in \llbracket 0,N \rrbracket$, $F_i$ is a {finite} sum of bounded, Lipchitz continuous mappings, in turn rendering $F$ Lipschitz continuous on 
$G_{\beta}$. We conclude, using the Cauchy Lipschitz theorem, that the system \eqref{eq:systminres} admits a unique continuous $[0,1]^N$-valued 
solution $X$ on the interval $[0,T_\beta]$. This is equivalent to saying that there exists a unique solution to the system \eqref{eq:limhydro} in 
$\mathbb C\left([0,T_\beta],\bar\M\right)$. Therefore, we get that $\xi=\zeta$ on $[0,T_\beta]$ for any $\beta>0$, and we conclude as in the proof of 
Corollary \ref{cor:MainGRE}.  
\end{proof}

\begin{remark}\rm 
The formulation of the uniqueness problem using a representation on $\R^{N+1}$ is reminiscent of the differential equation method in its standard presentation 
(see e.g. Wormald \cite{WCours}). The characterization of the limiting dynamics boils down to checking a Lipschitz condition over a finite number of simpler functions. This is why the classical differential equation method can be portrayed as a restriction of our method to smaller spaces, such as spaces of finite-support measures. 
Observe in particular that, in this restricted case, it is easy to adapt to the {\sc greedy} criterion, the argument that we have developed here regarding the {\sc uni-min} criterion, thereby obtaining a simpler proof than that of Corollary \ref{cor:MainGRE}. 
\end{remark}

\section{{\sc uni-max} for bounded degrees} 
\label{sec:unimax}
It is immediate to observe that similar arguments as those of Section \ref{sec:unimin} can be applied to the criterion $\bphi=\textsc{uni-max}$, defined in Example \ref{ex:unimax}. All the arguments applied above can be transposed to the present case, by setting now instead of \eqref{eq:K'unimin}, for all $\mu$ having non-zero first moment and for all $k,k'\in\N_+$, 
\begin{equation*}
\mathcal K'_{\textsc{uni-max}}(\mu,k)(k') = \left(\sum_{j=1}^{k'} {j\mu(j)\over \cro{\mu,\chi}}\right)^k-\left(\sum_{j=1}^{k'-1} {j\mu(j)\over \cro{\mu,\chi}}\right)^k
= F_\mu(k')^k-F_\mu(k'-1)^k,
\end{equation*}
where we recall \eqref{eq:defF}. In view of \eqref{eq:defLbarMIN}, we obtain the following result, 
\begin{corollary}[Convergence for the \textsc{uni-max} criterion]
\label{cor:MainMAX}
If $\mathbf\Phi=\textsc{uni-max}$, if the sequence of initial degree distributions $\suite{\mu^n_0}$ are supported in $\llbracket 0,N \rrbracket$ for some 
$N\in \N_+$ and satisfy Assumption \ref{hypo:Ho} for some measure $\nu$, then for all $f\in \maC_b$ we get the convergence 
\begin{equation*}
\sup_{t \leq 1 } |\cro{\bar{\mu}^n_t,f} -  \cro{\bar{\mu}^*_t,f}| \xrightarrow{(n,\P)} 0,
\end{equation*}
where $\procun{\bar{\mu}_t^*}$ is the unique solution of \eqref{eq:limhydro}, for $\bar{\mathfrak{L}}$ defined, for all $f\in\maC_b$ and all measures 
$\bar\mu\in\bar\M$ supported in $\llbracket 0,N \rrbracket$,  by 
\begin{multline*}
\label{eq:defLbarMIN}
\bar{\mathfrak{L}}_f(\bar\mu)
=-\Biggl\{{\cro{\bar\mu,f\mathbb 1_{\N_+}}\over \cro{\bar\mu,\mathbb 1_{\N_+}}} + {\sum_{k\in\N_+}\bar\mu(k)\sum_{k'\in\N_+}f(k')\left\{F_{\bar\mu}(k')^k-F_{\bar\mu}(k'-1)^k\right\}\over\cro{\bar\mu,\mathbb 1_{\N_+}}}\\
\left.+ {\cro{\bar\mu,\chi\nabla f} \over \cro{\bar\mu,\chi}}\left\{{\cro{\bar\mu,(\chi- \mathbb 1_{\N_+})}\over \cro{\bar\mu,\mathbb 1_{\N_+}}}
+{\sum_{k\in\N_+}\bar\mu(k)\sum_{k'\in\N_+}\left(k'-1\right)\left\{F_{\bar\mu}(k')^k-F_{\bar\mu}(k'-1)^k\right\}\over\cro{\bar\mu,\mathbb 1_{\N_+}}}\right\} \right\}
\mathbb 1_{\cro{\bar\mu,\mathbf 1_{\N_+}}>0},
\end{multline*}
where $F_{\bar\mu}$ is the c.d.f of the size-biased distribution associated to $\bar\mu$. In particular, the sequence of matching coverages satisfies 
\[{\mathbf {M}}^n_{\textsc{uni-max}}(\nu) \xrightarrow{(n,\P)} {\mathbf {M}}_{\textsc{uni-max}}(\nu) := 1 - \bar{\mu}^*_1 (\{0\}).\]
\end{corollary}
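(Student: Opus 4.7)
The plan is to mirror exactly the strategy used in Section \ref{sec:unimin} for the \textsc{uni-min} criterion, replacing throughout the tail $\bar F_\mu$ of the size-biased distribution by its c.d.f. $F_\mu$. First I will verify the three hypotheses of Theorem \ref{thm:main} for $\bphi=\textsc{uni-max}$. Moment preservation is immediate, since under the assumption that $\mu^n_0$ is supported in $\llbracket 0,N \rrbracket$ for all $n$, the degree measures $\tilde\mu^n_t$ remain supported in $\llbracket 0,N \rrbracket$ along the whole construction, so the distributions $\maK_{\textsc{uni-max}}(\mu)$ and $\wh{\maK}'_{\textsc{uni-max}}(\mu,k)$ are trivially supported in $\llbracket 0,N \rrbracket$ and have all moments bounded by $N^2$. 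Well-behavedness is obtained by the very same argument as in \eqref{eq:WBunimin}: the identity linking sampling with and without replacement through the conditioning event $\{\wh H_1,\dots,\wh H_{\wh K}\text{ distinct}\}$ is independent of the specific choice function $\Phi'$, hence applies verbatim here, and the success probability of that event still tends to $1$ uniformly over $\M^n_{\beta,M}$ because $\cro{\mu,\mathbb 1_{\N_+}}\ge n\beta$ and $\wh K\le N$. Continuity in the sense of Definition \ref{def:polCont} follows from the explicit formula $\maK'_{\textsc{uni-max}}(\mu,k)(k')=F_\mu(k')^k-F_\mu(k'-1)^k$, using that the map $\bar\mu\mapsto F_{\bar\mu}(k')$ is continuous on $\{\bar\mu\in\bar\M\,:\,\cro{\bar\mu,\chi}>0\}$ for the weak topology, as a finite linear combination of continuous mappings when evaluations are restricted to the bounded support $\llbracket 0,N \rrbracket$.

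Second, I need to prove uniqueness of solutions to the integral system \eqref{eq:limhydro} with the operator $\bar{\mathfrak L}$ given in the statement. The key simplification, exactly as in the \textsc{uni-min} case, is that any solution $\bar\mu^*$ inherits the support $\llbracket 0,N \rrbracket$ from $\nu$, so it is fully characterized by the finite-dimensional vector-valued process $X_t=(\bar\mu^*_t(0),\ldots,\bar\mu^*_t(N))\in[0,1]^{N+1}$. Writing \eqref{eq:limhydro} as the finite-dimensional ODE $\dot X_t=F(X_t)$ with $F_i(X)=\bar{\mathfrak L}_{\mathbb 1_i}(X)$, I will then stop at the hitting time $T_\beta$ and show that $F$ is Lipschitz on the set $G_\beta=\{X\in[0,1]^{N+1}\,:\,m_X\ge \beta\}$. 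For this I will invoke a variant of Lemma \ref{lemma:Lip}, adding to the list of elementary Lipschitz and bounded building blocks the mappings $X\mapsto P_X(i):=M_X^{-1}\sum_{k=1}^i kX(k)=1-Q_X(i+1)$ and $X\mapsto P_X(i)^k$ for $1\le i,k \le N$; these are Lipschitz on $G_\beta$ by the exact same telescoping argument $a^k-b^k=(a-b)\sum_{l=0}^{k-1}a^{k-1-l}b^l$ used there, since $P_X(i)\in[0,1]$. Once this is in place, $F$ is a finite sum and product of such bounded Lipschitz functions and is therefore Lipschitz on $G_\beta$, and Cauchy-Lipschitz gives uniqueness on $[0,T_\beta]$.

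Third, uniqueness is then extended from $[0,T_\beta]$ to $[0,T_0]$ by letting $\beta\downarrow 0$ and using the continuity of $t\mapsto\cro{\bar\mu^*_t,\mathbb 1_{\N_+}}$, and from $[0,T_0]$ to $[0,1]$ by observing that $\bar{\mathfrak L}_f(\bar\mu)=0$ as soon as $\cro{\bar\mu,\mathbb 1_{\N_+}}=0$, so the unique solution is simply constant past $T_0$. This gives all the hypotheses of Theorem \ref{thm:main}, from which the claimed uniform convergence in probability of $\cro{\bar\mu^n_t,f}$ to $\cro{\bar\mu^*_t,f}$ follows. Finally, the convergence of the matching coverage ${\mathbf M}^n_{\textsc{uni-max}}(\nu)\xrightarrow{(n,\P)}1-\bar\mu^*_1(\{0\})$ is a direct application of Corollary \ref{cor:maincov}. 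I expect no serious new obstacle here: the only place where \textsc{uni-min} and \textsc{uni-max} differ substantially is in the formula \eqref{eq:K'unimin}, but the replacement $\bar F_\mu\leadsto F_\mu$ preserves every structural property (boundedness in $[0,1]$, polynomial structure in moments of $\bar\mu$, continuity in $\bar\mu$) that was used in the Lipschitz estimates, so the only delicate step is checking that the extra combinatorial sums $\sum_k \bar\mu(k)\sum_{k'}(k'-1)\{F_{\bar\mu}(k')^k-F_{\bar\mu}(k'-1)^k\}$ remain Lipschitz on $G_\beta$, which they do as finite sums of products of the bounded Lipschitz primitives described above.
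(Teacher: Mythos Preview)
Your proposal is correct and follows essentially the same approach as the paper. The paper's own treatment of Corollary \ref{cor:MainMAX} is in fact just a one-line remark that ``similar arguments as those of Section \ref{sec:unimin} can be applied'' after substituting $F_\mu$ for $\bar F_\mu$; your write-up simply fills in those details faithfully, including the observation that the relevant Lipschitz building blocks $P_X(i)=1-Q_X(i+1)$ inherit the bounds of Lemma \ref{lemma:Lip}.
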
 

\subsection*{Acknowledgements}
The authors would like to warmly thank Flavio Mari--Moyal for his precious help in the numerical examples of Section \ref{subsec:mainexamples}. 

\bibliographystyle{plain} 
\bibliography{bibli}

\end{document}